\documentclass{article}
\usepackage{mystyle}
\usepackage{fullpage}
\title{On the global convergence of gradient flow for wide shallow models beyond homogeneous nonlinearities}

\author{
    Romain Petit \\
    CNRS and ENS, PSL Universit\'e\\
    \texttt{romain.petit@ens.fr}
    \and
    Clarice Poon \\
    Mathematics Institute, University of Warwick\\
    \texttt{clarice.poon@warwick.ac.uk}
    \and
    Gabriel Peyré \\
    CNRS and ENS, PSL Universit\'e \\
    \texttt{gabriel.peyre@ens.fr}
}

\date{}

\begin{document}

\maketitle

\begin{abstract}
A surprising phenomenon in the training of neural networks is the ability of gradient descent to find global minimizers of the training loss despite its non-convexity. Following earlier work, we investigate this behavior for wide shallow models. Existing global convergence results primarily concern models with positively one-homogeneous nonlinearities, such as ReLU activations, and models with scalar output weights and bounded nonlinearities, such as sigmoid activations. We study a broader class of models, including multi-head attention layers and two-layer networks with bounded or asymptotically positively one-homogeneous activations and vector output weights. 

Building upon \cite{chizat2018global}, we prove that, in the limit of many hidden neurons or attention heads, non-global minimizers of the training loss are unstable under mean-field gradient flow dynamics by constructing ``escape regions'' in the parameter space. Our global convergence statements are conditional in the
following sense: if the mean-field gradient flow converges in $W_2$, then its limit must be a global minimizer. We revisit the bounded nonlinearity, scalar-output setting of \cite{chizat2018global}, giving an escape region construction adapted to unbounded nonlinear parameter domains. We also propose new constructions for nonlinearities with at most linear growth under a non-degeneracy assumption and for asymptotically positively one-homogeneous nonlinearities. Finally, we show the well-posedness and stability estimates for the mean-field training dynamics under sub-Gaussian initializations.
\end{abstract}

\section{Introduction}
The training of neural networks leads to the minimization of objective functions that are highly non-convex. As a result, first-order optimization algorithms such as gradient descent may fail to reach an approximate minimizer by converging to non-optimal critical points. In certain settings, it has been observed that this behavior is not typical and that gradient descent finds global minimizers of the training loss despite its non-convexity. Among the various explanations that have been proposed, a line of work has focused on the study of the \emph{over-parameterized regime}. The landmark results of \cite{chizat2018global} (see also \cite{meiMeanFieldView2018,rotskoffParametersInteractingParticles2018}) show that, if the model is sufficiently over-parameterized and the initialization densely covers the parameter space, then (continuous-time) gradient descent can only converge to global minimizers of the training loss. In these works, the over-parameterized models under study are typically fully connected neural networks with a single hidden layer and a large number of hidden neurons. Existing results essentially cover models with positively $1$-homogeneous nonlinearities, such as ReLU activations, and models with scalar output weights and bounded nonlinearities, such as sigmoid activations. 

Existing global convergence results rely heavily on the homogeneity of the nonlinearity. In fully positively homogeneous models, such as ReLU-type two-layer networks, the first variation has a scaling structure in all parameter variables, which makes it possible to rule out convergence to non-optimal measures through directions of parameter growth. This mechanism is used in \cite{chizat2018global,wojtowytschConvergenceGradientDescent2020}, although the non-smoothness of ReLU at the origin requires either a suitable reparameterization or additional assumptions. Models with bounded nonlinearities are only partially homogeneous, as they are linear in their output parameters. On unbounded nonlinear parameter domains, this case is more delicate even with scalar output weights. For activations such as the sigmoid, the relevant gradients in the nonlinear parameters vanish as those parameters become large, which prevents the direct uniform control of trajectories used in the instability argument of \cite{chizat2018global}. We revisit this scalar-output setting and give a complete proof under slightly revised assumptions. Beyond this case, existing global convergence results do not cover bounded nonlinearities with vector output weights, nor asymptotically positively $1$-homogeneous activations such as GELU and SiLU/Swish \cite{ramachandranSearchingActivationFunctions2017,elfwingSigmoidweightedLinearUnits2018,hendrycksGaussianErrorLinear2023}. They also do not cover architectures such as attention layers, which have become ubiquitous in modern machine learning. In this work, we take a step toward bridging this gap by proving conditional global convergence results: whenever the mean-field dynamics converges in $W_2$, its limit must be globally minimizing under the assumptions below.

\subsection{Main problem}
We study the training of parameterized models of the form $(1/m)\sum_{i=1}^m \Phi(w_i,\theta_i)$, where $\Phi$ is \emph{linear} in the $w$ variable. This setting covers two main examples of interest: transformers with a single multi-head attention layer and two-layer fully connected neural networks. In the first case, the $w$ and $\theta$ variables correspond, respectively, to the value matrix and to the query and key matrices. In the second case, they correspond, respectively, to the parameters of the output layer and to the parameters of the hidden layer.

We analyze the training of such models with continuous-time gradient descent (or gradient flow). To be more precise, we analyze the gradient flow of the function $F_m\colon \Omega^m\to\RR$ defined by 
\begin{equation}\label{eq_def_fm}
    F_m((w_i,\theta_i)_{1\leq i\leq m})=\textstyle R((1/m)\sum_{i=1}^m \Phi(w_i,\theta_i)),
\end{equation} where $\Omega=\RR^{d_w}\times \RR^{d_\theta}$ and $R$ is a given loss function. The objective $F_m$ is highly non-convex, and the iterates of gradient descent can become trapped by spurious local minima. Yet, in the large-$m$ (or \emph{over-parameterized}) regime, gradient descent often converges to global minimizers of $F_m$. The main objective of this work is to understand this phenomenon. 

Following \cite{chizat2018global}, the key object of our analysis is the dynamics obtained by encoding the parameters of the model as a probability distribution over the parameter space $\Omega$. This evolution turns out to be a \emph{Wasserstein gradient flow} (see \cite{santambrogioEuclideanMetricWasserstein2017} for an introduction to this topic) of a function $F$ defined on the set $\mathcal{P}(\Omega)$ of probability distributions on $\Omega$. By studying the long-time behavior of this evolution, we aim to identify conditions ensuring that, whenever the mean-field gradient flow converges in $W_2$, its limit is a global minimizer.

\subsection{Related work}\label{sec_related_work}
\paragraph{Finite-network landscapes and training dynamics.}
Away from infinite-width limits, rigorous analyses of gradient-based training have often focused on structured models. Deep linear networks already exhibit nonlinear, depth-dependent dynamics: exact gradient-flow solutions in structured settings reveal plateaus, stagewise learning, and a strong dependence on initialization \cite{saxe2014exact}. Their landscapes can nevertheless be benign---under suitable assumptions on the loss, data, and layer widths, every local minimum is global \cite{kawaguchi2016deep,laurent2018deep}---but a benign landscape does not by itself yield a uniform convergence guarantee. Indeed, gradient descent can require time exponential in depth even for a one-dimensional deep linear network with standard random initialization \cite{shamir2019exponential}, whereas, for population squared loss with isotropic inputs, polynomial-time results for symmetric positive-definite target maps use identity initialization \cite{bartlett2018gradient}. Direct analyses for finite nonlinear networks likewise rely on restrictive settings: one example treats a non-overlapping one-hidden-layer convolutional network under Gaussian inputs and weight normalization \cite{du2018gradient}, while a deep-network landscape theorem shows that almost all local minima are global under, among other conditions, squared loss, pairwise-distinct inputs, a strictly monotone analytic activation, one sufficiently wide hidden layer, and a pyramidal tail \cite{nguyen2017loss}. Conversely, spurious local minima already occur in realizable two-layer ReLU population losses with Gaussian inputs \cite{safran2018spurious}. Thus, a general global training theory for finite nonlinear networks from arbitrary initialization, without an NTK/lazy rescaling, proximity to a favorable basin, or special structural assumptions, remains elusive. This difficulty motivates our focus on shallow but very wide architectures: lifting neurons or attention heads to a probability measure makes the predictor linear in that measure and turns the training dynamics into a transport PDE, or equivalently a Wasserstein gradient flow, making global geometric arguments accessible.

\paragraph{Scaling limits and the NTK regime.}
Wide neural networks admit several inequivalent scaling limits. At random initialization, fully connected networks converge to Gaussian processes as their hidden widths tend to infinity \cite{lee2018gaussian,matthews2018gaussian}. Under the standard Neural Tangent Kernel (NTK) scaling, the tangent kernel introduced in \cite{jacot2018neural} becomes deterministic and, in the lazy-training regime, remains close to its initial value; the network is then governed by its first-order linearization and training reduces to kernel gradient descent \cite{lee2019wide,chizat2019lazy,liu2020linearity}. This viewpoint has led to non-asymptotic optimization guarantees for sufficiently wide ReLU networks, first for two-layer models \cite{li2017convergence} and then for deep fully connected, convolutional, and residual architectures \cite{allenzhu2019convergence,du2019gdglobal,zou2020gradient,chen2020overparam,nguyen2021global}. It also yields exact or efficiently computable infinite-width kernels in several architectures \cite{arora2019exact}. These results explain one mechanism by which over-parameterization regularizes the training dynamics, but the limiting features are essentially frozen. Different parameterizations, including maximal-update scalings, instead retain order-one feature evolution at infinite width \cite{yang2021tensor}; the distinction between these regimes is therefore a property of the scaling, not merely of the architecture.

\paragraph{Feature-learning and mean-field regimes.}
In the mean-field scaling of a two-layer network, each neuron contributes at order $1/m$ and moves by an order-one amount during training. The empirical distribution of neurons then converges, on finite time horizons, to a nonlinear transport PDE or equivalently a Wasserstein gradient flow \cite{meiMeanFieldView2018,rotskoffParametersInteractingParticles2018,sirignano2020mean,mei2019mean}. Because the parameter distribution itself evolves, this limit captures representation learning rather than a fixed kernel. It connects optimization to convex neural-network and Barron-space descriptions \cite{bach2017breaking,weinan2022representation}, and it can exhibit advantages over fixed kernels on structured data \cite{ghorbani2020neural}. Long-time analyses use the geometry of the first variation and sufficiently rich initial support to rule out convergence to non-optimal measures \cite{chizat2018global,wojtowytschConvergenceGradientDescent2020}; rigorous multilayer extensions require more elaborate families of interacting random fields \cite{nguyen2023rigorous}. Our aim is to bring this transport-based, noise-free viewpoint to head scaling in attention, while allowing vector output weights and sub-Gaussian rather than compactly supported initial distributions.

\paragraph{Large-head limits and training dynamics of attention.}
The number of attention heads can play the role of width, but the resulting limits depend strongly on the parameterization. At random initialization, Hron et al. derive Neural Network Gaussian Process (NNGP) and NTK kernels for deep attention networks; under the standard softmax scaling, their Gaussian limit jointly sends the hidden-layer widths, the number of heads, and the query/key dimension to infinity, rather than taking a head-only limit at fixed internal dimensions \cite{hron2020infinite}. Fu et al. study a single layer with many heads in a random-feature regime: the query and key matrices remain frozen, only the value matrices are trained, and the analysis gives expressivity and finite-sample guarantees for normalized ReLU attention, rather than the softmax attention considered here \cite{fu2023single}. In a feature-learning scaling, Bordelon, Chaudhry, and Pehlevan derive a dynamical mean-field theory for several infinite limits, including infinitely many heads at fixed key/query dimension, and characterize finite-time training statistics \cite{bordelon2024infinite}. Barboni et al. combine a large-head mean-field limit with an infinite-depth limit and prove well-posedness and small-loss convergence through a conditional Wasserstein geometry \cite{barboni2026transformers}. Closest to our single-layer setting, Huan and Yuan derive a Wasserstein gradient flow for the empirical law of heads in a simplified causal self-attention model with cross-entropy loss, prove a finite-time approximation of stochastic gradient descent, and study convergence to stationary sets under additional compactness or gradient-domination assumptions \cite{huan2026meanfield}. Kim and Suzuki also use a mean-field, two-timescale limit to analyze feature learning by an MLP followed by linear attention in an in-context learning model \cite{kim2024transformers}.

The present work isolates a single softmax attention layer with fixed context length and fixed token and key/query dimensions, sends only the number of independently parameterized heads to infinity, and trains both the attention and value parameters. Its emphasis is the long-time geometry of the resulting Wasserstein gradient flow: under regular-value or nondegeneracy assumptions, whenever the flow converges in $W_2$, its limit is globally minimizing, without a small-initial-loss condition. This differs from finite-head, task-specific analyses of gradient descent for text classification, word co-occurrence, in-context multi-task regression, or next-token prediction \cite{lu2021attention,yang2024training,pmlr-v247-siyu24a,huang2024nonasymptotic}. Finally, linear attention, random-feature attention, Performers, and FlashAttention address the complementary computational problem of reformulating, approximating, or evaluating softmax attention efficiently rather than the large-head training limit \cite{katharopoulos2020linear,peng2021random,choromanski2021performer,dao2022flashattention}.

\subsection{Contributions}
Our main contribution is presented in \Cref{sec_glob_conv}. Building upon \cite{chizat2018global}, we formulate in \Cref{thm_glob_conv} an abstract convergence-to-global-optimality criterion: if every non-minimizer admits an escape region and the Wasserstein gradient flow of $F$ converges in $W_2$, then its limit must be a global minimizer. The key object is an \emph{escape region}, namely an open region of the parameter space where the first variation remains uniformly negative under small perturbations of the limiting vector field, forcing a positive amount of mass to escape to infinity.

We then verify the escape-region condition in three settings. \Cref{prop_esc_reg_bounded_scalar} treats bounded nonlinearities with scalar output weights. This case was already studied in \cite{chizat2018global}; one of our contributions is to complete the construction of an escape region in this unbounded nonlinear-parameter setting. \Cref{prop_esc_reg_nondegenerate} treats nonlinearities with at most linear growth under a non-degeneracy assumption and gives a new instability mechanism in which the nonlinear parameters remain near a stable critical branch. \Cref{prop_esc_reg_asymp_homogeneous} treats asymptotically positively $1$-homogeneous nonlinearities, covering activations such as GELU and SiLU/Swish.

The abstract criteria are instantiated in \Cref{sec_examples}. For two-layer fully connected networks, \Cref{prop_glob_conv_sigmoid,prop_glob_conv_gelu,prop_glob_conv_mlp_nondegenerate} give conditional global convergence results for bounded activations, asymptotically positively $1$-homogeneous activations, and activations satisfying the non-degeneracy condition, respectively. For multi-head attention layers, \Cref{prop_glob_conv_attention,prop_glob_conv_attention_bis} give analogous conditional global convergence results. We also prove in \Cref{prop_conv_attention} a uniform convergence result from softmax attention to hardmax attention. A list of common activation functions to which the fully connected results apply is given in \Cref{table_activations}.

As the Wasserstein gradient flow of $F$ is the dynamical object underlying \Cref{thm_glob_conv}, we also establish its well-posedness and stability in \Cref{thm_wgf}. Compared with previous results, we prove existence and uniqueness for all times, as well as a stability estimate with respect to perturbations of the initialization, under weaker assumptions. Our main assumption is that the initial measure is sub-Gaussian, rather than compactly supported, which includes the Gaussian case.

\paragraph{Regularization.} An important difference with respect to \cite{chizat2018global} is that they allow an optional regularization term in the objective. Our choice to cover the unregularized case is motivated by two reasons. First, the potential non-smoothness of the regularizer leads to several additional technical difficulties. Second, in the case of bounded nonlinearities, the analysis conducted in \cite{chizat2018global} would only allow us to treat the case of an $\ell^1$ regularization of the weights, which is less common. Practitioners more commonly use weight decay, which corresponds to an $\ell^2$ squared regularization.

\begin{table}[H]
    \centering
    \begin{tabular}{|c|c|}
        \hline
        \Cref{prop_glob_conv_sigmoid} & sigmoid, tanh, softsign \\
        \hline
         \Cref{prop_glob_conv_gelu} & GELU, SiLU, Mish, ELU, softplus, logsigmoid\\
         \hline
         \Cref{prop_glob_conv_mlp_nondegenerate} & sigmoid, tanh, GELU, SiLU, Mish, softplus, logsigmoid\\
         \hline
    \end{tabular}
    \caption{List of common activation functions to which the conditional global convergence results of \Cref{prop_glob_conv_sigmoid}, \Cref{prop_glob_conv_gelu}, and \Cref{prop_glob_conv_mlp_nondegenerate} apply (see the \href{https://docs.pytorch.org/docs/2.13/nn.html\#non-linear-activations-weighted-sum-nonlinearity}{PyTorch documentation} for definitions of these functions).}
    \label{table_activations}
\end{table}

\section{Mean-field training dynamics}\label{sec_wellp_wgf}

In this section, we introduce the evolution one obtains by encoding the parameters of the model as a probability measure on the parameter space. We prove the well-posedness and stability of these mean-field training dynamics for a broad class of initializations, namely, sub-Gaussian distributions. 

\subsection{Setting}
Throughout, we rely on the following assumption. As shown in \Cref{sec_examples} below, it covers multi-head attention layers, as well as fully connected networks with a single hidden layer, provided the activation function is differentiable with a bounded and Lipschitz differential.
\begin{ass}
    $\mathcal{F}$ is a separable Hilbert space and $\Omega=\RR^d$ for some $d\geq 1$. The loss $R:\mathcal{F}\to\RR_+$ is differentiable, with differential Lipschitz on bounded sets and bounded on sublevel sets of $R$. The mapping $\Phi:\Omega\to\mathcal{F}$ is differentiable, and its differential has the following properties.
    \begin{enumerate}[label=(\roman*)]
        \item Linear growth: $\|d\Phi(u)\|_{\mathcal{L}(\RR^d;\mathcal{F})}\lesssim 1+|u|$.
        \item Local Lipschitz continuity: for every $r>0$ and $u_1,u_2\in B(0,r)$, we have
        \[
            \|d\Phi(u_1)-d\Phi(u_2)\|_{\mathcal{L}(\RR^d;\mathcal{F})}\lesssim (1+r)|u_1-u_2|.
        \]
    \end{enumerate}
    \label{ass_wellp_wgf}
\end{ass}

\paragraph{Gradient flows.} Gradient flows, which correspond to the vanishing step size limit of gradient descent, are curves of steepest descent. In the case of the objective function $F_m$, a gradient flow is a $C^1$ path $t\in \RR_+\mapsto (w_i(t),\theta_i(t))_{1\leq i\leq m}$ such that $(\dot w_i(t), \dot \theta_i(t))_{1\leq i\leq m}=-m\nabla F_m((w_i(t),\theta_i(t))_{1\leq i\leq m})$ for every $t\geq 0$.\footnote{Following \cite{chizat2018global}, we scale the usual inner product on $(\RR^d)^m$ by $1/m$, which is convenient when considering the mean-field limit $m\to+\infty$. Equivalently, the displayed equation uses the standard Euclidean gradient of $F_m$. If $\mu_m=(1/m)\sum_{i=1}^m\delta_{u_i}$, then $\nabla^{\mathrm{Euc}}_{u_i}F_m=(1/m)\nabla F'(\mu_m)(u_i)$, so the scaled-gradient flow satisfies $\dot u_i=-m\nabla^{\mathrm{Euc}}_{u_i}F_m=-\nabla F'(\mu_m)(u_i)$.} Under \Cref{ass_wellp_wgf}, the objective $F_m$ is of class $C^1$ and lower bounded. As a consequence, standard results (see for instance \cite{santambrogioEuclideanMetricWasserstein2017}) imply that, for every initialization $(w_i(0),\theta_i(0))_{1\leq i\leq m}\in\Omega^m$, there exists a unique gradient flow of $F_m$.

\paragraph{Lifting to the space of probability measures.} Following \cite{chizat2018global}, we define the function 
\begin{equation*}
    \begin{aligned}
    F\colon\mathcal{P}_2(\Omega)&\to \RR\\
    \mu&\textstyle\mapsto R(\int_{\Omega}\Phi d\mu),
    \end{aligned}
\end{equation*}
where $\mathcal{P}_2(\Omega)$ denotes the set of probability distributions on $\Omega$ with finite second-order moment. Under \Cref{ass_wellp_wgf}, this function is well defined because $\Phi$ has at most quadratic growth and hence $\int_{\Omega}\|\Phi\|_{\mathcal{F}}d\mu$ is finite. We notice that the mapping $F_m$ defined in \eqref{eq_def_fm} satisfies $F_m((w_i,\theta_i)_{1\leq i\leq m})=F((1/m)\sum_{i=1}^m \delta_{(w_i,\theta_i)})$. In other words, the restriction of $F$ to empirical measures with support of size at most $m$ coincides with $F_m$. The main advantage of this lifting is that the predictor is now linear in $\mu$, allowing one to consider models associated with more general parameter distributions, such as distributions with infinite support.

\paragraph{Wasserstein gradient flow.} In \cite[Proposition B.1]{chizat2018global}, it is shown that, if $t\mapsto (w_i(t),\theta_i(t))_{1\leq i\leq m}$ is a gradient flow of $F_m$, then $t\mapsto \mu_t:=(1/m) \sum_{i=1}^m \delta_{(w_i(t),\theta_i(t)})$ is a Wasserstein gradient flow of $F$, that is, a curve of steepest descent for the geometry induced by the $2$-Wasserstein distance on $\mathcal{P}_2(\Omega)$. More precisely,
\begin{equation}\label{eq_wgf}
    \partial_t\mu_t=-\mathrm{div}(v_t\mu_t)~~\mathrm{with}~~v_t(u)=-\nabla F'(\mu_t)(u)
\end{equation}
for every $t\geq 0$, where $F'(\nu):\Omega\to \RR$ denotes the first variation of $F$ at $\nu\in\mathcal{P}_2(\Omega)$. We refer the reader to \Cref{appendix_wgf} for the precise meaning of \eqref{eq_wgf} and to \cite{santambrogioEuclideanMetricWasserstein2017} for more details on Wasserstein gradient flows. When the evolution \eqref{eq_wgf} is initialized with a more complicated distribution, proving its long-time existence and stability is more involved. This is the purpose of the following subsection.

\subsection{Well-posedness}
Under \Cref{ass_wellp_wgf}, we show that the mean-field training dynamics is well-posed for every sub-Gaussian initialization.
\begin{thm} Under \Cref{ass_wellp_wgf}, if ${\mu_0\in\mathcal{P}_2(\Omega)}$ is sub-Gaussian, then there exists a unique Wasserstein gradient flow $(\mu_t)_{t\geq 0}$ of $F$ starting from $\mu_0$. Moreover, for every $T,K>0$, if $\mu^1_0,\mu^2_0\in\mathcal{P}_2(\Omega)$ are sub-Gaussian with $\max\pa{\norm{\mu_0^1}_{\psi_2},\norm{\mu_0^2}_{\psi_2}}\leq K$, then the Wasserstein gradient flows $(\mu^1_t)_{t\geq 0}$ and $(\mu^2_t)_{t\geq 0}$ of $F$ initialized with $\mu^1_0$ and $\mu^2_0$ satisfy $\sup_{0\leq t\leq T}W_2(\mu^1_t,\mu^2_t)\to 0$ when $W_2(\mu^1_0,\mu^2_0)\to 0$.
\label{thm_wgf}
\end{thm}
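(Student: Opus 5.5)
We construct the Wasserstein gradient flow as the pushforward of $\mu_0$ by the flow of a time-dependent vector field, and obtain it as the fixed point of a Lagrangian map. Write $h(\mu)=\int_\Omega\Phi\,d\mu\in\mathcal{F}$. Then $F'(\mu)(u)=\langle \nabla R(h(\mu)),\Phi(u)\rangle_{\mathcal{F}}$, so
\[
v_t(u)=-d\Phi(u)^*\nabla R(h(\mu_t)).
\]
By \Cref{ass_wellp_wgf}, if $\|\nabla R(h(\mu_t))\|\leq C$, this field satisfies $|v_t(u)|\lesssim C(1+|u|)$. It is also Lipschitz on $B(0,r)$ with constant $\lesssim C(1+r)$.

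\textbf{Step 1: a priori bound.} Along any Wasserstein gradient flow, $t\mapsto F(\mu_t)$ is nonincreasing, so $R(h(\mu_t))\leq R(h(\mu_0))$. Hence $\nabla R(h(\mu_t))$ stays bounded by a constant $C_0$ depending only on the sublevel set. This is where we use that $dR$ is bounded on sublevel sets. Consequently every characteristic $X_t(u)$ satisfies $\frac{d}{dt}|X_t|\lesssim C_0(1+|X_t|)$. Gronwall then gives $1+|X_t(u)|\leq e^{cC_0t}(1+|u|)$. So the flow maps move points at most linearly, and $\mu_t$ stays sub-Gaussian on $[0,T]$ with $\|\mu_t\|_{\psi_2}\leq e^{cC_0T}(1+K)$. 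In particular all exponential moments $\int e^{\lambda|u|^2}d\mu_t$ with $\lambda$ small are uniformly controlled.

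\textbf{Step 2: existence and uniqueness by a fixed point.} We work on the space of continuous curves $g:[0,T]\to\mathcal{F}$ with $R$-level bounded appropriately. Given $g$, the field $-d\Phi(u)^*\nabla R(g(t))$ yields a well-defined flow $X^g$ by Cauchy--Lipschitz, since the field is locally Lipschitz with linear growth. We then set $\Gamma(g)(t)=\int\Phi(X^g_t)\,d\mu_0$.

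To compare two inputs, let $\delta_t(u)=|X^{g_1}_t(u)-X^{g_2}_t(u)|$. This satisfies
\[
\dot\delta\lesssim (1+r_t(u))\,\delta+(1+|X_t|)\,\|\nabla R(g_1)-\nabla R(g_2)\|,
\]
where $r_t(u)\lesssim e^{cT}(1+|u|)$. Gronwall gives
\[
\delta_t(u)\lesssim e^{c T(1+|u|)}(1+|u|)\int_0^t\|g_1-g_2\|.
\]
The factor $e^{cT|u|}$ grows exponentially in $|u|$, which is exactly why the sub-Gaussian assumption is needed rather than just finite moments. Next, $\|\Phi(x)-\Phi(y)\|\lesssim(1+|x|+|y|)|x-y|$, and $e^{cT|u|}$ is integrable against a sub-Gaussian $\mu_0$. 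Together these give $\|\Gamma(g_1)-\Gamma(g_2)\|_{L^\infty(0,t)}\leq C_T\int_0^t\|g_1-g_2\|$.

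With the Lipschitz property of $\nabla R$ on bounded sets, this yields a unique fixed point on short intervals, by contraction with a weighted norm. The a priori bounds of Step 1 let us iterate to all times. One must check that Wasserstein gradient flows in the sense of \Cref{appendix_wgf} coincide with such pushforwards. This follows from the superposition principle or from uniqueness for the continuity equation with locally Lipschitz fields of linear growth.

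\textbf{Step 3: stability.} For two initial measures, take an optimal coupling $\gamma$ of $\mu_0^1,\mu_0^2$ and track $\delta_t(u,u')=|X^1_t(u)-X^2_t(u')|$. The same Gronwall estimate gives
\[
\delta_t\lesssim e^{cT(1+|u|+|u'|)}\Big(|u-u'|+(1+|u|)\int_0^t\|h(\mu^1_s)-h(\mu^2_s)\|ds\Big).
\]
It remains to bound $\|h(\mu^1_s)-h(\mu^2_s)\|$ and $\int\delta_t^2\,d\gamma$, using Cauchy--Schwarz and Hölder.

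\textbf{Main obstacle.} The hard part is controlling terms like $\int e^{cT(|u|+|u'|)}|u-u'|\,d\gamma$ by a function of $W_2$ that tends to zero. For this I would split the integral into the region $|u|,|u'|\leq M$ and its complement. On the bounded region the integrand is at most $e^{2cTM}W_2$. On the complement it is small uniformly, by the $\psi_2$ bound $K$. This gives a modulus of continuity rather than a Lipschitz estimate, which is consistent with the statement: only convergence $\sup_{t\leq T}W_2(\mu_t^1,\mu_t^2)\to0$ is claimed.
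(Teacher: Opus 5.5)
Your proposal is correct in outline, but it takes a genuinely different route from the paper.

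\textbf{The paper's route.} The paper runs the fixed point on measure curves. It first truncates the objective, $\tilde F=\xi\circ F$, so that $\tilde R'$ is globally bounded before any energy bound is available. It then shows that $\mu\mapsto(X_t[\mu]_{\#}\bar\mu_0)_{t}$ maps the set of curves with $\|\mu_t\|_{\psi_2}\le f(t,\|\bar\mu_0\|_{\psi_2})$ into itself. It proves that this set is complete for $W_{2,T}$, obtains a contraction constant $g(T,K_0)\to0$, and only afterwards uses the energy identity to pass from $\tilde F$ back to $F$. Stability comes from splitting $W_2(\mu^1_t,\mu^2_t)$ into a velocity-perturbation term (Gronwall in $W_2$) and a flow-map Lipschitz term. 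The latter is closed by the H\"older interpolation $\int e^{\alpha(|u_1|+|u_2|)}|u_1-u_2|^2\,d\Pi_0\lesssim W_2(\mu^1_0,\mu^2_0)^{2-\eta}$, which gives an explicit H\"older-type modulus.

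\textbf{Your route.} You use that the velocity depends on $\mu_t$ only through $\nabla R(h(\mu_t))$. The problem then becomes a Volterra fixed point for the predictor curve in the Banach space $C([0,T];\mathcal{F})$. This buys several simplifications: completeness is free, and no sub-Gaussian constraint has to be built into the fixed-point space. Sub-Gaussianity enters only through $\int(1+|u|)^2e^{cT|u|}\,d\mu_0<\infty$. The contraction also needs only an $L^1(\mu_0)$ bound on $\|\Phi(X^1_t)-\Phi(X^2_t)\|$ rather than an $L^2$ coupling estimate. Your stability argument (optimal coupling, Gronwall along paired trajectories, cutoff at radius $M$) gives a non-explicit modulus, which is all the statement asks.

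\textbf{First point to repair: the fixed-point space.} Curves with ``$R$-level bounded'' do not form a valid fixed-point space, for two reasons.
\begin{itemize}
\item $\Gamma$ does not preserve it. With $\mu^g_t=(X^g_t)_{\#}\mu_0$, one has $\frac{d}{dt}R(\Gamma(g)(t))=-\int\langle d\Phi(x)^*\nabla R(\Gamma(g)(t)),d\Phi(x)^*\nabla R(g(t))\rangle\,d\mu^g_t(x)$, which has no sign unless $g=\Gamma(g)$. Energy dissipation holds only at the fixed point.
\item Sublevel sets of $R$ need not be bounded (e.g.\ for the cross-entropy loss). There $\nabla R$ is bounded, but the assumptions do not make it Lipschitz.
\end{itemize}
Use instead a ball $\{\sup_{t\le\tau}\|g(t)-h(\mu_{t_0})\|\le1\}$. $\Gamma$ preserves it for small $\tau$, since $|X^g_t(u)-u|\le(e^{cC_1t}-1)(1+|u|)$. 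Then prove the energy identity for the resulting fixed point; this is the chain-rule computation the paper carries out, and your Step~1 simply assumes it. That identity yields your Step~1 bounds, hence uniform control of $\|h(\mu_{t_0})\|$ and of the exponential moments of the restart measures, so the continuation argument closes. This is exactly the circularity the paper sidesteps with the truncation $\xi$.

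\textbf{Second point to repair: the stability loop.} In Step~3 the predictor gap appears on both sides, since $\|h(\mu^1_t)-h(\mu^2_t)\|\lesssim\int(1+|u|+|u'|)\,\delta_t\,d\gamma$. You therefore need a second, closed Gronwall loop on this quantity. You also need a single sublevel level depending only on $K$, obtained from $\|h(\mu^i_0)\|\lesssim1+K^2$ and boundedness of $R$ on bounded sets, so that both flows share the same constants.
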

The proof of \Cref{thm_wgf} is given in \Cref{appendix_wgf}. Our proof strategy and our assumptions on the initialization are discussed below.
\paragraph{Assumption on the initialization.} We stress that our condition on the initialization is weaker than that of \cite[Proposition 2.5]{chizat2018global}. Depending on the nonlinearity, the authors require the initial measure or its $w$-marginal to have compact support. In particular, our condition covers the important case of Gaussian initializations. In \Cref{sec_glob_conv} below, we focus on the case where $\Omega=\RR^{d_w}\times \RR^{d_\theta}$ and $\Phi:(w,\theta)\mapsto \phi(\theta)w$ where $\phi(\theta)$ is a linear mapping from $\RR^{d_w}$ to $\mathcal{F}$. The $w$ variable corresponds to the parameters of the output layer (in the case of fully connected networks) or to the value matrix (in the case of attention layers). The $\theta$ variable corresponds to the parameters of the hidden layer (for fully connected networks) or to the query and key matrices (for attention layers). For attention layers or fully connected networks with bounded activations, we could deal separately with these two sets of variables and exploit the boundedness of $\phi$ to remove the sub-Gaussian assumption on the $\theta$-marginal. However, this leads to more involved computations. For the sake of clarity, we choose to deal with both sets of variables jointly, at the price of requiring that the full distribution is sub-Gaussian.

\paragraph{Proof strategy.} In \cite{chizat2018global}, the analogue of \Cref{thm_wgf}, Proposition 2.5, is proved by relying on the theory of gradient flows in metric spaces developed in \cite{ambrosioGradientFlowsMetric2009}. In contrast, our proof, which allows us to remove the compact support assumption, uses only elementary tools and is based on a fixed point argument. We stress that this is made possible because, unlike \cite{chizat2018global}, we do not consider a possibly non-smooth regularization term in the objective. Our proof strategy is borrowed from the literature on interacting particle systems (see also \cite{castinUnifiedPerspectiveDynamics2025} for an application to the analysis of infinitely deep transformers) and uses Dobrushin-type estimates (see, for instance, \cite{dobrushinVlasovEquations1979,canizoWellposednessTheoryMeasures2011}). However, the control we have on the velocity field is weaker than in these works. We circumvent this issue by exploiting the sub-Gaussian assumption on the initialization and by controlling the growth of the tails along the evolution.

\section{Global convergence}\label{sec_glob_conv}
In this section, we focus on the setting $\Omega=\RR^{d_w}\times \RR^{d_\theta}$ and $\Phi:(w,\theta)\mapsto \phi(\theta)w$ where $\phi(\theta)$ is a linear mapping from $\RR^{d_w}$ to $\mathcal{F}$. We investigate Wasserstein gradient-flow trajectories that converge in $W_2$ and seek conditions ensuring that their limits are global minimizers. Our main contribution is to leverage the ideas introduced in \cite{chizat2018global} to formulate a general blueprint for ruling out convergence to non-optimal measures. The key ingredient is the construction of an \emph{escape region}: an open region of the parameter space where the first variation of $F$ remains uniformly negative under small perturbations of the limiting vector field. This forces a positive amount of mass to escape to infinity, which yields a contradiction to convergence. We instantiate the blueprint in three different cases: bounded nonlinearities with scalar output weights, nonlinearities with at most linear growth under a non-degeneracy assumption, and asymptotically positively $1$-homogeneous nonlinearities. In each case, we construct an escape region for an arbitrary non-global minimizer of $F$.

\subsection{Instability of non-global minimizers}
\label{subsec_glob_conv}

\emph{General mechanism.} Because of the special form assumed for $\Phi$ (namely $\Phi(w,\theta)=\phi(\theta)w$ with $\phi(\theta):\mathbb{R}^{d_w}\to\mathcal{F}$ linear), the first variation of $F$ at some measure $\mu\in\mathcal{P}_2(\Omega)$ is \emph{linear} in the $w$ variable. Indeed, for every $(w,\theta)\in\RR^{d_w}\times\RR^{d_{\theta}}$,
\begin{equation*}
    F'(\mu)(w,\theta)=\textstyle\langle R'(\int_{\Omega}\Phi d\mu),\phi(\theta)w\rangle_{\mathcal{F}}=\langle g_{\mu}(\theta),w\rangle ~~\mathrm{where}~~g_{\mu}(\theta):=\phi(\theta)^* R'(\int_{\Omega}\Phi d\mu).
\end{equation*}
As a consequence, the velocity field $(v_t)_{t\geq 0}$ associated with the Wasserstein gradient flow of $F$ (defined in \eqref{eq_wgf}) is the negative gradient of a partially linear function. This observation is at the core of the instability mechanism for non-global minimizers of $F$. Indeed, if $(w_t,\theta_t)_{t\geq 0}$ is a solution of the characteristic system of ordinary differential equations (ODE)
\begin{equation}\label{characteristic_ode}
    \begin{pmatrix}\dot w_t\\\dot \theta_t\end{pmatrix}=-\nabla F'(\mu_t)(w_t,\theta_t)=\begin{pmatrix}-g_{\mu_t}(\theta_t)\\-J_{g_{\mu_t}}(\theta_t)^T w_t\end{pmatrix},
\end{equation}
then we have $\frac{d}{dt}(|w_t|^2/2)=\langle \dot w_t,w_t\rangle=-\langle g_{\mu_t}(\theta_t),w_t\rangle=-F'(\mu_t)(w_t,\theta_t)$. This shows that the norm of linear parameters grows as long as $F'(\mu_t)(w_t,\theta_t)$ is negative. To ensure that a positive amount of mass escapes to infinity and to obtain a contradiction to the convergence of $(\mu_t)_{t\geq 0}$ to $\mu$, it is therefore sufficient to find an open region $A$ of the parameter space such that $F'(\mu_t)(w_t,\theta_t)$ is negative for solutions of \eqref{characteristic_ode} initialized in $A$. This motivates \Cref{def_escape_reg} below. In practice, as $\mu$ is not a minimizer of $F$, the mapping $F'(\mu)$ is not identically zero and we usually take $A$ as a negative sublevel set of $F'(\mu)$. The main challenge is then to prove that characteristics initialized in $A$ cannot leave this set.
\begin{defn}[Escape region]\label{def_escape_reg}
    Let $\mu\in \mathcal{P}_2(\Omega)$. We say that a Borel set $A\subset \RR^{d_w}\times\RR^{d_\theta}$ containing a nonempty open set is an \emph{escape region} for $\mu$ if there exists $\eta,\epsilon>0$ such that, for every absolutely continuous curve $(\mu_t)_{t\geq 0}$ in $\mathcal{P}_2(\Omega)$ with $\sup_{t\geq 0}W_2(\mu_t,\mu)\leq\epsilon$, every solution of \eqref{characteristic_ode} initialized in $A$ satisfies $\sup_{t\geq 0} F'(\mu_t)(w_t,\theta_t)\leq-\eta$.
\end{defn}

\begin{rem}
    Depending on our assumptions on $\phi$, having $W_2(\mu_t,\mu)\leq \epsilon$ will ensure $\|F'(\mu_t)-F'(\mu)\|_{\mathcal{X}}\lesssim \epsilon$ for some norm $\|\cdot\|_{\mathcal{X}}$. Thus, one can discard $(\mu_t)_{t\geq 0}$ and $\mu$ and only reason at the level of an abstract function $f$ and time-dependent $\|\cdot\|_{\calX}$-perturbations $(f_t)_{t\geq 0}$. Consequently, the construction of an escape region simply amounts to the study of perturbations of a Euclidean gradient flow.
\end{rem}

\begin{rem}
If $\mu$ is a global minimizer of $F$ then $g_{\mu}\equiv0$ and $\mu$ does not admit an escape region. Indeed, taking
$\mu_t=\mu$ for every $t$, every solution of \eqref{characteristic_ode} is constant.
\end{rem}

We can now leverage \Cref{def_escape_reg} to prove our main global convergence result, which relies on the following assumption. As shown in \Cref{lem_ass_ass}, it implies \Cref{ass_wellp_wgf} so that \Cref{thm_wgf} holds.
\begin{ass}
\label{ass_glob_conv}
Let $\mathcal{F}$ be a separable Hilbert space and let
$\Omega=\mathbb R^{d_w}\times\mathbb R^{d_\theta}$.
Assume that the loss
$R:\mathcal{F}\to\mathbb R_+$
is convex and differentiable, with differential Lipschitz on bounded sets and
bounded on sublevel sets of $R$.

Assume also that
$\Phi:\Omega\to\mathcal{F}$
has the form $\Phi(w,\theta)=\phi(\theta)w$,  where $\phi(\theta)\in\mathcal{L}(\mathbb R^{d_w};\mathcal{F})$ is differentiable with a bounded Lipschitz differential.
\end{ass}

\begin{thm}
\label{thm_glob_conv}
Assume that \Cref{ass_glob_conv} holds and that every $\mu\in\mathcal{P}_2(\Omega)$ that does not minimize $F$ admits an escape region. Let $(\mu_t)_{t\geq0}$ be a Wasserstein gradient
flow of $F$ and assume that $\mu_0$ has full support. If $W_2(\mu_t,\mu)\to 0$, then $\mu$ is a global minimizer of $F$.
\end{thm}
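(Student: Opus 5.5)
We argue by contradiction. Assume $W_2(\mu_t,\mu)\to 0$ but $\mu$ does not minimize $F$, and let $A$ be an escape region for $\mu$, with constants $\eta,\epsilon>0$ as in \Cref{def_escape_reg}. Choose $t_0$ such that $W_2(\mu_t,\mu)\leq\epsilon$ for all $t\geq t_0$. The shifted curve $(\mu_{t_0+s})_{s\geq0}$ is absolutely continuous in $\mathcal{P}_2(\Omega)$; this follows from \Cref{thm_wgf}, which applies through \Cref{lem_ass_ass}. Hence every characteristic of \eqref{characteristic_ode} started at time $t_0$ inside $A$ satisfies $F'(\mu_t)(w_t,\theta_t)\leq-\eta$ for all $t\geq t_0$. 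Since $\frac{d}{dt}(|w_t|^2/2)=-F'(\mu_t)(w_t,\theta_t)\geq\eta$, we obtain $|w_t|^2\geq 2\eta(t-t_0)$ along each such characteristic, so these characteristics escape to infinity linearly in $t$ for $|w|^2$.

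The second step is to show that $\mu_{t_0}(A)>0$. We use the representation $\mu_t=(X_t)_\#\mu_0$, where $X_t$ is the flow of the velocity field $v_t=-\nabla F'(\mu_t)$; this representation is available by the construction in \Cref{appendix_wgf}. Under \Cref{ass_glob_conv}, $v_t$ is locally Lipschitz in $u$ (with a constant depending on $|w|$, since $\nabla_\theta F'=J_{g}^Tw$) and has linear growth, uniformly for $t\in[0,t_0]$. Indeed, $R'(\int\Phi d\mu_t)$ stays bounded because $R(\int\Phi d\mu_t)\leq R(\int\Phi d\mu_0)$ and $R'$ is bounded on sublevel sets, while $\phi$ and $d\phi$ are bounded. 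Consequently $X_{t_0}$ is a homeomorphism of $\Omega$. Then $X_{t_0}^{-1}(\mathrm{int}A)$ is a nonempty open set, and since $\mu_0$ has full support,
\[
\mu_{t_0}(A)\geq\mu_0\big(X_{t_0}^{-1}(\mathrm{int}A)\big)=:c>0 .
\]

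Finally we derive the contradiction. The characteristics through $A$ at time $t_0$ follow the same flow, so for $t\geq t_0$ a mass at least $c$ of $\mu_t$ lies in $\{|w|^2\geq 2\eta(t-t_0)\}$. Take $R_0$ large enough that $\mu(\{|w|\geq R_0\})<c/2$. Any coupling of $\mu_t$ and $\mu$ must transport a mass at least $c/2$ from $\{|w|^2\geq 2\eta(t-t_0)\}$ to $\{|w|<R_0\}$. This gives
\[
W_2(\mu_t,\mu)^2\geq \tfrac{c}{2}\Big(\sqrt{2\eta(t-t_0)}-R_0\Big)_+^2\to\infty,
\]
which contradicts $W_2(\mu_t,\mu)\to0$. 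Note that $\sup_t W_2(\mu_t,\mu)\leq\epsilon$ already suffices for the contradiction, so we do not even need the full convergence.

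The main obstacle is the second step, namely justifying that $\mu_t$ is the pushforward of $\mu_0$ by a flow of homeomorphisms, so that $\mathrm{supp}\,\mu_{t_0}=\Omega$. The velocity field is only locally Lipschitz with linear growth, and $\mu_0$ has unbounded support. We would first take this from the fixed-point construction underlying \Cref{thm_wgf}. Failing that, we would invoke uniqueness for the continuity equation with locally Lipschitz, linearly growing fields, together with the superposition principle, and deduce invertibility of $X_{t_0}$ by running the ODE backward in time.
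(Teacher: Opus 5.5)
Your proposal is correct and follows essentially the paper's own argument. You pick $t_0$ after which $W_2(\mu_t,\mu)\leq\epsilon$, use the Lagrangian representation with a homeomorphic flow map (as in \Cref{lem_lagrangian_representation}) to get $\mu_{t_0}(A)>0$ from full support, and exploit $\frac{d}{dt}(|w_t|^2/2)\geq\eta$ on that mass. The only difference is cosmetic: you close with an explicit coupling lower bound on $W_2(\mu_t,\mu)$, whereas the paper contradicts the boundedness of $\int|w|^2d\mu_t$ implied by $W_2$-convergence.

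There are two small inaccuracies. First, absolute continuity of the shifted curve comes from the definition of a Wasserstein gradient flow (a curve in $AC^2_{\mathrm{loc}}$), not from \Cref{thm_wgf}, which requires a sub-Gaussian $\mu_0$ that is not assumed here. Second, \Cref{ass_glob_conv} does not make $\phi$ bounded, only $d\phi$ bounded and Lipschitz; however, $\|\phi(\theta)\|\lesssim 1+|\theta|$ still gives the linear growth of $v_t$ that you need.
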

\begin{proof}
    We argue by contradiction and assume that $\mu$ admits an escape region $A$. The convergence of $(\mu_t)_{t\geq 0}$ to $\mu$ in the $W_2$ distance yields the existence of $t_0\geq 0$ such that $W_2(\mu_t,\mu)\leq \epsilon$ for every $t\geq t_0$, as well as the boundedness of $\int_{\RR^{d_w}\times\RR^{d_\theta}}|w|^2 d\mu_t(w,\theta)$. Since $A$ is an escape region, every solution $(w_t,\theta_t)_{t\geq t_0}$ of the characteristic system \eqref{characteristic_ode} with $(w_{t_0},\theta_{t_0})\in A$ satisfies
    \[
        \frac{d}{dt}\bigg(\frac{1}{2}|w_t|^2\bigg)=-F'(\mu_t)(w_t,\theta_t)\geq \eta
    \]
    for every $t\geq t_0$. As a result, we obtain
    \begin{equation}\label{eq_growth_moment}
            \int_{\RR^{d_w}\times\RR^{d_\theta}}|w|^2d\mu_t(w,\theta)=\int_{\RR^{d_w}\times\RR^{d_\theta}}|w_t|^2d\mu_{t_0}(w_{t_0},\theta_{t_0})\geq\int_{A}|w_t|^2d\mu_{t_0}(w_{t_0},\theta_{t_0})\geq 2\eta (t-t_0) \mu_{t_0}(A). 
    \end{equation}
    By \Cref{lem_lagrangian_representation}, $\mu_{t_0}=(X_{0,t_0})_{\#}\mu_0$, where $X_{0,t_0}$ is a homeomorphism. Since $\mu_0$ has full support, the same holds for $\mu_{t_0}$, which implies $\mu_{t_0}(A)>0$. We therefore obtain that the right-hand side of \eqref{eq_growth_moment} is unbounded, which yields a contradiction.
\end{proof}

\begin{rem}[On full-support initializations]
    The full-support assumption in \Cref{thm_glob_conv} is a convenient sufficient condition ensuring that every escape region contains a positive amount of mass at positive times. It is stronger than the assumptions used in \cite{chizat2018global}, and it is not expected to be sharp. In particular, in the bounded scalar case one should only need sufficient support in the nonlinear parameter, while in the asymptotically homogeneous case an omnidirectionality condition at large radius should be enough.
\end{rem}

\paragraph{Regular values.} If a $C^1$ function $f$ is defined on $\RR^d$ or the sphere $\mathbb{S}^{d-1}$ for some $d$, we say that a value $\eta$ in the range of $f$ is regular if the differential of $f$ does not vanish on $f^{-1}(\{\eta\})$, and we say it is critical otherwise. Following \cite{chizat2018global}, we often rely on the assumption that some function has at least one regular value. As discussed in \cite[Section 3.2 and Appendix D.3]{chizat2018global}, this assumption is technical and seems quite weak. Indeed, Sard's theorem ensures that, if $f:\RR^d\to\RR$ is of class $C^d$ ($C^{d-1}$ if $f$ is defined on $\SS^{d-1}$ with $d\geq2$), then the set of critical values of $f$ has Lebesgue measure zero in $\RR$ (and hence the set of regular values is nonempty as soon as $f$ is not constant). For $d=1$, the sphere $\SS^0$ is discrete and its tangent spaces are zero-dimensional, so no value in the range is regular under the convention used here; this case must be treated separately. There exist $C^1$ functions for which every real number is a critical value (see \cite{whitneyFunctionNotConstant1935}), but these counterexamples involve artificial constructions. Non-smooth versions of Sard's theorem have been established for semialgebraic or subanalytic functions (see \cite{bolteNonsmoothMorseSard2006}). For every input, the parameter-to-prediction map implemented by a neural network typically enjoys such regularity. However, the functions we consider below are integrals of such maps against the data distribution, which is assumed to have a density with respect to the Lebesgue measure. It is therefore unclear how to ensure the existence of a regular value, and we leave this property as an assumption.

\subsection{Bounded nonlinearities with scalar output parameters}\label{subsec_glob_conv_scalar}
In this section, we show that, when $d_w=1$ and $\phi$ is bounded, every probability measure that does not minimize $F$ admits an escape region under additional technical assumptions. This region has the form 
\begin{equation}\label{def_esc_reg_bounded_scalar}
    A=\{(w,\theta)\in\RR\times\RR^{d_\theta}\,\rvert\, w\geq \bar{r},~g_{\mu}(\theta)\leq -\eta\}
\end{equation} for some $\bar{r},\eta>0$ with $-\eta$ a regular value of $g_{\mu}$. For positive regular values, the region can be defined by reversing the signs, that is by requiring $w\leq -\bar r$ and $g_\mu(\theta)\geq \eta$. Our corrected proof of \cite[Proposition C.4]{chizat2018global} involves very slightly different assumptions (see \ref{conv_dgmu_bounded_scalar} below), which are satisfied in the relevant examples considered in \Cref{sec_examples}. The proof of the following proposition is given in \Cref{appendix_bounded_scalar}.

\begin{prop}\label{prop_esc_reg_bounded_scalar}
    Assume that \Cref{ass_glob_conv} holds, $\phi$ is bounded and $d_w=1$. Further assume that $\mu\in\mathcal{P}_2(\Omega)$ is not a minimizer of $F$ and the following conditions hold:
    \begin{enumerate}[label=(\roman*)]
        \item the mapping $\theta\mapsto g_{\mu}(r\theta)$ converges uniformly on $\mathbb{S}^{d_\theta-1}$ to a mapping $g_{\mu}^{\infty}$ as $r\to+\infty$,\label{conv_gmu_bounded_scalar}
        \item the mapping $\theta\mapsto r \nabla g_{\mu}(r\theta)$ converges uniformly on $\mathbb{S}^{d_\theta-1}$ as $r\to+\infty$.\label{conv_dgmu_bounded_scalar}
        \item the quantity $\sup\,\{|\theta||\nabla g_{\nu}(\theta)-\nabla g_{\mu}(\theta)|\,\rvert\,\theta\in\RR^{d_\theta},~\nu\in\mathcal{P}_2(\Omega),~W_2(\mu,\nu)\leq \epsilon\}$ converges to $0$ as $\epsilon\to 0$.\label{conv_dgmu_dgnu}
    \end{enumerate}
    If either of the following conditions holds:
    \begin{enumerate}[label=(\roman*),start=4]
    \item the function $g_{\mu}$ has a nonzero regular value associated with a bounded level set,\label{reg_val_bounded_scalar}
    \item the function $g_{\mu}$ has a nonzero regular value associated with an unbounded level set, which is also a regular value of $g_{\mu}^{\infty}$.\label{reg_val_inf_bounded_scalar}
    \end{enumerate}
    then $\mu$ admits an escape region.
\end{prop}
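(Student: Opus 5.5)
We take $-\eta$ to be a negative nonzero regular value; the positive case is symmetric. The escape region is $A$ as in \eqref{def_esc_reg_bounded_scalar}, but with a margin $-\eta'$ where $\eta'$ is slightly smaller than $\eta$. Let $\mu_t$ satisfy $\sup_t W_2(\mu_t,\mu)\le\epsilon$. Since $\phi$ is bounded and $R'$ is Lipschitz on bounded sets, we have $\|g_{\mu_t}-g_\mu\|_\infty\lesssim\epsilon$. Condition \ref{conv_dgmu_dgnu} gives $\sup_\theta|\theta||\nabla g_{\mu_t}(\theta)-\nabla g_\mu(\theta)|\le\delta(\epsilon)\to0$. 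The characteristic system reads $\dot w_t=-g_{\mu_t}(\theta_t)$ and $\dot\theta_t=-w_t\nabla g_{\mu_t}(\theta_t)$. As long as $w_t>0$, we reparametrize time by $ds=w_t\,dt$. Then $\theta$ follows a perturbed gradient descent of $g_\mu$, namely $\theta'=-\nabla g_\mu(\theta)+e_s$ with $|e_s|\le\delta/|\theta|$. The function $g_\mu$ along this curve has derivative $-|\nabla g_\mu|^2+\langle\nabla g_\mu,e_s\rangle$. So it suffices to show that this derivative is nonpositive whenever $g_\mu(\theta)\approx-\eta$ and $g_{\mu_t}(\theta)\le -\eta'$. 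Then $\theta_t$ stays in $\{g_\mu\le-\eta''\}$, and hence $F'(\mu_t)(w_t,\theta_t)=w_tg_{\mu_t}(\theta_t)\le -\bar r\eta'''<0$, because $w_t$ increases from $\bar r$. This gives the constant required in \Cref{def_escape_reg}.

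The core step is a uniform lower bound $|\nabla g_\mu(\theta)|\ge c\,\kappa(\theta)$ on a neighbourhood $\{|g_\mu+\eta|\le\gamma\}$ of the level set, strong enough to dominate the perturbation. In case \ref{reg_val_bounded_scalar}, the level set is compact and regular. By continuity, $|\nabla g_\mu|\ge c>0$ on $\{|g_\mu+\eta|\le\gamma\}\cap B(0,\rho)$ for small $\gamma$. We also need this strip to stay bounded. We plan to pick $\eta$ such that the whole sublevel component is handled, by considering the connected component of $\{g_\mu\le-\eta\}$ enclosed by the compact level set, which is itself compact. Trajectories then cannot leave this compact component without crossing the level set, where $g_\mu$ is strictly decreasing once $\delta$ and $\epsilon$ are small. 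The crossing argument is: at a first time where $g_\mu(\theta)=-\eta'$, the derivative is $\le -c^2+c\delta/|\theta|$. Near the origin we must instead use the unscaled bound $|\nabla g_{\mu_t}-\nabla g_\mu|\le\delta'$, which holds on compacts. The derivative is therefore negative, which is a contradiction.

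In case \ref{reg_val_inf_bounded_scalar}, the level set is unbounded, and the main obstacle is the decay of $\nabla g_\mu$ at infinity. Because $g_\mu$ is asymptotically $0$-homogeneous, $\nabla g_\mu(r\theta)\sim h(\theta)/r$ by \ref{conv_dgmu_bounded_scalar}, with $h$ the limit of $r\nabla g_\mu(r\theta)$. This is exactly why \ref{conv_dgmu_dgnu} is stated with the weight $|\theta|$: the perturbation $e_s$ decays at the same rate. We first identify $h$ with $\nabla_{\mathbb S}g_\mu^\infty$. Uniform convergence of $g_\mu(r\cdot)$ together with uniform convergence of the derivatives identifies the tangential part of the limit as the spherical gradient of $g_\mu^\infty$. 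Regularity of $-\eta$ for $g^\infty_\mu$ on the sphere then gives $|\theta||\nabla g_\mu(\theta)|\ge c>0$ on $\{|g_\mu+\eta|\le\gamma,\ |\theta|\ge\rho\}$. Combining this with the compact part $|\theta|\le\rho$, handled as in case \ref{reg_val_bounded_scalar}, yields $|\theta||\nabla g_\mu(\theta)|\ge c$ on the whole strip. The boundary derivative is then at most $-c^2/|\theta|^2+c\delta/|\theta|^2<0$ for $\delta<c$, which makes the sublevel set forward invariant.

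Finally, $w_t\ge\bar r$ is preserved, since $\dot w_t=-g_{\mu_t}(\theta_t)\ge\eta'-O(\epsilon)>0$, which also justifies the time change. Nonemptiness of the interior of $A$ follows because $-\eta$ lies in the range of $g_\mu$ and is regular, so $\{g_\mu<-\eta\}$ is open and nonempty near the level set. I expect the invariance at infinity in case \ref{reg_val_inf_bounded_scalar} to be the main difficulty, in particular matching the radial decay rates and making sure $\gamma$, $\rho$ and $\epsilon$ can be chosen consistently.
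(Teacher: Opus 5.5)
Your proof is correct in substance. Case (iv) is handled as in the paper. For case (v) you take a genuinely different and shorter route.

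\textbf{Case (iv).} A uniform lower bound on $|\nabla g_\mu|$ along the compact regular level set is compared with the unweighted bound $\|g_{\mu_t}-g_\mu\|_{C^1}\lesssim\epsilon$. This bound holds globally because $d\phi$ is bounded. Together they give $\frac{d}{dt}g_\mu(\theta_t)<0$ on $\partial K$ whenever $w_t>0$, which is exactly the paper's argument.

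\textbf{Case (v).} The paper tracks the angular quantity $g_\mu^\infty(\varphi_t)$. This only approximates $g_\mu(\theta_t)$ up to $\sup_{r\ge\bar r}\|g_\mu(r\cdot)-g_\mu^\infty\|_\infty$, so the paper cannot conclude that $K=\{g_\mu\le-\eta\}$ is invariant. Instead it proves the weaker invariant $g_\mu(\theta_t)\le-\eta/2$, via an angular-descent lemma and a finite-excursion lemma in the annulus $\bar r\le|\theta|\le4\bar r$ (with the constants $\alpha,\tau,c$).

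You track $g_\mu(\theta_t)$ directly and observe that the $1/|\theta|$ decay which breaks the original Chizat--Bach claim is harmless once both sides carry the same weight.
\begin{itemize}
\item On $\{g_\mu=-\eta,\ |\theta|\ge\rho\}$, the identity $\mathrm{proj}_{\varphi^\perp}\lim_r r\nabla g_\mu(r\varphi)=\nabla_{\mathbb{S}}g_\mu^\infty(\varphi)$ and the regularity of $-\eta$ for $g_\mu^\infty$ give $|\theta||\nabla g_\mu(\theta)|\ge c$.
\item Condition (iii) gives $|\theta||\nabla g_{\mu_t}(\theta)-\nabla g_\mu(\theta)|\le\delta(\epsilon)<c$.
\item Hence $\langle\nabla g_\mu,\nabla g_{\mu_t}\rangle\ge|\nabla g_\mu|(c-\delta)/|\theta|>0$ on the far part of $\partial K$.
\end{itemize}
Under exactly the same hypotheses (the paper also uses (iii)), this gives strict forward invariance of $[\bar r,\infty)\times K$ and removes all the annulus bookkeeping. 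It is a real simplification with a stronger conclusion.

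\textbf{Points to clean up.}
\begin{itemize}
\item Place the barrier exactly on the regular level $\{g_\mu=-\eta\}$, not at a shifted level $-\eta'$. Neighbouring levels need not be bounded or regular. In case (iv) nothing is assumed about $g_\mu^\infty$, so the strip $\{|g_\mu+\eta|\le\gamma\}$ may reach infinity. The margin belongs only in the conclusion: $\|g_{\mu_t}-g_\mu\|_\infty\lesssim\epsilon$ gives $\dot w_t\ge\eta/2$ on $K$.
\item You do not need $K$, or a component of it, to be compact. That claim is false in general, for instance when $g_\mu<-\eta$ outside a ball. The first-exit argument only uses $\partial K\subset\{g_\mu=-\eta\}$.
\item The bound $|\theta||\nabla g_\mu(\theta)|\ge c$ cannot hold on the whole level set if it passes near the origin. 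Split instead into two regions. For $|\theta|\le\rho$, compare the compactness bound $\beta_\rho$ with the unweighted $C^1$ perturbation. For $|\theta|\ge\rho$, use the weighted bounds.
\item Fix the constants in this order: $\rho$ from (i)--(ii) first, then $\beta_\rho$, then $\epsilon$.
\end{itemize}
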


\begin{rem}\label{rem_reg_val_bounded_scalar}
    In \Cref{appendix_bounded_scalar}, we prove that if the following three conditions hold:
    \begin{itemize}
        \item the function $g_{\mu}$ is not constant,
        \item the set of critical values of $g_{\mu}$ has Lebesgue measure zero in $\RR$,
        \item the set of critical values of $g_{\mu}^{\infty}:\theta\in \mathbb{S}^{d_\theta-1}\mapsto \lim_{r\to+\infty} g_{\mu}(r\theta)$ has Lebesgue measure zero in $\RR$,
    \end{itemize}
    then condition \ref{reg_val_bounded_scalar} or condition \ref{reg_val_inf_bounded_scalar} holds.
    If $\mu$ does not minimize $F$ and $g_\mu$ is constant, then it is a nonzero constant; this remaining case is handled directly in the proof by a half-space escape region and does not require a regular value.
\end{rem}

\begin{rem}[Remarks on the proof]
    The $d_w=1$ construction relies on a simple level-set mechanism. Since the escape region is defined using a sublevel set of $g_\mu$, the boundary corresponds to a level set of the same function that drives the $\theta$-dynamics:
    \[
        \dot\theta_t=-w_t\nabla g_{\mu_t}(\theta_t).
    \]
    Thus, when $w_t>0$ and $\mu_t$ is close to $\mu$, a uniform lower bound on $|\nabla g_\mu|$ along the boundary gives an inward-pointing estimate. This prevents trajectories from leaving the escape region, while the negativity of $g_\mu$ keeps $\dot w_t=-g_{\mu_t}(\theta_t)$ positive. The point requiring modification with respect to \cite[Proposition C.4]{chizat2018global} is the unbounded-level-set case. If $\{g_\mu=-\eta\}$ is bounded, compactness and the regular-value assumption give the needed lower bound on $|\nabla g_\mu|$. If it is unbounded, regularity of the limiting angular profile $g_\mu^\infty$ does not imply such a uniform Euclidean estimate, because the tangential component of $\nabla g_\mu(r\varphi)$ is scaled by $1/r$. Our proof therefore replaces strict invariance of $\{g_\mu\leq-\eta\}$ by a weaker but sufficient statement: when $\theta_t$ reaches the boundary at large radius, the angular quantity $g_\mu^\infty(\theta_t/|\theta_t|)$ decreases, forcing re-entry after a controlled time while $g_\mu(\theta_t)$ remains uniformly negative.
\end{rem}

\paragraph{Extension to vector output weights.}
The $d_w>1$ case is more delicate: when $d_w=1$, the sign of $w_t$ fixes the orientation of the $\theta$-dynamics, so the same scalar function $g_\mu$ both defines the escape region and controls its invariance. When $d_w>1$, writing $v_t=w_t/|w_t|$, the dynamics becomes
\[
    \dot\theta_t \approx -|w_t|J_{g_\mu}(\theta_t)^Tv_t.
\]
The direction $v_t$ can itself evolve, on a different scale from $\theta_t$, so the pair $(v_t,\theta_t)$ is not automatically tied to the level sets of a scalar function related to $F'(\mu)$. This is the role of the construction in the next subsection: the map $\Theta$ couples each direction $v$ to a preferred nonlinear parameter $\Theta(v)$, allowing one to track a reduced scalar function on the sphere.

\subsection{Nonlinearities with at most linear growth under a non-degeneracy condition}\label{subsec_glob_conv_nondegenerate}
Motivated by the above discussion, we introduce a non-degeneracy assumption that allows one to construct an escape region for scalar and vector output parameters ($d_w\geq 1$). In fact, this construction is valid beyond the bounded case and applies to any sufficiently smooth nonlinearity such that \Cref{ass_glob_conv} holds. We show that, under our non-degeneracy assumption, every probability measure that does not minimize $F$ admits an escape region of the form
    \begin{equation*}
        A:=\{(w,\theta)\in \RR^{d_w}\times\RR^{d_\theta}\,\rvert\,|w|\geq \bar{r},~\langle g_{\mu}(\Theta(w/|w|)),w/|w|\rangle\leq -\eta,~|\theta-\Theta(w/|w|)|\leq M\}.
    \end{equation*}
for some $\bar{r},M,\eta>0$ and a map $\Theta:\SS^{d_w-1}\to\RR^{d_\theta}$. The proof of the following proposition is given in \Cref{appendix_nondegenerate}.
\begin{prop}\label{prop_esc_reg_nondegenerate}
    Assume that \Cref{ass_glob_conv} holds and $\phi$ is locally $C^{2,1}$. Let $\mu\in\mathcal{P}_2(\Omega)$ be a measure that does not minimize $F$. Assume that there exists a nonempty open set $V\subset\SS^{d_w-1}$, a constant $\lambda>0$ and a $C^1$ map $\Theta:V\to\RR^{d_\theta}$ such that, for every $v\in V$,
    \begin{equation*}
        \left\{\begin{aligned}
            \nabla_{\theta}h(v,\Theta(v))&=0,\\
            \nabla^2_{\theta}h(v,\Theta(v))&\succeq \lambda I_{d_\theta},\\
        \end{aligned}\right.
    \end{equation*}
    where $h:(v,\theta)\in\SS^{d_w-1}\times\RR^{d_\theta}\mapsto \langle g_{\mu}(\theta),v\rangle$. If there exists $\eta>0$ such that $\{v\in V\,\rvert\,h_{\Theta}(v)\leq -\eta\}\subset\subset V$ where $h_{\Theta}:v\in V\mapsto h(v,\Theta(v))$ and $-\eta$ is a regular value of $h_{\Theta}$, then $\mu$ admits an escape region.
\end{prop}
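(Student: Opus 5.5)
We will show that the set $A$ displayed before the proposition is an escape region, for suitable $\bar r, M, \eta' >0$, after shrinking the data as needed. We work in polar coordinates $w=\rho v$ with $\rho=|w|$ and $v\in\SS^{d_w-1}$. Write $f_t:=g_{\mu_t}$ and $f:=g_\mu$. \Cref{ass_glob_conv} together with $W_2(\mu_t,\mu)\le\epsilon$ gives $\|R'(\int\Phi d\mu_t)-R'(\int\Phi d\mu)\|\lesssim\epsilon$. Hence $f_t-f$, $J_{f_t}-J_f$ and (using local $C^{2,1}$) the second derivatives are $O(\epsilon)$, uniformly on bounded sets; the bounds grow at most like $(1+|\theta|)$, but we only need them on a bounded neighbourhood of $\Theta(K)$. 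Here $K:=\{h_\Theta\le-\eta\}$ is compact in $V$, and we fix a compact $K'\subset V$ containing a neighbourhood of $K$.

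The characteristic system in these variables reads
\[
\dot\rho=-\langle f_t(\theta),v\rangle,\qquad \dot v=-\tfrac1\rho P_{v^\perp}f_t(\theta),\qquad \dot\theta=-\rho J_{f_t}(\theta)^Tv=-\rho\nabla_\theta h(v,\theta)+O(\rho\epsilon).
\]
So $\theta$ moves on the fast time scale $\rho$, while $v$ moves on the slow scale $1/\rho$. The plan is a singular-perturbation/invariance argument with three ingredients.

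\textbf{(a) Tracking the branch.} Consider $\delta_t:=\theta_t-\Theta(v_t)$. By strong convexity of $\theta\mapsto h(v,\theta)$ near $\Theta(v)$ (Hessian $\succeq\lambda$, hence $\succeq\lambda/2$ on a ball of radius $M$ around $\Theta(v)$, uniformly for $v\in K'$ by continuity), we get
\[
\tfrac{d}{dt}\tfrac12|\delta_t|^2\le-\rho\tfrac\lambda2|\delta_t|^2+\rho C\epsilon|\delta_t|+|\delta_t|\,|D\Theta|\,|\dot v|,
\]
and $|\dot v|\lesssim1/\rho$. This yields $|\delta_t|\le\max(|\delta_0|,\,C(\epsilon+\rho^{-2})/\lambda)$. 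In particular the ball $|\delta|\le M$ is invariant, and after a transient $\delta$ becomes $O(\epsilon+\bar r^{-2})$.

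\textbf{(b) The reduced dynamics of $v$.} This is the main obstacle. We need the sublevel set $K$ of $h_\Theta$ to be essentially invariant under $v$, even though $\theta$ is only near $\Theta(v)$. Compute
\[
\tfrac{d}{dt}h_\Theta(v_t)=\langle\nabla h_\Theta(v_t),\dot v_t\rangle.
\]
Since $\nabla_\theta h(v,\Theta(v))=0$, the envelope identity gives $\nabla_{\SS}h_\Theta(v)=P_{v^\perp}f(\Theta(v))$. Then
\[
\dot v=-\tfrac1\rho\big(\nabla_{\SS} h_\Theta(v)+O(|\delta|+\epsilon)\big),
\]
so $\tfrac{d}{dt}h_\Theta(v_t)\le-\tfrac1\rho\big(|\nabla h_\Theta|^2-C|\nabla h_\Theta|(|\delta|+\epsilon)\big)$. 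On the boundary $\{h_\Theta=-\eta\}$ we have $|\nabla h_\Theta|\ge c>0$ by the regular-value assumption and compactness. Therefore $h_\Theta(v_t)$ cannot cross $-\eta$ upward once $|\delta|+\epsilon\le c/(2C)$. The difficulty is the initial transient, during which $|\delta|$ may be as large as $M$. For this we use the time-scale separation. By (a), $|\delta|$ decays at rate $\rho\lambda$, so it exceeds $c/(2C)$ only for a time of order $\rho^{-1}\log(M/c)$. During that time $v$ moves by at most $O(\rho^{-2}\log)$, which is small for $\bar r$ large. To absorb this, we define $A$ using a slightly stricter threshold $-\eta$ and prove invariance of the larger set $\{h_\Theta\le-\eta+\kappa\}$. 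By Sard-type openness, regular values near $-\eta$ remain regular; we should choose $\eta$ accordingly or just use $\{h_\Theta\le -\eta\}\subset\subset V$ with a small margin together with a uniform gradient lower bound on $\{-\eta\le h_\Theta\le-\eta+\kappa\}$, which holds for small $\kappa$ by compactness. This also keeps $v_t\in K'$, as required in (a).

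\textbf{(c) Negativity of the first variation.} On this invariant set,
\[
F'(\mu_t)(w_t,\theta_t)=\rho\langle f_t(\theta_t),v_t\rangle\le\rho\big(h_\Theta(v_t)+C(M^2+\epsilon)\big),
\]
using $\nabla_\theta h=0$ on the branch and a second-order Taylor bound. Note that a first-order error $O(M)$ is not available here, so we must take $M$ small relative to $\eta$: $CM^2\le\eta/4$ and $C\epsilon\le\eta/4$. Since then $\langle f_t(\theta_t),v_t\rangle\le-\eta/2$, $\rho$ is nondecreasing, so $\rho\ge\bar r$ is preserved, and $F'(\mu_t)\le-\bar r\eta/2$. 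Finally, $A$ contains a nonempty open set because $K$ has nonempty interior: $-\eta$ is in the range of $h_\Theta$ and is a regular value, so values slightly below $-\eta$ are attained. This gives \Cref{def_escape_reg}. The order of choices is $\kappa$, then $M$, then $\bar r$, then $\epsilon$.
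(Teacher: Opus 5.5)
Your argument is correct and has the same skeleton as the paper's proof: the same set $A$, a strong-convexity estimate that keeps $\theta_t$ in the tube around $\Theta(v_t)$, an inward-pointing estimate on $\partial K$, and growth of $|w_t|$. Step (b), however, uses a different mechanism.

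The paper never meets the transient you worry about, because it makes the tube thin relative to the boundary gradient. The constant hidden in your $O(|\delta|+\epsilon)$ is essentially $\|J_g\|_\infty$, which does not depend on $M\le 1$. So the paper fixes $M$ first with $G_1\|J_g\|_\infty M\le\beta^2/8$ (here $\beta$ is your $c$), and then $\epsilon$ with $G_1\delta(\epsilon)\le\beta^2/8$. This gives $\frac{d}{dt}h_\Theta(v_t)\le-\beta^2/(2|w_t|)$ whenever $h_\Theta(v_t)=-\eta$, at every point of the tube and not only near the branch. As a result $A$ is exactly invariant. The paper therefore needs no enlarged level $-\eta+\kappa$, no exponential decay of $|\delta_t|$, and no $\log(M/c)$ dependence of $\bar r$.

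Your slow--fast route does close with your order of choices. What it buys is a tube width that depends only on $\eta$, $\lambda$ and local bounds on $\nabla^2_\theta h$, not on $\beta$. The price is that $A$ itself is no longer invariant.

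If you keep your route, three small fixes are needed:
\begin{itemize}
\item Localize the enlarged set to $\{v\in K'\,\rvert\,h_\Theta(v)\le-\eta+\kappa\}$ with $\kappa<\eta+\min_{\partial K'}h_\Theta$. The set $\{h_\Theta\le-\eta+\kappa\}$ need not be compactly contained in $V$, since $h_\Theta$ may approach $-\eta$ near $\partial V$.
\item The gradient lower bound on the strip $\{-\eta\le h_\Theta\le-\eta+\kappa\}\cap K'$ comes from compactness, not from Sard.
\item Take $\kappa<\eta/2$, so that (c) gives $F'(\mu_t)(w_t,\theta_t)\le-\bar r(\eta/2-\kappa)$ rather than $-\bar r\eta/2$.
\end{itemize}

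Finally, contrary to your remark in (c), the first-order bound $|g(\theta)-g(\Theta(v))|\le\|J_g\|_\infty M$ is available. It is exactly what the paper uses for the growth of $|w_t|$; your second-order bound is sharper but not needed.
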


We also show that the assumptions of the previous proposition are satisfied under the simpler assumption that $\theta\mapsto (1/2)|g_{\mu}(\theta)|^2$ has a non-degenerate local maximizer. The proof of this corollary is also provided in \Cref{appendix_nondegenerate}.
\begin{cor}\label{cor_esc_reg_nondegenerate}
    Assume that \Cref{ass_glob_conv} holds and $\phi$ is locally $C^{2,1}$. Let $\mu\in\mathcal{P}_2(\Omega)$ be a measure that does not minimize $F$. If there exists a non-degenerate local maximizer $\theta_*\in\RR^{d_\theta}$ of $\theta\mapsto (1/2)|g_{\mu}(\theta)|^2$, then $\mu$ admits an escape region.
\end{cor}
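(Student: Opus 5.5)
The plan is to reduce the corollary to \Cref{prop_esc_reg_nondegenerate}, building the map $\Theta$ from the implicit function theorem around the non-degenerate maximizer $\theta_*$. Write $G(\theta):=\tfrac12|g_\mu(\theta)|^2$. The first step is to check that $g_\mu(\theta_*)\neq 0$. If $g_\mu(\theta_*)=0$, then $\theta_*$ would be a local maximizer of a nonnegative function at the value $0$, so $G\equiv 0$ near $\theta_*$; this contradicts non-degeneracy, i.e.\ $\nabla^2 G(\theta_*)\prec 0$. Set $v_*:=-g_\mu(\theta_*)/|g_\mu(\theta_*)|\in\SS^{d_w-1}$. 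Then $h(v_*,\theta_*)=-|g_\mu(\theta_*)|<0$. The condition $\nabla G(\theta_*)=J_{g_\mu}(\theta_*)^Tg_\mu(\theta_*)=0$ gives $\nabla_\theta h(v_*,\theta_*)=J_{g_\mu}(\theta_*)^Tv_*=0$.

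The second step computes the Hessian. Writing $g_\mu=(g^1,\dots,g^{d_w})$, we have
\[
\nabla^2G(\theta_*)=J_{g_\mu}(\theta_*)^TJ_{g_\mu}(\theta_*)+\textstyle\sum_k g^k(\theta_*)\nabla^2 g^k(\theta_*).
\]
The second term equals $-|g_\mu(\theta_*)|\,\nabla^2_\theta h(v_*,\theta_*)$. Since $\nabla^2G(\theta_*)\prec0$ and $J^TJ\succeq0$, we get $|g_\mu(\theta_*)|\nabla^2_\theta h(v_*,\theta_*)\succeq -\nabla^2G(\theta_*)\succ 0$, so $\nabla^2_\theta h(v_*,\theta_*)\succ0$. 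The regularity is available: $\phi$ is locally $C^{2,1}$ and $g_\mu=\phi^*R'(\int\Phi d\mu)$, so $h$ is $C^2$ in $\theta$ with $\nabla_\theta h$ being $C^1$ jointly in $(v,\theta)$. The implicit function theorem therefore yields a neighborhood $V_0$ of $v_*$ in the sphere and a $C^1$ map $\Theta$ on $V_0$ with $\Theta(v_*)=\theta_*$ and $\nabla_\theta h(v,\Theta(v))=0$. After shrinking $V_0$, continuity gives $\nabla_\theta^2h(v,\Theta(v))\succeq\lambda I$ for some $\lambda>0$.

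The third step, which I expect to be the main obstacle, is finding $\eta$ with $\{h_\Theta\le-\eta\}\subset\subset V$ and $-\eta$ a regular value. The key observation is that $v_*$ is a strict, non-degenerate minimizer of $h_\Theta$ on the sphere. By the envelope theorem, the tangential gradient of $h_\Theta$ is the tangential projection of $g_\mu(\Theta(v))$. At $v_*$ this vector is parallel to $v_*$, so $v_*$ is critical. Moreover, $h_\Theta(v)=\min_{\theta \text{ near }\Theta(v)}\langle g_\mu(\theta),v\rangle\le\langle g_\mu(\theta_*),v\rangle$, with equality at $v_*$. The right-hand side restricted to the sphere has a non-degenerate minimum at $v_*$, with Hessian $|g_\mu(\theta_*)|I$ on the tangent space. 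Hence $h_\Theta$ has a strict local minimum at $v_*$. Its Hessian is
\[
|g_\mu(\theta_*)|I - J_{g_\mu}(\theta_*)\bigl[\nabla_\theta^2h\bigr]^{-1}J_{g_\mu}(\theta_*)^T,
\]
restricted to $T_{v_*}\SS^{d_w-1}$. To show non-degeneracy, I would compare this matrix with $\nabla^2 G(\theta_*)\prec 0$ through a Schur-complement argument. Set $H:=\nabla_\theta^2h$ and $J:=J_{g_\mu}(\theta_*)$. Then $-\nabla^2G/|g_\mu(\theta_*)|=H-J^TJ/|g_\mu(\theta_*)|\succ0$, and this is equivalent to $|g_\mu(\theta_*)|I-JH^{-1}J^T\succ0$. (Note that $J^Tv_*=0$ allows restricting to the tangent space.)

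With non-degeneracy established, $\nabla h_\Theta\neq0$ on a punctured neighborhood $B\setminus\{v_*\}$ with $\overline B\subset V_0$. Choose $\eta$ slightly smaller than $|g_\mu(\theta_*)|=-h_\Theta(v_*)$ and take $V=B$ as the open set. The sublevel set $\{h_\Theta\le-\eta\}\cap B$ is then a small compact neighborhood of $v_*$ contained in $B$. Every point with $h_\Theta=-\eta$ differs from $v_*$, so $-\eta$ is a regular value. \Cref{prop_esc_reg_nondegenerate} then gives the escape region. The degenerate case $d_w=1$ is simpler: the sphere is $\{\pm1\}$, so take $V=\{v_*\}$, for which the compact inclusion is trivial and regularity holds vacuously.
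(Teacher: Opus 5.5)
For $d_w\geq 2$ your route matches the paper's proof. It uses the implicit-function branch $\Theta$ through $(v_*,\theta_*)$ and the envelope identity for $\nabla_{\SS}h_\Theta$. It then computes the spherical Hessian $|g_\mu(\theta_*)|I-JH^{-1}J^T$ at $v_*$, with $H=\nabla^2_\theta h(v_*,\theta_*)$, and derives its positivity from $\nabla^2G(\theta_*)\prec0$. Your Schur-complement equivalence is the same fact the paper obtains by noting that $JA^{-1}J^T$ and $A^{-1/2}J^TJA^{-1/2}$ share their nonzero eigenvalues. Two points need correcting.

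First, the sentence ``hence $h_\Theta$ has a strict local minimum at $v_*$'' does not follow from $h_\Theta(v)\leq\langle g_\mu(\theta_*),v\rangle$ with equality at $v_*$. Being bounded above by a function that is minimal at $v_*$ gives no lower bound on $h_\Theta$. The touching only yields $\nabla^2_{\SS}h_\Theta(v_*)\preceq|g_\mu(\theta_*)|I$, which is the wrong direction; it is exactly the sign of the correction $-JH^{-1}J^T$. Delete that inference. The strict minimum, and the smallness of the sublevel sets $\{h_\Theta\leq-\eta\}$ you use at the end, both follow from the positive definiteness $|g_\mu(\theta_*)|I-JH^{-1}J^T\succ0$. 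Your Schur-complement step already establishes this.

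Second, for $d_w=1$ you cannot apply \Cref{prop_esc_reg_nondegenerate} with $V=\{v_*\}$ by declaring regularity vacuous. In the paper's convention only values in the range can be regular, and on $\SS^0$ no value in the range is regular. If you take $\eta<|g_\mu(\theta_*)|$ so that the level set is empty, then $-\eta$ lies outside the range and the hypothesis is not met. This is why the paper treats $d_w=1$ separately. It checks directly that $\{(w,\theta)\,\rvert\,v_*w\geq\bar r,~|\theta-\theta_*|\leq M\}$ is invariant: $\frac{d}{dt}(v_*w_t)\geq\eta_0$, and the tube boundary points inward by strong convexity of $v_*g_\mu$ on $\overline B(\theta_*,M)$. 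Your shortcut is right in substance, since $w_t/|w_t|$ is frozen and the angular-boundary step of the proposition is void. You must still either state explicitly that the proposition's proof survives with $\partial K=\emptyset$, or give this direct argument.
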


\begin{rem}
    The proof of \Cref{prop_esc_reg_nondegenerate} shows how the map $\Theta$ restores a scalar level-set structure in the vector-output case. It ties each direction $v$ to a non-degenerate critical point $\Theta(v)$ of
    $\theta\mapsto \langle g_\mu(\theta),v\rangle$.
    The resulting reduced function
    \[
        h_\Theta(v)=F'(\mu)(v,\Theta(v))
        =\langle g_\mu(\Theta(v)),v\rangle
    \]
    has level sets whose inward-pointing property can be tracked through the evolution of $v_t=w_t/|w_t|$, while the non-degeneracy condition keeps $\theta_t$ close to $\Theta(v_t)$.
\end{rem}

\begin{rem}
    From first-order optimality conditions, one can see that, if $\mu$ is a stationary point of the Wasserstein gradient flow of $F$, then $g_{\mu}$ vanishes on the support of the $\theta$-marginal of $\mu$. If $\mu$ does not minimize $F$, then $g_{\mu}$ is not identically zero, and one can therefore expect $\theta\mapsto (1/2)|g_{\mu}(\theta)|^2$ to have a local maximizer.
\end{rem}

\subsection{Asymptotically positively one-homogeneous nonlinearities}
This subsection deals with the case where $\phi$ is asymptotically positively $1$-homogeneous, which is relevant, for instance, for two-layer networks with GELU or SiLU activations. The following proposition, whose proof is given in \Cref{appendix_asymp_homogeneous}, states that, under suitable assumptions, every probability measure that does not minimize $F$ admits an escape region of the form
\[
    A=\{(w,\theta)\in\RR^{d_w} \times \RR^{d_\theta}\,\rvert\, \min(|w|,|\theta|)\geq \bar{r},~1/M\leq |w|/|\theta|\leq M,~h_{\mu}^{\infty}(w/|w|,\theta/|\theta|)\leq -\eta\}
\]
for some $\bar{r},M,\eta>0$ (see \eqref{eq_def_h_mu_infty} for the definition of $h_{\mu}^{\infty}$ below).

We recall that a sequence $(L_n)_{n\geq 0}$ of bounded linear operators between two Hilbert spaces $\mathcal{H}_1$ and $\mathcal{H}_2$ is said to converge weakly to $L_{\infty}$ if $(\langle L_n f,g\rangle_{\mathcal{H}_2})_{n\geq 0}$ converges to $\langle L_{\infty}f,g\rangle_{\mathcal{H}_2}$ for every $(f,g)\in\mathcal{H}_1\times\mathcal{H}_2$. Given a family of sequences $(L^{\alpha}_n)_{\alpha\in\mathcal{A},n\geq 0}$, we say that it converges weakly, uniformly on $\mathcal{A}$, to $(L^{\alpha}_{\infty})_{\alpha\in\mathcal{A}}$ if, for every $(f,g)\in\mathcal{H}_1\times\mathcal{H}_2$, $\sup_{\alpha\in\mathcal{A}}|\langle L^\alpha_n f,g\rangle_{\mathcal{H}_2}-\langle L_{\infty}^{\alpha} f,g\rangle_{\mathcal{H}_2}|\to 0$ as $n\to+\infty$.
\begin{prop}\label{prop_esc_reg_asymp_homogeneous}
    Assume that \Cref{ass_glob_conv} holds and that
    \begin{enumerate}[label=(\roman*)]
        \item the mapping $\theta\mapsto (1/r)\phi(r\theta)$ converges weakly, uniformly on $\mathbb{S}^{d_\theta-1}$, as $r\to+\infty$,\label{conv_phi_asymp_hom}
        \item the mapping $\theta\mapsto d\phi(r\theta)$ converges weakly, uniformly on $\mathbb{S}^{d_\theta-1}$, as $r\to+\infty$.\label{conv_dphi_asym_hom}
    \end{enumerate}
    Define $\phi_{\infty}:\theta \in \RR^{d_\theta}\mapsto \lim_{r\to+\infty}(1/r)\phi(r\theta)$. Assume that
    \begin{enumerate}[label=(\roman*),start=3]
        \item the linear span of $\{\phi_{\infty}(\theta)w,~(w,\theta)\in\Omega\}$ is dense in $\mathcal{F}$. \label{density_phi_asymp_hom}
    \end{enumerate}
    If $\mu$ is not a minimizer of $F$ then the mapping
    \begin{equation}\label{eq_def_h_mu_infty}
    \textstyle h_{\mu}^{\infty}:(w,\theta)\in \mathbb{S}^{d_w-1}\times\mathbb{S}^{d_\theta-1}\mapsto \lim_{r\to+\infty} (1/r)F'(\mu)(w,r\theta)
    \end{equation} is not identically zero. In addition, if $h_{\mu}^{\infty}$ has a nonzero regular value, then $\mu$ admits an escape region.
\end{prop}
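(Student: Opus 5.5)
Two things must be shown: that $h_\mu^\infty\not\equiv 0$, and that a nonzero regular value gives an escape region of the displayed form.

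\emph{Step 1: $h_\mu^\infty\not\equiv0$.} By \ref{conv_phi_asymp_hom}, for fixed $(w,\theta)$ we have
\[
(1/r)F'(\mu)(w,r\theta)=\langle R'(\textstyle\int\Phi d\mu),(1/r)\phi(r\theta)w\rangle\to\langle R'(\textstyle\int\Phi d\mu),\phi_\infty(\theta)w\rangle .
\]
Hence $h_\mu^\infty(w,\theta)=\langle p,\phi_\infty(\theta)w\rangle$ with $p:=R'(\int\Phi d\mu)$, and this expression is positively homogeneous in $(w,\theta)$. If $h_\mu^\infty\equiv0$, then $p$ is orthogonal to every $\phi_\infty(\theta)w$, so $p=0$ by the density assumption \ref{density_phi_asymp_hom}. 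Since $R$ is convex, $\mu\mapsto F(\mu)$ is convex along linear interpolations $(1-s)\mu+s\nu$. Then $F(\nu)\ge F(\mu)+\langle p,\int\Phi d\nu-\int\Phi d\mu\rangle=F(\mu)$ for every $\nu$, so $\mu$ would be a minimizer, a contradiction. One sign convention is convenient here: $h_\mu^\infty$ is odd in $w$, so a nonzero regular value $c$ yields the regular value $-|c|$. We may therefore take $-\eta<0$ to be regular.

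\emph{Step 2: the homogeneous limiting dynamics.} Write $w=\rho_w v$ and $\theta=\rho_\theta\varphi$. Along \eqref{characteristic_ode}, $\dot w=-\phi(\theta)^*p_t$ and $\dot\theta=-(d\phi(\theta)[\cdot]w)^*p_t$, with $p_t=R'(\int\Phi d\mu_t)$. At large radius, \ref{conv_phi_asymp_hom} and \ref{conv_dphi_asym_hom} give $\dot w\approx-\rho_\theta\nabla_w H$ and $\dot\theta\approx-\rho_w\nabla_\theta H$, where $H(w,\theta)=\langle p,\phi_\infty(\theta)w\rangle$ is $2$-homogeneous. This is the ReLU-type situation of \cite{chizat2018global}. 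On $A$, where $1/M\le\rho_w/\rho_\theta\le M$ and $h_\mu^\infty\le-\eta$, the following computations hold:
\begin{itemize}
\item $\tfrac{d}{dt}\rho_w^2=-2\rho_\theta H(v,\varphi)\rho_w\gtrsim\eta\rho_w\rho_\theta$, and similarly $\tfrac d{dt}\rho_\theta^2\gtrsim\eta\rho_w\rho_\theta$;
\item since both radii grow at rate $\approx -H(v,\varphi)$ times the other radius, $\tfrac d{dt}\log(\rho_w/\rho_\theta)=-H(v,\varphi)\,(\rho_\theta/\rho_w-\rho_w/\rho_\theta)$, which pushes the ratio toward $1$. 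Choosing $M>1$ with a margin therefore keeps the ratio constraint invariant;
\item the angular part $(v,\varphi)$ follows, up to a time change by the radii, the spherical gradient of $h_\mu^\infty$ with respect to a metric weighted by $\rho_\theta/\rho_w$ and $\rho_w/\rho_\theta$. Hence $\tfrac{d}{dt}h_\mu^\infty(v_t,\varphi_t)\le-c|\nabla_{\mathbb S}h_\mu^\infty|^2\rho+\text{errors}$.
\end{itemize}
Regularity of $-\eta$, together with compactness of $\mathbb{S}^{d_w-1}\times\mathbb{S}^{d_\theta-1}$, gives $|\nabla_{\mathbb S} h_\mu^\infty|\ge\delta>0$ on a neighbourhood of the level set. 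On the boundary $\{h_\mu^\infty=-\eta\}$ the angular derivative is therefore strictly negative, so the angular condition is invariant. Finally, $F'(\mu_t)(w_t,\theta_t)\approx\rho_w\rho_\theta h_\mu^\infty\le-\eta\bar r^2/2$, which gives the required uniform negative bound.

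\emph{Step 3: controlling errors (main obstacle).} The approximations must be uniform in $t$ and in perturbations $\mu_t$ with $W_2(\mu_t,\mu)\le\epsilon$. Three error sources enter.
\begin{itemize}
\item $p_t-p$. This has norm $\lesssim\epsilon$, because $\int\Phi d\cdot$ is Lipschitz in $W_2$ on bounded second moments, $\phi$ has linear growth, and $R'$ is Lipschitz on bounded sets.
\item The asymptotic errors $(1/r)\phi(r\varphi)-\phi_\infty(\varphi)$ and $d\phi(r\varphi)-d\phi_\infty(\varphi)$. These tend to zero only weakly, but they are tested against the fixed vector $p$. Uniform weak convergence therefore yields uniform scalar convergence of $\langle p,\cdot\rangle$ and of the gradients.
\item The cross term $\langle p_t-p,\cdot\rangle$. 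This is bounded by $\epsilon$ times the operator bounds from \Cref{ass_glob_conv}.
\end{itemize}
The delicate point is that the errors must be small relative to the $\delta^2$ and $\eta$ margins after dividing by the radii. I would handle this by choosing $M$ first, then $\eta$ and $\delta$, and finally $\bar r$ large and $\epsilon$ small so that all relative errors fall below $\min(\eta,\delta^2)/4$. Doing so requires convergence of the gradient of $h$ in the angular variables. The $\varphi$-gradient comes from \ref{conv_dphi_asym_hom}, using $d\phi(r\varphi)$ restricted to tangent directions and rescaled. The $v$-gradient comes from linearity in $w$.

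With invariance of all three constraints established, $A$ contains a nonempty open set and satisfies \Cref{def_escape_reg}.
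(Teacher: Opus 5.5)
Your proposal is correct and follows the paper's proof. It uses the same region $A$, the same three invariance checks (growth of both radii, the ratio $|w|/|\theta|$ pushed back toward $1$, and the inward-pointing angular boundary $\{h_\mu^\infty=-\eta\}$), and the same split of the errors into a $W_2$-perturbation part and an asymptotic part, exactly as in the paper's $\delta_1(\epsilon)$ and $\delta_2(\bar r)$. The only imprecision is that the error thresholds must depend on $M$: the paper needs $\delta_1(\epsilon)+\delta_2(\bar r)\le\min\bigl(\frac{\eta}{2}\frac{M^2-1}{M^2+1},\frac{\beta}{2\sqrt{2}M^2}\bigr)$ rather than $\min(\eta,\delta^2)/4$, which is harmless because you choose $\epsilon$ and $\bar r$ last.
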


\begin{rem}
    The $d_w>1$ case is easier to handle in the asymptotically homogeneous setting than in the bounded setting because the large-parameter regime provides balanced dynamics for the two normalized variables. Indeed, when $\theta$ is large, the first variation is well approximated by its homogeneous limit
    \[
        F'(\mu)(w,\theta)\approx |w|\,|\theta|\, h_\mu^\infty(w/|w|,\theta/|\theta|).
    \]
    Thus, after writing $v=w/|w|$ and $\varphi=\theta/|\theta|$, the angular dynamics of $(v_t,\varphi_t)$ are, up to positive scaling factors depending on $|w_t|/|\theta_t|$, descent dynamics for $h_\mu^\infty$ on 
    $\mathbb S^{d_w-1}\times\mathbb S^{d_\theta-1}$. The escape region is therefore chosen so that both $|w_t|$ and $|\theta_t|$ are large and their ratio remains bounded. In this regime, the level sets of $h_\mu^\infty$ give a stable way to control both directions simultaneously.
\end{rem}

\paragraph{Comparison of the escape regions}
    In addition to the construction presented above, the construction of \Cref{subsec_glob_conv_nondegenerate} could, in principle, also be used here. However, the two arguments rely on rather different mechanisms, and therefore lead to different assumptions and escape regions. In the asymptotically homogeneous setting, the escape region is defined using a limiting first variation object, namely the limit of $F'(\mu)(w,r\theta)$ as $r\to\infty$, in a suitable sense. The corresponding propositions therefore require assumptions ensuring convergence to this limiting object, which in turn may impose implicit regularity conditions on the data distribution. For instance, in Proposition~\ref{prop_glob_conv_gelu}, we verify these assumptions for common activation functions under the assumption that the marginal data distribution $\rho_x$ has a density. Thus, these results are naturally adapted to population losses and do not directly cover finite empirical distributions. By contrast, the construction developed in \Cref{subsec_glob_conv_nondegenerate} does not use an asymptotic object. Instead, it relies on a non-degeneracy assumption on the function $\theta\mapsto |g_\mu(\theta)|^2$. This leads to weaker assumptions on the data distribution (see \cref{prop_glob_conv_mlp_nondegenerate} where $\rho_x$ could be discrete). The two constructions also produce geometrically different escape regions. In the asymptotically homogeneous setting, instability is detected at large parameter values: the escape region lies in a regime where $\theta$ is sufficiently large and the limiting homogeneous behavior controls the dynamics. In the non-degenerate setting, the escape region is instead localized near non-degenerate local maximizers of $|g_\mu|^2$, through the graph of the map $\Theta$. Thus, the non-degenerate construction typically gives a smaller and more local escape region.
    
\section{Examples}\label{sec_examples}
In this section, we show that the two examples mentioned in the introduction fit into the abstract frameworks of \Cref{sec_wellp_wgf,sec_glob_conv}, so that \Cref{thm_wgf,thm_glob_conv} apply.

\paragraph{Model space and loss function.} We consider two settings, namely regression with the square loss and classification with the cross-entropy loss. Given a data distribution $\rho\in\mathcal{P}(\RR^{\din}\times \RR^{\dout})$ over pairs of features and labels, we minimize the expected risk $R:f\mapsto\int_{\RR^\din\times\RR^\dout} \ell(f(x),y) d\rho(x,y)$, where $\ell$ is either the square loss $\ell:(z,y)\mapsto (1/2)|z-y|^2$ or the cross-entropy loss
\begin{equation*}
\ell:(z,y)\mapsto \left\{
\begin{aligned}
    &\textstyle-\langle z,y\rangle +\log(\sum_{1\leq i\leq \dout}e^{z_i})&\mathrm{if}~\dout>1,\\
    &-yz+\log(1+e^z)&\mathrm{if}~\dout=1.
\end{aligned}\right.
\end{equation*}
We denote by $\rho_x$ and $\rho_y$ the marginals of $\rho$ on $\RR^{\din}$ and $\RR^{\dout}$, and $\rho_{y|x}$ the conditional distribution of labels given a feature $x$. In the classification case, if $\dout>1$, we ask that $\rho_y$ is supported on the set of one-hot labels $\mathcal{B}_{\dout}$, namely, the canonical basis of $\RR^\dout$, where $k=\dout$ is the number of classes and the $i$-th element of $\mathcal{B}_k$ is associated with the $i$-th class. If $\dout=1$, we ask that $\rho_y$ is supported on $\{0,1\}$. \Cref{lem_ass_loss} below, whose proof is given in \Cref{appendix_examples}, shows that if $\mathcal{F} = L^2(\rho_x;\RR^{\dout})$ then in both cases $R$ fits in our abstract framework.
\begin{lem}\label{lem_ass_loss}
    In the following two cases:
    \begin{enumerate}[label=(\roman*)]
        \item $\ell$ is the square loss and $\rho_y$ has a finite moment of order $2$,\label{cond_square}
        \item $\ell$ is the cross-entropy loss and either $\dout>1$ with $\rho_y$ supported on $\mathcal{B}_{\dout}$ or $\dout=1$ with $\rho_y$ supported on $\{0,1\}$,\label{cond_cross_entropy}
    \end{enumerate}
    the functional $R$ is convex and differentiable on $\mathcal{F} = L^2(\rho_x;\RR^{\dout})$. Moreover, its differential is bounded on sublevel sets and Lipschitz.
\end{lem}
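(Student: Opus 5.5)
We treat the two cases in parallel, writing $R(f)=\int \ell(f(x),y)\,d\rho(x,y)$ on $\mathcal{F}=L^2(\rho_x;\RR^{\dout})$. For each case we first establish pointwise properties of $\ell(\cdot,y)$: convexity, differentiability, and a Lipschitz gradient in $z$ with a constant independent of $y$. We then lift these properties to $R$ by integration.

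\emph{Pointwise step.} For the square loss, $\nabla_z\ell(z,y)=z-y$, which is $1$-Lipschitz, and $\ell$ is convex. For the cross-entropy loss with $\dout>1$, $\nabla_z\ell(z,y)=\mathrm{softmax}(z)-y$. Its Hessian is $\mathrm{diag}(p)-pp^T$ with $p=\mathrm{softmax}(z)$, which is positive semidefinite with operator norm at most $1$. Hence $\ell(\cdot,y)$ is convex with a $1$-Lipschitz gradient, and $|\nabla_z\ell(z,y)|\leq 2$ for $y\in\mathcal{B}_{\dout}$. The case $\dout=1$ is the same with the sigmoid function, whose derivative is bounded by $1/4$. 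We also check that $R$ is finite on all of $\mathcal{F}$. For the square loss this uses $\int|y|^2d\rho<\infty$. For the cross-entropy loss it uses $\ell(z,y)\leq C(1+|z|)$, which follows from the log-sum-exp bound $\log\sum_i e^{z_i}\leq \max_i z_i+\log \dout$.

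\emph{Lifting to $R$.} Convexity of $R$ follows by integrating the pointwise convexity inequality. For differentiability, the candidate is $R'(f)(x)=\int\nabla_z\ell(f(x),y)\,d\rho_{y|x}(y)$, viewed as an element of $L^2(\rho_x)$. This is well defined because $|\nabla_z\ell(z,y)|\leq |z|+|y|$, respectively $\leq 2$, and $\rho_y$ has a finite second moment. The first-order Taylor estimate
\[
|\ell(z+h,y)-\ell(z,y)-\langle\nabla_z\ell(z,y),h\rangle|\leq \tfrac12|h|^2
\]
integrates to $|R(f+h)-R(f)-\langle R'(f),h\rangle_{\mathcal{F}}|\leq\frac12\|h\|^2_{\mathcal{F}}$, which gives Fréchet differentiability. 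The Lipschitz bound $\|R'(f)-R'(g)\|_{\mathcal{F}}\leq\|f-g\|_{\mathcal{F}}$ follows from the pointwise $1$-Lipschitz property together with Jensen's inequality for the conditional expectation in $y$.

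\emph{Boundedness on sublevel sets.} For the cross-entropy loss, $\|R'(f)\|_{\mathcal{F}}\leq 2$ everywhere, so there is nothing more to prove. For the square loss, $\|R'(f)\|_{\mathcal{F}}=\|f-\bar y\|$, where $\bar y(x)=\int y\,d\rho_{y|x}(y)$. This satisfies $\|R'(f)\|^2\leq 2R(f)$, by Jensen's inequality applied to $R(f)=\frac12\int|f(x)-y|^2d\rho$. Hence $R'$ is bounded on sublevel sets.

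The only delicate points are measurability and the disintegration $\rho=\rho_x\otimes\rho_{y|x}$, which justifies defining $R'(f)$ through the conditional law. I expect this to be routine, so the main work lies in the pointwise Hessian bound for softmax cross-entropy.
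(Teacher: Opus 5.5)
Your proof is correct and has the same structure as the paper's. Both establish pointwise convexity and a $y$-uniform Lipschitz bound on $\nabla_z\ell$, then lift to $R$ by integrating the quadratic Taylor remainder, and prove the Lipschitz bound on $R'$ in the same way. The differences are minor: you justify the cross-entropy Lipschitz constant through the Hessian $\mathrm{diag}(p)-pp^T$ (getting $L=1$, where the paper asserts $L=2$), and you handle boundedness on sublevel sets differently. You use the global bound $|\nabla_z\ell|\leq 2$ for cross-entropy and Jensen for the square loss, whereas the paper proves the unified pointwise inequality $|\nabla_z\ell(z,y)|^2\leq 2\ell(z,y)$ via $|a-1|^2\leq-\log a$ and concludes $\|R'(f)\|_{\mathcal{F}}\leq\sqrt{2R(f)}$ by Cauchy--Schwarz.
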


\subsection{Two-layer fully connected networks}
The mapping implemented by a two-layer fully connected neural network with $m$ hidden neurons and activation function $\sigma$ is $(1/m)\sum_{i=1}^m \Phi(w_i,\theta_i)$, where $\Phi(w,\theta):x\mapsto \sigma(\langle\theta,(x, 1)\rangle)w$, $d_{\theta}=\din+1$ and $d_w=\dout$. The first part of \Cref{lem_nn_ass_wellp_wgf} below states that the assumptions on $\phi$ in \Cref{ass_glob_conv} hold under suitable assumptions on the activation function and the data distribution. The second part ensures that, under slightly stronger assumptions, $\phi$ is $C^{2,1}$, so that \Cref{prop_esc_reg_nondegenerate} and \Cref{cor_esc_reg_nondegenerate} can be applied.
\begin{lem}\label{lem_nn_ass_wellp_wgf}
    If $\sigma\in C^{1,1}(\RR)$ with $\sigma'$ bounded and $\rho_x$ has a finite moment of order $4$, then $\phi$ is differentiable with a bounded Lipschitz differential. If $\sigma$ is, in addition, twice differentiable with a bounded Lipschitz second derivative and $\rho_x$ has a finite moment of order $6$, then $\phi$ is twice differentiable with a bounded Lipschitz second differential.
\end{lem}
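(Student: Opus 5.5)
The plan is to compute $d\phi$ and $d^2\phi$ explicitly and bound them pointwise in $x$. For fixed $\theta$, $\phi(\theta)\in\mathcal{L}(\RR^{\dout};L^2(\rho_x;\RR^{\dout}))$ acts by $w\mapsto(x\mapsto\sigma(\langle\theta,\tilde x\rangle)w)$ with $\tilde x=(x,1)$. For this to be well defined, $\sigma$ has at most linear growth because $\sigma'$ is bounded, so $|\sigma(\langle\theta,\tilde x\rangle)|\lesssim 1+|\theta||\tilde x|$, which is square integrable since $\rho_x$ has a finite second moment. The candidate differential is $d\phi(\theta)[h]w=(x\mapsto\sigma'(\langle\theta,\tilde x\rangle)\langle h,\tilde x\rangle w)$. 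Its operator norm is bounded by $\|\sigma'\|_\infty(\int|\tilde x|^2d\rho_x)^{1/2}$, uniformly in $\theta$. This gives boundedness of the differential.

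To show this is the Fréchet derivative, I will use the Taylor estimate $|\sigma(a+b)-\sigma(a)-\sigma'(a)b|\le \tfrac{L}{2}b^2$, where $L$ is the Lipschitz constant of $\sigma'$. Applying it with $b=\langle h,\tilde x\rangle$, the remainder in $L^2(\rho_x)$ norm is at most $\tfrac L2|h|^2|w|(\int|\tilde x|^4d\rho_x)^{1/2}$. This is $o(|h|)$ uniformly over $|w|\le1$, and it is precisely where the fourth moment enters. Lipschitz continuity of $d\phi$ follows the same way:
\[
\|(d\phi(\theta_1)-d\phi(\theta_2))[h]w\|_{L^2}\le L|\theta_1-\theta_2||h||w|\Big(\int|\tilde x|^4d\rho_x\Big)^{1/2}.
\]

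The second part repeats the argument one order higher. The candidate second differential is $d^2\phi(\theta)[h,k]w=\sigma''(\langle\theta,\tilde x\rangle)\langle h,\tilde x\rangle\langle k,\tilde x\rangle w$. Since $\sigma''$ is bounded, its norm is at most $\|\sigma''\|_\infty(\int|\tilde x|^4)^{1/2}$. The Taylor remainder for $\sigma'$ is bounded by $\tfrac{L_2}{2}b^2$, where $L_2$ is the Lipschitz constant of $\sigma''$, and the Lipschitz estimate for $\sigma''$ contributes factors of $|\tilde x|^3$. Squaring these gives the sixth moment, which yields both differentiability of $d\phi$ and the Lipschitz bound for $d^2\phi$.

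There is no substantial obstacle; the work is bookkeeping. I will check that $|\tilde x|\le 1+|x|$, so moments of $\tilde x$ are controlled by those of $x$. I will also check that the bounds hold uniformly for operator norms on $\mathcal{L}(\RR^{d_\theta};\mathcal{L}(\RR^{\dout};\mathcal{F}))$, which follows from taking suprema over unit $h$, $k$, and $w$.
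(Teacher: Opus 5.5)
Your proposal is correct and takes essentially the same route as the paper. The paper does not write the argument out: it defers to the direct computation of \cite[Lemma D.2]{chizat2018global}, observing that the bounds depend only on $\|\sigma'\|_\infty$ and $\mathrm{Lip}(\sigma')$, and says the second-order statement follows in the same way. Your explicit formulas for $d\phi$ and $d^2\phi$ and your Taylor remainders controlled by the Lipschitz constants of $\sigma'$ and $\sigma''$ are exactly that computation, and they correctly recover the fourth and sixth moment requirements.
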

The proof of the first statement follows by arguing as in \cite[Lemma D.2]{chizat2018global}, which deals with sigmoid activations. A closer inspection reveals that the bounds only depend on the supremum norm and the Lipschitz constant of $\sigma'$, so that the result also applies to any differentiable activation with bounded Lipschitz derivative. The second part of the statement can be obtained in the same way. 

As a consequence of \Cref{lem_nn_ass_wellp_wgf}, we immediately obtain the following corollary.
\begin{cor}
    Assume $\sigma\in C^{1,1}(\RR)$ with $\sigma'$ bounded and $\rho_x$ has a finite moment of order $4$. If $\ell$ is the square loss and $\rho_y$ has a finite moment of order $2$, or if $\ell$ is the cross-entropy loss and either $\dout>1$ with $\rho_y$ supported on $\mathcal{B}_{\dout}$ or $\dout=1$ with $\rho_y$ supported on $\{0,1\}$, then \Cref{ass_glob_conv} (and hence \Cref{ass_wellp_wgf}) hold.
\end{cor}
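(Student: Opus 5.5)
The corollary follows by checking each clause of \Cref{ass_glob_conv} directly, using \Cref{lem_ass_loss} for the loss and \Cref{lem_nn_ass_wellp_wgf} for the model. The implication \Cref{ass_glob_conv}$\Rightarrow$\Cref{ass_wellp_wgf} is then supplied by \Cref{lem_ass_ass}, which the paper states earlier. The setting is $\mathcal{F}=L^2(\rho_x;\RR^{\dout})$, $\Omega=\RR^{\dout}\times\RR^{\din+1}$ and $\Phi(w,\theta)=\phi(\theta)w$, where $\phi(\theta)\in\mathcal{L}(\RR^{\dout};\mathcal{F})$ is the operator $w\mapsto[x\mapsto\sigma(\langle\theta,(x,1)\rangle)w]$.

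First, $\mathcal{F}$ is a separable Hilbert space, since $\rho_x$ is a Borel probability measure on $\RR^{\din}$. Next, I check that $\phi(\theta)$ is indeed a bounded linear map into $\mathcal{F}$. Linearity in $w$ is clear. For boundedness, note that $|\sigma(s)|\le|\sigma(0)|+\|\sigma'\|_\infty|s|$. Hence
\[
\|\phi(\theta)w\|_{\mathcal{F}}\le |w|\,\bigl(|\sigma(0)|+\|\sigma'\|_\infty|\theta|\,\|(x,1)\|_{L^2(\rho_x)}\bigr),
\]
and the right-hand side is finite because $\rho_x$ has a finite fourth moment, hence a finite second moment.

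For the loss, both alternatives in the hypothesis are exactly cases \ref{cond_square} and \ref{cond_cross_entropy} of \Cref{lem_ass_loss}. That lemma gives that $R$ is convex and differentiable on $\mathcal{F}$, with differential Lipschitz (in particular Lipschitz on bounded sets) and bounded on sublevel sets. Nonnegativity $R\ge0$ also holds. For the square loss it is immediate. For the cross-entropy, $\ell(z,y)=-\log\mathrm{softmax}(z)_i\ge0$ for a one-hot $y$, and in the binary case $\ell\ge0$ for $y\in\{0,1\}$ by the same argument.

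For $\phi$, the first statement of \Cref{lem_nn_ass_wellp_wgf}, under $\sigma\in C^{1,1}$, $\sigma'$ bounded and a finite fourth moment of $\rho_x$, gives exactly that $\phi$ is differentiable with a bounded Lipschitz differential. This completes the verification of \Cref{ass_glob_conv}.

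There is no real obstacle here; the only point needing care is that the target of $\phi$ is the right operator space. I would also make sure that the Lipschitz bound on $d\phi$ in \Cref{lem_nn_ass_wellp_wgf} is taken in the $\mathcal{L}(\RR^{d_\theta};\mathcal{L}(\RR^{\dout};\mathcal{F}))$ norm, which follows from the same moment bounds.
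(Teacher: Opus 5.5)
Your proposal is correct and follows the paper's route. The paper obtains the corollary immediately by combining \Cref{lem_ass_loss} for the loss with the first part of \Cref{lem_nn_ass_wellp_wgf} for $\phi$, and then uses \Cref{lem_ass_ass} to pass to \Cref{ass_wellp_wgf}. Your extra checks, namely that $\phi(\theta)$ is a bounded operator into $\mathcal{F}$ and that $R\geq 0$, are sound details the paper leaves implicit.
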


We end this section with the following results, whose proofs are postponed to \Cref{appendix_fcn}. They give sufficient assumptions ensuring that, whenever the mean-field gradient flow for a two-layer fully connected network converges in $W_2$, its limit is a global minimizer. The results cover sigmoid and asymptotically positively $1$-homogeneous activations.
\begin{prop}[Bounded activations and scalar output weights]\label{prop_glob_conv_sigmoid}

    Let $d_w=\dout=1$ and $\sigma\in C^{1,1}(\RR)$ be such that $\sigma'$ is integrable and $s\sigma'(s)\to 0$ as $|s|\to +\infty$. Assume that $\ell$ is either
    \begin{itemize}
        \item the square loss with $\rho_y$ having a finite moment of order $2$ and $x\mapsto \int_{\RR} y\,d\rho_{y|x}(y)$ having a bounded and continuous representative on the support of $\rho_x$,
        \item the cross-entropy loss with $\rho_y$ supported on $\{0,1\}$ and $x\mapsto \rho_{y|x}(\{i\})$ having a continuous representative on the support of $\rho_x$ for $i=0,1$.
    \end{itemize}
      If $\rho_x\in\mathcal{P}(\RR^\din)$ has a finite moment of order $4$ and has a continuous density $p_x$ with respect to the Lebesgue measure satisfying $p_x(x)\leq C(1+|x|)^{-p}$ for some $C>0$ and $p>{\din}+2$, then conditions \ref{conv_gmu_bounded_scalar} to \ref{conv_dgmu_dgnu} of \Cref{prop_esc_reg_bounded_scalar} hold for every $\mu\in\mathcal{P}_2(\Omega)$. Moreover, suppose that condition \ref{reg_val_bounded_scalar} or \ref{reg_val_inf_bounded_scalar} of \Cref{prop_esc_reg_bounded_scalar} holds for every measure that does not minimize $F$, and that the initial distribution of parameters is sub-Gaussian and has full support. Then, if the Wasserstein gradient flow of $F$ converges in $W_2$, its limit is a global minimizer.
\end{prop}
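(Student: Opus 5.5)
The proof splits into two parts. First we verify conditions \ref{conv_gmu_bounded_scalar}--\ref{conv_dgmu_dgnu} of \Cref{prop_esc_reg_bounded_scalar} by explicit computation. Then we combine \Cref{prop_esc_reg_bounded_scalar} with \Cref{thm_glob_conv,thm_wgf}.

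The second part is immediate. By \Cref{lem_ass_loss} and \Cref{lem_nn_ass_wellp_wgf}, \Cref{ass_glob_conv} holds. Indeed, $\sigma'$ integrable and $\sigma$ continuous make $\sigma$ bounded (it has limits at $\pm\infty$), so $\phi$ is bounded. Also, $s\sigma'(s)\to0$ with $\sigma'$ continuous makes $\sigma'$ bounded. Then \Cref{thm_wgf} gives the flow, and \Cref{prop_esc_reg_bounded_scalar} provides an escape region for every non-minimizer, so \Cref{thm_glob_conv} concludes.

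For the first part, write $\theta=(a,b)\in\RR^{\din}\times\RR$. In both cases, $R'(f)(x)=\partial_z\ell$ integrated against $\rho_{y|x}$, so $R'(\int\Phi d\mu)=:r_\mu\in L^2(\rho_x)$. Hence
\[
g_\mu(\theta)=\int\sigma(\langle a,x\rangle+b)\,r_\mu(x)\,p_x(x)\,dx .
\]
For the square loss, $r_\mu(x)=f_\mu(x)-\bar y(x)$, where $f_\mu=\int \sigma(\langle\theta,(x,1)\rangle)w\,d\mu$. For cross-entropy, $r_\mu(x)=\mathrm{sigm}(f_\mu(x))-\rho_{y|x}(\{1\})$. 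In either case, $r_\mu$ has a bounded continuous representative. Boundedness comes from $|f_\mu|\le\|\sigma\|_\infty\int|w|d\mu$. Continuity comes from continuity of $\sigma$ and dominated convergence.

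For \ref{conv_gmu_bounded_scalar}, $\sigma(r(\langle a,x\rangle+b))\to\sigma(\pm\infty)\mathbf 1_{\pm(\langle a,x\rangle+b)>0}$ pointwise off the hyperplane $\{\langle a,x\rangle+b=0\}$, which is $\rho_x$-null. Dominated convergence gives pointwise convergence to some $g^\infty_\mu$. To upgrade to uniform convergence on $\SS^{\din}$, we bound the error by $\|r_\mu\|_\infty$ times $\int|\sigma(rs_\theta(x))-\sigma(\pm\infty)|p_x$, where $s_\theta(x)=\langle a,x\rangle+b$. We split into $|s_\theta|\le\delta$ and $|s_\theta|>\delta$. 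The slab $\{|s_\theta|\le\delta\}$ has mass $O(\delta)$ uniformly in $\theta\in\SS^{\din}$, using the density bound and the decay $p>\din+2$; this needs care when $|a|$ is small, i.e. $\theta$ near $(0,\pm1)$, but then the slab is empty for $\delta<|b|$ and large $|x|$ is controlled by the tail. Outside the slab, the integrand is small for $r\ge r(\delta)$.

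For \ref{conv_dgmu_bounded_scalar}, we have
\[
r\nabla g_\mu(r\theta)=\int r\sigma'(rs_\theta(x))\,(x,1)\,r_\mu(x)p_x(x)\,dx .
\]
This is the key step. We would change variables along the normal direction: $x=x_\perp+t\,a/|a|^2$ with $s_\theta=t+b$. Then $r\sigma'(r(t+b))\,dt\to(\int\sigma')\,\delta_{t=-b}$. The limit is therefore $(\int\sigma')$ times a surface integral of $(x,1)r_\mu p_x/|a|$ over the hyperplane $\{s_\theta=0\}$, which is finite by the decay $p>\din+2$ (one power for the factor $x$, plus integrability on a $(\din-1)$-plane) and continuity of $r_\mu p_x$. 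Uniformity in $\theta$ follows from equicontinuity of $t\mapsto\int_{\{s_\theta=t\}}(x,1)r_\mu p_x$ via the density bound. The delicate region is again $|a|\to0$. There the hypothesis $s\sigma'(s)\to0$ is used: with $|b|\approx1$ and $|s_\theta|\gtrsim 1$ on most of the mass, $r\sigma'(rs)\le o(1)/|s|$ uniformly, so the limit is $0$ there and matches the surface formula, since the hyperplane escapes to infinity where $p_x$ decays. This uniform convergence near the poles is the main obstacle.

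For \ref{conv_dgmu_dgnu}, $\nabla g_\nu-\nabla g_\mu=\int\sigma'(s_\theta)(x,1)(r_\nu-r_\mu)p_x$. The factor $|\theta|\sigma'(s_\theta(x))$ is integrable uniformly in $\theta$ against $(1+|x|)p_x$ by the same hyperplane argument, using $|\theta|\sigma'$ and $\int\sigma'<\infty$. It therefore suffices to show $\|r_\nu-r_\mu\|_\infty\to0$ as $W_2(\nu,\mu)\to0$. Since $\ell$ is Lipschitz in $z$ for these derivatives, this reduces to $\sup_x|f_\nu-f_\mu|\lesssim W_2(\nu,\mu)$. That bound holds because $(w,\theta)\mapsto\sigma(s_\theta(x))w$ is Lipschitz on bounded-$w$ regions, with constant $\|\sigma'\|_\infty(1+|x|)|w|$. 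To make it uniform in $x$, we instead compare via a coupling weighted by $(1+|x|)$ and absorb this factor into the extra decay of $p_x$ in the gradient integral. If needed, we would carry out this last estimate in $L^\infty((1+|x|)^{-1})$ rather than $L^\infty$.
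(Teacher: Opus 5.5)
Your proposal follows the paper's proof: (i) and (ii) are the slab and co-area/hyperplane arguments of \Cref{lem_aux_bounded_scalar} and the first part of \Cref{lem_aux_grad_bounded_scalar}, including the separate treatment of the poles $|a|\to0$. Part (iii) is the same coupling estimate, with the extra factor $(1+|x|)$ absorbed into the decay of $p_x$ via the weighted bound of \Cref{lem_aux_grad_bounded_scalar} with $q=2$; this is exactly where $p>\din+2$ is used, whereas (ii) only needs $p>\din+1$. The weighted version is not optional, though: your unweighted claim $\sup_x|f_\nu-f_\mu|\lesssim W_2(\nu,\mu)$ is false, since perturbing $\theta$ by $\epsilon$ changes $\sigma(\langle\theta,(x,1)\rangle)$ by an order-one amount at $|x|\sim1/\epsilon$, so you must control $\sup_x|r_\nu(x)-r_\mu(x)|/(1+|x|)$ and integrate $|\theta||\sigma'(\langle\theta,(x,1)\rangle)|$ against $(1+|x|)^2p_x$ uniformly in $\theta$.
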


\begin{prop}[Asymptotically positively one-homogeneous activations]\label{prop_glob_conv_gelu}
    Let $\sigma\in C^{1,1}(\RR)$ be such that $\lim_{s\to+\infty}\sigma'(s)= a$ and $\lim_{s\to-\infty}\sigma'(s)= b$ with $a\neq b$. Assume that $\ell$ is the square loss and $\rho_y$ has a finite moment of order $2$, or that $\ell$ is the cross-entropy loss and either $\dout>1$ with $\rho_y$ supported on $\mathcal{B}_{\dout}$ or $\dout=1$ with $\rho_y$ supported on $\{0,1\}$. If $\rho_x$ is absolutely continuous with respect to the Lebesgue measure and has a finite moment of order $4$, then conditions \ref{conv_phi_asymp_hom} to \ref{density_phi_asymp_hom} hold. Moreover, suppose that $h_{\mu}^{\infty}$ has a nonzero regular value for every $\mu$ that does not minimize $F$, and that the initial distribution of parameters is sub-Gaussian and has full support. Then, if the Wasserstein gradient flow of $F$ converges in $W_2$, its limit is a global minimizer.
\end{prop}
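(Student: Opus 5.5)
We verify hypotheses \ref{conv_phi_asymp_hom}--\ref{density_phi_asymp_hom} of \Cref{prop_esc_reg_asymp_homogeneous} for $\phi(\theta)w = \sigma(\langle\theta,(\cdot,1)\rangle)w$ with $\mathcal{F}=L^2(\rho_x;\RR^{\dout})$. Once this is done, the conclusion follows by chaining results already in the paper. \Cref{lem_ass_loss} and \Cref{lem_nn_ass_wellp_wgf} give \Cref{ass_glob_conv}; for the latter we use that $\sigma'$ is bounded, because it is continuous with finite limits at $\pm\infty$. \Cref{thm_wgf} then yields a well-posed flow from a sub-Gaussian $\mu_0$. \Cref{prop_esc_reg_asymp_homogeneous}, together with the assumed nonzero regular value of $h^\infty_\mu$, gives an escape region for every non-minimizer. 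Finally, \Cref{thm_glob_conv} concludes.

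\textbf{Step 1: the limit $\phi_\infty$.} Write $\tilde x=(x,1)$ and $\psi(s)=as_+-bs_-$, where $s_+=\max(s,0)$ and $s_-=\max(-s,0)$. Since $\sigma'\to a,b$, we have $\sigma(s)=\psi(s)+o(|s|)+O(1)$, and hence $|\sigma(rs)/r-\psi(s)|\le \epsilon(r|s|)|s|+C/r$ with $\epsilon(t)\to0$. For $\theta\in\SS^{d_\theta-1}$,
\[
\int \big|\sigma(r\langle\theta,\tilde x\rangle)/r-\psi(\langle\theta,\tilde x\rangle)\big|^2 d\rho_x \lesssim \int \epsilon(r|\langle\theta,\tilde x\rangle|)^2|\tilde x|^2d\rho_x + r^{-2}.
\]
The integrand is dominated by $|\tilde x|^2$. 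Uniformity in $\theta$ needs control of the small-$|\langle\theta,\tilde x\rangle|$ region: split according to whether $|\langle\theta,\tilde x\rangle|\ge\delta$ or not. The first part tends to $0$ uniformly. The second part is bounded by $\sup_\theta\int_{|\langle\theta,\tilde x\rangle|<\delta}|\tilde x|^2d\rho_x$, and this goes to $0$ as $\delta\to0$ uniformly in $\theta$. To see this, note that $\rho_x$ is absolutely continuous, so hyperplanes are null, and the compactness of $\SS^{d_\theta-1}$ plus continuity in $\theta$ gives uniformity via Dini or a contradiction argument. This proves even norm (hence weak) uniform convergence in \ref{conv_phi_asymp_hom}.

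\textbf{Step 2: convergence of $d\phi(r\theta)$.} We have $d\phi(r\theta)[\dot\theta]w = \sigma'(r\langle\theta,\tilde x\rangle)\langle\dot\theta,\tilde x\rangle w$. Its pointwise limit is $(a\mathbf 1_{\langle\theta,\tilde x\rangle>0}+b\mathbf 1_{<0})\langle\dot\theta,\tilde x\rangle w$, valid $\rho_x$-a.e. since hyperplanes are null. The same $\delta$-splitting shows convergence in operator norm, uniformly on the sphere. The operators are bounded because $\sigma'$ is bounded and $\rho_x$ has a second moment.

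\textbf{Step 3: density \ref{density_phi_asymp_hom}.} This is the main obstacle. The span of $\{\psi(\langle\theta,\tilde x\rangle)w\}$ equals (vector-valued copies of) the span of the functions $\psi(\langle\theta,\tilde x\rangle)$. Using $\theta$ and $-\theta$,
\[
\psi(s)+\psi(-s)... \quad\text{more precisely}\quad \psi(s)-\psi(-s)\ \text{combined with}\ \psi(s)+\psi(-s)
\]
one obtains both the linear function $s$ and $(a-b)s_+$; since $a\neq b$, the span contains all ReLU ridge functions $\langle\theta,\tilde x\rangle_+$. Density of ReLU ridge functions with bias in $L^2(\rho_x)$ then follows from the universal approximation theorem. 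One route is Leshno--Lin--Pinkus--Schocken: ReLU is not a polynomial, so one gets density in $C(K)$ for compact $K$, and truncation uses the finite second moment. A cleaner route is duality: if $f\in L^2(\rho_x)$ is orthogonal to all $\langle\theta,\tilde x\rangle_+$, then the measure $f\,d\rho_x$ has vanishing ReLU transform, and by Fourier/Radon arguments it must vanish. For the duality route, I would check that $f\rho_x$ is a finite signed measure with first moment, so that the ReLU integrals are defined; this holds because $f\in L^2$ and the moments are finite. Absolute continuity of $\rho_x$ is not needed here, only in Steps 1--2.
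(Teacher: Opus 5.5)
Your chaining of the results and your Steps 1--2 are fine. You actually prove operator-norm convergence, which is stronger than the weak uniform convergence required. Step 1 does not even need absolute continuity: on the slab $|\langle\theta,\tilde x\rangle|<\delta$ your integrand is at most $\|\epsilon\|_\infty^2\delta^2$, and the paper's lemma uses $\sup_s|\sigma(rs)/r-\psi(s)|/(1+|s|)\to0$ directly.

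\textbf{The gap is in Step 3.} From $\theta$ and $-\theta$ you get
\[
\psi(s)-\psi(-s)=(a+b)s,\qquad \psi(s)+\psi(-s)=(a-b)|s|.
\]
So the linear function $s$, and hence $\psi(s)-bs=(a-b)s_+$, lies in the span only when $a+b\neq0$. The hypothesis $a\neq b$ allows $b=-a\neq0$, for instance $\sigma(s)=\sqrt{1+s^2}$, for which $\psi(s)=a|s|$. In that case your claim that the span contains all ReLU ridge functions is false. Take input dimension one and $\rho_x$ of full support: every $\sum_i c_i|\alpha_i x+\beta_i|$ has slopes $S$ and $-S$ at $\pm\infty$ with $S=\sum_i c_i|\alpha_i|$, so neither $x$ nor $x_+$ belongs to the span. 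You must pass to the closure. Absorbing the bias into $\theta$ via $\tilde x=(x,1)$, the functions $\frac{1}{2}\bigl(|s+c|-|s-c|\bigr)$ converge to $s$ in $L^2(\rho_x)$ as $c\to\infty$, by the finite second moment. Hence ReLU units lie in the closed span, and density of the closed span is all that condition (iii) requires.

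Your first route to density of ReLU ridge functions in $L^2(\rho_x)$ is also not sound as written. Leshno--Lin--Pinkus--Schocken gives uniform approximation on a compact $K$, but the approximants have linear growth off $K$ with constants depending on $K$. So ``truncation using the second moment'' does not control the error on the complement of $K$. The duality (Cybenko/Hornik-type) argument for the finite signed measure $f\,d\rho_x$ is the one that works.

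The paper avoids both issues at once. The function $\sigma_\infty(s)-\sigma_\infty(s-1)$ is an exact element of the span, obtained by shifting the bias coordinate of $\theta$. It is a continuous ramp equal to $b$ for $s\leq0$ and to $a$ for $s\geq1$, so it is bounded and nonconstant exactly when $a\neq b$, whatever the sign of $a+b$. Hornik's theorem for bounded nonconstant activations then gives density in $L^p(\mu)$ for every finite measure $\mu$, in particular in $L^2(\rho_x)$. This needs no reduction to ReLU and no truncation. Equivalently, you can run your duality argument directly on the $\psi$-ridge functions, since their span contains this bounded ramp.
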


\begin{prop}[Activations with at most linear growth under a non-degeneracy condition]\label{prop_glob_conv_mlp_nondegenerate}
    Let $\sigma$ be twice differentiable with a bounded first derivative and a bounded Lipschitz second derivative. Assume that $\ell$ is the square loss and $\rho_y$ has a finite moment of order $2$, or that $\ell$ is the cross-entropy loss and either $\dout>1$ with $\rho_y$ supported on $\mathcal{B}_{\dout}$ or $\dout=1$ with $\rho_y$ supported on $\{0,1\}$. Further assume that $\rho_x\in\mathcal{P}(\RR^\din)$ has a finite moment of order $6$ and that, for every $\mu\in\mathcal{P}_2(\Omega)$ that does not minimize $F$, there exists a non-degenerate local maximizer of $\theta\mapsto (1/2)|g_{\mu}(\theta)|^2$. Then, provided that the initial distribution of parameters is sub-Gaussian and has full support and that the Wasserstein gradient flow of $F$ converges in $W_2$, the limit of the flow is a global minimizer.
\end{prop}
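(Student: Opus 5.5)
The proof reduces \Cref{prop_glob_conv_mlp_nondegenerate} to \Cref{thm_glob_conv}, with the escape regions supplied by \Cref{cor_esc_reg_nondegenerate}. We therefore need to check \Cref{ass_glob_conv}, the extra $C^{2,1}$ regularity of $\phi$, and well-posedness of the flow.

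\emph{Step 1: \Cref{ass_glob_conv} holds.} The loss part follows from \Cref{lem_ass_loss} in both the square-loss and cross-entropy cases: $R$ is convex and differentiable on $\mathcal{F}=L^2(\rho_x;\RR^{\dout})$, and its differential is Lipschitz and bounded on sublevel sets. For the map $\phi$, note that $\sigma'$ is bounded and Lipschitz, because $\sigma''$ is bounded. Hence $\sigma\in C^{1,1}(\RR)$ with $\sigma'$ bounded. Since $\rho_x$ has a finite moment of order $6$, hence of order $4$, the first part of \Cref{lem_nn_ass_wellp_wgf} gives that $\phi$ is differentiable with a bounded Lipschitz differential. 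The corollary following \Cref{lem_nn_ass_wellp_wgf} then yields \Cref{ass_glob_conv}, and by \Cref{lem_ass_ass} also \Cref{ass_wellp_wgf}.

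\emph{Step 2: $\phi$ is locally $C^{2,1}$ and escape regions exist.} We use the remaining hypotheses: $\sigma$ is twice differentiable with bounded Lipschitz $\sigma''$, and $\rho_x$ has a finite sixth moment. With these, the second part of \Cref{lem_nn_ass_wellp_wgf} shows that $\phi$ is twice differentiable with a bounded Lipschitz second differential. In particular $\phi$ is locally $C^{2,1}$.

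Now take any $\mu\in\mathcal{P}_2(\Omega)$ that does not minimize $F$. By assumption, $\theta\mapsto\frac12|g_\mu(\theta)|^2$ has a non-degenerate local maximizer. \Cref{cor_esc_reg_nondegenerate} then gives that $\mu$ admits an escape region.

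\emph{Step 3: conclude.} The initial distribution is sub-Gaussian, so \Cref{thm_wgf} provides a unique Wasserstein gradient flow $(\mu_t)_{t\ge0}$. It also has full support, and every non-minimizer admits an escape region by Step 2. Therefore \Cref{thm_glob_conv} applies: if $W_2(\mu_t,\mu)\to0$, then $\mu$ is a global minimizer.

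The only step with real content is Step 2, specifically the second part of \Cref{lem_nn_ass_wellp_wgf}. Its proof would follow \cite[Lemma D.2]{chizat2018global} one order higher. One differentiates $x\mapsto\sigma(\langle\theta,(x,1)\rangle)w$ twice in $\theta$, which produces factors $|(x,1)|^2$. The Lipschitz bound on $\sigma''$ contributes one more factor $|(x,1)|$ in the $L^2(\rho_x)$ estimate. Squaring these three factors gives $|(x,1)|^6$, which is integrable precisely because of the sixth-moment assumption. Since the paper states this lemma earlier, I take it as given, and the proposition follows by assembling the results above.
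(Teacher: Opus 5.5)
Your proposal is correct and is exactly the intended argument. The paper writes out no separate proof, but, as in the proofs of \Cref{prop_glob_conv_sigmoid,prop_glob_conv_gelu}, the result follows by combining \Cref{lem_ass_loss} and both parts of \Cref{lem_nn_ass_wellp_wgf} (giving \Cref{ass_glob_conv} and the $C^{2,1}$ regularity of $\phi$), then \Cref{cor_esc_reg_nondegenerate} for the escape regions, and finally \Cref{thm_glob_conv}, with your observations that bounded $\sigma''$ makes $\sigma'$ Lipschitz and that the sixth moment controls $\mathrm{Lip}(d^2\phi)$ supplying the only routine checks.
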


\subsection{Multi-head attention layers}\label{sec_attention_layers}
\subsubsection{Model and conditional convergence results}
We consider a multi-head attention layer taking input contexts of size $n$ composed of tokens in $\RR^d$. The number of attention heads is $m$, and the query, key, and value matrices associated with the $i$-th head are $(Q_i,K_i,V_i)\in (\RR^{k\times d})^3$. In a next-token prediction setting, the implemented predictor is $(1/m)\sum_{i=1}^m V_i\psi(A_i)$, where $A_i=K_i^TQ_i$ and $\psi(A):\RR^{n\times d}\to \RR^d$ is defined by 
\begin{equation*}
    \psi(A):X=\begin{pmatrix}x_1^T~\cdots~x_n^T\end{pmatrix}^T\mapsto \sum\limits_{i=1}^n \frac{e^{\langle Ax_n,x_i\rangle}}{\sum\limits_{j=1}^n e^{\langle Ax_n,x_j\rangle}}x_i=\sum\limits_{i=1}^n \sigma_i(XAx_n)x_i=X^T\sigma(XAx_n),
\end{equation*}
where $\sigma:z\in\RR^n\mapsto (e^{z_i}/(\sum_{j=1}^n e^{z_j}))_{i=1}^n\in \RR^n$ is the softmax function.

This setting is covered by our abstract framework by taking $d_w=kd$, $d_{\theta}=d^2$, $\din=nd$ and $\dout=k$, with $w$ the vectorized version of $V$ and $\theta$ the vectorized version of $A=K^TQ$. The mapping $\phi$ is then defined by the relation $\phi(\theta)w=V\psi(A)$. For this reason, properties of $\psi$ (boundedness, regularity, etc.) directly transfer to $\phi$. Since $\psi$ has a simpler expression, we use it rather than $\phi$ in our computations.

\begin{rem}[Parameterization of attention]
Transformers combine attention with other important operations, including layer normalization, residual connections, and feed-forward blocks. Here we deliberately isolate a single softmax self-attention layer and study the limit of a large number of heads. We also use a slightly relaxed parameterization. If $(Q,K,V)$ are the query, key, and value matrices of one head, its output depends on $Q$ and $K$ only through $A=K^TQ$. Directly optimizing the factors $(Q,K)$ introduces an additional non-convex factorization and gradients that no longer satisfy our global Lipschitz assumptions. We therefore optimize over $(A,V)$ and leave the standard factorized parameterization to future work. When $k<d$, allowing arbitrary $A\in\RR^{d\times d}$ also removes the rank constraint inherited from $A=K^TQ$.
\end{rem}

The first part of the following lemma, whose proof is postponed to \Cref{appendix_examples}, shows that, under a finite moment assumption on the distribution of input contexts $\rho_X\in\mathcal{P}(\RR^{n\times d})$, the regularity assumptions on $\phi$ in \Cref{ass_glob_conv} are satisfied. The second part shows that, under slightly stronger assumptions, $\phi$ is $C^{2,1}$, so that \Cref{prop_esc_reg_nondegenerate} and \Cref{cor_esc_reg_nondegenerate} apply.
\begin{lem}\label{lem_reg_transfo}
    If $\rho_X$ has a finite moment of order $10$, then $\phi$ is differentiable with a bounded Lipschitz differential. Moreover, if $\rho_X$ has a finite moment of order $14$, then $\phi$ is twice differentiable with a bounded Lipschitz second differential.
\end{lem}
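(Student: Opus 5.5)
We will show that $\phi$ is differentiable with a bounded Lipschitz differential by bounding the derivatives of $\psi(A)(X)=X^T\sigma(XAx_n)$ pointwise in $X$, and then integrating against $\rho_X$. Here $\phi(\theta)\in\mathcal{L}(\RR^{kd};L^2(\rho_X;\RR^k))$ acts by $w\mapsto V\psi(A)$, so operator norms are controlled by $\big(\int \|\cdot\|^2 d\rho_X\big)^{1/2}$ of the corresponding pointwise quantities.

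The first step is a chain-rule computation. The map $A\mapsto z=XAx_n$ is linear, with $\|dz\|\le |X|\,|x_n|\le |X|^2$. The softmax $\sigma$ has Jacobian $J_\sigma(z)=\mathrm{diag}(\sigma)-\sigma\sigma^T$, which is bounded independently of $z$; its second and third derivatives are likewise bounded polynomials in $\sigma\in\Delta_n$. Hence:
\begin{itemize}
\item the first derivative satisfies $|d_A\psi(A)(X)[H]|=|X^TJ_\sigma(z)XHx_n|\lesssim |X|^3|H|$;
\item the second derivative satisfies $|d^2_A\psi|\lesssim |X|\cdot|X|^2\cdot|X|^2=|X|^5$;
\item the third derivative satisfies $|d^3_A\psi|\lesssim |X|^7$.
\end{itemize}
Each additional derivative contributes one more factor of $|X|^2$, coming from $dz$.

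The second step passes from these pointwise bounds to the operator bounds. The differential $d\phi(\theta)$ applied to $H$ gives the operator $w\mapsto V\,d\psi(A)[H]$, so
\[
\|d\phi(\theta)\|\lesssim\Big(\int|X|^6d\rho_X\Big)^{1/2},
\]
uniformly in $\theta$. For the Lipschitz bound, the mean value theorem applied pointwise gives
\[
|d\psi(A_1)(X)-d\psi(A_2)(X)|\lesssim |X|^5|A_1-A_2|,
\]
and squaring and integrating requires a moment of order $10$. This is exactly the first hypothesis. Differentiability of $\phi$ as an $L^2$-valued map follows by dominated convergence: the Taylor remainder is bounded pointwise by $|X|^5|H|^2$, which is square integrable.

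The third step handles the $C^{2,1}$ case in the same way. Here $\|d^2\phi\|$ requires $\int|X|^{10}$, and its Lipschitz constant is controlled by $\int|X|^{14}$ via the pointwise third-derivative bound $|X|^7$. This explains the order-$14$ hypothesis.

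The only delicate point is verifying that the higher derivatives of softmax are globally bounded and tracking the powers of $|X|$ correctly. Both follow from writing the derivatives of $\sigma$ as polynomials in the entries of $\sigma$, which lie in $[0,1]$. We also need measurability and the correct $L^2$ Fréchet-differentiability argument, handled by the dominated remainder bounds above. Finally, the $V$-linearity is harmless: $V\mapsto V\psi$ contributes only a factor $|V|=|w|$ in the operator-norm definition.
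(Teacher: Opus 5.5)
Your proposal is correct and follows essentially the same route as the paper. Both arguments bound the derivatives of $A\mapsto X^T\sigma(XAx_n)$ pointwise using the uniform boundedness of the softmax derivatives (the paper's \Cref{lemma_bound_d2sigma}, stated in $\ell^1$/$\ell^\infty$ form), with one factor of order $|X|^2$ per derivative coming from $XHx_n$. Both then integrate the squared Taylor-remainder and Lipschitz estimates against $\rho_X$, which is exactly where the moments of order $10$ and $14$ arise. The only differences are cosmetic: the paper writes its bounds with $\max_i|x_i|$ instead of $|X|$, and it records explicitly that $|\psi(A)(X)|\leq\max_i|x_i|$, so that $\phi(\theta)$ is well defined, a point you take for granted.
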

\begin{cor}
    Assume $\rho_X$ has a finite moment of order $10$. If $\ell$ is the square loss and $\rho_y$ has a finite moment of order $2$, or if $\ell$ is the cross-entropy loss and either $\dout>1$ with $\rho_y$ supported on $\mathcal{B}_{\dout}$ or $\dout=1$ with $\rho_y$ supported on $\{0,1\}$, then \Cref{ass_glob_conv} (and hence \Cref{ass_wellp_wgf}) hold.
\end{cor}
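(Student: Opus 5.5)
The statement immediately above is the corollary. It follows directly from \Cref{lem_reg_transfo} and \Cref{lem_ass_loss}, plus a check that the structural requirements of \Cref{ass_glob_conv} are met by the attention parameterization. The plan is to verify the three parts of \Cref{ass_glob_conv} in turn: the Hilbert space and product structure of $\Omega$, the properties of the loss $R$, and the form and regularity of $\Phi$. Then \Cref{lem_ass_ass} yields \Cref{ass_wellp_wgf}.

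First, set $\mathcal{F}=L^2(\rho_X;\RR^{\dout})$ with $\dout=k$. This is a separable Hilbert space, since $\rho_X$ is a Borel probability measure on the Polish space $\RR^{n\times d}$, and $L^2$ spaces of such measures are separable. The parameter space is $\Omega=\RR^{kd}\times\RR^{d^2}$, with $w=\mathrm{vec}(V)$ and $\theta=\mathrm{vec}(A)$. The map $\Phi(w,\theta)=\phi(\theta)w$ with $\phi(\theta)w:X\mapsto V\psi(A)(X)$ is linear in $w$. To see that $\phi(\theta)\in\mathcal{L}(\RR^{kd};\mathcal{F})$ is well defined and bounded, note that
\[
|V\psi(A)(X)|\leq |V|\,|X^T\sigma(XAx_n)|\leq |V|\,|X|,
\]
because $\sigma(\cdot)$ lies in the simplex. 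Hence $\|\phi(\theta)\|\leq (\int|X|^2d\rho_X)^{1/2}$, which is finite under the order-$10$ moment assumption.

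Second, the loss. In either of the two listed cases, \Cref{lem_ass_loss} states that $R$ is convex and differentiable on $\mathcal{F}$, with a Lipschitz differential that is bounded on sublevel sets. A globally Lipschitz differential is in particular Lipschitz on bounded sets. The only remaining issue is that \Cref{lem_ass_loss} is phrased with $\rho_x$, whereas here the feature variable is the context $X\in\RR^{nd}$, i.e. $\din=nd$. This is a relabeling only, since the lemma holds for any feature distribution. The requirement that $R$ is nonnegative holds for the square loss trivially. For the cross-entropy loss it holds because $\log\sum_i e^{z_i}\geq z_j$ for one-hot $y$, and similarly $\log(1+e^z)\geq yz$ for $y\in\{0,1\}$.

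Third, the regularity of $\phi$. The first part of \Cref{lem_reg_transfo} gives, under the order-$10$ moment assumption, that $\phi$ is differentiable with a bounded Lipschitz differential. This is exactly the remaining condition of \Cref{ass_glob_conv}, so \Cref{ass_glob_conv} holds, and \Cref{lem_ass_ass} then yields \Cref{ass_wellp_wgf}. No step is a genuine obstacle here. The substantive work is in \Cref{lem_reg_transfo}, namely bounding the derivatives of the softmax composed with $X\mapsto XAx_n$ by polynomials in $|X|$, which is where the moment order $10$ enters. The only point requiring care in the corollary itself is the identification of $\mathcal{F}$ and of the feature distribution with those in \Cref{lem_ass_loss}.
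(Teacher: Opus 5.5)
Your proposal is correct and follows the paper's route: the corollary is obtained by combining \Cref{lem_reg_transfo} (regularity of $\phi$ under the order-$10$ moment assumption) with \Cref{lem_ass_loss} applied on $\mathcal{F}=L^2(\rho_X;\RR^{k})$, and then \Cref{lem_ass_ass} yields \Cref{ass_wellp_wgf}. Your additional checks (separability of $\mathcal{F}$, boundedness of $\phi(\theta)$, nonnegativity of the cross-entropy loss) are correct details that the paper leaves implicit.
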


As in the case of fully connected networks, we finally provide two propositions, whose proofs are given in \Cref{appendix_attention}, that summarize the conditional global convergence results one can obtain from \Cref{prop_esc_reg_bounded_scalar,prop_esc_reg_nondegenerate}.

\begin{prop}\label{prop_glob_conv_attention} Let $k=d=1$. Assume that $\ell$ is the square loss and $\rho_y$ has a finite moment of order $2$, or that $\ell$ is the cross-entropy loss and $\rho_y$ is supported on $\{0,1\}$.
If $\rho_X\in\mathcal{P}(\RR^{n\times d})$ has a finite moment of order $10$ and has a bounded continuous density with respect to the Lebesgue measure, then condition \ref{conv_gmu_bounded_scalar} of \Cref{prop_esc_reg_bounded_scalar} holds for every $\mu\in\mathcal{P}_2(\Omega)$. Moreover, suppose that, for every measure that does not minimize $F$, conditions \ref{conv_dgmu_bounded_scalar} and \ref{conv_dgmu_dgnu} hold and condition \ref{reg_val_bounded_scalar} or \ref{reg_val_inf_bounded_scalar} holds, and that the initial distribution of parameters is sub-Gaussian and has full support. Then, if the Wasserstein gradient flow of $F$ converges in $W_2$, its limit is a global minimizer.
\end{prop}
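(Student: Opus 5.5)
The argument splits into two parts: verify condition \ref{conv_gmu_bounded_scalar} of \Cref{prop_esc_reg_bounded_scalar} directly, then chain the earlier abstract results. For $k=d=1$, we have $d_w=1$, $d_\theta=1$, and $\theta=A\in\RR$. A context $X=(x_1,\dots,x_n)\in\RR^n$ gives
\[
\psi(A)(X)=\sum_{i=1}^n \sigma_i(A x_n X)\,x_i .
\]
Moreover $\mathcal{F}=L^2(\rho_X;\RR)$ and
\[
g_\mu(\theta)=\int \psi(\theta)(X)\,r_\mu(X)\,d\rho_X(X),
\]
where $r_\mu:=R'(\int\Phi\,d\mu)\in L^2(\rho_X)$. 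The "sphere" $\SS^0$ is $\{\pm1\}$, so uniform convergence on $\SS^0$ reduces to two pointwise limits as $\theta\to\pm\infty$.

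\textbf{Limit of the attention map.} For $\rho_X$-almost every $X$, the softmax $\sigma(\theta x_nX)$ converges as $\theta\to+\infty$ to the uniform distribution on $\arg\max_i x_nx_i$. As $\theta\to-\infty$ it converges to the uniform distribution on $\arg\min_i x_nx_i$. Because $\rho_X$ has a density, ties among the $x_nx_i$ with $i$ ranging over distinct indices form a Lebesgue-null set. The exception is that index $n$ always appears with the value $x_n^2$, but that value is just one candidate and causes no problem. Hence $\psi(\theta)(X)\to x_{i^\pm(X)}$ almost everywhere, which is the hardmax attention.

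\textbf{Convergence of $g_\mu$.} The pointwise limit comes with the domination $|\psi(\theta)(X)\,r_\mu(X)|\le \max_i|x_i|\,|r_\mu(X)|$. This bound is integrable because $X\in L^2(\rho_X)$ (finite moment of order $10$) and $r_\mu\in L^2(\rho_X)$. By dominated convergence, $g_\mu(\theta)\to g_\mu^\infty(\pm1)$ as $\theta\to\pm\infty$, which is condition \ref{conv_gmu_bounded_scalar}. The boundedness of the density of $\rho_X$ is not needed for this step. It could be used instead to obtain a quantitative rate, in the spirit of \Cref{prop_conv_attention}, but this is not required.

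\textbf{Conclusion.} The lemma on the loss (\Cref{lem_ass_loss}), combined with \Cref{lem_reg_transfo}, gives \Cref{ass_glob_conv}, so the well-posedness result \Cref{thm_wgf} applies to the sub-Gaussian initialization. Next, $\phi$ is bounded: $|\psi(\theta)(X)|\le \max_i|x_i|$, hence $\|\phi(\theta)\|\le \|X\|_{L^2(\rho_X)}$ uniformly in $\theta$. With $d_w=1$, condition \ref{conv_gmu_bounded_scalar} just verified, and the assumed conditions \ref{conv_dgmu_bounded_scalar}, \ref{conv_dgmu_dgnu} and \ref{reg_val_bounded_scalar} or \ref{reg_val_inf_bounded_scalar}, \Cref{prop_esc_reg_bounded_scalar} gives an escape region for every non-minimizing $\mu$. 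When $g_\mu$ is a nonzero constant, the half-space construction mentioned in \Cref{rem_reg_val_bounded_scalar} covers it. Finally, since $\mu_0$ has full support, \Cref{thm_glob_conv} shows that any $W_2$-limit of the flow is a global minimizer.

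The main obstacle is the almost-everywhere uniqueness of the argmax used in the limit step. Uniqueness must be checked separately for the terms $x_nx_i$ with $i<n$ against $x_n^2$; each tie lies in a quadric or hyperplane of measure zero. One must also handle the set $\{x_n=0\}$, where the softmax stays uniform for every $\theta$; there the limit still exists, so it causes no harm.
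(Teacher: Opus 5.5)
Your proposal is correct. It differs from the paper only in how you verify condition \ref{conv_gmu_bounded_scalar}; the remaining chain (\Cref{lem_ass_loss} and \Cref{lem_reg_transfo} give \Cref{ass_glob_conv}, then boundedness of $\phi$, then \Cref{prop_esc_reg_bounded_scalar} and \Cref{thm_glob_conv}) is the same as in the paper.

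\textbf{The paper's route.} It invokes \Cref{prop_conv_attention}, the uniform $L^2(\rho_X)$ convergence of $\psi(rA)$ to the hardmax map $\psi_\infty(A)$ over the whole sphere of matrices. That result uses the bounded continuous density to make the measure of the near-tie region small uniformly in $A$. The paper then applies Cauchy--Schwarz against $f_\mu$ (your $r_\mu$).

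\textbf{Your route.} You observe that $d_w=kd=1$ forces $d_\theta=d^2=1$. The sphere is then $\SS^0=\{\pm1\}$, so uniform convergence reduces to the two limits $\theta\to\pm\infty$. Each limit follows from dominated convergence with the integrable majorant $\max_i|x_i|\,|f_\mu(X)|$. This is more elementary. It also shows that the density hypotheses in the statement are not needed for the first claim. The paper's argument is more than this setting requires, but it is the one that would be needed if uniformity over a continuum of directions were at stake.

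\textbf{A simplification.} The almost-everywhere uniqueness of the argmax, which you flag as the main obstacle, is not needed. For every $z\in\RR^n$, ties included, one has $\sigma(rz)\to\mathbf{1}_{\Sigma(z)}/\#\Sigma(z)$. Hence $\psi(\pm r)(X)\to\psi_\infty(\pm1)(X)$ for every $X$, and no null-set argument or absolute continuity of $\rho_X$ is required.
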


\begin{prop}\label{prop_glob_conv_attention_bis}
Assume that $\ell$ is the square loss and $\rho_y$ has a finite moment of order $2$, or that $\ell$ is the cross-entropy loss and either $\dout>1$ with $\rho_y$ supported on $\mathcal{B}_{\dout}$ or $\dout=1$ with $\rho_y$ supported on $\{0,1\}$. Further assume that $\rho_X\in\mathcal{P}(\RR^{n\times d})$ has a finite moment of order $14$ and that, for every $\mu\in\mathcal{P}_2(\Omega)$ that does not minimize $F$, there exists a non-degenerate local maximizer of $\theta\mapsto (1/2)|g_{\mu}(\theta)|^2$. Then, provided that the initial distribution of parameters is sub-Gaussian and has full support and that the Wasserstein gradient flow of $F$ converges in $W_2$, the limit of the flow is a global minimizer.
\end{prop}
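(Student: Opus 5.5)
The statement is \Cref{prop_glob_conv_attention_bis}, the attention analogue of \Cref{prop_glob_conv_mlp_nondegenerate}. The plan is to check the hypotheses of the abstract results and then chain them: \Cref{lem_ass_loss}, \Cref{lem_reg_transfo}, \Cref{cor_esc_reg_nondegenerate}, \Cref{thm_wgf}, and finally \Cref{thm_glob_conv}.

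First, the loss. \Cref{lem_ass_loss} applies on $\mathcal{F}=L^2(\rho_X;\RR^{\dout})$ in both the square-loss case and the cross-entropy case. It shows that $R$ is convex and differentiable, and that its differential is Lipschitz and bounded on sublevel sets.

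Second, the model. Since $\rho_X$ has a finite moment of order $14$, it also has a finite moment of order $10$. The first part of \Cref{lem_reg_transfo} then gives that $\phi$ is differentiable with a bounded Lipschitz differential. Together with the form $\Phi(w,\theta)=\phi(\theta)w$, where $w=\mathrm{vec}(V)$ and $\theta=\mathrm{vec}(A)$, this means \Cref{ass_glob_conv} holds, and hence \Cref{ass_wellp_wgf} holds as well. So, by \Cref{thm_wgf}, the Wasserstein gradient flow from the sub-Gaussian initialization exists and is unique, and the flow in the statement is well defined.

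The second part of \Cref{lem_reg_transfo} uses the moment of order $14$. It shows that $\phi$ is twice differentiable with a bounded Lipschitz second differential, so $\phi$ is (globally, hence locally) $C^{2,1}$. One point needs care here. The corollary needs $C^{2,1}$ regularity of $\phi$ as a map into $\mathcal{L}(\RR^{d_w};\mathcal{F})$. What it actually uses is $C^{2,1}$ regularity of $g_\mu(\theta)=\phi(\theta)^*R'(\int\Phi\,d\mu)$ and of its perturbations $g_\nu$. This follows because $R'(\int\Phi\,d\nu)$ is a fixed element of $\mathcal{F}$ and the adjoint is a bounded linear operation.

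Third, the escape regions. Take any $\mu\in\mathcal{P}_2(\Omega)$ that does not minimize $F$. By hypothesis, $\theta\mapsto\frac12|g_\mu(\theta)|^2$ has a non-degenerate local maximizer, so \Cref{cor_esc_reg_nondegenerate} gives an escape region for $\mu$. Every non-minimizer therefore admits an escape region.

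Finally, the initialization $\mu_0$ has full support and the flow converges in $W_2$ to some $\mu$. \Cref{thm_glob_conv} then says $\mu$ is a global minimizer.

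The main obstacle is \Cref{lem_reg_transfo} itself, if it is not taken as given. It requires uniform bounds, over all $A$, on the first and second derivatives of $A\mapsto X^T\sigma(XAx_n)$, together with Lipschitz bounds on those derivatives. Each derivative in $A$ brings out polynomial factors in $X$, namely products $x_ix_n^T$. The Jacobian and the higher derivatives of the softmax are bounded uniformly, so these factors are the only source of growth. Integrating their squares against $\rho_X$ is what produces the moment orders $10$ and $14$. Since the lemma is stated earlier in the paper, I would simply invoke it, and the remaining argument is bookkeeping.
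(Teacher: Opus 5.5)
Your proposal is correct and matches the argument the paper intends. The paper gives no separate written proof of this statement; your chain mirrors its proof of \Cref{prop_glob_conv_attention}, with \Cref{lem_ass_loss} and \Cref{lem_reg_transfo} supplying \Cref{ass_glob_conv} and the local $C^{2,1}$ regularity, \Cref{cor_esc_reg_nondegenerate} taking the place of \Cref{prop_esc_reg_bounded_scalar} to give an escape region for every non-minimizer, and \Cref{thm_wgf} together with \Cref{thm_glob_conv} concluding.
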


\subsubsection{Uniform convergence to hardmax attention} 
In this subsection, motivated by conditions \ref{conv_gmu_bounded_scalar} and \ref{conv_dgmu_bounded_scalar} of \Cref{prop_esc_reg_bounded_scalar}, we discuss the convergence of softmax attention to hardmax attention. More precisely, we study the convergence, as $A$ becomes large, of the parameter-to-predictor mapping associated with softmax attention to the corresponding mapping for hardmax attention. Below, we prove a uniform convergence result for the mapping itself. To our knowledge, this result is new and could be of independent interest. We conjecture that a similar result for its differential holds, but leave its proof for future work.

Given $z\in\RR^n$, we define $z_{\max}\eqdef\max_{1\leq i\leq n}z_i$ and $\Sigma(z)\eqdef\{i\in\{1,\ldots,n\}\,\rvert\,z_i=z_{\max}\}$. We also define $\sigma_r:z\mapsto \sigma(rz)$ and $\sigma_{\infty}:z\mapsto \lim_{r\to+\infty}\sigma(rz)=(1/\#\Sigma(z))\mathbf{1}_{\Sigma(z)}$, where $\mathbf{1}_{\Sigma(z)}$ is the $n$-dimensional vector with $i$-th entry $1$ if $i\in\Sigma(z)$ (that is, if $z_i=z_{\max}$) and $0$ otherwise. Abusing notation, we denote by $\mathbb{S}^{d^2-1}$ the set of $d\times d$ matrices with $\|A\|=1$, where $\|\cdot\|$ is the Frobenius norm. Finally, we define $\psi_r$ and $\psi_{\infty}$ on $\mathbb{S}^{d^2-1}$ by $\psi_r(A)=\psi(rA)$ and
\[
    \psi_{\infty}(A)(X)=\lim_{r\to+\infty}\psi(rA)(X)=\sum\limits_{i=1}^n [\sigma_{\infty}(XAx_n)]_i x_i=\frac{1}{\#\Sigma(XAx_n)}\sum\limits_{i\in \Sigma(XAx_n)} x_i.
\]
We stress that, when $\Sigma(XAx_n)=\{i\}$ for some $i\in\{1,\ldots,n\}$ (that is, when $i$ is the only index for which $\langle Ax_n,x_j\rangle$ is maximal), then $\psi_{\infty}(A)(X)=x_i$.

We can now state the following proposition, whose proof is postponed to \Cref{appendix_attention}.
\begin{prop}\label{prop_conv_attention}
    Assume that $\rho_X$ has a bounded continuous density with respect to the Lebesgue measure on $\RR^{n\times d}$ and has a finite moment of order $2$. Then
    \begin{equation*}
        \underset{r\to+\infty}{\lim}\sup_{A\in\mathbb{S}^{d^2-1}} \|\psi(rA)-\psi_\infty(A)\|_{L^2(\rho_X;\RR^d)}=0.
    \end{equation*}
\end{prop}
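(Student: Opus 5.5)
We write the quantity to control as an integral over contexts. For $A\in\mathbb{S}^{d^2-1}$ and $r>0$,
\[
    \|\psi(rA)-\psi_\infty(A)\|^2_{L^2(\rho_X;\RR^d)}=\int \big|X^T(\sigma(rXAx_n)-\sigma_\infty(XAx_n))\big|^2 d\rho_X(X).
\]
Since $\sigma$ and $\sigma_\infty$ take values in the simplex, the integrand is bounded by $4\|X\|^2$. The finite second moment makes this integrable uniformly in $A$. So for every $\delta>0$ we can fix $R>0$ such that
\[
    \int_{\|X\|>R}4\|X\|^2d\rho_X\leq\delta,
\]
and it suffices to treat contexts in the ball $\{\|X\|\leq R\}$. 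There the integrand is at most $4R^2|\sigma(rz)-\sigma_\infty(z)|^2$, where $z=XAx_n$.

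The key elementary estimate concerns the gap $\gamma(z)=z_{\max}-\max_{i\notin\Sigma(z)}z_i$, with $\gamma=+\infty$ if every index is maximal. For such $z$,
\[
    |\sigma(rz)-\sigma_\infty(z)|_1\leq 2n e^{-r\gamma(z)}.
\]
To see this, note that $\sigma(rz)$ is uniform on $\Sigma(z)$ up to the mass $\sum_{i\notin\Sigma}e^{r(z_i-z_{\max})}/\#\Sigma\leq ne^{-r\gamma}$ sitting on the non-maximal indices. Consequently, on the set $G_\tau(A)=\{\|X\|\leq R,\ \gamma(XAx_n)\geq\tau\}$ the error is at most $4R^2(2ne^{-r\tau})^2$, uniformly in $A$. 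This tends to $0$ as $r\to\infty$ for each fixed $\tau>0$.

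It remains to show that the bad set $B_\tau(A)=\{\|X\|\leq R,\ 0<\gamma(XAx_n)<\tau\}$ has small $\rho_X$-mass uniformly in $A$. This is the main obstacle, and it is where the density hypothesis enters. Note that $\{0<\gamma<\tau\}$ is contained in
\[
    \bigcup_{i\neq j}\{0<|\langle Ax_n,x_i-x_j\rangle|<\tau\}\cup\{\text{ties }\langle Ax_n,x_i\rangle=\langle Ax_n,x_j\rangle\},
\]
and tied indices are harmless because $\gamma$ ignores them. Since $\rho_X$ has a bounded density $p\leq C$, it suffices to bound the Lebesgue measure of
\[
    S_{ij}(A)=\{\|X\|\leq R,\ |\langle Ax_n,x_i-x_j\rangle|<\tau\}.
\]
We handle this with a slicing argument. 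Suppose first that $i,j\neq n$ (WLOG $i\neq n$). Fix $x_n$ and all tokens other than $x_i$. The constraint then confines $x_i$ to a slab of width $2\tau/|Ax_n|$ in the direction $Ax_n$, so its measure is at most
\[
    c_R\min\big(1,\tau/|Ax_n|\big).
\]
We then integrate over $x_n$ in the ball of radius $R$. Because $\|A\|=1$, the set $\{|Ax_n|<s\}$ has measure $\leq c_R s^{\,\cdot}$ going to $0$ as $s\to0$ uniformly over the compact sphere, by continuity of $A\mapsto$ Lebesgue measure of $\{|Ax|\le s, |x|\le R\}$ together with compactness. Hence $\sup_A|S_{ij}(A)|\to0$ as $\tau\to0$. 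The case where one of the two indices is $n$ gives the quadratic form
\[
    \langle Ax_n,x_n-x_j\rangle,
\]
which is linear in $x_j$ for $j\neq n$, so it is handled identically by slicing in $x_j$.

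Collecting the three pieces gives
\[
    \limsup_{r\to\infty}\sup_A\|\psi(rA)-\psi_\infty(A)\|^2\leq\delta+4R^2\cdot 4\cdot C\sup_A|B_\tau(A)|.
\]
Letting $\tau\to0$ and then $\delta\to0$ concludes.

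The delicate step is making the small-$|Ax_n|$ estimate uniform in $A$. If needed, I would instead use that $x\mapsto|Ax|$ is a nonzero quadratic form's square root: for each $A$ the set $\{|Ax|<s,\ |x|\leq R\}$ has measure tending to $0$ as $s\to0$. Dini-type compactness over $\mathbb{S}^{d^2-1}$ then gives uniformity, since these measures are monotone in $s$ and upper semicontinuous in $A$.
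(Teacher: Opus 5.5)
Your proposal is correct and takes essentially the paper's route. You truncate to bounded contexts using the second moment, apply an exponential softmax bound where the top gap is at least $\tau$, and bound the pairwise slabs $\{|\langle Ax_n,x_i-x_j\rangle|<\tau\}$ by slicing in the non-$n$ token. The minor differences: the paper couples $\eta=r^{-1/2}$ and puts tied maxima in the bad set, whereas your gap relative to $\Sigma(z)$ absorbs ties. The paper also gets uniformity in $A$ explicitly from $|Ax|\geq d^{-1/2}|\langle x,u_A\rangle|$, giving an $O(\tau(1+|\log\tau|))$ bound, where your Dini/upper-semicontinuity argument gives the same uniform convergence non-quantitatively.
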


If $\rho_X$ has a finite moment of order $3$, one can show that for every bounded and continuous function ${f:\RR^{n\times d}\to \RR^d}$ the mapping $g_f:A\mapsto \int_{\RR^{n\times d}}\langle f(X),\psi(A)(X)\rangle d\rho_X(X)$ is of class $C^1$ and 
\begin{equation*}
    \nabla g_f(A)=\int_{\RR^{n\times d}}\sum\limits_{i=1}^n\sum\limits_{\substack{j=1\\j\neq i}}^n \sigma_i(XAx_n)\sigma_j(XAx_n)\langle f(X),x_i\rangle (x_i-x_j)x_n^T d\rho_X(X).
\end{equation*}

When $r\to+\infty$, we conjecture that, under suitable assumptions on $\rho_X$ and, in particular, assuming that $\rho_X$ has density $p_X$ with respect to the Lebesgue measure, the mapping $A\mapsto r\nabla g_f(rA)$ converges uniformly on $\mathbb{S}^{d^2-1}$ to
    \begin{equation}\label{limit_grad_attention}
        A\mapsto\sum\limits_{i=1}^n\sum\limits_{\substack{j=1\\j\neq i}}^n \int_{\Gamma_{ij}(A)}\frac{1}{\alpha_{ij}(A,X)}\langle f(X),x_i\rangle p_X(X)(x_i-x_j)x_n^T d\mathcal{H}^{nd-1}(X),
    \end{equation}
    where
    \begin{equation*}
        \Gamma_{ij}(A)=\{X\in\RR^{n\times d}\,\rvert\,\langle Ax_n,x_i-x_j\rangle=0~\mathrm{and}~\langle Ax_n,x_i-x_k\rangle\geq 0~\mathrm{for~all}~k\in\{1,...,n\}\}
    \end{equation*}
    and
    \begin{equation}\label{eq_jac_coarea}
        \alpha_{ij}(A,X)=|\nabla_X \langle Ax_n,x_i-x_j\rangle|=\begin{cases}\sqrt{2|Ax_n|^2+|A^T(x_i-x_j)|^2}&\mathrm{if}~i\neq n~\mathrm{and}~j\neq n,\\\sqrt{|Ax_n|^2+|A^T(x_n-x_j)+Ax_n|^2}&\mathrm{if}~i=n,\\\sqrt{|Ax_n|^2+|A^T(x_i-x_n)-Ax_n|^2}&\mathrm{if}~j=n.\end{cases}
    \end{equation}
The main difficulty in obtaining a rigorous proof of this result is that the integrand in \eqref{limit_grad_attention} is singular when $Ax_n=0$ and $A^T(x_i-x_j)=0$. As a result, one has to ensure that $p_X$ vanishes sufficiently fast around these points to obtain uniform integrability in $A\in\mathbb{S}^{d^2-1}$.

\section{Conclusion}
In this work, we studied conditional global convergence of gradient flow for wide shallow models. We provided a blueprint for ruling out convergence in $W_2$ to non-global measures, based on the construction of an escape region in parameter space. We completed the escape-region construction for bounded nonlinearities and scalar output weights proposed in \cite{chizat2018global}. We also proposed a new construction for nonlinearities with at most linear growth under a non-degeneracy assumption, and proved the existence of an escape region for asymptotically positively $1$-homogeneous nonlinearities.

A better understanding of the instability mechanism for bounded nonlinearities with vector output weights is a major open question. Even if one could ensure that the non-degeneracy assumption holds under mild assumptions on the learning task, the escape region in this case is, in some sense, much smaller than in the other examples. Proving the existence of an escape region similar to the scalar case would be desirable.

There are several other directions for future work. Concerning the training of attention layers, other variants of attention, such as masked attention, could be considered. An interesting but challenging question is to study the traditional $(Q,K,V)$ parameterization, instead of optimizing over $(A,V)$ where $A=K^TQ$. One could also investigate whether the abstract framework in which our results are proved can handle contexts with variable, and possibly arbitrarily large, size. For the training of transformers, the Adam algorithm seems to significantly outperform gradient descent. Studying its global convergence properties is a highly interesting challenge.

\section*{Acknowledgments}

The work of G. Peyr\'e was supported by the French government under the management of Agence Nationale de la Recherche as part of the ``Investissements d’avenir'' program, reference ANR-19-P3IA-0001 (PRAIRIE 3IA Institute). The work of G. Peyr\'e and R. Petit was supported by the European Research Council (ERC project WOLF).


\bibliography{ref}
\bibliographystyle{alpha}
\clearpage

\appendix

\section{Auxiliary results}\label{appendix_examples}
\subsection{Regularity of the lifted feature map}
We first verify that the product parametrization used throughout \Cref{sec_glob_conv} satisfies \Cref{ass_wellp_wgf} under a bounded-Lipschitz regularity assumption on $\phi$.
\begin{lem}\label{lem_ass_ass}
    Assume that $\Omega=\RR^{d_w}\times\RR^{d_{\theta}}$ and that $\Phi:\Omega\to\mathcal{F}$ has the form $\Phi(w,\theta)=\phi(\theta)w$ where $\phi(\theta)\in \mathcal{L}(\RR^{d_w};\mathcal{F})$. If $\phi$ is differentiable with a bounded Lipschitz differential, then \Cref{ass_wellp_wgf} holds.
\end{lem}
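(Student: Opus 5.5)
We need to check two conditions. The first is linear growth of $d\Phi$; the second is local Lipschitz continuity of $d\Phi$ with constant $(1+r)$. Differentiability of $\Phi$ itself is immediate, since it is a product of a differentiable map $\phi$ and a linear evaluation.

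The differential of $\Phi(w,\theta)=\phi(\theta)w$ at $u=(w,\theta)$ acts on a direction $(\delta w,\delta\theta)$ by
\[
d\Phi(w,\theta)(\delta w,\delta\theta)=\phi(\theta)\delta w+\big(d\phi(\theta)\delta\theta\big)w .
\]
Here $d\phi(\theta)\in\mathcal{L}(\RR^{d_\theta};\mathcal{L}(\RR^{d_w};\mathcal{F}))$. I would verify this formula directly. Write the increment as
\[
\phi(\theta+\delta\theta)(w+\delta w)-\phi(\theta)w
=\phi(\theta)\delta w+\big(\phi(\theta+\delta\theta)-\phi(\theta)\big)w+\big(\phi(\theta+\delta\theta)-\phi(\theta)\big)\delta w .
\]
The last term is $o(|\delta w|+|\delta\theta|)$ because $\phi$ is continuous (indeed Lipschitz).

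Set $L:=\sup\|d\phi\|$, and let $L'$ denote the Lipschitz constant of $d\phi$. The mean value inequality gives $\|\phi(\theta)\|\le\|\phi(0)\|+L|\theta|$, so $\phi$ has linear growth. Then
\[
\|d\Phi(w,\theta)\|\le\|\phi(\theta)\|+L|w|\lesssim 1+|\theta|+|w|\lesssim 1+|u| .
\]
This gives condition (i).

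For condition (ii), take $u_i=(w_i,\theta_i)\in B(0,r)$ and split the difference as
\[
\big(\phi(\theta_1)-\phi(\theta_2)\big)\delta w+\big(d\phi(\theta_1)\delta\theta\big)(w_1-w_2)+\big((d\phi(\theta_1)-d\phi(\theta_2))\delta\theta\big)w_2 .
\]
The three terms are bounded respectively by $L|\theta_1-\theta_2||\delta w|$, by $L|w_1-w_2||\delta\theta|$, and by $L'|\theta_1-\theta_2||\delta\theta|\,r$. Summing,
\[
\|d\Phi(u_1)-d\Phi(u_2)\|\lesssim (1+r)|u_1-u_2| .
\]

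No step is a real obstacle. The only care needed is keeping track of the operator-norm identifications, in particular bounding $\|(A\delta\theta)w\|\le\|A\||\delta\theta||w|$. The factor $r$ in (ii) comes precisely from the unbounded $w_2$ multiplying the Lipschitz increment of $d\phi$.
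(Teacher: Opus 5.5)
Your proposal is correct and follows essentially the same route as the paper: the same formula $d\Phi(w,\theta)\cdot(\delta w,\delta\theta)=\phi(\theta)\delta w+(d\phi(\theta)\delta\theta)w$, linear growth from the boundedness of $d\phi$, and the same three-term splitting of $d\Phi(u_1)-d\Phi(u_2)$. The only difference is that the paper attaches the Lipschitz increment of $d\phi$ to $w_1$ and $d\phi(\theta_2)$ to $w_1-w_2$ while you swap the indices, which is immaterial; your explicit mean value bound $\|\phi(\theta)\|\le\|\phi(0)\|+L|\theta|$ also makes precise a step the paper leaves implicit.
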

\begin{proof}
    We have
    \[
        d\Phi(w,\theta)\cdot (w',\theta')=\phi(\theta)w'+[d\phi(\theta)\cdot\theta']w.
    \]
    As a result, $\|d\Phi(w,\theta)\|\leq (\|\phi(\theta)\|^2+|w|^2\|d\phi(\theta)\|^2)^{1/2}$. Since $d\phi$ is bounded and Lipschitz, $\|d\Phi(u)\|\lesssim 1+|u|$. Defining $\Delta=[d\Phi(w_1,\theta_1)-d\Phi(w_2,\theta_2)]\cdot (w',\theta')$, we also have
    \[
        \Delta=[\phi(\theta_1)-\phi(\theta_2)]w'
        +[(d\phi(\theta_1)-d\phi(\theta_2))\cdot\theta']w_1
        +[d\phi(\theta_2)\cdot\theta'](w_1-w_2).
    \]
    As a consequence, we obtain
    \begin{equation*}
        \|d\Phi(w_1,\theta_1)-d\Phi(w_2,\theta_2)\|\leq \sqrt{\|d\phi\|_{\infty}^2 |\theta_1-\theta_2|^2+(\mathrm{Lip}(d\phi)|\theta_1-\theta_2||w_1|+\|d\phi\|_{\infty}|w_1-w_2|)^2}
    \end{equation*}
    and a direct computation concludes the proof.
\end{proof}

\subsection{Standard loss functions}
We now prove \Cref{lem_ass_loss}, which states that the square loss and the cross-entropy loss are both compatible with \Cref{ass_wellp_wgf,ass_glob_conv}.
\begin{proof}[Proof of \Cref{lem_ass_loss}]
    We argue as in the proof of \cite[Lemma D.1]{chizat2018global}. First, a direct computation shows that, in both cases, the integral in the expression of $R(f)$ is finite for every $f\in\mathcal{F}$. The convexity of $R$ directly follows from the fact that, for every $y\in\RR^{\dout}$, the mapping $z\mapsto \ell(z,y)$ is convex.
    
    In both cases, $z\mapsto \ell(z,y)$ is differentiable for every $y$ and there exists $L>0$ such that, for every $(z_1,z_2,y)\in(\RR^{\dout})^3$, ${|\nabla_z \ell(z_1,y)-\nabla_z\ell(z_2,y)|\leq L|z_1-z_2|}$. To see this, notice that ${\nabla_z \ell(z,y)=z-y}$ for the square loss, so that we can take $L=1$. For the cross-entropy loss with $\dout>1$, ${\nabla_z \ell(z,y)=-y+\sigma(z)}$, where $\sigma$ denotes the softmax function. For the binary case $\dout=1$, the derivative is $\nabla_z\ell(z,y)=\sigma_{\log}(z)-y$, where $\sigma_{\log}(z)=e^z/(1+e^z)$. In both cross-entropy cases, one can take $L=2$.
    
    We now prove that the differential of $R$ is given by
    \[
        dR(f)\cdot h=\int_{\RR^{\din}\times\RR^{\dout}}\langle \nabla_z\ell(f(x),y),h(x)\rangle d\rho(x,y).
    \]
    We have
    \begin{equation*}
        \begin{aligned}
            \Delta(h)&\eqdef|R(f+h)-R(f)-dR(f)\cdot h|\\
            &=\textstyle|\int_{\RR^{\din}\times\RR^{\dout}}[\ell(f(x)+h(x),y)-\ell(f(x),y)-\langle \nabla_z \ell(f(x),y),h(x)\rangle]d\rho(x,y)|.
        \end{aligned}
    \end{equation*}
    Using that $|\ell(z+z',y)-\ell(z,y)- \nabla_z\ell(z,y)\cdot z'|\leq L |z'|^2/2$, we obtain $\Delta(h)\leq (L/2)\|h\|^2_{\mathcal{F}}$. A similar computation also shows that $dR$ is $L$-Lipschitz.

    Finally, we claim that $|\nabla_z\ell(z,y)|^2\leq 2\ell(z,y)$. This is immediate for the square loss. For the multiclass cross-entropy loss, since $\rho_y$ is supported on $\mathcal{B}_k$ with $k=\dout$, it is enough to show that property when $y\in\mathcal{B}_k$, say $y=e_q$. We use $|a-1|^2\leq -\log(a)$ if $a\in(0,1)$ to obtain 
    \begin{equation*}
        \begin{aligned}
            |\sigma(z)-y|^2&=\Bigg|\frac{e^{z_q}}{\sum_{j=1}^k e^{z_j}}-1\Bigg|^2 + \sum\limits_{i\neq q}\Bigg|\frac{e^{z_i}}{\sum_{j=1}^k e^{z_j}}\Bigg|^2\\
            &\leq \Bigg|\frac{e^{z_q}}{\sum_{j=1}^k e^{z_j}}-1\Bigg|^2 + \Bigg|\sum\limits_{i\neq q}\frac{e^{z_i}}{\sum_{j=1}^k e^{z_j}}\Bigg|^2\\
            &= \Bigg|\frac{e^{z_q}}{\sum_{j=1}^k e^{z_j}}-1\Bigg|^2 + \Bigg|\frac{e^{z_q}}{\sum_{j=1}^k e^{z_j}}-1\Bigg|^2\\
            &\leq -2\log(\sigma_q(z))=2\ell(z,y).
        \end{aligned}
    \end{equation*}
    In the binary cross-entropy case, writing $\sigma_{\log}(z)=e^z/(1+e^z)$, the same conclusion follows from $\sigma_{\log}(z)^2\leq -\log(1-\sigma_{\log}(z))=\ell(z,0)$ and $(1-\sigma_{\log}(z))^2\leq -\log(\sigma_{\log}(z))=\ell(z,1)$.
    To conclude, we notice that
    \begin{equation*}
        \begin{aligned}
            |dR(f)\cdot h|&\textstyle\leq \int_{\RR^{\din}\times\RR^{\dout}}|\nabla_z\ell(f(x),y)||h(x)| d\rho(x,y)\\
            &\textstyle \leq \|h\|_{\mathcal{F}} \sqrt{\int_{\RR^{\din}\times\RR^{\dout}}|\nabla_z\ell(f(x),y)|^2 d\rho(x,y)}\\
            &\leq \|h\|_{\mathcal{F}}\sqrt{2R(f)},
        \end{aligned}
    \end{equation*}
    so that $dR$ is bounded on sublevel sets of $R$.
\end{proof}

\subsection{Lagrangian representation of continuity equations}
The next lemma connects the Eulerian continuity equation with its characteristic flow. It is used in the proofs of both \Cref{thm_wgf,thm_glob_conv}: for the former, it reduces uniqueness to uniqueness of the characteristic fixed point; for the latter, it propagates full support along the flow.
\begin{lem}[Lagrangian representation]\label{lem_lagrangian_representation}
Let $T>0$ and let $\mu\in AC^2([0,T];\mathcal{P}_2(\Omega))$ solve a continuity equation $\partial_t\mu_t+\operatorname{div}(v_t\mu_t)=0$, where $v$ is continuous, locally Lipschitz in space uniformly on compact subsets of $[0,T]\times\Omega$, has linear growth in space uniformly in time, and belongs to $L^2(\mu_tdt)$. If $X_{s,t}$ is the two-parameter flow of $v$, then
\[
    \mu_t=(X_{s,t})_{\#}\mu_s\qquad\text{for every }s,t\in[0,T].
\]
Moreover, $X_{s,t}$ is a homeomorphism of $\Omega$ and $X_{s,t}^{-1}=X_{t,s}$.
\end{lem}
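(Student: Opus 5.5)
The plan is to use the standard Cauchy--Lipschitz theory for the characteristic ODE and then uniqueness for the continuity equation with a locally Lipschitz field. I proceed in three steps: build the flow, identify $\mu_t$ with the pushforward, and derive the group and homeomorphism properties.

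\emph{Step 1 (flow).} Since $v$ is continuous and locally Lipschitz in space uniformly on compact sets, the ODE $\dot x=v_t(x)$ with $x(s)=x_0$ has a unique maximal solution. The growth bound $|v_t(x)|\le C(1+|x|)$ with $C$ uniform in $t$, combined with Gr\"onwall's lemma, gives $|x(t)|\le (|x_0|+CT)e^{CT}$, so solutions exist on all of $[0,T]$, in both time directions. Thus $X_{s,t}(x_0):=x(t)$ is well defined for all $s,t\in[0,T]$.

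\emph{Step 2 (group and homeomorphism properties).} Uniqueness of solutions gives the cocycle identity $X_{t,r}\circ X_{s,t}=X_{s,r}$ and $X_{s,s}=\mathrm{id}$, hence $X_{t,s}\circ X_{s,t}=\mathrm{id}$. Continuity of $X_{s,t}$ comes from Gr\"onwall on compact sets. Initial points in a ball $B(0,\rho)$ keep their trajectories inside a fixed ball $B(0,\rho')$ by the a priori bound, and on that ball $v$ is $L$-Lipschitz, which yields $|X_{s,t}(x)-X_{s,t}(y)|\le e^{L|t-s|}|x-y|$. So $X_{s,t}$ is a continuous bijection with continuous inverse $X_{t,s}$, that is, a homeomorphism.

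\emph{Step 3 (identification $\mu_t=(X_{s,t})_\#\mu_s$).} This is the substantive step.
\begin{itemize}
\item \emph{The pushforward solves the equation.} Set $\nu_t:=(X_{s,t})_\#\mu_s$. For $\varphi\in C_c^\infty(\Omega)$ we have $\frac{d}{dt}\int\varphi\,d\nu_t=\int\langle\nabla\varphi(X_{s,t}),v_t(X_{s,t})\rangle d\mu_s=\int\langle\nabla\varphi,v_t\rangle d\nu_t$, so $\nu_t$ solves the continuity equation with $\nu_s=\mu_s$. The linear-growth bound and $\mu_s\in\mathcal{P}_2$ keep second moments finite.
\item \emph{Uniqueness, by duality.} I would run the argument of \cite[Prop.~8.1.7--8.1.8]{ambrosioGradientFlowsMetric2009}. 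Fix $t$ and $\varphi\in C_c^\infty$, and define $\psi(r,x):=\varphi(X_{r,t}(x))$. This function solves the backward transport equation $\partial_r\psi+\langle v_r,\nabla\psi\rangle=0$ and has compact support uniformly in $r$, by the a priori bounds applied to the backward flow. It is Lipschitz rather than smooth, so I would mollify $v$ in space, or equivalently $\psi$. Testing the equation for $\mu$ against $\psi$ then gives $\int\varphi\,d\mu_t=\int\psi(s,\cdot)\,d\mu_s=\int\varphi\circ X_{s,t}\,d\mu_s$.
\item \emph{Justifying the test.} The integrability $v\in L^2(\mu_t\,dt)$ together with the local Lipschitz bounds lets me pass to the limit in the mollification, since the error is controlled on a fixed compact set. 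This is the main technical obstacle. Alternatively, I could invoke the superposition principle \cite[Thm.~8.2.1]{ambrosioGradientFlowsMetric2009}: $\mu$ is carried by integral curves of $v$, and these are unique, so they coincide with $t\mapsto X_{s,t}(x)$. That route bypasses the mollification.
\end{itemize}

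Combining the steps gives $\mu_t=(X_{s,t})_\#\mu_s$ for every $s,t\in[0,T]$, together with the homeomorphism property and $X_{s,t}^{-1}=X_{t,s}$.
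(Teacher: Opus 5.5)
Your proof is correct. Steps 1--2 match the paper, which also gets the homeomorphism property from continuous dependence on initial data and backward solvability of the characteristic ODE.

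For the identification $\mu_t=(X_{s,t})_{\#}\mu_s$, the paper uses exactly your fallback. The superposition principle represents $\mu$ by a measure on absolutely continuous integral curves of $v$ with time marginals $\mu_t$. Uniqueness of characteristics then forces $\gamma_t=X_{s,t}(\gamma_s)$, and taking marginals concludes. That route takes two lines but rests on a deep theorem. It is also where the hypothesis $v\in L^2(\mu_t\,dt)$ enters, since the superposition principle needs $\int_0^T\!\int|v_t|^p\,d\mu_t\,dt<\infty$ for some $p>1$.

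Your primary duality route is more elementary and does not actually use that hypothesis. By linear growth, the backward flow maps $\mathrm{supp}\,\varphi$ into a fixed compact set, so $\psi(r,\cdot)=\varphi\circ X_{r,t}$ is supported where $v$ is bounded. Narrow continuity of $t\mapsto\mu_t$ then lets you test on $[s,t]$.

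The cost is the regularization step, and there ``mollify $v$, or equivalently $\psi$'' is loose. These are two different arguments, though both work.
\begin{itemize}
\item \emph{Mollifying $\psi$ in $(r,x)$.} The locally Lipschitz $\psi$ satisfies $\partial_r\psi+\langle v_r,\nabla\psi\rangle=0$ at every differentiability point, because it is constant along $r\mapsto(r,X_{s,r}(x))$. Hence the commutator $\partial_r\psi^\epsilon+\langle v_r,\nabla\psi^\epsilon\rangle$ is bounded pointwise by $\|\nabla\psi\|_\infty$ times the oscillation of $v$ at scale $\epsilon$ on a compact set, so it tends to zero \emph{uniformly}. This uniformity is what lets you integrate against possibly singular $\mu_r$ (e.g.\ empirical measures), where an $L^1(dx)$ commutator bound would be useless.
\item \emph{Mollifying $v$.} Here $\varphi\circ X^\epsilon_{r,t}$ is $C^1$ with Lipschitz bounds uniform in $\epsilon$. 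The error $\int_s^t\!\int\langle v_r-v_r^\epsilon,\nabla\psi_r^\epsilon\rangle\,d\mu_r\,dr$ vanishes by local uniform convergence of $v^\epsilon$ to $v$.
\end{itemize}

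Finally, your first bullet in Step 3 is not needed: the duality computation gives $\int\varphi\,d\mu_t=\int\varphi\circ X_{s,t}\,d\mu_s$ directly.
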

\begin{proof}
The superposition principle for continuity equations represents $\mu$ by a probability measure on absolutely continuous paths solving $\dot\gamma_t=v_t(\gamma_t)$, with time-$t$ marginal $\mu_t$. The spatial local-Lipschitz bound and linear growth give existence and uniqueness of each characteristic on $[0,T]$, so every such path satisfies $\gamma_t=X_{s,t}(\gamma_s)$. Taking marginals gives the pushforward identity. Continuous dependence on the initial condition makes $X_{s,t}$ continuous, and solving the ODE backward in time gives the continuous inverse $X_{t,s}$. See, for example, the superposition principle in \cite{ambrosioGradientFlowsMetric2009}.
\end{proof}

\subsection{Fully connected networks}\label{appendix_fcn}
\subsubsection{Bounded activations}
The two properties below are stated for the sigmoid function (without proof) in \cite[Appendix D.3]{chizat2018global}. We provide a proof of both results for the sake of completeness and extend them to a larger class of bounded activation functions. We then proceed with the proof of \Cref{prop_glob_conv_sigmoid}.
\begin{lem}\label{lem_aux_bounded_scalar}
    Assume that $\rho\in\mathcal{P}(\RR^d)$ is absolutely continuous with respect to the Lebesgue measure. If $\sigma$ is continuous and $\sigma(r)\to \alpha$, $\sigma(-r)\to \beta$ for some $\alpha,\beta\in\RR$, then for every $f\in L^{1}(\rho)$ the mapping
    \[
        (\theta,b)\mapsto \int_{\RR^d} f(x)\sigma(r[\langle \theta,x\rangle+b])d\rho(x)
    \]
    converges uniformly on $\mathbb{S}^{d}$, as $r\to+\infty$, to the mapping
    \[
        (\theta,b)\mapsto \int_{\RR^d}f(x) \sigma_{\infty}(\langle\theta,x\rangle+b)d\rho(x),
    \]
    where $\sigma_{\infty}(s)=\alpha\mathbf{1}_{(0,+\infty)}(s)+\beta\mathbf{1}_{(-\infty,0)}(s)$ for $s\neq0$ and $\sigma_\infty(0)=0$.
\end{lem}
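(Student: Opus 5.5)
The plan is to argue by contradiction using compactness of $\mathbb{S}^{d}$, and to reduce everything to the dominated convergence theorem. Write
\[
I_r(\theta,b)=\int_{\RR^d} f(x)\sigma(r[\langle\theta,x\rangle+b])\,d\rho(x),\qquad I_\infty(\theta,b)=\int_{\RR^d} f(x)\sigma_\infty(\langle\theta,x\rangle+b)\,d\rho(x).
\]

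\emph{Preliminary observations.}
\begin{itemize}
\item Since $\sigma$ is continuous with finite limits at $\pm\infty$, it is bounded. Hence $\|\sigma\|_\infty|f|$ is a $\rho$-integrable dominating function for all integrands above, and $\sigma_\infty$ is bounded as well.
\item For every $(\theta,b)\in\mathbb{S}^d$, the set $H_{\theta,b}=\{x\,\rvert\,\langle\theta,x\rangle+b=0\}$ is $\rho$-null. If $\theta\neq 0$, it is an affine hyperplane, which has Lebesgue measure zero, and $\rho$ is absolutely continuous. If $\theta=0$, then $b=\pm1$ and $H_{\theta,b}=\emptyset$.
\end{itemize}

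\emph{Step 1: setting up the contradiction.} If uniform convergence fails, there exist $\delta>0$, radii $r_k\to+\infty$, and points $(\theta_k,b_k)\in\mathbb{S}^d$ such that
\[
|I_{r_k}(\theta_k,b_k)-I_\infty(\theta_k,b_k)|\geq\delta .
\]
By compactness of $\mathbb{S}^d$, we may extract a subsequence and assume $(\theta_k,b_k)\to(\theta,b)\in\mathbb{S}^d$.

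\emph{Step 2: convergence of $I_{r_k}(\theta_k,b_k)$.} Fix $x\notin H_{\theta,b}$ and set $s_k(x)=\langle\theta_k,x\rangle+b_k$. Then $s_k(x)\to s(x)=\langle\theta,x\rangle+b\neq 0$. So for $k$ large, $s_k(x)$ has the sign of $s(x)$ and stays bounded away from $0$. Consequently $r_k s_k(x)\to\pm\infty$ with that sign, and
\[
\sigma(r_k s_k(x))\to\sigma_\infty(s(x)).
\]
This holds $\rho$-a.e. by the second preliminary observation, so dominated convergence gives $I_{r_k}(\theta_k,b_k)\to I_\infty(\theta,b)$.

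\emph{Step 3: convergence of $I_\infty(\theta_k,b_k)$.} For the same $x$, the value $\sigma_\infty(s_k(x))$ equals $\sigma_\infty(s(x))$ for $k$ large, because $\sigma_\infty$ is constant on each open half-line. Dominated convergence again yields $I_\infty(\theta_k,b_k)\to I_\infty(\theta,b)$. In other words, $I_\infty$ is continuous on $\mathbb{S}^d$.

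\emph{Conclusion.} Combining Steps 2 and 3, the difference $I_{r_k}(\theta_k,b_k)-I_\infty(\theta_k,b_k)$ tends to $0$. This contradicts the lower bound $\delta$ from Step 1.

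\emph{Main obstacle.} The only delicate point is the $\rho$-nullity of the limiting hyperplane $H_{\theta,b}$, which is where absolute continuity of $\rho$ is used. One must also handle the degenerate case $\theta=0$, which is harmless because $|b|=1$ forces $H_{\theta,b}=\emptyset$. The value $\sigma_\infty(0)=0$ is irrelevant, since it is only ever evaluated on a null set.
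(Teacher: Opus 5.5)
Your argument is correct and complete, but it is not the route the paper takes.

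\textbf{The paper's route.} The paper gives a direct estimate. For fixed $\eta>0$ and $R\geq1$ it splits $\RR^d$ into three parts:
\begin{itemize}
\item the region $\{|\langle\theta,x\rangle+b|\geq\eta\}$, where the integrand error is at most $\sup_{|s|\geq\eta}|\sigma(rs)-\sigma_\infty(s)|\to0$;
\item the slab $\{|\langle\theta,x\rangle+b|<\eta\}\cap B_d(0,R)$, which is empty when $|\theta|\leq 1/(2R)$ (for $\eta$ small) and otherwise has Lebesgue measure $O(R^{d}\eta)$ uniformly in $(\theta,b)\in\mathbb{S}^d$; its contribution is small by absolute continuity of $|f|\rho$ with respect to Lebesgue measure;
\item the exterior of the ball, controlled by the tail of $f$.
\end{itemize}
It then lets $r\to\infty$, $\eta\to0$ and $R\to\infty$, in that order.

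\textbf{Your route.} You get uniformity from compactness of $\mathbb{S}^d$ and dominated convergence along sequences $(r_k,\theta_k,b_k)$. As a result, you only need the single limiting hyperplane $H_{\theta,b}$ to be $\rho$-null. This avoids the slab-volume geometry and the case split on $|\theta|$. It also proves the lemma under a weaker hypothesis: that $\rho$ (indeed $|f|\rho$) gives no mass to affine hyperplanes. That covers singular measures such as the uniform measure on a circle in $\RR^2$.

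\textbf{What the paper's route buys.} It gives an explicit bound on $\sup_{\mathbb{S}^d}|I_r-I_\infty|$ in terms of three quantities: $\sup_{|s|\geq\eta}|\sigma(rs)-\sigma_\infty(s)|$, the modulus of absolute continuity of $|f|\rho$ on thin slabs, and the tail $\int_{|x|>R}|f|\,d\rho$. Under extra regularity (e.g.\ a bounded density) this can be turned into a rate. Your contradiction argument yields no modulus of convergence.
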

\begin{proof}
    Our assumptions on $\sigma$ ensure that it is bounded and $\epsilon_{\eta}(r):=\sup_{|s|\geq \eta}|\sigma(rs)-\sigma_{\infty}(s)|\to 0$ when $r\to+\infty$ for every $\eta>0$. We fix $\eta>0$ and $R\geq 1$ and split $\RR^{d}$ in two sets $E^1,E^2$ defined by
    \begin{equation*}
        E^1=\{x\in\RR^d\,\rvert\, |\langle \theta,x\rangle+b|\geq \eta\}~\mathrm{and}~ E^2=\RR^{d}\setminus E^1.
    \end{equation*}
    If $x\in E^1$ then $|\sigma(r [\langle\theta, x\rangle+b])-\sigma_{\infty}(\langle\theta, x\rangle + b)|\leq \epsilon_{\eta}(r)$. Since $\|\sigma-\sigma_{\infty}\|\leq \|\sigma\|_{\infty}+\max(|\alpha|,|\beta|):=c$, we obtain
    \begin{equation*}
        \begin{aligned}
            \Delta_r(\theta,b)&:=\textstyle\int_{\RR^{d}}|f(x)||\sigma(r[\langle\theta, x\rangle+b])-\sigma_{\infty}(\langle\theta, x\rangle+b)| d\rho(x)\\
            &\textstyle\leq  \epsilon_{\eta}(r)\int_{\RR^d}|f(x)|d\rho(x)+c\int_{E^2}|f(x)|d\rho(x)\\
            &\textstyle\leq  \epsilon_{\eta}(r)\int_{\RR^d}|f(x)|d\rho(x)+c\int_{E^2\cap B_d(0,R)}|f(x)|d\rho(x)+c\int_{\RR^d\setminus B_d(0,R)}|f(x)|d\rho(x),
        \end{aligned}
    \end{equation*}
    which yields
    \begin{equation*}
        \textstyle\underset{r\to+\infty}{\mathrm{lim~sup}}\underset{(\theta,b)\in\mathbb{S}^{d}}{\sup}\Delta_r(\theta,b) \leq c\int_{E^2\cap B_d(0,R)}|f(x)|d\rho(x)+c\int_{\RR^d\setminus B_d(0,R)}|f(x)|d\rho(x).
    \end{equation*}

    If $|\theta|\leq 1/(2R)$, then since $|\theta|^2+b^2=1$, the set $E^2\cap B_d(0,R)$ is empty as soon as $\eta<(\sqrt{3}-1)/2$. If $|\theta|\geq 1/(2R)$, then $E^2\cap B_d(0,R)$ is contained up to a rotation and translation in $[-a,a]\times B_{d-1}(0,R)$ with $a=4R\eta$. As a consequence, we obtain
    \begin{equation*}
        \textstyle\int_{E^2\cap B_d(0,R)}|f(x)|d\rho(x)\leq \sup\,\{\int_{A}|f(x)|d\rho(x)\,\rvert\, A\subset \RR^{d},~|A|\leq 8R\eta \mathcal{L}^{d-1}(B_{d-1}(0,R))\}.
    \end{equation*}
    The quantity on the right-hand side converges to zero as $\eta\to 0$ since $\rho$ is absolutely continuous with respect to the Lebesgue measure and $f\in L^1(\rho)$. As $\int_{\RR^d\setminus B_d(0,R)}|f(x)|d\rho(x)$ goes to $0$ as $R\to+\infty$ by $f\in L^1(\rho)$, letting first $\eta\to 0$ and then $R\to +\infty$, we obtain the result.
\end{proof}

In \cite[Appendix D.3]{chizat2018global}, the result below is stated for sigmoid activations under a finite moment assumption on $\rho$. However, the integrability of $x\mapsto f(x)\rho(x)(x,1)$ with respect to the surface measure on the hyperplane $\{\langle \theta,x\rangle+b=0\}$ cannot be guaranteed without additional assumptions on $\rho$. For this reason, we introduce a polynomial decay assumption under which the result holds.
\begin{lem}\label{lem_aux_grad_bounded_scalar}
    Assume $\rho\in\mathcal{P}(\RR^d)$ has a continuous density with respect to the Lebesgue measure with $\rho(x)\leq C(1+|x|)^{-p}$ for some $C>0$ and $p>d+1$. If $\sigma\in C^1(\RR)$ with $\sigma'$ integrable and $s\sigma'(s)\to 0$ as $|s|\to +\infty$, then for every bounded and continuous function $f:\RR^{d}\to\RR$, the mapping $g_f:(\theta,b)\mapsto \int_{\RR^{d}}f(x)\sigma(\langle \theta,x\rangle+b)\rho(x)dx$ is of class $C^1$ and $\nabla g_f(\theta,b)=\int_{\RR^{d}}\sigma'(\langle \theta,x\rangle+b)f(x)\rho(x)(x,1)dx$ for every $(\theta,b)\in\RR^{d+1}$. Moreover, the mapping $(\theta,b)\mapsto r\nabla g_f(r(\theta,b))$ converges uniformly on $\mathbb{S}^{d}$ to
    \begin{equation*}
    G_f^{\infty}:(\theta,b)\mapsto\left\{\begin{aligned}
        &\frac{\int_{-\infty}^{+\infty}\sigma'(s)ds}{|\theta|}\int_{\{\langle \theta,x\rangle+b=0\}}f(x)\rho(x)(x,1) d\mathcal{H}^{d-1}(x)&\mathrm{if}~\theta\neq 0,\\
        &0&\mathrm{if}~\theta=0.
    \end{aligned}\right.
    \end{equation*}
    Finally, for every $q\geq 1$ such that $p>d+q$, the quantity
    \[
        \textstyle\sup\{|(\theta,b)|\int_{\RR^d}|\sigma'(\langle\theta,x\rangle+b)||f(x)|(1+|x|)^q\rho(x)dx\,\rvert\,(\theta,b)\in\RR^{d+1},~f\in C^0(\RR^d),~\|f\|_{\infty}\leq \epsilon\}
    \]
    converges to $0$ as $\epsilon\to0$.
\end{lem}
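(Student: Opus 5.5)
We prove the three claims of \Cref{lem_aux_grad_bounded_scalar} in turn, writing $u=(\theta,b)$ and $\tilde x=(x,1)$, so that $\langle\theta,x\rangle+b=\langle u,\tilde x\rangle$.

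\emph{Step 1: $C^1$ regularity.} We differentiate under the integral sign. The integrand $u\mapsto f(x)\sigma(\langle u,\tilde x\rangle)\rho(x)$ has gradient $f(x)\sigma'(\langle u,\tilde x\rangle)\rho(x)\tilde x$. Since $\sigma'$ is integrable and continuous with $s\sigma'(s)\to0$, it is bounded. The gradient is therefore dominated by $\|f\|_\infty\|\sigma'\|_\infty(1+|x|)\rho(x)$, which is integrable because $p>d+1$. Continuity of $u\mapsto\nabla g_f(u)$ then follows by dominated convergence. This gives the formula for $\nabla g_f$.

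\emph{Step 2: uniform convergence of $r\nabla g_f(ru)$.} This is the main step and the main obstacle.

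\begin{itemize}
\item For $\theta\neq0$, we choose coordinates $x=x_0+t\,\theta/|\theta|+y$, with $y$ in the hyperplane $H=\theta^\perp$ and $x_0$ the foot point on $\{\langle\theta,x\rangle+b=0\}$. Then $\langle u,\tilde x\rangle=|\theta|t$.
\item Substituting $s=r|\theta|t$ gives
\[
r\nabla g_f(ru)=\frac1{|\theta|}\int_H\int_\RR\sigma'(s)\,k\big(x_0+\tfrac{s}{r|\theta|}\tfrac{\theta}{|\theta|}+y\big)\,ds\,d\mathcal H^{d-1}(y),
\]
where $k=f\rho\,\tilde x$ is continuous with $|k(x)|\lesssim(1+|x|)^{1-p}$.
\item Pointwise, dominated convergence gives the limit $G_f^\infty$. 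The domination uses $|k|\lesssim(1+|y|)^{1-p}$, which is integrable on $H$ because $p-1>d-1$. This bound is uniform in $t$ because $|x|\ge|y|$ up to the bounded shift $x_0$, with $|x_0|=|b|/|\theta|$.
\end{itemize}

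Uniformity in $u\in\SS^d$ fails to be automatic in two places.

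\begin{itemize}
\item \emph{Near $\theta=0$.} There $|x_0|=|b|/|\theta|\to\infty$, so the hyperplane escapes to infinity. Both $G^\infty_f$ and the prelimit must then be shown to be small.
\begin{itemize}
\item For $G^\infty_f$, the decay $\rho\lesssim(1+|x|)^{-p}$ with $p>d+1$ gives a hyperplane integral of order $|x_0|^{d-p}$. Hence $|G^\infty_f|\lesssim|\theta|^{-1}(|\theta|/|b|)^{p-d}\to0$, which uses $p>d+1$.
\item For the prelimit, we bound $\int|\sigma'(r\langle u,\tilde x\rangle)|\,r\,|k|\,dx$. We split into the region $|x|\le|x_0|/2$ and its complement. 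On the first region, $|\langle u,\tilde x\rangle|\ge|b|/2$, so the term is controlled using $r|\sigma'(rs)|\le\sup_{|\tau|\ge r|b|/2}|\tau\sigma'(\tau)|/(|b|/2)\to0$, by the hypothesis $s\sigma'(s)\to0$. On the complement, we use the same change of variables restricted to $|y|$ large.
\end{itemize}
\item \emph{Away from $\theta=0$.} For $|\theta|\ge\delta$, uniformity comes from uniform continuity of $k$ on compact sets together with uniform tail bounds. We truncate in $s$, using integrability of $\sigma'$, and in $y$, using the decay of $k$.
\end{itemize}

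\emph{Step 3: the final supremum.} We take $\|f\|_\infty\le\epsilon$ and $q$ with $p>d+q$, and set $\kappa(x)=(1+|x|)^q\rho(x)\lesssim(1+|x|)^{q-p}$. We write $(\theta,b)=r u$ with $u\in\SS^d$. It suffices to show
\[
\sup_{r\ge0,\ u\in\SS^d} r\int|\sigma'(r\langle u,\tilde x\rangle)|\,\kappa(x)\,dx<\infty,
\]
since the quantity in the statement is then bounded by $\epsilon$ times this constant.

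\begin{itemize}
\item For $r\le1$, the integral is bounded by $\|\sigma'\|_\infty\int\kappa<\infty$, since $p-q>d$.
\item For $r\ge1$, we repeat Step 2 with $|\sigma'|$ in place of $\sigma'$ and $\kappa$ in place of $k$. This bounds the integral by $|\theta|^{-1}\|\sigma'\|_{L^1}\sup_{t}\int_H\kappa$. The hyperplane integral is finite, and decays like $(|\theta|/|b|)^{p-q-d+1}$, because $p-q>d$. The near-$\theta=0$ regime is handled exactly as in Step 2.
\end{itemize}

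We expect the delicate part throughout to be the uniform control as $\theta\to0$. This is where the precise exponent conditions ($p>d+1$, respectively $p>d+q$) and the hypothesis $s\sigma'(s)\to0$ enter.
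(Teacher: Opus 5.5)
Your proposal is correct and follows essentially the same route as the paper: you slice along $\theta/|\theta|$ (the paper's coarea step), and you use uniform continuity together with $L^1$ tails of $\sigma'$ for $|\theta|\geq\delta$. Near $\theta=0$ you split into two regions, using $s\sigma'(s)\to0$ on one and hyperplane decay on the other, and you repeat with the weight $(1+|x|)^q$ for the last claim. The one imprecision is on the complement $\{|x|>|x_0|/2\}$: what matters there is that $|x|$ (not $|y|$) is large, which gives a restricted hyperplane integral $\lesssim(1+|x_0|)^{d-p}$ uniformly in the offset. The paper's slab $\{|\langle u_\theta,x\rangle-t_\theta|\leq|t_\theta|/2\}$ makes this bound immediate.
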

\begin{proof}
First, our assumptions on $\sigma$ ensure that $\sigma'$ is bounded and $\epsilon_{\eta}(r):=\sup_{|s|\geq \eta}r|\sigma'(rs)|\to 0$ as $r\to+\infty$ for every $\eta>0$. Throughout, we use the notation $\psi(x):=f(x)\rho(x)(x, 1)$. For every $x\in\RR^d$, the gradient of $(\theta,b)\mapsto f(x)\sigma(\langle\theta,x\rangle+b)\rho(x)$ is $\sigma'(\langle \theta,x\rangle+b)\psi(x)$, whose norm is bounded by $\|\sigma'\|_{\infty}\|f\|_{\infty}\rho(x)(1+|x|)\lesssim (1+|x|)^{-(p-1)}$. Since $p>d+1$, this last function is integrable, which shows that $g_f$ is of class $C^1$ and that for every $(\theta,b)\in\RR^{d+1}$,
\begin{equation*}
    \nabla g_f(\theta,b)=\int_{\RR^{d}}\sigma'(\langle \theta,x\rangle+b)\psi(x)dx.
\end{equation*}
Let $(\theta,b)\in\mathbb{S}^{d}$ and $r>0$. If $\theta=0$ then $r\nabla g_f(r(\theta,b))=r\sigma'(rb)\int_{\RR^{d}}\psi(x)dx$. If $\theta\neq 0$, we define $u_{\theta}:=\theta/|\theta|$ and $t_{\theta}:=-b/|\theta|$, the co-area formula \cite[Theorem 3.11]{evansMeasureTheoryFine2015} yields that the restriction of $\psi$ to $\{\langle u_\theta,x\rangle=s\}$ is $\mathcal{H}^{d-1}$-integrable for almost every $s\in\RR$ and
\begin{equation*}
    \begin{aligned}
        r\nabla g_f(r(\theta,b))&=\frac{1}{|\theta|}\int_{-\infty}^{+\infty} r\sigma'(rs) H_{u_\theta}(t_{\theta}+s/|\theta|)ds~~~\mathrm{with}~~~ H_{u}(s):=\int_{\{\langle u,x\rangle=s\}}\psi(x)d\mathcal{H}^{d-1}(x).
    \end{aligned}
\end{equation*}

If $\theta_0\in B_d(0,1)\setminus\{0\}$, we can find a continuous map $\theta\mapsto R_{\theta}$ in a neighborhood $\Theta_0\subset B_d(0,1)$ of $\theta_0$ such that $R_\theta$ is a rotation with $R_{\theta}e_1=u_{\theta}$ for every $\theta\in\Theta_0$. On $\Theta_0$,
\begin{equation*}
    H_{u_{\theta}}(s)=\int_{\{z_1=0\}}\psi(R_{\theta}(se_1+z))d\mathcal{H}^{d-1}(z).
\end{equation*}
Since
\begin{equation*}
    \begin{aligned}
        |\psi(R_{\theta}(s e_1+z))|&\leq C \|f\|_{\infty}\Big(1+\sqrt{s^2+|z|^2}\Big)^{-(p-1)}\leq C\|f\|_{\infty} (1+|z|)^{-(p-1)}
    \end{aligned}
\end{equation*}
and $p>d$, the dominated convergence theorem shows that $(\theta,b,s)\mapsto H_{u_{\theta}}(t_{\theta}+s/|\theta|)$ is continuous on the set $\{(\theta,b,s)\in\mathbb{S}^d\times \RR\,\rvert\,\theta\neq 0\}$, and hence uniformly continuous on $\{(\theta,b,s)\in\mathbb{S}^d\times [-1,1]\,\rvert\,|\theta|\geq \delta\}$ for every $\delta\in(0,1)$. We therefore obtain that the family $\{H_{u_{\theta}}(t_{\theta}+\cdot/|\theta|)\,\rvert\,(\theta,b)\in\mathbb{S}^d,~|\theta|\geq \delta\}$ is uniformly bounded and equicontinuous at zero. Using $\int_{-\infty}^{+\infty}r\sigma'(rs)ds=\int_{-\infty}^{+\infty}\sigma'(s)ds$ together with
\begin{equation*}
\sup_{r\geq 0}\int_{-\infty}^{+\infty}r|\sigma'(rs)|ds=\int_{-\infty}^{+\infty}|\sigma'(s)|ds<+\infty~~\mathrm{and}~~
\int_{|s|\geq \eta}r|\sigma'(rs)|ds=\int_{|s|\geq r\eta}|\sigma'(s)|ds\underset{r\to+\infty}{\longrightarrow} 0
\end{equation*}
for every $\eta>0$, we can then obtain the desired uniform convergence result on $\{(\theta,b)\in\mathbb{S}^d\,\rvert\,|\theta|\geq \delta\}$ for every $\delta\in(0,1)$.

For every $\theta\neq 0$,
\begin{equation}\label{eq:hyperplane_integral}
    \begin{aligned}
        |H_{u_{\theta}}(s)|&\leq C\|f\|_{\infty}\int_{\{\langle u_{\theta},z\rangle=0\}}\Big[1+\sqrt{|s|^2+|z|^2}\Big]^{-(p-1)}d\mathcal{H}^{d-1}(z)\\
        &\leq C\|f\|_{\infty}\int_{\{\langle u_{\theta},z\rangle=0\}}\big[2^{-1/2}(1+|s|+|z|)\big]^{-(p-1)}d\mathcal{H}^{d-1}(z)\\
        &=C'(1+|s|)^{-(p-d)},
    \end{aligned}
\end{equation}
where $C':=2^{(p-1)/2}C\|f\|_{\infty}\int_{\RR^{d-1}}(1+|z|)^{-(p-1)}d\mathcal{H}^{d-1}(z)$.

Let us fix $\delta\leq 1/2$ and $0<|\theta|\leq \delta$. We have $|b|\geq 1/2$. We define $E_1=\{s\in\RR\,\rvert\,|s-t_{\theta}|\leq |t_{\theta}|/2\}$ and $E_2=\RR\setminus E_1$. It holds
\begin{equation*}
    \begin{aligned}
        |r\nabla g_f(r(\theta,b))|&\leq\frac{1}{|\theta|}\int_{-\infty}^{+\infty}|r\sigma'(rs)H_{u_{\theta}}(t_{\theta}+s/|\theta|)|ds\\
        &= \int_{-\infty}^{+\infty}|r\sigma'(r|\theta|(s-t_{\theta}))H_{u_{\theta}}(s)|ds\\
        &=\int_{E_1}|r\sigma'(r|\theta|(s-t_{\theta}))H_{u_{\theta}}(s)|ds+\int_{E_2}|r\sigma'(r|\theta|(s-t_{\theta}))H_{u_{\theta}}(s)|ds.
    \end{aligned}
\end{equation*}
Since $|s|\geq |t_{\theta}|/2$ on $E_1$, we get
\begin{equation}\label{eq:hyperplane_2}
    \begin{aligned}
        \int_{E_1}r|\sigma'(r|\theta|(s-t_{\theta}))H_{u_{\theta}}(s)|ds&\leq C'\int_{E_1}r|\sigma'(r|\theta|(s-t_{\theta}))|(1+|s|)^{-(p-d)}ds\\
        &\leq \frac{C'}{(1+|t_{\theta}|/2)^{p-d}}\int_{-\infty}^{+\infty}r|\sigma'(r|\theta|(s-t_{\theta}))|ds\\
        &=\frac{C'}{|\theta|(1+|t_{\theta}|/2)^{p-d}}\int_{-\infty}^{+\infty}|\sigma'(s)|ds\\
        &\leq \frac{C'}{|\theta|(1+1/(4|\theta|))^{p-d}}\int_{-\infty}^{+\infty}|\sigma'(s)|ds\\
        &=4^{p-d}C'|\theta|^{p-(d+1)}\int_{-\infty}^{+\infty}|\sigma'(s)|ds.
    \end{aligned}
\end{equation}
We also obtain
\begin{equation*}
    \begin{aligned}
        \int_{E_2}r|\sigma'(r|\theta|(s-t_{\theta}))H_{u_{\theta}}(s)|ds&\leq \epsilon_{1/4}(r)\int_{-\infty}^{+\infty}|H_{u_{\theta}}(s)|ds\\
        &\leq \epsilon_{1/4}(r)\int_{-\infty}^{+\infty}|H_{u_{\theta}}(s)|ds\\
        &\leq C'' \epsilon_{1/4}(r)
    \end{aligned}
\end{equation*}
where $C'':=C'\int_{-\infty}^{+\infty}(1+|s|)^{-(p-d)}ds$. Combining the previous estimates we obtain
\begin{equation*}
    \textstyle\sup_{(\theta,b)\in\mathbb{S}^{d},~|\theta|\leq \delta}|r\nabla g_f(r(\theta,b))|\leq 4^{p-d}C'\delta^{p-(d+1)}\int_{-\infty}^{+\infty}|\sigma'(s)|ds+C''\epsilon_{1/4}(r).
\end{equation*}
As $G_f^{\infty}(\theta,b)=(1/|\theta|)H_{u_{\theta}}(t_{\theta})$ for $\theta\neq 0$, we also have $|G_f^{\infty}(\theta,b)|\leq 2^{p-d}C'\delta^{p-(d+1)}$ provided $|\theta|\leq \delta$.
This yields
\begin{equation*}
    \textstyle\limsup_{r\to+\infty}\sup_{(\theta,b)\in\mathbb{S}^{d}}|r\nabla g_f(r(\theta,b))-G_f^{\infty}(\theta,b)|\leq (4^{p-d}\int_{-\infty}^{+\infty}|\sigma'(s)|ds+2^{p-d})C'\delta^{p-(d+1)}.
\end{equation*}
As this holds for every $\delta>0$ and $p>d+1$, the first part of the result is proved.

It remains to prove the weighted estimate in the last sentence of the lemma. By homogeneity, it is enough to consider the variable $r(\theta,b)$ with $(\theta,b)\in\SS^d$ and $r\geq0$. Since $\|f\|_\infty\leq\epsilon$ and $\rho(x)\leq C(1+|x|)^{-p}$, we have $|f(x)|(1+|x|)^q\rho(x)\leq C\epsilon(1+|x|)^{-(p-q)}$. Repeating the above estimates with this new integrand instead of $\psi(x)=f(x)\rho(x)(x,1)$ gives a uniform bound
\[
    \sup_{r\geq0,\,(\theta,b)\in\SS^d}
    r\int_{\RR^d}|\sigma'(r(\langle\theta,x\rangle+b))||f(x)|(1+|x|)^q\rho(x)dx
    \lesssim \epsilon.
\]
Indeed, the hyperplane integral \eqref{eq:hyperplane_integral} becomes bounded by $C'\epsilon(1+|t|)^{-(p-q-d+1)}$, and the integral \eqref{eq:hyperplane_2} gives a factor of $4^{p-q-d+1}C\epsilon|\theta|^{p-q-d}$, which leads to a uniform bound precisely when $p>d+q$. This proves the last claim.

\end{proof}

\begin{proof}[Proof of \Cref{prop_glob_conv_sigmoid}]
    As $\sigma$ is differentiable with a bounded Lipschitz differential, \Cref{ass_glob_conv} holds. For every $f\in\mathcal{F} = L^2(\rho_x;\RR^{\dout})$, we have
    \begin{equation*}
        R'(f):x\mapsto \int_{\RR^{\dout}}\nabla_z\ell(f(x),y)d\rho_{y|x}(y).
    \end{equation*}
    If $\ell$ is the square loss, then $\nabla_z \ell(z,y)=z-y$. If $\ell$ is the cross-entropy loss, then, since $d_w=\dout=1$, $\nabla_z \ell(z,y)=e^z/(1+e^z)-y$. In both cases, our assumptions ensure that $f_*:x\mapsto \int_{\RR^{\dout}}y\,d\rho_{y|x}(y)$ has a continuous and bounded representative on the support of $\rho_x$, which can be extended to a bounded continuous function on $\RR^{\din}$. Thus, $f_{\mu}:=R'(\int_{\Omega}\Phi d\mu)$ is bounded and continuous. Since \begin{equation}\label{eq_gmu_sigmoid}
        g_{\mu}(\theta)=\int_{\RR^{\din}}\sigma(\langle\theta,(x,1)\rangle)f_{\mu}(x)d\rho_x(x),
    \end{equation}
    and $p_x(x)\leq C(1+|x|)^{-p}$ with $p>\din+2>\din +1$, we can use \Cref{lem_aux_bounded_scalar} and the first part of \Cref{lem_aux_grad_bounded_scalar}. This shows that conditions \ref{conv_gmu_bounded_scalar} and \ref{conv_dgmu_bounded_scalar} in \Cref{prop_esc_reg_bounded_scalar} hold. For \ref{conv_dgmu_dgnu}, if $L=\sup_{y\in\RR^{\dout}}\mathrm{Lip}(\nabla_z\ell (\cdot,y))<+\infty$ and $\Pi$ is a coupling between $\mu$ and $\nu$, then
    \begin{align*}
        \Delta&:=|\theta||\nabla g_{\mu}(\theta)-\nabla g_{\nu}(\theta)|\\
        &\leq L\int_{\RR^{\din}}|\theta||\sigma'(\langle\theta,(x,1)\rangle)|(1+|x|)
        |(\int_{\Omega}\Phi d\mu)(x)-(\int_{\Omega}\Phi d\nu)(x)|d\rho_x(x)\\
        &\leq L\int_{\Omega\times\Omega}\int_{\RR^{\din}}|\theta||\sigma'(\langle\theta,(x,1)\rangle)|(1+|x|)\\
        &\qquad\qquad\qquad \times
        |\sigma(\langle\theta_1,(x,1)\rangle)w_1-\sigma(\langle\theta_2,(x,1)\rangle)w_2|
        d\rho_x(x)d\Pi(w_1,\theta_1,w_2,\theta_2)\\
        &\leq L\int_{\Omega\times\Omega}\int_{\RR^{\din}}|\theta||\sigma'(\langle\theta,(x,1)\rangle)|(1+|x|)\\
        &\qquad\qquad\qquad \times
        [\|\sigma\|_{\infty}|w_1-w_2|+\|\sigma'\|_{\infty}(1+|x|)|w_2||\theta_1-\theta_2|]
        d\rho_x(x)d\Pi(w_1,\theta_1,w_2,\theta_2)\\
        &\leq 2L\bigg[\int_{\Omega\times\Omega}
        [\|\sigma\|_{\infty}|w_1-w_2|+\|\sigma'\|_{\infty}|w_2||\theta_1-\theta_2|]
        d\Pi(w_1,\theta_1,w_2,\theta_2)\bigg]\\
        &\qquad\qquad\qquad \times
        \bigg[\int_{\RR^{\din}}|\theta||\sigma'(\langle \theta,(x,1)\rangle)|(1+|x|)^2d\rho_x(x)\bigg].
    \end{align*}
    Reasoning as in the proof of \Cref{lemma_bound_diff_Phi_mu} and taking $\Pi$ an optimal transport plan for the Euclidean cost between $\mu$ and $\nu$, we obtain
    \begin{equation*}
\textstyle\Delta\lesssim W_2(\mu,\nu)\int_{\RR^{\din}}|\theta||\sigma'(\langle\theta,(x,1)\rangle)|(1+|x|)^2d\rho_x(x).
    \end{equation*}
    Taking $f:x\mapsto W_2(\mu,\nu)$, since $p_x(x)\leq C(1+|x|)^{-p}$ with $p>\din+2$, we can use the last part of \Cref{lem_aux_grad_bounded_scalar} with $q=2$ to show that the supremum of $\Delta$ over $\theta\in\RR^{d_\theta}$ converges to zero as $W_2(\mu,\nu)\to 0$, which yields \ref{conv_dgmu_dgnu}. Finally, by assumption, having condition \ref{reg_val_bounded_scalar} or \ref{reg_val_inf_bounded_scalar} allows us to apply \Cref{prop_esc_reg_bounded_scalar}. We therefore have that every $\mu\in\mathcal{P}_2(\Omega)$ that does not minimize $F$ admits an escape region. Since the initial distribution is sub-Gaussian and has full support, \Cref{thm_glob_conv} applies and we obtain the result.
\end{proof}

\subsubsection{Asymptotically positively one-homogeneous activations}
We begin this section with two technical lemmas related to conditions \ref{conv_phi_asymp_hom}-\ref{conv_dphi_asym_hom} in \Cref{prop_esc_reg_asymp_homogeneous}. Then, we proceed with the proof of \Cref{prop_glob_conv_gelu}.
\begin{lem}
    Let $\rho\in\mathcal{P}(\RR^d)$ and $\sigma\in C^0(\RR)$ be such that $\lim_{s\to+\infty}\sigma(s)/s=\alpha$ and $\lim_{s\to-\infty}\sigma(s)/s=\beta$ for some $\alpha,\beta\in \RR$. Then, for every measurable function $f$ such that $[x\mapsto f(x)(1+|x|)]\in L^1(\rho)$, the mapping
    \[
        (\theta,b)\mapsto (1/r)\int_{\RR^{d}}f(x)\sigma(r[\langle\theta,x\rangle+b]) d\rho(x)
    \]
    converges uniformly on $\mathbb{S}^{d}$ to
    \[
        (\theta,b)\mapsto \int_{\RR^{d}}f(x)\sigma_{\infty}(\langle\theta,x\rangle+b) d\rho(x),
    \]
    with $\sigma_{\infty}(s)=\alpha\max(0,s)+\beta\min(0,s)$ when $r\to+\infty$.
\end{lem}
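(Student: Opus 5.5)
The plan is to follow the template of the proof of \Cref{lem_aux_bounded_scalar}, with the linear-growth weight $(1+|x|)$ playing the role that boundedness played there. First I will record two elementary facts about $\sigma$. Since $\sigma$ is continuous and $\sigma(s)/s$ has finite limits at $\pm\infty$, it has at most linear growth, $|\sigma(s)|\leq c(1+|s|)$. Moreover, $\sigma_\infty$ is positively $1$-homogeneous, so $(1/r)\sigma_\infty(rs)=\sigma_\infty(s)$. Write $s=\langle\theta,x\rangle+b$. Because $(\theta,b)\in\mathbb{S}^d$, we have $|s|\leq |x|+1$. The integrand difference to control is therefore
\[
    D_r(s):=|(1/r)\sigma(rs)-\sigma_\infty(s)|,
\]
which must be bounded uniformly in $(\theta,b)$, after multiplication by $|f(x)|$ and integration against $\rho$.

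The key pointwise estimate is that, for every $\epsilon>0$, there is $S_\epsilon$ such that $|\sigma(t)-\sigma_\infty(t)|\leq \epsilon|t|$ for $|t|\geq S_\epsilon$. On $[-S_\epsilon,S_\epsilon]$, $\sigma-\sigma_\infty$ is bounded by some $C_\epsilon$. Hence, for all $t$,
\[
    |\sigma(t)-\sigma_\infty(t)|\leq \epsilon|t|+C_\epsilon.
\]
Substituting $t=rs$ and dividing by $r$ gives
\[
    D_r(s)\leq \epsilon|s|+C_\epsilon/r\leq \epsilon(1+|x|)+C_\epsilon/r.
\]
This bound is uniform in $(\theta,b)\in\mathbb{S}^d$ and in $x$. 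Integrating against $|f|\,d\rho$ yields
\[
    \sup_{(\theta,b)\in\mathbb{S}^d}\int|f|\,D_r\,d\rho\leq \epsilon\|f(1+|\cdot|)\|_{L^1(\rho)}+(C_\epsilon/r)\|f\|_{L^1(\rho)}.
\]
Here $f\in L^1(\rho)$ follows from the hypothesis. Letting $r\to+\infty$ and then $\epsilon\to0$ concludes the proof.

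Unlike \Cref{lem_aux_bounded_scalar}, no splitting near the hyperplane $\{\langle\theta,x\rangle+b=0\}$ and no absolute continuity of $\rho$ are needed. The reason is that the limit $\sigma_\infty$ is continuous and the error near $s=0$ is automatically $O(1/r)$. The only step requiring care is the uniform bound $|\sigma-\sigma_\infty|\leq\epsilon|t|+C_\epsilon$, which uses continuity of $\sigma$ on compacts together with the two asymptotic slopes. I do not expect any genuine obstacle; the main point is just to check that the weight $(1+|x|)$ is exactly what the hypothesis on $f$ provides. The same argument also gives well-definedness of both integrals, since $|\sigma(rs)|/r\leq c(1/r+|s|)\lesssim 1+|x|$ for $r\geq1$.
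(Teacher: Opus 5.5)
Your proof is correct and is essentially the paper's argument. The paper shows that
\[
\epsilon(r)=\sup_s \frac{\bigl|(1/r)\sigma(rs)-\sigma_\infty(s)\bigr|}{1+|s|}\to 0
\]
by splitting into $|s|\leq\eta$, where $\sigma-\sigma_\infty$ is bounded and contributes $O(1/r)$, and $|s|\geq\eta$, where the asymptotic slopes make the error small. After rescaling, this is exactly your bound $|\sigma(t)-\sigma_\infty(t)|\leq \epsilon|t|+C_\epsilon$. The paper then integrates against $|f|(1+|x|)\,d\rho$ using $|\langle\theta,x\rangle+b|\leq 1+|x|$ on $\mathbb{S}^d$, as you do.
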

\begin{proof}
    We first note that our assumption on $\sigma$ implies that it has at most linear growth. Our assumption on $f$ therefore ensures that the functions $x\mapsto (1/r)f(x)\sigma(r [\langle \theta,x\rangle+b])$ and $x\mapsto f(x)\max(0,\langle\theta,x\rangle+b)$ belong to $L^1(\rho)$. Defining $\epsilon(r):=\sup_{s\in\RR}|(1/r)\sigma(rs)-\sigma_{\infty}(s)|/(1+|s|)$, we obtain
    \begin{equation*}
        \textstyle|\int_{\RR^d}f(x)[(1/r)\sigma(r[\langle\theta,x\rangle+b])-\sigma_{\infty}(\langle\theta,x\rangle+b)]d\rho(x)|\leq \epsilon(r)\int_{\RR^d}|f(x)|(1+|x|)d\rho(x).
    \end{equation*}
    It remains to prove that $\epsilon(r)\to 0$ as $r\to+\infty$. We have $\epsilon(r)=\sup_{s\in\RR}|\sigma(s)-\sigma_{\infty}(s)|/(r+|s|)$. Let $\eta>0$. Then
    \begin{equation*}
        \sup_{|s|\leq \eta}\frac{|\sigma(s)-\sigma_{\infty}(s)|}{r+|s|}\leq \frac{1}{r}\sup_{|s|\leq \eta}|\sigma(s)-\sigma_{\infty}(s)|.
    \end{equation*}
    We also have
    \begin{equation*}
        \sup_{|s|\geq \eta}\frac{|\sigma(s)-\sigma_{\infty}(s)|}{r+|s|}\leq \sup_{|s|\geq \eta}\frac{|\sigma(s)-\sigma_{\infty}(s)|}{|s|}.
    \end{equation*}
    We therefore obtain
    \begin{equation*}
        \limsup_{r\to+\infty}\,\epsilon(r)\leq \sup_{|s|\geq \eta}|\sigma(s)-\sigma_{\infty}(s)|/|s|.
    \end{equation*}
    As this holds for every $\eta>0$ and the right-hand side converges to $0$ as $\eta\to +\infty$ by our assumptions on $\sigma$, we obtain the result.
\end{proof}

\begin{lem}
    Assume that $\rho\in\mathcal{P}(\RR^d)$ is absolutely continuous with respect to the Lebesgue measure and that $\sigma\in C^1(\RR)$ satisfies $\lim_{s\to+\infty}\sigma'(s)=\alpha$ and $\lim_{s\to-\infty}\sigma'(s)=\beta$ for some $\alpha,\beta\in\RR$. Then, for every measurable function $f:\RR^{d}\to\RR$ with $[x\mapsto f(x)(1+|x|)]\in L^1(\rho)$, the mapping
    \[
        g_f:(\theta,b)\mapsto \int_{\RR^{d}}f(x)\sigma(\langle \theta,x\rangle+b)d\rho(x)
    \]
    is of class $C^1$ and $(\theta,b)\mapsto \nabla g_f(r(\theta, b))$ converges uniformly to
    \[
        (\theta,b)\mapsto \int_{\RR^d}f(x)\tau_{\infty}(\langle\theta,x\rangle+b)(x,1)d\rho(x)
    \]
    on $\mathbb{S}^{d}$ when $r\to+\infty$, where $\tau_{\infty}(s)=\alpha\mathbf{1}_{(0,+\infty)}(s)+\beta\mathbf{1}_{(-\infty,0)}(s)$ for $s\neq0$ and $\tau_\infty(0)=0$.
\end{lem}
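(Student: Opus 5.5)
The proof has two parts: first the $C^1$ regularity with a formula for $\nabla g_f$, then the uniform convergence, handled by the same splitting as in \Cref{lem_aux_bounded_scalar}.

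\emph{Regularity.} Since $\sigma'$ is continuous with finite limits at $\pm\infty$, it is bounded. Hence $\sigma$ has at most linear growth and $g_f$ is well defined, because $f(1+|x|)\in L^1(\rho)$. For every $x$, the gradient of $(\theta,b)\mapsto f(x)\sigma(\langle\theta,x\rangle+b)$ is $f(x)\sigma'(\langle\theta,x\rangle+b)(x,1)$. Its norm is bounded by $\|\sigma'\|_\infty|f(x)|(1+|x|)$, an integrable function independent of $(\theta,b)$. Differentiation under the integral and dominated convergence, using continuity of $\sigma'$, then give that $g_f$ is $C^1$ with
\[
\nabla g_f(\theta,b)=\int_{\RR^d}f(x)\sigma'(\langle\theta,x\rangle+b)(x,1)d\rho(x).
\]

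\emph{Uniform convergence.} Set $\epsilon_\eta(r):=\sup_{|s|\ge\eta}|\sigma'(rs)-\tau_\infty(s)|$. For fixed $\eta>0$, this tends to $0$ as $r\to+\infty$, by the limits of $\sigma'$ at $\pm\infty$. Also $|\sigma'-\tau_\infty|\le c:=\|\sigma'\|_\infty+\max(|\alpha|,|\beta|)$ everywhere. For $(\theta,b)\in\SS^d$, split $\RR^d$ into $E^1=\{|\langle\theta,x\rangle+b|\ge\eta\}$ and $E^2=\RR^d\setminus E^1$. This bounds
\[
|\nabla g_f(r(\theta,b))-G(\theta,b)|\le \epsilon_\eta(r)\int|f|(1+|x|)d\rho+c\int_{E^2\cap B(0,R)}|f|(1+|x|)d\rho+c\int_{B(0,R)^c}|f|(1+|x|)d\rho,
\]
where $G$ denotes the claimed limit. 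The last term tends to $0$ as $R\to\infty$ uniformly in $(\theta,b)$, since $f(1+|x|)\in L^1(\rho)$.

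The middle term is the main obstacle, because it must be controlled uniformly over the sphere. It is handled by the geometric argument of \Cref{lem_aux_bounded_scalar}. If $|\theta|\le 1/(2R)$, then $|b|\ge\sqrt{3}/2$ and $E^2\cap B(0,R)$ is empty once $\eta<(\sqrt3-1)/2$. Otherwise $E^2\cap B(0,R)$ is contained, up to a rigid motion, in a slab $[-4R\eta,4R\eta]\times B_{d-1}(0,R)$, whose Lebesgue measure is $O(R^{d}\eta)$. Since $\rho\ll\mathcal{L}^d$ and $|f|(1+|x|)\in L^1(\rho)$, absolute continuity of the integral (the measure $|f|(1+|x|)\rho$ is absolutely continuous with respect to Lebesgue) gives
\[
\sup_{|A|\le C_R\eta}\int_A|f|(1+|x|)d\rho\to0 \quad\text{as }\eta\to0.
\]
We therefore take $\limsup_{r\to\infty}$ of the supremum over $(\theta,b)\in\SS^d$, then let $\eta\to0$, then $R\to\infty$, which gives the uniform convergence.

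The value $\tau_\infty(0)=0$ is irrelevant to the limit. The hyperplane $\{\langle\theta,x\rangle+b=0\}$ is $\rho$-null when $\theta\ne0$, and when $\theta=0$ we have $|b|=1$, so the case does not arise.
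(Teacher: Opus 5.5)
Your proposal is correct and follows essentially the same argument as the paper: domination by $\|\sigma'\|_\infty|f(x)|(1+|x|)$ for the $C^1$ claim, then the same three-region split. Those regions are the set far from the hyperplane (controlled by $\epsilon_\eta(r)$), a thin slab inside $B(0,R)$ (empty when $|\theta|\le 1/(2R)$ and $R\ge 1$, otherwise handled by absolute continuity of $|f|(1+|x|)\rho$), and the tail outside the ball, with the limits taken in the order $r\to\infty$, then $\eta\to0$, then $R\to\infty$.
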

\begin{proof}
    First, our assumption on $\sigma$ implies that its derivative is bounded and, for every $\eta>0$, $\epsilon_{\eta}(r):=\sup_{|s|\geq \eta}|\sigma'(rs)-\tau_{\infty}(s)|\to 0$ when $r\to\infty$. The gradient of the integrand with respect to $(\theta,b)$ is $f(x)\sigma'(\langle \theta,x\rangle+b)\rho(x)(x,1)$, which is dominated by $\|\sigma'\|_{\infty} |f(x)|(1+|x|)\rho(x)$. This last function is integrable, which shows that $g_f$ is of class $C^1$ and that, for every $(\theta,b)\in\RR^{d+1}$,
    \begin{equation*}
        \nabla g_f(\theta,b)=\int_{\RR^{d}}f(x)\sigma'(\langle\theta,x\rangle+b)\rho(x)(x,1)dx.        
    \end{equation*}
    For every $\eta>0$, we have
    \begin{equation*}
         \int_{\{|\langle\theta,x\rangle+b|\geq \eta\}}|f(x)\rho(x)(x,1)||\sigma'(r[\langle\theta,x\rangle+b])-\tau_{\infty}(\langle\theta,x\rangle+b)|dx\leq \epsilon_{\eta}(r)\int_{\RR^{d}}|f(x)\rho(x)(x,1)|dx.
    \end{equation*}
    Let $R\geq 1$ and $(\theta,b)\in\mathbb{S}^d$. If $|\theta|\leq 1/(2R)$, then, since $|\theta|^2+b^2=1$, the set ${\{|\langle\theta,x\rangle+b|\leq \eta\}\cap B_{d}(0,R)}$ is empty as soon as $\eta< (\sqrt{3}-1)/2$. If $|\theta|\geq 1/(2R)$, then $\{|\langle\theta,x\rangle+b|\leq \eta\}\cap B_{d}(0,R)$ is contained, up to a rotation and a translation, in $[-a,a]\times B_{d-1}(0,R)$ with $a=4R\eta$. In this case,
    \begin{equation*}
         \int_{\{|\langle\theta,x\rangle+b|\leq \eta\}\cap B_{d}(0,R)}|f(x)\rho(x)(x,1)||\sigma'(r[\langle\theta,x\rangle+b])-\tau_{\infty}(\langle\theta,x\rangle+b)|dx\leq (\max(|\alpha|,|\beta|)+\|\sigma'\|_{\infty})\Psi(\eta,R)
    \end{equation*}
    with
    \begin{equation*}
        \Psi(\eta,R)\eqdef\textstyle\sup\,\{\int_{A}|f(x)\rho(x)(x,1)|dx\,\rvert\, A\subset\RR^{d},~|A|\leq 4R\eta\mathcal{L}^{d-1}(B_{d-1}(0,R))\}.
    \end{equation*}
    Finally, $\{|\langle\theta,x\rangle+b|\leq \eta\}\setminus B_{d}(0,R)$ is included in $\RR^{d}\setminus B_{d}(0,R)$, and
    \begin{align*}
         &\int_{\RR^{d}\setminus B_{d}(0,R)}|f(x)\rho(x)(x,1)|
         |\sigma'(r[\langle\theta,x\rangle+b])-\tau_{\infty}(\langle\theta,x\rangle+b)|dx\\
         &\qquad\leq (\max(|\alpha|,|\beta|)+\|\sigma'\|_{\infty})
         \int_{\RR^{d}\setminus B_{d}(0,R)}|f(x)\rho(x)(x,1)|dx.
    \end{align*}
    Combining the three terms and using the integrability of $x\mapsto f(x)\rho(x)(x,1)$, we obtain that
    \begin{equation*}
        \Delta:=\limsup_{r\to+\infty}\sup_{(\theta,b)\in\mathbb{S}^{d}}\textstyle|\int_{\RR^{d}}(\sigma'(r[\langle\theta,x\rangle+b])-\tau_{\infty}(\langle\theta,x\rangle+b))f(x)\rho(x)(x,1)dx|
    \end{equation*}
    satisfies $\Delta\leq  (\max(|\alpha|,|\beta|)+\|\sigma'\|_{\infty})(\Psi(\eta,R)+\int_{\RR^{d}\setminus B_{d}(0,R)}|f(x)\rho(x)(x,1)|dx)$ for every $\eta\in (0,(\sqrt{3}-1)/2)$ and $R\geq 1$. As the first term converges to $0$ as $\eta\to 0$ for every $R>0$ and the second term converges to $0$ as $R\to +\infty$, the result follows by first sending $\eta$ to $0$ and then $R$ to $+\infty$.
\end{proof}

\begin{proof}[Proof of \Cref{prop_glob_conv_gelu}]
    As $\sigma \in C^{1,1}(\RR)$ and $\sigma'$ is bounded by our limit assumptions, \Cref{ass_glob_conv} holds. For every $(w,f)\in\RR^{d_w}\times L^2(\rho_x;\RR^{\dout})$, we have
    \begin{equation*}
        \langle \phi(\theta)w,f\rangle_{\mathcal{F}}=\int_{\RR^{\din}} \sigma(\langle \theta,(x, 1)\rangle) \langle f(x),w\rangle d\rho_x(x).
    \end{equation*} 
    Since $\rho_x$ has a finite moment of order $2$, we have $(1+|\cdot|)\in L^2(\rho_x)$, so that $[x\mapsto (1+|x|)\langle f(x),w\rangle]\in L^1(\rho_x)$ and the two lemmas above apply.\footnote{Note that when $r\to +\infty$, the assumptions $\sigma'(r)\to a$ and $\sigma'(-r)\to b$ imply $\sigma(r)/r\to a$ and $\sigma(-r)/(-r)\to b$ by l'H\^opital's rule (equivalently, $\sigma(-r)/r\to-b$).} This shows that conditions \ref{conv_phi_asymp_hom} and \ref{conv_dphi_asym_hom} hold. Condition \ref{density_phi_asymp_hom} follows from $a\neq b$ and standard density results for two-layer Leaky ReLU networks. Indeed, according to \cite{hornik1991approximation}, any bounded continuous nonconstant function leads to universal approximation results in $L^p$, and given $\sigma_{\infty}:x\mapsto a\max(0,x)+b\min(0,x)$, the mapping $x\mapsto \sigma_{\infty}(x) - \sigma_{\infty}(x-1)$ is bounded and fits in the framework of Hornik. Thus, \Cref{prop_esc_reg_asymp_homogeneous} holds. By assumption, the mapping $h_{\mu}^{\infty}$ has a nonzero regular value, which allows us to apply \Cref{prop_esc_reg_asymp_homogeneous} to obtain that every $\mu\in\mathcal{P}_2(\Omega)$ that does not minimize $F$ admits an escape region. Finally, since the initial distribution is assumed to be sub-Gaussian and to have full support, \Cref{thm_glob_conv} applies: if the Wasserstein gradient flow converges in $W_2$, its limit is a global minimizer of $F$.
\end{proof}

\subsection{Attention layers}\label{appendix_attention}
Throughout this subsection, we denote by $\sigma:z\in\RR^n\mapsto (e^{z_i}/(\sum_{j=1}^n e^{z_j}))_{i=1}^n\in \RR^n$ the softmax function. We start with the proof that, under a finite moment assumption on the distribution of the input contexts $\rho_X$, the mapping $\phi$ is sufficiently regular.
\begin{proof}[Proof of \Cref{lem_reg_transfo}]
    As $\psi(A)(X)$ is a convex combination of the $(x_i)_{1\leq i\leq n}$, using Jensen's inequality we obtain 
    \begin{equation*}
        |\psi(A)(X)|^2\leq \sum\limits_{i=1}^n \frac{e^{\langle Ax_n,x_i\rangle}}{\sum\limits_{j=1}^n e^{\langle Ax_n,x_j\rangle}}|x_i|^2\leq \underset{1\leq i\leq n}{\max}~|x_i|^2.
    \end{equation*}
    Therefore, provided $\int_{\RR^{n\times d}}\max_{1\leq i\leq n}|x_i|^2 d\rho_X(X)<+\infty$, we have $\psi(A)\in \calH$ with $\calH\eqdef L^2(\rho_X;\RR^d)$ for every $A\in\RR^{d\times d}$ and $\sup_{A}\|\psi(A)\|_{\calH}<+\infty$.

    We next note that $\psi(A)(X)=X^T\sigma(XAx_n)$, where $\sigma:z\in\RR^n\mapsto (e^{z_i}/(\sum_{j=1}^n e^{z_j}))_{i=1}^n\in \RR^n$ is the softmax function. We claim that $\psi$ is differentiable with differential $d\psi$ given by
    \begin{equation*}
    \begin{aligned}
    \relax[d\psi(A).B](X)&=X^T(d\sigma(XAx_n)\cdot (XBx_n))\\
    &=\sum\limits_{i=1}^n \sigma_i(XAx_n)\bigg(\langle Bx_n,x_i\rangle-\sum\limits_{j=1}^n \sigma_j(XAx_n)\langle Bx_n,x_j\rangle\bigg)x_i.
    \end{aligned}
    \end{equation*}
    We have
    \begin{equation*}
        \begin{aligned}
            \Delta(B)^2&\eqdef \|\psi(A+B)-\psi(A)-d\psi(A)\cdot B\|_{\calH}^2\\
            &=\int_{\RR^{n\times d}}|X^T(\sigma(X(A+B)x_n)-\sigma(XAx_n)-d\sigma(XAx_n)\cdot(XBx_n))|^2 d\rho_X(X)\\
            &\leq \int_{\RR^{n\times d}}\bigg[\underset{1\leq i\leq n}{\mathrm{max}}|x_i|^2\bigg]\Bigg[\sum\limits_{i=1}^n|\sigma_i(X(A+B)x_n)-\sigma_i(XAx_n)-d\sigma_i(XAx_n)\cdot(XBx_n)|\Bigg]^2 d\rho_X(X).
        \end{aligned}
    \end{equation*}
    Using $\sigma_i(z+h)-\sigma_i(z)-d\sigma_i(z)\cdot h=\int_{0}^1(1-t)d^2\sigma_i(z+th)\cdot(h,h)dt$ for every $z,h\in\RR^n$ and \Cref{lemma_bound_d2sigma} below, we obtain
    \begin{equation*}
        \begin{aligned}
            \Delta(B)^2&\leq \int_{\RR^{n\times d}}\bigg[\underset{1\leq i\leq n}{\mathrm{max}}|x_i|^2\bigg]\bigg[\int_{0}^1(1-t)|d^2\sigma(X(A+tB)x_n)\cdot(XBx_n,XBx_n)|_1dt\bigg]^2 d\rho_X(X)\\
            &\leq 9\int_{\RR^{n\times d}}\bigg[\underset{1\leq i\leq n}{\mathrm{max}}|x_i|^2\bigg] |XBx_n|_{\infty}^4 d\rho_X(X)\leq 9 \|B\|^4 \int_{\mathcal{X}}\underset{1\leq i\leq n}{\mathrm{max}}|x_i|^{10} d\rho_X(X).
        \end{aligned}
    \end{equation*}
    This shows that $\psi$ is differentiable with differential $d\psi$. A similar computation shows that $d\psi$ is bounded and Lipschitz. Arguing in a similar way, one can also obtain that $\psi$ is twice differentiable with a bounded Lipschitz differential provided $\rho_X$ has a finite moment of order $14$.
\end{proof}

\begin{lem}\label{lemma_bound_d2sigma}
    For every $z,h\in\RR^n$, we have $|d\sigma(z)\cdot h|_1\leq 2|h|_{\infty}$ and $|d^2\sigma(z)\cdot(h,h)|_1\leq 6|h|_{\infty}^2$.
\end{lem}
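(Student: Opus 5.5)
We will prove both bounds by differentiating the explicit formula for the softmax and bounding each term in $\ell^1$. Throughout, we use only two facts about $\sigma(z)$: its entries satisfy $\sigma_i(z)\geq 0$, and $\sum_i\sigma_i(z)=1$. We write $\langle \sigma,h\rangle=\sum_j\sigma_j(z)h_j$. Since $\langle\sigma,h\rangle$ is a convex combination of the $h_j$, it satisfies $|\langle \sigma,h\rangle|\leq |h|_\infty$.

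\emph{First-order bound.} A direct computation from $\sigma_i(z)=e^{z_i}/\sum_j e^{z_j}$ gives
\[
    [d\sigma(z)\cdot h]_i=\sigma_i(z)\big(h_i-\langle\sigma(z),h\rangle\big).
\]
This formula was already used in the proof of \Cref{lem_reg_transfo}. By the triangle inequality, $|h_i-\langle\sigma,h\rangle|\leq 2|h|_\infty$. Summing over $i$ with weights $\sigma_i$ that add up to $1$ gives $|d\sigma(z)\cdot h|_1\leq 2|h|_\infty$.

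\emph{Second-order bound.} We differentiate the expression $G_i(z)=\sigma_i(z)(h_i-\langle \sigma(z),h\rangle)$ once more in the direction $h$, keeping $h$ fixed. This yields
\[
    [d^2\sigma(z)\cdot(h,h)]_i=[d\sigma(z)\cdot h]_i\,\big(h_i-\langle\sigma(z),h\rangle\big)-\sigma_i(z)\,\langle d\sigma(z)\cdot h,h\rangle.
\]
We sum the absolute values over $i$ and treat the two terms separately.

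For the first term, we use $|h_i-\langle\sigma,h\rangle|\leq 2|h|_\infty$ together with the first-order bound. This gives at most $2|h|_\infty\cdot|d\sigma(z)\cdot h|_1\leq 4|h|_\infty^2$.

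For the second term, the factor $\langle d\sigma(z)\cdot h,h\rangle$ does not depend on $i$, and the weights $\sigma_i$ sum to $1$. Its contribution is therefore $|\langle d\sigma(z)\cdot h,h\rangle|$. By Hölder's inequality and the first-order bound, this is at most $|d\sigma(z)\cdot h|_1|h|_\infty\leq 2|h|_\infty^2$.

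Adding the two contributions yields $|d^2\sigma(z)\cdot(h,h)|_1\leq 6|h|_\infty^2$.

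No step is a genuine obstacle here. The only point needing care is differentiating the product $\sigma_i(h_i-\langle\sigma,h\rangle)$ correctly. In the second term, one should apply the $\ell^1$–$\ell^\infty$ Hölder inequality rather than bound each entry of $d\sigma(z)\cdot h$ separately, since the entrywise route would lose a factor depending on $n$.
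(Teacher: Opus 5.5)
Your proof is correct and follows essentially the same route as the paper: you use explicit first and second derivatives of the softmax, bounded using only $\sigma_i(z)\geq 0$ and $\sum_i\sigma_i(z)=1$. The difference is cosmetic. The paper expands the second derivative as $\sigma_i\big[(h_i-\langle\sigma,h\rangle)^2-\langle\sigma,h\odot h\rangle+\langle\sigma,h\rangle^2\big]$ and bounds the three pieces by $4|h|_\infty^2$, $|h|_\infty^2$ and $|h|_\infty^2$, whereas you keep the product-rule form, reuse the first-order bound and apply H\"older, reaching the same constant $4+2=6$.
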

\begin{proof}
    We have $d\sigma(z)\cdot h=\sigma(z)\odot h - \langle \sigma(z),h\rangle \sigma(z)$. Using the fact that the components of $\sigma(z)$ are positive and sum to one, we obtain that
    \begin{equation*}
        \begin{aligned}
        |d\sigma(z)\cdot h|_1&=\sum\limits_{i=1}^n \sigma_i(z)|h_i-\langle \sigma(z),h\rangle|\leq \underset{1\leq i \leq n}{\mathrm{max}}~|h_i-\langle \sigma(z),h\rangle|\leq 2|h|_{\infty}.
        \end{aligned}
    \end{equation*}

    For every $1\leq i\leq n$, we have
    \begin{equation*}
            d^2\sigma_i(z)\cdot(h,h)=\sigma_i(z)\big[(h_i-\langle h,\sigma(z)\rangle)^2-\langle \sigma(z),h\odot h\rangle+\langle \sigma(z),h\rangle^2\big].
    \end{equation*}
    Using again that the components of $\sigma(z)$ are positive and sum to one, we obtain that
    \begin{equation*}
        \begin{aligned}
            |d^2\sigma(z)\cdot(h,h)|_1&\leq \underset{1\leq i\leq n}{\mathrm{max}}~|h_i-\langle \sigma(z),h\rangle|^2+|\langle \sigma(z),h\odot h\rangle|+\langle \sigma(z),h\rangle^2\\
            &= \underset{1\leq i\leq n}{\mathrm{max}}~\langle \sigma(z),h-h_i\mathbf{1}\rangle^2+|\langle \sigma(z),h\odot h\rangle|+\langle \sigma(z),h\rangle^2\\
            &\leq \underset{1\leq i,j\leq n}{\mathrm{max}}~|h_i-h_j|^2+2|h|_{\infty}^2\\
            &\leq 6|h|_{\infty}^2.       
        \end{aligned}
    \end{equation*}
\end{proof}

We now turn to the proof of \Cref{prop_conv_attention}. Our strategy is to restrict our study to contexts whose tokens all lie in a given ball by using that $\rho_X$ has finite second-order moments and the rest of the integrand is bounded. Then, given $\eta>0$, we directly bound the integral on the region of the input space where $\Sigma(z)=\{i\}$ for some $i$ and $z_{\max}-z_j=z_i-z_j\geq \eta$ for every $j\neq i$, where $z=(\langle Ax_n,x_i\rangle)_{1\leq i\leq n}$. The remaining region is contained in the region where there exists $i\neq j$ such that $|z_i-z_j|\leq \eta$. This is a region of small width around the zero set of some nontrivial polynomial, and we show that its measure tends to zero when $\eta\to 0$ uniformly in $A\in\mathbb{S}^{d^2-1}$.
\begin{proof}[Proof of \Cref{prop_conv_attention}]
    If $n=1$, then $\psi_r(A)=\psi_\infty(A)$ for every $r>0$ and $A\in\mathbb{S}^{d^2-1}$, so the result is immediate. We therefore assume $n\geq2$.
    Throughout this proof, we set $z=(\langle Ax_n,x_i\rangle)_{1\leq i\leq n}\in\RR^n$. We note that
    \begin{equation*}
        \begin{aligned}
        |\psi_{r}(A)(X)-\psi_{\infty}(A)(X)|&=\Bigg|\sum\limits_{i\notin \Sigma(z)}\sigma_{r,i}(z)\Bigg(x_i-\frac{1}{\#\Sigma(z)}\sum\limits_{j\in \Sigma(z)} x_j\Bigg)\Bigg|\\
        &\leq 2\bigg[\underset{1\leq j\leq n}{\max}|x_j|\bigg]\sum\limits_{i\notin \Sigma(z)}\sigma_{r,i}(z).\\
        \end{aligned}
    \end{equation*}
    
    Given $R>0$ and $\eta>0$, we split the input space $\RR^{n\times d}$ in three regions
    \begin{equation*}
        \begin{aligned}
            \mathcal{R}_1&=\{X\in\RR^{n\times d}\,\rvert\, \exists i\in\{1,...,n\},~ |x_i|>R\},\\
            \mathcal{R}_2&=\{X\in\RR^{n\times d}\setminus \mathcal{R}_1\,\rvert\, \Sigma(z) \mathrm{~is~a~singleton~and~}z_{\max}-z_i\geq \eta\mathrm{~for~every~}i\notin\Sigma(z)\},\\
            \mathcal{R}_3&=\RR^{n\times d}\setminus (\mathcal{R}_1\cup \mathcal{R}_2).
        \end{aligned}
    \end{equation*}
    
    \emph{Bound on $\mathcal{R}_1$.} Since $\sum_{i\notin\Sigma(z)}\sigma_{r,i}(z)\leq \sum_{i=1}^n \sigma_{r,i}(z)=1$, we have
    \begin{equation*}
        \int_{\mathcal{R}_1}|\psi_r(A)(X)-\psi_{\infty}(A)(X)|^2 d\rho_X(X)\leq 4\int_{\mathcal{R}_1} \bigg[\max_{1\leq i\leq n}|x_i|^2\bigg]d\rho_X(X).
    \end{equation*}
    By our finite moment assumption on $\rho_X$, this quantity therefore goes to $0$ as $R\to+\infty$.
    
    \emph{Bound on $\mathcal{R}_2$.} If $\Sigma(z)$ is a singleton and $z_{\max}-z_i\geq \eta$ for every $i\notin\Sigma(z)$, then, for every $i\notin\Sigma(z)$,
    \begin{equation*}
        \sum\limits_{i\notin\Sigma(z)}\sigma_{r,i}(z)=\frac{\sum_{i\notin\Sigma(z)}e^{-r(z_{\max}-z_i)}}{\sum_{j=1}^n e^{-r(z_{\max}-z_j)}}=\frac{1}{1+(\sum_{i\notin\Sigma(z)}e^{-r(z_{\max}-z_i)})^{-1}}\leq \frac{1}{1+e^{r\eta}/(n-1)}.
    \end{equation*}
    As a consequence, we obtain
    \begin{equation*}
        \int_{\mathcal{R}_2}|\psi_r(A)(X)-\psi_{\infty}(A)(X)|^2 d\rho_X(X)\leq \frac{4R^2}{(1+e^{r\eta}/(n-1))^2}.
    \end{equation*}

    \emph{Bound on $\mathcal{R}_3$.} We notice that $\mathcal{R}_3$ is included in
    \[
        \mathcal{R}'_3=\bigcup_{i\neq j}\mathcal{R}'_{3,ij},
        \qquad
        \mathcal{R}'_{3,ij}:=\{X\in \RR^{n\times d}\setminus \mathcal{R}_1\,\rvert\,|z_i-z_j|\leq \eta\}.
    \]
    It is therefore enough to estimate each of the finitely many pairwise regions $\mathcal{R}'_{3,ij}$ and then sum the resulting bounds. As $\|A\|=1$, the set $\{X\,\rvert\,Ax_n=0\}$ has Lebesgue measure zero. If $X\in\mathcal{R}'_{3,ij}$ and $Ax_n\neq0$, then, after a rotation that sends $Ax_n/|Ax_n|$ to a coordinate vector, $x_i-x_j$ belongs to the cylinder
    \[
        B_{d-1}(0,2R)\times(-\eta/|Ax_n|,\eta/|Ax_n|).
    \]
    The radius is $2R$ because $x_i,x_j\in B_d(0,R)$.

    First assume that $i\neq n$ and $j\neq n$. Denoting by $\mathcal{L}^d$ Lebesgue measure on $\RR^d$ and setting $\epsilon=\eta/|Ax_n|$, we obtain
    \[
        \mathcal{L}^{nd}(\mathcal{R}'_{3,ij})
        \leq [\mathcal{L}^d(B_d(0,R))]^{n-3}
        \int_{B_d(0,R)}\mathcal{L}^{2d}(E_{x_n})\,dx_n,
    \]
    where
    \[
        E_{x_n}=\{(y_1,y_2)\in(B_d(0,R))^2\,\rvert\,
        y_1-y_2\in B_{d-1}(0,2R)\times(-\epsilon,\epsilon)\}.
    \]
    This set is contained in the image, under the unit-Jacobian map $(x,z)\mapsto(x,x+z)$, of
    \[
        B_d(0,R)\times\bigl(B_{d-1}(0,2R)\times(-\epsilon,\epsilon)\bigr).
    \]
    Since the largest singular value of $A$ is at least $1/\sqrt d$, rotating the $x_n$ variable along a corresponding right singular vector yields
    \[
        \int_{B_d(0,R)}\mathcal{L}^{2d}(E_{x_n})\,dx_n
        \lesssim R^{2d-1}\int_{B_d(0,R)}\min(R,\eta/|y_1|)\,dy.
    \]
    For fixed $R$, the last integral converges to zero as $\eta\to0$ by dominated convergence, uniformly over $A\in\mathbb S^{d^2-1}$.
    More explicitly, if $u_A$ is a right singular vector associated with the largest singular value of $A$, then $|Ax|\geq d^{-1/2}|x\cdot u_A|$ and, after rotating $u_A$ to the first coordinate direction,
    \[
        \int_{B_d(0,R)}\min\!\left(R,\frac{\eta}{|Ax|}\right)dx
        \leq C_{d,R}\int_0^R\min\!\left(R,\frac{\sqrt d\,\eta}{s}\right)ds
        \leq C'_{d,R}\eta(1+|\log\eta|)
    \]
    for all sufficiently small $\eta$. This makes the uniformity in $A$ explicit.

    If one of the two indices equals $n$, say $i=n$ and $j=1$, the same argument gives
    \[
        \mathcal{L}^{nd}(\mathcal{R}'_{3,n1})
        \leq [\mathcal{L}^d(B_d(0,R))]^{n-2}
        \int_{B_d(0,R)}\mathcal{L}^{d}(E_{x_n})\,dx_n,
    \]
    where
    \[
        E_{x_n}=\{x_1\in B_d(0,R)\,\rvert\,
        x_1-x_n\in B_{d-1}(0,2R)\times(-\epsilon,\epsilon)\}.
    \]
    The same singular-value estimate, with the explicit bound above, shows that this term is also $O_{d,R}(\eta(1+|\log\eta|))$ uniformly in $A$. Summing over the at most $n(n-1)$ ordered pairs proves that $\mathcal{L}^{nd}(\mathcal{R}'_3)\to0$ uniformly in $A$ as $\eta\to0$.

    \emph{Final step.} Take, for instance, $\eta=r^{-1/2}$. The integral over $\mathcal{R}_2$ converges to zero as $r\to+\infty$. On $\mathcal{R}_3$, the integrand is bounded by $4R^2$, and the bounded-density assumption together with the preceding uniform volume estimate shows that the corresponding integral also converges to zero. Thus, the limit is bounded by the integral over $\mathcal{R}_1$. Since this holds for every $R>0$ and the latter integral converges to zero as $R\to+\infty$, the result follows.
\end{proof}

\begin{proof}[Proof of \Cref{prop_glob_conv_attention}]
    Since $k=d=1$, the attention model has scalar output weights, and the operator $\phi(A):w\mapsto w\psi(A)$ is uniformly bounded as a map from $\RR$ to $L^2(\rho_X;\RR)$ by the finite moment assumption on $\rho_X$. By \Cref{lem_reg_transfo}, \Cref{ass_glob_conv} holds, and the assumptions on the loss ensure that \Cref{lem_ass_loss} applies.

    It remains to verify the hypotheses of \Cref{prop_esc_reg_bounded_scalar}. For every $\mu\in\mathcal{P}_2(\Omega)$, $f_\mu:=R'(\int_\Omega \Phi d\mu)$ belongs to $\mathcal{F} = L^2(\rho_X;\RR)$, and
    \[
        g_\mu(A)=\int_{\RR^{n\times d}} f_\mu(X)\psi(A)(X)d\rho_X(X).
    \]
    Define $g_\mu^\infty(A):=\langle f_\mu,\psi_\infty(A)\rangle_{\mathcal{F}}$. By the Cauchy--Schwarz inequality and \Cref{prop_conv_attention},
    \[
        \sup_{A\in\SS^{d^2-1}}|g_\mu(rA)-g_\mu^\infty(A)|
        \leq \|f_\mu\|_{\mathcal{F}}\sup_{A\in\SS^{d^2-1}}\|\psi(rA)-\psi_\infty(A)\|_{\mathcal{F}}
        \underset{r\to+\infty}{\longrightarrow}0.
    \]
    Thus, condition \ref{conv_gmu_bounded_scalar} holds. Conditions \ref{conv_dgmu_bounded_scalar} and \ref{conv_dgmu_dgnu}, as well as the relevant regular-value alternative, are assumed in the statement. Hence every nonminimizing $\mu$ admits an escape region by \Cref{prop_esc_reg_bounded_scalar}. Since the initial distribution is sub-Gaussian and has full support, \Cref{thm_glob_conv} implies that, if the Wasserstein gradient flow converges in $W_2$, its limit is a global minimizer of $F$.
\end{proof}

\section{Proof of Theorem \ref{thm_wgf}}\label{appendix_wgf}
To prove \Cref{thm_wgf}, we show the existence and uniqueness of a weak solution to the continuity equation 
\begin{equation*}
    \partial_t \mu_t=-\mathrm{div}(\mu_t v_t)~~\mathrm{with}~~v_t(u)=-\nabla F'(\mu_t)(u)    
\end{equation*}
by a fixed point argument. More precisely, a curve $\mu\in AC^2_{\mathrm{loc}}([0,+\infty);\mathcal{P}_2(\Omega))$ is understood as a weak solution starting from $\mu_0$ if $\mu_{t=0}=\mu_0$, $\int_0^T\int_\Omega |v_t(u)|^2d\mu_t(u)dt<+\infty$ for every $T>0$, and, for every test function $\varphi\in C_c^1((0,+\infty)\times\Omega;\RR)$, we have
\begin{equation*}
    \int_{0}^{+\infty}\int_{\Omega}[\partial_t\varphi_t(u)+\langle \nabla\varphi_t(u),v_t(u)\rangle]d\mu_t(u)dt=0.
\end{equation*}
In \Cref{appendix_wgf_prelim}, we introduce some useful notions that are used in the proof. The core of the proof is given in \Cref{appendix_wgf_core}, while the technical lemmas on which it relies are proved in \Cref{subsec_technical_lemmas}.

\subsection{Preliminaries}\label{appendix_wgf_prelim}
If $\mu\in\mathcal{P}(\Omega)$, we denote by $m_2(\mu)=\int_{\Omega}|u|^2 d\mu(u)$ its second-order moment. We denote by $\mathcal{P}_2(\Omega)$ the set of probability measures on $\Omega$ that have finite second-order moments. We recall that $(\mathcal{P}_2(\Omega),W_2)$ is a complete metric space.

\paragraph{Sub-Gaussian norm.} We define the sub-Gaussian norm of a probability measure $\mu\in\mathcal{P}(\Omega)$ by
\[
    \|\mu\|_{\psi_2}=\inf\{c>0\,\rvert\, \int_{\Omega}e^{|u|^2/c^2}d\mu(u)\leq 2\}
\]
and say that it is sub-Gaussian if $\|\mu\|_{\psi_2}<+\infty$. We stress that, when $d\geq 2$, this definition differs from the usual definition of sub-Gaussianity for random vectors, which can be found for instance in \cite[Section 3.4]{vershyninHighDimensionalProbabilityIntroduction2026}. Namely, if $X$ is a random vector with law $\mu$, then with our definition
\[
    \|\mu\|_{\psi_2}=\inf\{c>0\,\rvert\, \mathbb{E}[\exp(|X|^2/c^2)]\leq 2\},
\]

whereas the sub-Gaussian norm of $X$ is usually defined as $\sup_{|v|\leq 1}\inf\{c>0\,\rvert\, \mathbb{E}[\exp((v^TX)^2/c^2)]\leq 2\}$. Our choice is motivated by the following lemma, which shows that with our definition, the pushforward operation by a map with linear growth preserves sub-Gaussianity.
\begin{lem}
    \label{lem_subg_pushf}
    If $\mu\in\mathcal{P}(\Omega)$ is sub-Gaussian and $T:\Omega\to\Omega$ satisfies $|T(u)|\leq C(1+|u|)$ for some $C>0$, we have
    \begin{equation}
        \|T_{\#}\mu\|_{\psi_2}\leq 2C\max\Big(\|\mu\|_{\psi_2},1/\sqrt{\ln(2)}\Big).
    \end{equation}
\end{lem}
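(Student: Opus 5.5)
The plan is to bound the exponential moment of $T_{\#}\mu$ directly by that of $\mu$. By the change of variables formula, $\int_\Omega e^{|v|^2/c^2}d(T_{\#}\mu)(v)=\int_\Omega e^{|T(u)|^2/c^2}d\mu(u)$. So it suffices to find a $c$ of the stated form for which the right-hand side is at most $2$. The definition of $\|\cdot\|_{\psi_2}$ is an infimum, and the map $c\mapsto\int e^{|T(u)|^2/c^2}d\mu$ is nonincreasing. Hence exhibiting one admissible $c$ gives $\|T_{\#}\mu\|_{\psi_2}\leq c$.

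The key pointwise step combines the linear growth bound with $(a+b)^2\leq 2a^2+2b^2$. This gives $|T(u)|^2\leq C^2(1+|u|)^2\leq 2C^2(1+|u|^2)$. Set $K:=\max(\|\mu\|_{\psi_2},1/\sqrt{\ln 2})$ and $c:=2CK$. Then
\[
    \frac{|T(u)|^2}{c^2}\leq \frac{1+|u|^2}{2K^2}.
\]
Therefore
\[
    \int e^{|T(u)|^2/c^2}d\mu(u)\leq e^{1/(2K^2)}\int e^{|u|^2/(2K^2)}d\mu(u).
\]

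Next, apply Jensen's (or Cauchy--Schwarz) inequality to the concave map $x\mapsto x^{1/2}$:
\[
    \int e^{|u|^2/(2K^2)}d\mu\leq\Big(\int e^{|u|^2/K^2}d\mu\Big)^{1/2}.
\]
Since $K\geq\|\mu\|_{\psi_2}$, the inner integral is at most $2$. This needs a short justification when $K=\|\mu\|_{\psi_2}$ exactly. Take $c_n\downarrow\|\mu\|_{\psi_2}$ with $\int e^{|u|^2/c_n^2}d\mu\leq 2$, and pass to the limit by monotone convergence. Note also that the degenerate case $\|\mu\|_{\psi_2}=0$ cannot occur for a probability measure, since $\int e^{0}d\mu=1$ only when $\mu=\delta_0$; and in any case $K\geq 1/\sqrt{\ln 2}>0$. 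We obtain the bound $e^{1/(2K^2)}\sqrt2$.

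Finally, use $K\geq 1/\sqrt{\ln 2}$ to get $1/(2K^2)\leq \ln(2)/2$, so $e^{1/(2K^2)}\leq\sqrt2$. The total bound is then $\sqrt2\cdot\sqrt2=2$, so $c$ is admissible. This is exactly where the constant $1/\sqrt{\ln 2}$ in the statement enters.

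I expect no real obstacle. The only delicate point is balancing the constants: the additive $1$ from the linear growth must be absorbed by the factor $e^{1/(2K^2)}$, and that is what forces both the factor $2$ in $2C$ and the floor $1/\sqrt{\ln 2}$ in $K$. If $C=0$ then $T\equiv 0$; the exponential moment equals $1$ for every $c>0$, so $\|T_{\#}\mu\|_{\psi_2}=0$ and the claim is trivial.
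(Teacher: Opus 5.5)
Your proof is correct and follows the paper's argument essentially verbatim (pointwise bound via $(1+a)^2\leq 2(1+a^2)$, Jensen for $\sqrt{\cdot}$, and $K\geq 1/\sqrt{\ln 2}$ giving $e^{1/(2K^2)}\leq\sqrt{2}$). The only difference is that the paper works with an admissible $c>\|\mu\|_{\psi_2}$ and passes to the infimum at the end, whereas you handle $c=\|\mu\|_{\psi_2}$ directly by monotone convergence; also, your side remark is wrong that $\|\mu\|_{\psi_2}=0$ cannot occur, since it occurs for $\mu=\delta_0$ (as your own $C=0$ remark shows), but this is harmless because $K\geq 1/\sqrt{\ln 2}>0$.
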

\begin{proof}
    Let $X$ be a random vector with law $\mu$. Fix $c>\|\mu\|_{\psi_2}$ such that $\mathbb E[\exp(|X|^2/c^2)]\leq2$, and set $M:=\max(c,1/\sqrt{\ln(2)})$ and $K:=2CM$. Using $|T(X)|\leq C(1+|X|)$ and $(1+a)^2\leq2(1+a^2)$, we obtain
    \begin{equation*}
        \begin{aligned}
            \mathbb E\left[\exp\left(\frac{|T(X)|^2}{K^2}\right)\right]
            &\leq
            \exp\left(\frac{1}{2M^2}\right)
            \mathbb E\left[\exp\left(\frac{|X|^2}{2M^2}\right)\right]\\
            &\leq
            \exp\left(\frac{\ln(2)}{2}\right)
            \mathbb E\left[\exp\left(\frac{|X|^2}{c^2}\right)\right]^{1/2}
            \leq 2.
        \end{aligned}
    \end{equation*}
    Here we used $M\geq1/\sqrt{\ln(2)}$ for the first factor, and Jensen's inequality applied to the concave function $s\mapsto\sqrt{s}$ for the second factor. Hence $\|T_{\#}\mu\|_{\psi_2}\leq 2C\max(c,1/\sqrt{\ln(2)})$.
\end{proof}

\paragraph{Continuity of the predictor for $W_2$.} The following lemma shows that the measure-to-predictor mapping $\mu\mapsto\int_{\Omega}\Phi d\mu$ is continuous with respect to the $W_2$ distance.
\begin{lem}\label{lemma_bound_diff_Phi_mu}
    Under \Cref{ass_wellp_wgf}, for every $\mu_1,\mu_2\in\mathcal{P}_2(\Omega)$, we have
    \begin{equation*}
        \textstyle \|\int_{\Omega}\Phi d\mu_1-\int_{\Omega}\Phi d\mu_2\|_{\mathcal{F}}\lesssim (1+m_2(\mu_1)+m_2(\mu_2))W_2(\mu_1,\mu_2).
    \end{equation*}
\end{lem}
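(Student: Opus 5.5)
The plan is a coupling argument combined with the growth bounds on $\Phi$ from \Cref{ass_wellp_wgf}.

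Take $\Pi$ to be an optimal coupling between $\mu_1$ and $\mu_2$ for the quadratic cost. Then
\[
\textstyle\int_\Omega\Phi\,d\mu_1-\int_\Omega\Phi\,d\mu_2=\int_{\Omega\times\Omega}\bigl(\Phi(u_1)-\Phi(u_2)\bigr)\,d\Pi(u_1,u_2),
\]
and moving the norm inside the Bochner integral gives
\[
\textstyle\bigl\|\int_\Omega\Phi\,d\mu_1-\int_\Omega\Phi\,d\mu_2\bigr\|_{\mathcal F}\leq\int\|\Phi(u_1)-\Phi(u_2)\|_{\mathcal F}\,d\Pi.
\]
This Bochner integral is well defined because $\mathcal F$ is separable and $\Phi$ has at most quadratic growth, which makes $\|\Phi\|_{\mathcal F}$ integrable against any measure in $\mathcal P_2(\Omega)$.

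Next I bound the integrand pointwise using the mean value inequality and the linear-growth condition (i):
\[
\|\Phi(u_1)-\Phi(u_2)\|_{\mathcal F}\leq\sup_{s\in[0,1]}\|d\Phi(u_2+s(u_1-u_2))\|\,|u_1-u_2|\lesssim(1+|u_1|+|u_2|)\,|u_1-u_2|.
\]

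I then apply Cauchy–Schwarz with respect to $\Pi$. The factor $\bigl(\int|u_1-u_2|^2\,d\Pi\bigr)^{1/2}$ equals $W_2(\mu_1,\mu_2)$ because $\Pi$ is optimal. The other factor satisfies
\[
\textstyle\bigl(\int(1+|u_1|+|u_2|)^2\,d\Pi\bigr)^{1/2}\lesssim\bigl(1+m_2(\mu_1)+m_2(\mu_2)\bigr)^{1/2}.
\]
Since $\sqrt{1+x}\leq 1+x$ for $x\geq0$, this gives the stated bound, and in fact a slightly better one.

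None of these steps is a real obstacle; the argument is routine. The only points to handle with some care are the existence of an optimal coupling, which is standard on $\mathcal P_2(\RR^d)$, and the measurability and Bochner integrability of $\Phi$ in the separable Hilbert space $\mathcal F$, which follow from the continuity of $\Phi$ together with its quadratic growth.
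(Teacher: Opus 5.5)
Your proposal is correct and follows essentially the same route as the paper's proof: an optimal quadratic-cost coupling, the triangle inequality for the Bochner integral, the pointwise bound $\|\Phi(u_1)-\Phi(u_2)\|_{\mathcal F}\lesssim(1+\max(|u_1|,|u_2|))|u_1-u_2|$ from the linear growth of $d\Phi$, and Cauchy--Schwarz. Your remark that one actually obtains the square root of $1+m_2(\mu_1)+m_2(\mu_2)$ is also valid; the paper simply relaxes it to the stated bound.
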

\begin{proof}
    Considering $\Pi$ an optimal transport plan (for the quadratic cost) between $\mu_1$ and $\mu_2$, we obtain 
    \begin{equation*}
        \begin{aligned}
            \textstyle\|\int_{\Omega}\Phi d\mu_1-\int_{\Omega}\Phi d\mu_2\|&\textstyle=\|\int_{\Omega\times\Omega} [\Phi(u_1)-\Phi(u_2)]d\Pi(u_1,u_2)\|\\
            &\textstyle\leq \int_{\Omega\times\Omega} \|\Phi(u_1)-\Phi(u_2)\|d\Pi(u_1,u_2)\\
            &\textstyle\lesssim \int_{\Omega\times\Omega} (1+\mathrm{max}(|u_1|,|u_2|))|u_1-u_2|d\Pi(u_1,u_2)\\
            &\textstyle\leq \sqrt{\int_{\Omega\times\Omega}(1+\mathrm{max}(|u_1|,|u_2|))^2d\Pi(u_1,u_2)}\sqrt{\int_{\Omega\times\Omega}|u_1-u_2|^2d\Pi(u_1,u_2)}\\
            &\textstyle =\sqrt{\int_{\Omega\times\Omega}(1+\mathrm{max}(|u_1|,|u_2|))^2d\Pi(u_1,u_2)}W_2(\mu_1,\mu_2)\\
            &\textstyle\lesssim (1+m_2(\mu_1)+m_2(\mu_2))W_2(\mu_1,\mu_2).
        \end{aligned}
    \end{equation*}
\end{proof}

\subsection{Core of the proof}\label{appendix_wgf_core}
Throughout this subsection, we work under \Cref{ass_wellp_wgf}. We set an initial condition $\bar{\mu}_0\in\mathcal{P}_2(\Omega)$ and $\alpha>0$ such that $F(\bar{\mu}_0)\leq \alpha$. 

\paragraph{Truncation of the objective.} We claim that there exists a $C^{1,1}$ function $\xi$ such that $\xi(x)=x$ if $x\in[0, \alpha]$, $\xi$ is (strictly) increasing on $[\alpha,2\alpha]$, $\xi(x)=2\alpha$ if $x\geq 2\alpha$, ${\|\xi'\|_{\infty}\leq 3/2}$ and $\mathrm{Lip}(\xi')\leq 4/\alpha$. One can check that a possible choice for $\xi_{\rvert[\alpha,2\alpha]}$ is
\begin{equation*}
    \xi(x)=x+\frac{(x-\alpha)(2\alpha-x)(1-\cos(\pi(x-\alpha)/\alpha))}{2\alpha}.
\end{equation*}
We define $\tilde{R}=\xi\circ R$ and $\tilde{F}=\xi\circ F$. The purpose of this truncation is to obtain global bounds during the construction of the flow. Indeed, the assumptions on $R$ give boundedness of $R'$ on sublevel sets, and such bounds would be available along a gradient flow by energy dissipation. However, at this stage the gradient flow has not yet been constructed, so these a priori bounds cannot be used in the fixed point argument. The truncated objective $\tilde{F}$ avoids this circularity by making the corresponding differential globally bounded while leaving the dynamics unchanged on the relevant sublevel set. More precisely, $\tilde{F}=F$ on ${\{F\leq \alpha\}=\{\tilde{F}\leq \alpha\}}$. We will construct a gradient flow for $\tilde F$ starting from $\bar\mu_0$; energy dissipation then gives $\tilde F(\mu_t)\leq \tilde F(\bar\mu_0)\leq\alpha$, so the curve remains in the region where $\tilde F=F$ and is therefore also a Wasserstein gradient flow for $F$.

\paragraph{Construction of the flow map.} Let $T>0$ and $\mu\in C([0,T];\mathcal{P}_2(\Omega))$. The continuity of $\mu$ implies that $t\mapsto m_2(\mu_t)$ is continuous, so that $m_2(\mu)=\sup_{0\leq t\leq T}m_2(\mu_t)$ is finite.

There exists a unique solution to
    \begin{equation*}\left\{
        \begin{aligned}
            &\dot r(t)=-\nabla \tilde{F}'(\mu_t)(r(t)),\qquad 0\leq t\leq T,\\
            &r(0)=u.
        \end{aligned}\right.
    \end{equation*}
    
Indeed, \Cref{lem_cauchy_prop_velocity} below shows that the velocity $v(t,u)=-\nabla \tilde{F}'(\mu_t)(u)$ is continuous, locally Lipschitz in space, and has linear growth in space (uniformly in time), so standard ODE theory applies. We define the associated flow map $X_t[\mu]:u\mapsto r(t)$.
The relation between these characteristic maps and weak solutions of the continuity equation is provided by \Cref{lem_lagrangian_representation}.

\paragraph{Fixed point argument on $[0,T]$.} We set an initial condition $\bar{\mu}_0\in\mathcal{P}_2(\Omega)$ and define the map
\begin{equation*}
    \begin{aligned}
        \mathcal{G}\colon C([0,T];\mathcal{P}_2(\Omega)) &\to C([0,T];\mathcal{P}_2(\Omega))\\
        \mu &\mapsto (X_t[\mu]_{\#}\bar{\mu}_0)_{0\leq t\leq T}.
    \end{aligned}
\end{equation*}
Our aim is to find a complete metric space $\calX\subset C([0,T];\mathcal{P}_2(\Omega))$ such that $\mathcal{G}(\calX)\subset \calX$ and $\mathcal{G}$ is a contraction on $\calX$ when $T$ is sufficiently small. This will allow us to apply a fixed point argument which will yield the existence and uniqueness of the solution on $[0,T]$. In \Cref{subsec_technical_lemmas} below, we show that there exists a continuous function $f:(\RR_+)^2\to\RR_+$ that is non-decreasing in each variable and such that, if $\bar{\mu}_0$ is sub-Gaussian and
\begin{equation}
    \calX=\{\mu\in C([0,T];\mathcal{P}_2(\Omega))\,\rvert\, \forall t\in [0,T],~\|\mu_t\|_{\psi_2}\leq f(t, \|\bar{\mu}_0\|_{\psi_2})\}
    \label{def_X}
\end{equation}
then $\mathcal{G}(\calX)\subset \calX$. We also show that, equipped with the distance $W_{2,T}:(\mu,\nu)\mapsto \sup_{0\leq t\leq T}W_2(\mu_t,\nu_t)$, the space $\calX$ is complete. Finally, we prove the existence of a positive continuous function $g:\RR_+\times \RR_+\to\RR_+$ satisfying $g(T,K)\to 0$ as $T\to 0$ for every $K>0$ such that 
\[
    W_{2,T}(\mathcal{G}(\mu^1),\mathcal{G}(\mu^2))\leq g(T,K_0)W_{2,T}(\mu^1,\mu^2)
\]
for every $\mu^1,\mu^2\in \calX$ and every $K_0$ such that $\|\bar{\mu}_0\|_{\psi_2}\leq K_0$. Thus, for every sub-Gaussian initial condition $\bar{\mu}_0$, there exists $T>0$ such that $\mathcal{G}$ is a contraction and hence has a unique fixed point. This shows the existence and uniqueness of the solution on $[0,T]$.

\paragraph{Energy identity and return to the original objective.}
Let $\mu$ be the fixed point constructed above and set $q_t:=\int_\Omega\Phi\,d\mu_t$. Since $\mu_t=X_t[\mu]_{\#}\bar\mu_0$, we have
\[
    q_t=\int_\Omega \Phi(X_t[\mu](u))\,d\bar\mu_0(u).
\]
The linear-growth bounds for $d\Phi$, the velocity field, and $X_t[\mu]$, together with the finite second moment of $\bar\mu_0$, justify differentiation under the integral. Consequently, for almost every $t\in[0,T]$,
\[
    \dot q_t=\int_\Omega d\Phi(u)v_t(u)\,d\mu_t(u),
    \qquad
    v_t(u)=-d\Phi(u)^*\tilde R'(q_t)=-\nabla\tilde F'(\mu_t)(u).
\]
The Hilbert-space chain rule therefore gives
\begin{equation}\label{eq_energy_dissipation_truncated}
    \frac{d}{dt}\tilde F(\mu_t)
    =\langle\tilde R'(q_t),\dot q_t\rangle_{\mathcal{F}}
    =-\int_\Omega|\nabla\tilde F'(\mu_t)(u)|^2\,d\mu_t(u).
\end{equation}
In particular, the velocity belongs to $L^2(\mu_t\,dt)$ on every compact time interval, so the curve is $2$-absolutely continuous in $W_2$ and satisfies the energy-dissipation identity. Moreover,
\[
    \tilde F(\mu_t)\leq\tilde F(\bar\mu_0)=F(\bar\mu_0)\leq\alpha.
\]
Thus $F(\mu_t)\leq\alpha$, $\xi'(F(\mu_t))=1$, and the truncated and original vector fields coincide along the whole curve. Hence the constructed curve is a Wasserstein gradient flow of $F$. For any other weak solution starting from $\bar\mu_0$, \Cref{lem_lagrangian_representation} first gives its characteristic representation. The same chain-rule calculation then keeps it in the sublevel set, so it solves the truncated equation and is a fixed point of $\mathcal{G}$. Contraction uniqueness therefore gives uniqueness for the original equation.

\paragraph{Construction of a global solution.} We can iterate the previous argument by applying the result again with the updated initial condition $X_T[\mu]_{\#}\bar{\mu}_0$, which satisfies ${F(X_T[\mu]_{\#}\bar{\mu}_0)\leq F(\bar{\mu}_0)\leq \alpha}$, so that $\alpha$ does not need to be updated. Since the sub-Gaussian norm of the initial condition increases at each step, however, the time interval that we can add at each step decreases. Suppose for contradiction that this procedure fails to guarantee existence beyond some time $T_{\lim}>0$. Reasoning as in \Cref{lem_bound_psi2}, we have $\|\mu_t\|_{\psi_2}\leq f(t,\|\bar{\mu}_0\|_{\psi_2})$ for every $t\in [0,T_{\lim})$. Let $T'\in(0,2T_{\lim})$ be such that $g(T', f(T_{\lim},\|\bar{\mu}_0\|_{\psi_2}))<1$. Since $f$ is non-decreasing in its first variable, we have $\|\mu_{T_{\lim}-T'/2}\|_{\psi_2}\leq f(T_{\lim}-T'/2,\|\bar{\mu}_0\|_{\psi_2})\leq f(T_{\lim},\|\bar{\mu}_0\|_{\psi_2})$, and we can therefore apply the previous argument to extend $\mu_{\rvert[0,T_{\lim}-T'/2]}$ to $[0,T_{\lim}+T'/2]$, which gives a contradiction.

\subsection{Technical lemmas}\label{subsec_technical_lemmas}
By \Cref{ass_wellp_wgf}, $R'$ is bounded on sublevel sets of $R$. Hence, there exists a constant $C>0$ depending only on $\alpha$ such that $\|\tilde{R}'(h)\|_{\mathcal{F}}\leq (3/2)\|R'(h)\|_{\mathcal{F}}\leq C$ for every $h\in\mathcal{F}$ such that $R(h)\leq 2\alpha$. Since $\tilde{R}'(h)=0$ if $R(h)>2\alpha$, the previous bound in fact holds for every $h\in\mathcal{F}$.
\begin{lem}\label{lem_cauchy_prop_velocity}
    Let $\alpha>0$, $T>0$ and $\mu\in C([0,T];\mathcal{P}_2(\Omega))$. Define ${v:[0,T]\times \Omega\to \RR^d}$ by
    \[
        v(t,u)=-\nabla \tilde{F}'(\mu_t)(u).
    \]
    Then $v$ is continuous, locally Lipschitz in space, and has linear growth in space, uniformly in time.
\end{lem}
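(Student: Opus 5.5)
The plan is to write the velocity field explicitly in terms of the truncated loss and reduce every claim to \Cref{ass_wellp_wgf} and \Cref{lemma_bound_diff_Phi_mu}. By the chain rule, $\tilde{F}'(\mu_t)(u)=\langle \tilde{R}'(q_t),\Phi(u)\rangle_{\mathcal{F}}$ with $q_t:=\int_\Omega\Phi\,d\mu_t$ and $\tilde R'(h)=\xi'(R(h))R'(h)$. Hence
\[
    v(t,u)=-d\Phi(u)^*\tilde{R}'(q_t).
\]
The two factors are handled separately. The factor $d\Phi(u)^*$ depends only on space and is controlled by the linear growth and local Lipschitz properties in \Cref{ass_wellp_wgf}. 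The factor $\tilde R'(q_t)$ depends only on time and satisfies the global bound $\|\tilde R'(h)\|_{\mathcal{F}}\leq C$ established just before the lemma.

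\emph{Linear growth, uniformly in time.} Using $\|d\Phi(u)^*\|=\|d\Phi(u)\|\lesssim 1+|u|$ together with the uniform bound on $\tilde R'$, we get
\[
    |v(t,u)|\leq C(1+|u|)
\]
uniformly in $t\in[0,T]$.

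\emph{Local Lipschitz continuity in space.} For $u_1,u_2\in B(0,r)$,
\[
    |v(t,u_1)-v(t,u_2)|\leq \|d\Phi(u_1)-d\Phi(u_2)\|\,\|\tilde R'(q_t)\|\lesssim C(1+r)|u_1-u_2|.
\]
This is again uniform in $t$.

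\emph{Joint continuity.} We have
\[
    |v(t,u)-v(s,u')|\leq |v(t,u)-v(t,u')|+\|d\Phi(u')\|\,\|\tilde R'(q_t)-\tilde R'(q_s)\|.
\]
The first term is handled by the local Lipschitz bound, so it suffices to prove that $t\mapsto \tilde R'(q_t)$ is continuous into $\mathcal{F}$.

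\emph{Continuity of $q_t$.} By \Cref{lemma_bound_diff_Phi_mu},
\[
    \|q_t-q_s\|\lesssim (1+2m_2(\mu))W_2(\mu_t,\mu_s).
\]
Here $m_2(\mu)=\sup_{[0,T]}m_2(\mu_t)<\infty$ by continuity of $\mu$, so $t\mapsto q_t$ is continuous. Since $[0,T]$ is compact, the set $\{q_t\}$ is compact and therefore bounded in $\mathcal{F}$.

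\emph{Continuity of $\tilde R'$.} The remaining step is to show that $\tilde R'=(\xi'\circ R)\,R'$ is continuous on bounded sets. The map $R'$ is Lipschitz on bounded sets by assumption, and it is bounded on the bounded set considered. Consequently $R$ is Lipschitz on that set, and $\xi'$ is Lipschitz with constant $4/\alpha$. The product of two bounded Lipschitz maps is Lipschitz, so $\tilde R'$ is Lipschitz on bounded sets. Composing with the continuous curve $t\mapsto q_t$ gives continuity of $t\mapsto\tilde R'(q_t)$.

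The only mildly delicate point I expect is this last step: $\tilde R'$ is bounded globally but is only locally Lipschitz. The compactness of $\{q_t\}_{t\in[0,T]}$ is what provides the bounded set on which its Lipschitz constant is controlled.
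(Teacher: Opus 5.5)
Your proposal is correct and follows essentially the same route as the paper: write $v(t,u)=-d\Phi(u)^*\tilde R'(q_t)$, then obtain linear growth and local Lipschitz continuity from \Cref{ass_wellp_wgf} together with the global bound on $\tilde R'$. Joint continuity likewise comes from continuity of $t\mapsto\tilde R'(q_t)$ via \Cref{lemma_bound_diff_Phi_mu}; your explicit argument that $\tilde R'=(\xi'\circ R)R'$ is Lipschitz on a ball containing $\{q_t\}_{t\in[0,T]}$ just fills in a step the paper states in one line.
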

\begin{proof}
    By \Cref{lemma_bound_diff_Phi_mu}, \Cref{ass_wellp_wgf} and the fact that $\xi'$ is continuous, we have that $t\mapsto \tilde{R}'(\int_{\Omega}\Phi d\mu_t)$ and $d\Phi$ are continuous. Using the continuity of $h_1,h_2\mapsto \langle h_1,h_2\rangle_{\mathcal{F}}$, we obtain the continuity of $v$.
    
    Moreover, $|v(t,u_1)-v(t,u_2)|\leq \|\tilde{R}'(\int_{\Omega}\Phi d\mu_t)\|_{\mathcal{F}} \|d\Phi(u_1)-d\Phi(u_2)\|$. Using \Cref{ass_wellp_wgf}, which guarantees that $d\Phi$ is locally Lipschitz, we obtain that $v$ is locally Lipschitz in space.

    Finally, using $\|\tilde{R}'(\int_{\Omega}\Phi d\mu_t)\|_{\mathcal{F}}\leq C$ and the fact that \Cref{ass_wellp_wgf} guarantees that $d\Phi$ has linear growth, we obtain the result.
\end{proof}

\begin{lem}\label{lem_bound_flow}
    If $T>0$ and $\bar{\mu}_0\in\mathcal{P}(\Omega)$ is sub-Gaussian, there exists a constant $C(T)>0$ depending on $\alpha$ and $T$ such that, for every $\mu\in\calX$, $t\in[0,T]$ and $u,u_1,u_2\in\Omega$,
    \begin{equation*}\left\{
        \begin{aligned}
            &|X_t[\mu](u)|\leq C(T)(1+|u|),\\
            &|X_t[\mu](u_1)-X_t[\mu](u_2)|\leq e^{C(T)(1+\max(|u_1|,|u_2|))t}|u_1-u_2|.
        \end{aligned}\right.
    \end{equation*}
\end{lem}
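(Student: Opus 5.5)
The plan is to control the velocity field uniformly over $\mu\in\calX$ and then apply Grönwall's lemma twice: once for growth, once for the Lipschitz estimate. By the discussion preceding \Cref{lem_cauchy_prop_velocity}, $\|\tilde R'(h)\|_{\mathcal{F}}\leq C$ for every $h\in\mathcal{F}$, with $C$ depending only on $\alpha$. Hence the velocity $v(t,u)=-d\Phi(u)^*\tilde R'(\int_\Omega\Phi d\mu_t)$ satisfies
\[
|v(t,u)|\leq C\|d\Phi(u)\|\lesssim C(1+|u|)
\]
by \Cref{ass_wellp_wgf}(i). By \Cref{ass_wellp_wgf}(ii), for $u_1,u_2\in B(0,r)$ it also satisfies
\[
|v(t,u_1)-v(t,u_2)|\lesssim C(1+r)|u_1-u_2|.
\]
The crucial point is that both constants depend only on $\alpha$, and not on $\mu$, $T$, or the moments of $\mu_t$, thanks to the truncation $\xi$.

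For the first inequality, write $r(t)=X_t[\mu](u)$. Then $\frac{d}{dt}|r(t)|\leq|\dot r(t)|\leq C_1(1+|r(t)|)$ (more precisely, differentiate $(1+|r|^2)^{1/2}$ to avoid nondifferentiability at $0$). Grönwall gives $1+|r(t)|\leq e^{C_1t}(1+|u|)$, so $|X_t[\mu](u)|\leq e^{C_1T}(1+|u|)$ for $t\leq T$. We then set $C(T)\geq e^{C_1T}$.

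For the second inequality, let $r_i(t)=X_t[\mu](u_i)$. By the first step, both trajectories stay in the ball $B(0,\rho)$ with $\rho=C(T)(1+\max(|u_1|,|u_2|))$. The local Lipschitz bound then yields
\[
\frac{d}{dt}|r_1-r_2|\leq C_2(1+\rho)|r_1-r_2|,
\]
so Grönwall gives $|r_1(t)-r_2(t)|\leq e^{C_2(1+\rho)t}|u_1-u_2|$. Since $1+\rho\leq 2C(T)(1+\max(|u_1|,|u_2|))$ when $C(T)\geq1$, enlarging $C(T)$ to $\max(e^{C_1T},2C_2C(T))$ puts this in the stated form $e^{C(T)(1+\max(|u_1|,|u_2|))t}$. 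Some care is needed in this step: the enlarged constant must still dominate the growth bound, which holds because enlarging only weakens the first inequality.

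The main obstacle is ensuring uniformity in $\mu$. The sub-Gaussian constraint in $\calX$ and the sub-Gaussianity of $\bar\mu_0$ are not actually needed beyond guaranteeing $\mu\in C([0,T];\mathcal{P}_2(\Omega))$, so that the flow exists by \Cref{lem_cauchy_prop_velocity}. All constants come from the global bound on $\tilde R'$ and from \Cref{ass_wellp_wgf}. One must also check the chaining of constants between the two steps, as noted above.
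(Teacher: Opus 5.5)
Your proposal is correct and takes essentially the same route as the paper. In both, the global bound on $\tilde R'$ supplied by the truncation $\xi$, combined with the linear-growth and local-Lipschitz bounds on $d\Phi$ from \Cref{ass_wellp_wgf}, gives velocity estimates that are uniform in $\mu$; two applications of Gr\"onwall's inequality (first for growth, then for the Lipschitz estimate on the ball containing both trajectories) give the two inequalities. The paper writes the argument in integral form $X_t[\mu](u)=u+\int_0^t v_s(X_s[\mu](u))\,ds$ rather than differentiating norms, which avoids the nondifferentiability point you flag, but the difference is only cosmetic.
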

\begin{proof}
    Using \Cref{ass_wellp_wgf} and $X_t[\mu](u)=u+\int_{0}^t v[\mu_t](X_s[\mu](u))ds$, we obtain:
    \begin{equation*}
        \begin{aligned}
            |X_t[\mu](u)|&\textstyle\leq |u|+\int_{0}^t \|\tilde{R}'(\int_{\Omega}\Phi d\mu_s)\|_{\mathcal{F}} \|d\Phi(X_s[\mu](u))\|_{\mathcal{L}(\RR^d;\mathcal{F})}ds\\
            &\leq |u|+C'\int_{0}^t (1+|X_s[\mu](u)|)ds=|u|+C't + C'\int_{0}^t |X_s[\mu](u)|ds,
        \end{aligned}
    \end{equation*}
    for some constant $C'>0$.
    Applying Gr\"onwall's inequality yields the first inequality.

    For the second inequality, we use $|v_t(u_1)-v_t(u_2)|\leq C(1+\max(|u_1|,|u_2|))|u_1-u_2|$ together with the first inequality to obtain
    \begin{equation*}
        |X_t[\mu](u_1)-X_t[\mu](u_2)|\leq |u_1-u_2|+C(T)(1+\max(|u_1|,|u_2|))\int_{0}^t |X_s[\mu](u_1)-X_s[\mu](u_2)|ds.
    \end{equation*}
    Another application of Gr\"onwall's inequality yields the second inequality.
\end{proof}

Using \Cref{lem_subg_pushf,lem_bound_flow}, we immediately obtain the following result, which shows that $\mathcal{G}(\calX)\subset \calX$.
\begin{lem}\label{lem_bound_psi2}
    There exists a continuous function $f:(\RR_+)^2\to \RR_+$ that is non-decreasing in each variable and such that, for every $\mu\in\calX$, defining $\nu=\mathcal{G}(\mu)$, we have $\|\nu_t\|_{\psi_2}\leq f(t,\|\bar{\mu}_0\|_{\psi_2})$.
\end{lem}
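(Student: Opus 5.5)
The plan is to combine the linear-growth bound of \Cref{lem_bound_flow} with the pushforward estimate of \Cref{lem_subg_pushf}. Since $\mu\in\calX$, the flow map $X_t[\mu]$ is well defined on $[0,T]$, and $\nu_t=X_t[\mu]_{\#}\bar\mu_0$. The first inequality of \Cref{lem_bound_flow} gives $|X_t[\mu](u)|\leq C(t)(1+|u|)$. Then \Cref{lem_subg_pushf} with $T=X_t[\mu]$ yields
\[
\|\nu_t\|_{\psi_2}\leq 2C(t)\max\bigl(\|\bar\mu_0\|_{\psi_2},1/\sqrt{\ln 2}\bigr).
\]
The candidate function is therefore $f(t,K):=2C(t)\max(K,1/\sqrt{\ln 2})$.

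The step that needs care is making sure $C(t)$ depends only on $t$ and $\alpha$, and not on $\mu$ itself. Otherwise the definition of $\calX$ in \eqref{def_X} would be circular. To settle this, I would go back to the Gr\"onwall argument in the proof of \Cref{lem_bound_flow}. The velocity satisfies
\[
|v_t(u)|\leq\|\tilde R'(\textstyle\int\Phi d\mu_t)\|_{\mathcal F}\,\|d\Phi(u)\|\leq C'(1+|u|),
\]
where $C'$ combines the global bound $\|\tilde R'\|\leq C(\alpha)$ (obtained from the truncation) with the linear-growth constant of \Cref{ass_wellp_wgf}. Neither constant involves $\mu$. Gr\"onwall then gives $|X_t(u)|\leq (|u|+C't)e^{C't}\leq (1+C't)e^{C't}(1+|u|)$. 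So I would take $C(t):=(1+C't)e^{C't}$ explicitly.

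This $C(t)$ is continuous, positive and non-decreasing in $t$. Consequently $f$ is continuous on $(\RR_+)^2$ and non-decreasing in each variable: in $K$ because of the max, and in $t$ because of $C(t)$. One subtlety is that the sub-Gaussian norm is defined as an infimum. However, \Cref{lem_subg_pushf} already accounts for this by working with any admissible $c>\|\mu\|_{\psi_2}$ and then passing to the limit, so the stated bound holds directly. With these observations, $\mathcal G(\calX)\subset\calX$ follows immediately.
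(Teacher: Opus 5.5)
Your proposal is correct and follows the paper's argument, which simply combines the linear-growth bound of \Cref{lem_bound_flow} with the pushforward estimate of \Cref{lem_subg_pushf}. Your explicit Gr\"onwall constant $C(t)=(1+C't)e^{C't}$ depends only on $\alpha$ and the growth constant of $d\Phi$, which makes the continuity and monotonicity of $f$ transparent and removes the apparent circularity between the definition of $\calX$ and the lemmas stated for $\mu\in\calX$.
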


\begin{lem}
 The space $(\calX,W_{2,T})$ with $\calX$ defined by \eqref{def_X} and $f$ as in \Cref{lem_bound_psi2} is a complete metric space.
\end{lem}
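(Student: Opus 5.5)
The plan is to show that $\calX$ is a closed subset of the complete metric space $(C([0,T];\mathcal{P}_2(\Omega)),W_{2,T})$. Completeness of the ambient space is standard: $(\mathcal{P}_2(\Omega),W_2)$ is complete, and a Cauchy sequence for the uniform distance $W_{2,T}$ converges uniformly to a limit curve. That limit is continuous as a uniform limit of continuous curves. It therefore suffices to prove the following: if $\mu^k\in\calX$ and $W_{2,T}(\mu^k,\mu)\to 0$, then $\|\mu_t\|_{\psi_2}\leq f(t,\|\bar\mu_0\|_{\psi_2})$ for every $t\in[0,T]$.

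The key step is lower semicontinuity of the sub-Gaussian norm under $W_2$ convergence, and in fact under narrow convergence. Fix $t$ and set $K_t:=f(t,\|\bar\mu_0\|_{\psi_2})$. For each $k$ and every $c>K_t$ we have $\int_\Omega e^{|u|^2/c^2}d\mu^k_t\leq 2$. To see this, note that $\|\mu^k_t\|_{\psi_2}\leq K_t<c$, and $c\mapsto\int e^{|u|^2/c^2}d\mu$ is non-increasing. So for any $c'$ in the set defining the infimum with $c'<c$, the bound transfers to $c$. If the infimum itself were attained only at $c=K_t$, monotone convergence as $c'\downarrow K_t$ shows that the bound holds at $K_t$ too.

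Next, $W_2$ convergence implies narrow convergence $\mu^k_t\rightharpoonup\mu_t$. The integrand $u\mapsto e^{|u|^2/c^2}$ is continuous, nonnegative and bounded below. By the portmanteau theorem, in the form $\int g\,d\mu\leq\liminf\int g\,d\mu^k$ for lower semicontinuous $g\geq 0$ (or by truncating with $\min(g,M)$ and letting $M\to\infty$ via monotone convergence), we get $\int e^{|u|^2/c^2}d\mu_t\leq 2$ for every $c>K_t$. Hence $\|\mu_t\|_{\psi_2}\leq c$ for all $c>K_t$, so $\|\mu_t\|_{\psi_2}\leq K_t$, and $\mu\in\calX$.

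The main point needing care is the ambient completeness, namely that uniform limits of $W_2$-continuous curves remain in $C([0,T];\mathcal{P}_2(\Omega))$. This is routine. It is also worth noting that the uniform sub-Gaussian bound makes second moments uniformly bounded, so nothing degenerates. No real obstacle is expected; the semicontinuity argument above carries the proof.
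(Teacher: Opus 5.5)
Your proposal is correct and follows essentially the same route as the paper: the ambient space $C([0,T];\mathcal{P}_2(\Omega))$ is complete, and the sub-Gaussian constraint passes to the limit because the Portmanteau theorem gives lower semicontinuity of $\int e^{|u|^2/c^2}\,d\mu$ under narrow convergence. Your choice to work with $c>f(t,\|\bar\mu_0\|_{\psi_2})$, or equivalently to check that the infimum in the $\psi_2$-norm is attained, makes explicit a step the paper leaves implicit when it plugs in $c=f(t,\|\bar\mu_0\|_{\psi_2})$ directly.
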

\begin{proof}
    Let $(\mu^n)_{n\geq 0}$ be a Cauchy sequence in $(\calX,W_{2,T})$. Since $\calX$ is a subset of the complete metric space $C([0,T];\mathcal{P}_2(\Omega))$, the sequence $(\mu^n)_{n\geq 0}$ converges for $W_{2,T}$ to some $\mu^*\in C([0,T];\mathcal{P}_2(\Omega))$. For every $c>0$, the mapping $u\mapsto \exp(|u|^2/c^2)$ is continuous and positive. Since $(\mu^n_t)_{n\geq 0}$ converges weakly to $\mu^*_t$ for every $t\in[0,T]$, the Portmanteau theorem gives
    \begin{equation*}
        \textstyle \int_{\Omega}\exp(|u|^2/c^2)d\mu^*_t(u)\leq \liminf_{n\to+\infty} \int_{\Omega}\exp(|u|^2/c^2)d\mu_t^n(u)
    \end{equation*}
    for every $c>0$. Taking $c=f(t,\|\bar{\mu}_0\|_{\psi_2})$ yields
    \[
        \int_{\Omega}\exp(|u|^2/c^2)d\mu_t^*(u)\leq 2
        \qquad\text{and hence}\qquad
        \|\mu_t^*\|_{\psi_2}\leq f(t,\|\bar{\mu}_0\|_{\psi_2}).
    \]
    This shows that $\mu^*\in \calX$ and concludes the proof.
\end{proof}

\begin{lem}\label{lem_lip_calF}
    There exists a positive continuous function $g:\RR_+\times \RR_+\to\RR_+$ satisfying $g(T,K)\to 0$ as $T\to 0$ for every $K>0$ such that, for every $\mu^1,\mu^2\in\calX$, we have 
    \begin{equation}\label{eq_lip_calF}
    W_{2,T}(\mathcal{G}(\mu^1),\mathcal{G}(\mu^2))\leq g(T,K_0)W_{2,T}(\mu^1,\mu^2)
    \end{equation}
    for every $K_0$ such that $\|\bar{\mu}_0\|_{\psi_2}\leq K_0$.
\end{lem}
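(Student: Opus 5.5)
To prove \Cref{lem_lip_calF}, I would fix $\mu^1,\mu^2\in\calX$, write $X^k_t:=X_t[\mu^k]$, and use the coupling $(X^1_t,X^2_t)_{\#}\bar\mu_0$ between $\mathcal{G}(\mu^1)_t$ and $\mathcal{G}(\mu^2)_t$. This gives
\[
W_2(\mathcal{G}(\mu^1)_t,\mathcal{G}(\mu^2)_t)^2\leq \int_\Omega |X^1_t(u)-X^2_t(u)|^2d\bar\mu_0(u),
\]
so the task reduces to a pointwise Dobrushin-type estimate on the difference of the two characteristic flows started at the same point $u$.

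\textbf{Velocity difference.} Write $v^k_s(x)=-d\Phi(x)^*\tilde R'(q^k_s)$ with $q^k_s=\int\Phi d\mu^k_s$. I would split
\[
|v^1_s(x)-v^2_s(y)|\leq \|\tilde R'(q^1_s)\|\,\|d\Phi(x)-d\Phi(y)\|+\|d\Phi(y)\|\,\|\tilde R'(q^1_s)-\tilde R'(q^2_s)\|.
\]
For the first term I use the global bound $\|\tilde R'\|\leq C$ and the local Lipschitz property of $d\Phi$ from \Cref{ass_wellp_wgf}. This gives $\lesssim(1+\max(|x|,|y|))|x-y|$.

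For the second term I need $\tilde R'$ to be globally Lipschitz. Since $\tilde R'=\xi'(R)R'$, it vanishes outside $\{R\leq 2\alpha\}$, and $R'$ is Lipschitz on bounded sets. I would check that the sublevel set $\{R\le 2\alpha\}$ can be handled by this local Lipschitz property; if it is unbounded, I expect to need the Lipschitz property of $R'$ on the relevant region, which holds for the losses of \Cref{lem_ass_loss} anyway. Combining this with \Cref{lemma_bound_diff_Phi_mu} gives
\[
\|\tilde R'(q^1_s)-\tilde R'(q^2_s)\|\lesssim (1+m_2(\mu^1_s)+m_2(\mu^2_s))W_2(\mu^1_s,\mu^2_s).
\]
The second moments are bounded by a function of $f(T,K_0)$, since $\mu^k\in\calX$ and the sub-Gaussian norm controls $m_2$.

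\textbf{Gr\"onwall along characteristics.} Set $\delta_t(u)=|X^1_t(u)-X^2_t(u)|$. By \Cref{lem_bound_flow}, both trajectories stay in a ball of radius $C(T)(1+|u|)$, so
\[
\delta_t(u)\leq C_1(T,K_0)\int_0^t (1+|u|)\delta_s(u)\,ds+C_2(T,K_0)(1+|u|)\,t\,W_{2,T}(\mu^1,\mu^2).
\]
Gr\"onwall then yields
\[
\delta_t(u)\leq C_2(1+|u|)\,t\,e^{C_1(1+|u|)t}\,W_{2,T}(\mu^1,\mu^2).
\]

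\textbf{Main obstacle.} The hard step is integrating $(1+|u|)^2e^{2C_1(1+|u|)T}$ against $\bar\mu_0$. The exponential of a linear function of $|u|$ is not controlled by a second moment, and this is exactly where sub-Gaussianity enters. I would use $2C_1T|u|\leq |u|^2/K_0^2+C_1^2T^2K_0^2$ together with $(1+|u|)^2\lesssim_{K_0} e^{|u|^2/(2K_0^2)}$ to bound the integrand. Since $\int e^{|u|^2/K_0^2}d\bar\mu_0\le 2$, I may need to slightly enlarge the constant, e.g.\ use $2K_0$ and Jensen as in \Cref{lem_subg_pushf}, to keep the total exponent within the admissible range. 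The result is
\[
\int\delta_t^2\,d\bar\mu_0\leq C_3(T,K_0)\,t^2\,W_{2,T}(\mu^1,\mu^2)^2,
\]
with $C_3$ continuous and nondecreasing in $T$. Taking $g(T,K_0)=T\sqrt{C_3(T,K_0)}$, which is continuous and tends to $0$ as $T\to0$, gives \eqref{eq_lip_calF}.
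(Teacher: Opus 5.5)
Your proposal is correct and is essentially the paper's proof: the same coupling $(X^1_t\times X^2_t)_{\#}\bar\mu_0$, the same splitting of the velocity difference, Gr\"onwall along characteristics started at the same point $u$, and sub-Gaussian integrability to absorb the weight $e^{C(1+|u|)T}$. The paper simply bounds $\int(e^{C(1+|u|)t}-1)^2d\bar\mu_0$ by $2\sup_u(e^{C(1+|u|)T}-1)^2e^{-|u|^2/K_0^2}$, which avoids your Young/Jensen bookkeeping.

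The one point to tighten is your claim that $\tilde R'$ must be globally Lipschitz. That claim need not hold under \Cref{ass_wellp_wgf} when $\{R\leq 2\alpha\}$ is unbounded, and it is not needed. Since $\mu^1,\mu^2\in\calX$, all predictors $\int_\Omega\Phi\,d\mu^k_s$ lie in a ball of radius $\lesssim 1+f(T,K_0)^2$. On that ball, $R'$ is Lipschitz and bounded, $R$ is Lipschitz, and $\mathrm{Lip}(\xi')\leq 4/\alpha$. Hence $\tilde R'=\xi'(R)R'$ is Lipschitz there with a constant $C(T,K_0)$, exactly as in the paper.
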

\begin{proof}
    Let $X_t^1$ and $X_t^2$ be the flow maps associated with $\mu^1$ and $\mu^2$ and $K_0$ such that $\|\bar{\mu}_0\|_{\psi_2}\leq K_0$. Using the fact that ${X_t^i(u)=u+\int_{0}^t v[\mu^i_s](X^i_s(u))ds}$, we obtain that
    \begin{equation*}
        \begin{aligned}
            |X_t^1(u)-X_t^2(u)|&\leq\int_{0}^t |v[\mu^1_s](X^1_s(u))-v[\mu^2_s](X^2_s(u))|ds\\
            &\leq \int_{0}^t [|v[\mu^1_s](X^1_s(u))-v[\mu^1_s](X^2_s(u))|+|v[\mu^1_s](X^2_s(u))-v[\mu^2_s](X^2_s(u))|]ds.
        \end{aligned}
    \end{equation*}
    To bound the first term, we use the boundedness of $\tilde{R}'$, the local Lipschitz bound for $d\Phi$ from \Cref{ass_wellp_wgf} and the bound from \Cref{lem_bound_flow} to obtain
    \begin{equation*}
        \int_{0}^t |v[\mu^1_s](X^1_s(u))-v[\mu^1_s](X^2_s(u))|ds \leq C(T)(1+|u|)\int_{0}^t |X^1_s(u)-X^2_s(u)|ds.
    \end{equation*}
    To bound the second term, we first notice that, for every $h_1,h_2\in\mathcal{F}$, we have:
    \begin{equation*}
        \|\tilde{R}'(h_1)-\tilde{R}'(h_2)\|_{\mathcal{F}}\leq \|\xi'\|_{\infty}\|R'(h_1)-R'(h_2)\|_{\mathcal{F}} + \|R'(h_2)\|_{\mathcal{F}}|\xi'(R(h_1))-\xi'(R(h_2))|.
    \end{equation*}
    Moreover,
    \begin{equation}\label{bound_predict}
        \textstyle\|\int_{\Omega}\Phi d\mu^i_s\|_{\mathcal{F}}\lesssim 1+m_2(\mu^i_s)\lesssim 1+\|\mu^i_s\|_{\psi_2}^2\leq 1+f(T,K_0)^2,
    \end{equation}
    so that $\{\int_{\Omega}\Phi d\mu^i_s,~(i,s)\in\{1,2\}\times[0,T]\}$ is included in some bounded set $B_{T,K_0}$. Since $R'$ is Lipschitz and hence bounded on $B_{T,K_0}$, using the fact that $\mathrm{Lip}(\xi')\leq 4/\alpha$ and \Cref{lemma_bound_diff_Phi_mu}, we obtain
    \begin{equation*}
        \begin{aligned}
        \textstyle\|\tilde{R}'(\int_{\Omega}\Phi d\mu^1_s)-\tilde{R}'(\int_{\Omega}\Phi d\mu^2_s)\|_{\mathcal{F}}&\textstyle\leq C(T,K_0)\|\int_{\Omega}\Phi d\mu^1_s-\int_{\Omega}\Phi d\mu^2_s\|_{\mathcal{F}}\\&\leq C(T,K_0) W_2(\mu^1_s,\mu^2_s).
        \end{aligned}
    \end{equation*}
    Finally, using $\|d\Phi(X_s^2(u))\|\lesssim 1+|X_s^2(u)|\leq C(T)(1+|u|)$, where the last inequality follows from \Cref{lem_bound_flow}, we obtain
    \begin{equation*}
        \int_{0}^t |v[\mu^1_s](X^2_s(u))-v[\mu^2_s](X^2_s(u))|ds\leq C(T,K_0)(1+|u|)\int_0^t W_2(\mu^1_s,\mu^2_s)ds.
    \end{equation*}
    Combining the two bounds and applying Gr\"onwall's inequality, we obtain
    \begin{equation*}
        |X^1_t(u)-X^2_t(u)|\leq C(T,K_0)(1+|u|)\int_{0}^t \exp(C(T,K_0)(1+|u|)(t-s))W_2(\mu_s^1,\mu_s^2)ds.
    \end{equation*}
    Using the transport plan $(X_t^1\times X_t^2)_{\#}\bar{\mu}_0$, we obtain
    \begin{equation*}
        \begin{aligned}
            W_2^2((X^1_t)_{\#}\bar{\mu}_0,(X^2_t)_{\#}\bar{\mu}_0)&\leq \int_{\Omega}|X^1_t(u)-X^2_t(u)|^2d\bar{\mu}_0(u)\\
            &\leq \int_{\Omega}C(T,K_0)^2 (1+|u|)^2 \bigg[\int_{0}^te^{C(T,K_0)(1+|u|)(t-s)}W_2(\mu^1_s,\mu^2_s)ds\bigg]^2 d\bar{\mu}_0(u)\\
            & \leq C(T,K_0)^2 W_{2,T}^2(\mu^1,\mu^2)\int_{\Omega}(1+|u|)^2 \bigg[\int_{0}^t e^{C(T,K_0)(1+|u|)(t-s)}ds\bigg]^2 d\bar{\mu}_0(u)\\
            & =C(T,K_0)^2 W_{2,T}^2(\mu^1,\mu^2)\int_{\Omega}(1+|u|)^2 \bigg[\frac{e^{C(T,K_0)(1+|u|)t}-1}{C(T,K_0)(1+|u|)}\bigg]^2 d\bar{\mu}_0(u)\\
            &=W_{2,T}^2(\mu^1,\mu^2)\int_{\Omega}\Big[e^{C(T,K_0)(1+|u|)t}-1\Big]^2d\bar{\mu}_0(u).
        \end{aligned}
    \end{equation*}
    Inserting the exponential weight and using $\int_{\Omega} \exp(|u|^2/K_0^2)d\bar{\mu}_0(u)\leq 2$, we obtain \eqref{eq_lip_calF} with
    \begin{equation*}
        g(T,K)= \sqrt{2~\underset{u\in\Omega}{\mathrm{sup}}~(e^{C(T,K)(1+|u|)T}-1)^2 e^{-|u|^2/K^2}},
    \end{equation*}
    which satisfies $g(T,K)\to 0$ as $T\to 0$ for every $K>0$.
\end{proof}

We end this section with a stability estimate that implies the last claim of \Cref{thm_wgf}.
\begin{lem}\label{lem_wgf_stability}
    Let $(\mu^1_t)_{t\geq 0}$ and $(\mu^2_t)_{t\geq 0}$ be two Wasserstein gradient flows of $F$ with $\mu^1_0$ and $\mu^2_0$ sub-Gaussian. Then, for every $T,K>0$, if $\max(\|\mu^1_0\|_{\psi_2},\|\mu^2_0\|_{\psi_2})\leq K$, we have $\sup_{0\leq t\leq T}W_2(\mu^1_t,\mu^2_t)\to 0$ as $W_2(\mu^1_0,\mu^2_0)\to 0$.
\end{lem}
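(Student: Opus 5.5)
Both flows are fixed points of $\mathcal G$, so I will redo the flow-map comparison of \Cref{lem_lip_calF}, this time also allowing the initial measures to differ. Fix $T,K$ and take $\alpha$ with $\alpha\geq\max(F(\mu^1_0),F(\mu^2_0))$. Such an $\alpha$ exists uniformly: by \Cref{lemma_bound_diff_Phi_mu}, $\|\int\Phi d\mu^i_0\|_{\mathcal F}\lesssim 1+m_2(\mu^i_0)\lesssim 1+K^2$, and $R$, which is $C^1$ with locally Lipschitz differential, is bounded on bounded sets. With this $\alpha$, both flows are gradient flows of the same truncated $\tilde F$ and satisfy $\mu^i_t=(X^i_t)_\#\mu^i_0$, where $X^i_t=X_t[\mu^i]$. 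Iterating \Cref{lem_bound_psi2} along the global construction gives $\|\mu^i_t\|_{\psi_2}\leq f(t,K)\leq f(T,K)$ on $[0,T]$. Hence all constants $C(T,K)$ from \Cref{lem_bound_flow} and \Cref{lem_lip_calF} (growth of $X^i_t$, local Lipschitz bound $e^{C(T)(1+|u|)t}$, Lipschitz dependence of $v[\mu]$ on $\mu$) apply to both flows.

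Let $\pi$ be an optimal plan between $\mu^1_0$ and $\mu^2_0$, and use $(X^1_t\times X^2_t)_\#\pi$ as a coupling of $\mu^1_t$ and $\mu^2_t$. For $(u_1,u_2)\sim\pi$, split
\[
|X^1_t(u_1)-X^2_t(u_2)|\leq |X^1_t(u_1)-X^1_t(u_2)|+|X^1_t(u_2)-X^2_t(u_2)|.
\]
By \Cref{lem_bound_flow}, the first term is at most $e^{C(1+|u_1|+|u_2|)t}|u_1-u_2|$. The Gr\"onwall bound in the proof of \Cref{lem_lip_calF} gives the second term as at most $C(1+|u_2|)\int_0^t e^{C(1+|u_2|)(t-s)}W_2(\mu^1_s,\mu^2_s)ds$. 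Squaring and integrating against $\pi$, the second term contributes at most $G(t)\int_0^t W_2^2(\mu^1_s,\mu^2_s)ds$, with $G$ controlled by $\int e^{2CT(1+|u|)}d\mu^2_0$. That integral is finite and depends only on $(T,K)$ by sub-Gaussianity, after Cauchy--Schwarz in time.

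The main obstacle is the first term, because the exponential weight is unbounded and $\pi$ only controls $\int|u_1-u_2|^2d\pi$. I truncate: on $\{|u_1|,|u_2|\leq L\}$ the term is at most $e^{2C(1+2L)T}W_2^2(\mu^1_0,\mu^2_0)$. Off this set, I use the linear growth $|X^1_t(u)|\leq C(1+|u|)$ and bound the integrand by $C(1+|u_1|^2+|u_2|^2)$. Its integral over $\{|u_1|>L\}\cup\{|u_2|>L\}$ is at most $\epsilon(L)$, where $\epsilon(L)\to0$ uniformly over measures with $\|\cdot\|_{\psi_2}\leq K$, since the tails $\int_{|u|>L}|u|^2d\mu\lesssim e^{-L^2/(2K^2)}$ are uniform. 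A cross term such as $|u_2|^2\mathbf 1_{|u_1|>L}$ is handled by Cauchy--Schwarz together with the uniform fourth moments and the uniform tail probabilities.

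This gives
\[
W_2^2(\mu^1_t,\mu^2_t)\leq A_L\,W_2^2(\mu^1_0,\mu^2_0)+\epsilon(L)+G\int_0^tW_2^2(\mu^1_s,\mu^2_s)ds,
\]
and Gr\"onwall yields $\sup_{t\leq T}W_2^2(\mu^1_t,\mu^2_t)\leq e^{GT}\bigl(A_LW_2^2(\mu^1_0,\mu^2_0)+\epsilon(L)\bigr)$. Taking $\limsup$ as $W_2(\mu^1_0,\mu^2_0)\to0$ and then letting $L\to\infty$ concludes.
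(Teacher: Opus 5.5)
Your proof is correct, and its architecture matches the paper's. Both arguments use a common truncation level depending only on $K$ and the representation $\mu^i_t=(X^i_t)_{\#}\mu^i_0$. Both split the error into an initial-data term, controlled by the local Lipschitz bound of \Cref{lem_bound_flow}, and a flow-difference term, controlled by the Gr\"onwall estimate from the proof of \Cref{lem_lip_calF}. Both finish with a Gr\"onwall step. Two differences are cosmetic:
\begin{itemize}
\item you split pointwise under the single coupling $(X^1_t\times X^2_t)_{\#}\pi$, whereas the paper uses the $W_2$ triangle inequality through $(X^2_t)_{\#}\mu^1_0$;
\item you run Gr\"onwall on $W_2^2$ after Cauchy--Schwarz in time, whereas the paper runs it on $W_2$ after Minkowski's inequality.
\end{itemize}

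The genuine difference is how you show that the exponentially weighted cost $\int e^{c(|u_1|+|u_2|)}|u_1-u_2|^2\,d\pi$ vanishes as $W_2(\mu^1_0,\mu^2_0)\to0$.
\begin{itemize}
\item The paper keeps the Lipschitz bound everywhere. It then uses a H\"older interpolation to bound this cost by a constant depending on $(T,K,\eta)$ times $W_2(\mu^1_0,\mu^2_0)^{2-\eta}$.
\item You use the Lipschitz bound only on $\{|u_1|,|u_2|\leq L\}$. Outside this ball you switch to the crude linear-growth bound for the flow, and you conclude with uniform sub-Gaussian tail estimates.
\end{itemize}
The paper's route immediately gives a quantitative H\"older modulus of continuity, with any exponent below $1$. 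Yours is qualitative as written, though optimizing $L$ as a function of $W_2(\mu^1_0,\mu^2_0)$ would make it quantitative too. In exchange, your argument is more elementary and shows where sub-Gaussianity is really needed. The initial-data term only needs uniform control of the tails of the second moments. Exponential moments enter only through the constant $G$ of the flow-difference term, where the Lipschitz constant of the velocity grows linearly in $|u|$.
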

\begin{proof}
    Let $(X^1_t)_{t\geq 0}$ and $(X^2_t)_{t\geq 0}$ denote the flow maps associated with $\mu^1$ and $\mu^2$. For the estimates below, set $K_0:=K$, which is a common positive upper bound for the two initial sub-Gaussian norms.
    The bound $\|\mu_0^i\|_{\psi_2}\leq K$ implies $m_2(\mu_0^i)\leq K^2\log 2$. The quadratic-growth bound for $\Phi$ therefore gives $\|\int_\Omega\Phi d\mu_0^i\|_{\mathcal{F}}\leq C_\Phi(1+K^2)$. Since $R'$ is Lipschitz on bounded sets, $R$ is bounded on this predictor ball. We may thus choose a single level $\alpha_K$ depending only on $K$ such that $F(\mu_0^i)\leq\alpha_K$ for $i=1,2$, and use the same truncation for both flows. By the energy identity, both original flows coincide with their truncated flows, and all constants below can be chosen as functions of $T$ and $K$ only.
    We have
    \begin{equation*}
            W_2(\mu^1_t,\mu^2_t)=W_2((X^1_t)_{\#}\mu^1_0,(X^2_t)_{\#}\mu^2_0)\leq W_2((X^1_t)_{\#}\mu^1_0,(X^2_t)_{\#}\mu^1_0)+W_2((X^2_t)_{\#}\mu^1_0,(X^2_t)_{\#}\mu^2_0).
    \end{equation*}
    For the first term, we argue as in the proof of \Cref{lem_lip_calF} to obtain
    \begin{equation*}
        \begin{aligned}
            W_2((X^1_t)_{\#}\mu^1_0,(X^2_t)_{\#}\mu^1_0)&\leq \sqrt{\int_{\Omega}\bigg[\int_{0}^t C(T,K_0) (1+|u|) e^{C(T,K_0)(1+|u|)(t-s)}W_2(\mu^1_s,\mu^2_s)ds\bigg]^2 d\mu^1_0(u)}\\
            &\leq \int_{0}^t W_2(\mu^1_s,\mu^2_s)\sqrt{\int_{\Omega}C(T,K_0)^2 (1+|u|)^2 e^{2C(T,K_0)(1+|u|)(t-s)} d\mu^1_0(u)}ds\\
            &\leq \int_{0}^t W_2(\mu^1_s,\mu^2_s)h_{T,K_0}(s)ds,
        \end{aligned}
    \end{equation*}
    where $h_{T,K_0}(s)^2=2\sup_{u\in\Omega} C(T,K_0)^2(1+|u|)^2 e^{2C(T,K_0)(1+|u|)(T-s)}e^{-|u|^2/K_0^2}$.
    For the second term, let $\Pi_0$ be any optimal transport plan for the quadratic cost between $\mu^1_0$ and $\mu^2_0$. We use the coupling $(X^2_t\times X^2_t)_{\#}\Pi_0$ and \Cref{lem_bound_flow} to obtain
    \begin{equation*}
        \begin{aligned}
            W_2((X^2_t)_{\#}\mu^1_0,(X^2_t)_{\#}\mu^2_0)&\leq\sqrt{\int_{\Omega}\int_{\Omega}|X^2_t(u_1)-X^2_t(u_2)|^2d\Pi_0(u_1,u_2)}\\
            &\leq \sqrt{\int_{\Omega}\int_{\Omega}e^{2C(T,K_0)(1+\max(|u_1|,|u_2|))T}|u_1-u_2|^2d\Pi_0(u_1,u_2)}.
        \end{aligned}
    \end{equation*} 
    Since $\Pi_0$ was arbitrary among optimal transport plans, the same bound holds with
    \begin{equation*}
        C'_{T,K_0}\bigl(\mu^1_0,\mu^2_0\bigr):=
        \inf_{\Pi_0}\bigg[
        \int_{\Omega\times\Omega}
        e^{2C(T,K_0)(1+\max(|u_1|,|u_2|))T}|u_1-u_2|^2d\Pi_0(u_1,u_2)
        \bigg]^{1/2},
    \end{equation*}
    where the infimum is taken over all optimal transport plans for the quadratic cost between $\mu^1_0$ and $\mu^2_0$.
    Applying Gr\"onwall's inequality yields
    \begin{equation*}
        W_2(\mu^1_t,\mu^2_t)\leq C'_{T,K_0}\bigl(\mu^1_0,\mu^2_0\bigr)\exp\bigg(\int_{0}^T h_{T,K_0}(s)ds\bigg).
    \end{equation*}
    It remains to show that $C'_{T,K_0}\bigl(\mu^1_0,\mu^2_0\bigr)\to 0$ as $W_2(\mu^1_0,\mu^2_0)\to 0$. For every optimal transport plan $\Pi_0$, every $\alpha>0$, and every $\eta\in (0,2)$, we have
    \begin{equation*}
        \begin{aligned}
            \Delta&:=\int_{\Omega}\int_{\Omega}e^{\alpha(|u_1|+|u_2|)}|u_1-u_2|^2d\Pi_0(u_1,u_2)\\
            &=\int_{\Omega\times \Omega}\Big[e^{(|u_1|+|u_2|)2\alpha/\eta}|u_1-u_2|^2 \Big]^{\eta/2}\big[|u_1-u_2|^2\big]^{1-\eta/2}d\Pi_0(u_1,u_2)\\
            &\leq \bigg[\int_{\Omega \times \Omega}e^{(|u_1|+|u_2|)2\alpha/\eta}|u_1-u_2|^2d\Pi_0(u_1,u_2)\bigg]^{\eta/2}\bigg[\int_{\Omega\times \Omega}|u_1-u_2|^2d\Pi_0(u_1,u_2)\bigg]^{1-\eta/2}\\
            &\leq \bigg[2\int_{\Omega \times \Omega}e^{(|u_1|+|u_2|)2\alpha/\eta}(|u_1|^2+|u_2|^2)d\Pi_0(u_1,u_2)\bigg]^{\eta/2}W_2(\mu^1_0,\mu^2_0)^{2-\eta}.
        \end{aligned}
    \end{equation*}
    Using the fact that $\mu^1_0$ and $\mu^2_0$ are sub-Gaussian, we can bound the first term by a constant depending only on $\alpha$, $\eta$ and $K_0$. Taking $\alpha=2C(T,K_0)T$ and using
    \begin{equation*}
        e^{2C(T,K_0)(1+\max(|u_1|,|u_2|))T}
        \leq e^{2C(T,K_0)T}e^{\alpha(|u_1|+|u_2|)},
    \end{equation*}
    this proves that $C'_{T,K_0}\bigl(\mu^1_0,\mu^2_0\bigr)\to0$ as $W_2(\mu^1_0,\mu^2_0)\to0$ and concludes the proof.
\end{proof}
\section{Construction of the escape regions}
In this section, we construct escape regions in the three settings mentioned above, namely bounded nonlinearities with scalar output weights, nonlinearities with at most linear growth under a non-degeneracy assumption, and asymptotically positively $1$-homogeneous nonlinearities. This provides a proof of \Cref{prop_esc_reg_bounded_scalar,prop_esc_reg_nondegenerate,prop_esc_reg_asymp_homogeneous}.
\subsection{Bounded nonlinearities with scalar output weights}\label{appendix_bounded_scalar}
We begin with the proof of \Cref{prop_esc_reg_bounded_scalar}. We fix an absolutely continuous curve $(\mu_t)_{t\geq 0}$ in $\mathcal{P}_2(\Omega)$ and write $g:=g_{\mu}$ and $g_t:=g_{\mu_t}$. Then
\begin{equation}\label{bound_diff_g_gt}
    \|g_t-g\|_{C^1}\leq \textstyle\|\phi\|_{C^1}\|R'(\int_{\Omega}\Phi d\mu_t)-R'(\int_{\Omega}\Phi d\mu)\|_{\mathcal{F}}.
\end{equation}
Thus, using that $R'$ is Lipschitz on bounded sets together with \Cref{lemma_bound_diff_Phi_mu}, we obtain the existence of $C>0$ such that $\sup_{t\geq 0}\|g_t-g\|_{C^1}\leq \epsilon$ provided $\sup_{t\geq 0} W_2(\mu_t,\mu)\leq C\epsilon$.

If $\mu$ is not a global minimizer, then $g$ is not identically zero. Note that if $g\equiv c$ for some $c\neq 0$, then one can show that $A =\{(w,\theta)\in \RR^{d_w}\times\RR^{d_\theta}\,\rvert\,-\sign(c)w\geq r\}$ is an escape region provided that $\epsilon<\abs{c}/2$. We will henceforth assume that $g$ is not constant.

The proof has three parts. First, we show that the regular-value assumptions indeed give a useful negative level. Second, if this level has bounded boundary, compactness gives a direct inward-pointing estimate. Third, if the level is unbounded, we work in the angular variable $\varphi=\theta/|\theta|$: the limiting function $g_\infty$ decreases near its regular level, and this either brings the trajectory back into $K=\{g\leq-\eta\}$ or keeps $g(\theta_t)$ uniformly negative long enough for $w_t$ to keep growing.
    
\paragraph{Regular values.} In this paragraph, we prove the claim of \Cref{rem_reg_val_bounded_scalar}. As $g$ is not constant, its range contains a nonempty open interval. If the set of critical values of $g$ has zero Lebesgue measure, we deduce that $g$ has at least one nonzero regular value. If there exists a regular value $\eta\neq 0$ such that $\{\theta\in\RR^{d_\theta}\,\rvert\,g(\theta)=\eta\}$ is bounded, then condition \ref{reg_val_bounded_scalar} holds. Otherwise, for every regular value $\eta\neq 0$, the level set $\{\theta\in\RR^{d_\theta}\,\rvert\,g(\theta)=\eta\}$ is unbounded. We now show that condition \ref{reg_val_inf_bounded_scalar} holds in this case. To this end, we prove that, if a level set $\{\theta\in\RR^{d_\theta}\,\rvert\, g(\theta)=\eta'\}$ is unbounded for some $\eta'$, then $\eta'$ belongs to the range of $g_{\infty}$. In this case, there exists $(\theta_n)_{n\geq 0}$ such that $g(\theta_n)=\eta'$ for every $n\geq 0$ and $|\theta_n|\to+\infty$. Define $r_n=|\theta_n|$ and $\varphi_n=\theta_n/|\theta_n|$ so that $\theta_n=r_n\varphi_n$. Since $|\varphi_n|=1$ for every $n\geq 0$, after extracting a subsequence, $(\varphi_n)_{n\geq 0}$ converges to $\varphi\in\mathbb{S}^{d_\theta-1}$ as $n\to+\infty$. Given that
\begin{equation*}
    \begin{aligned}
        |g_{\infty}(\varphi)-\eta'|&=|g_{\infty}(\varphi)-g(r_n \varphi_n)|\\
        &\leq |g_{\infty}(\varphi)-g_{\infty}(\varphi_n)|+|g_{\infty}(\varphi_n)-g(r_n\varphi_n)|\\
        &\textstyle\leq |g_{\infty}(\varphi)-g_{\infty}(\varphi_n)|+\sup_{\psi\in\mathbb{S}^{d_\theta-1}} |g_{\infty}(\psi)-g(r_n \psi)|,
    \end{aligned}
\end{equation*}
the uniform convergence of \ref{conv_gmu_bounded_scalar} (which also yields the continuity of $g_{\infty}$) gives that $g_{\infty}(\varphi)=\eta'$, so that $\eta'$ belongs to the range of $g_{\infty}$. As a consequence, the set of nonzero regular values of $g$, which has positive Lebesgue measure, is included in the range of $g_{\infty}$. If $d_\theta\geq2$, the zero-measure assumption on the critical values of $g_\infty$ therefore gives a common nonzero regular value of $g$ and $g_\infty$. If $d_\theta=1$, the range of $g_\infty$ on $\SS^0$ is finite, contradicting the inclusion of a positive-measure set. Hence in that case the preceding ``otherwise'' alternative is impossible, and $g$ must already have a nonzero regular value with bounded level set.

\paragraph{Bounded level set.} Assume that condition \ref{reg_val_bounded_scalar} of \Cref{prop_esc_reg_bounded_scalar} holds; that is, there exists $\eta>0$ such that $-\eta$ is a regular value of $g$ and $K=\{\theta\in\RR^{d_\theta}\,\rvert\, g(\theta)\leq-\eta\}$ is such that $\partial K$ is bounded (the case of a positive regular value can be treated similarly). Since $-\eta$ is a regular value of $g$, we have $\nabla g(\theta)\neq 0$ for every $\theta\in\partial K$. Since $\partial K$ is bounded, compactness gives $\beta>0$ such that $\inf_{\theta\in\partial K} |\nabla g(\theta)|\geq \beta$. Using this, we obtain
\begin{equation*}
    \begin{aligned}
        \frac{d}{dt}g(\theta_t)&=-w_t\langle\nabla g(\theta_t),\nabla g_t(\theta_t)\rangle\leq -w_t \beta(\beta - \epsilon) \leq -w_t\beta^2/2<0,
    \end{aligned}
\end{equation*}
provided $w_t>0$, $\theta_t\in \partial K$ and $\epsilon<\beta/2$. This shows that $(w_t,\theta_t)\in \RR^*_+\times K$ for every $t$ as soon as $(w_0,\theta_0)\in\RR^*_+\times K$. Since $\dot w_t=-g_t(\theta_t)\geq -g(\theta_t)-\epsilon\geq \eta-\epsilon\geq \eta/2$ as soon as $\theta_t\in K$ and $\epsilon\leq \eta/2$, the result follows as above.

\paragraph{Unbounded level set.} When $\partial K$ is unbounded, the Euclidean gradient $|\nabla g(\theta)|$ need not be bounded away from zero along $\partial K$; in fact the tangential component is naturally of order $1/|\theta|$. Thus the bounded-level proof cannot simply be repeated. The claim in the proof of \cite[Proposition C.4]{chizat2018global} that $\inf_{\theta\in\partial K}|\nabla g(\theta)|>0$ follows from $-\eta$ being a common regular value of $g$ and $g_\infty$ is therefore not valid. The replacement is to track the direction $\varphi_t=\theta_t/|\theta_t|$. Near the limiting level $\{g_\infty=-\eta\}$, regularity of $g_\infty$ gives a uniform spherical-gradient estimate. This does not make $K$ invariant, but it gives enough angular decrease to ensure that any excursion outside $K$ either returns to $K$ in a controlled manner or remains in the safe region where $g(\theta_t)\leq-\eta/2$.
 
\subsubsection{Setting}
In the following, we fix $\eta>0$ such that $-\eta$ is a common regular value of $g$ and $g_{\infty}$ and $\partial K$ is unbounded, where $K=\{\theta\in\RR^{d_\theta}\,\rvert\,g(\theta)\leq -\eta\}$. Again, the case of a positive common regular value can be treated in a similar way.

\paragraph{Definition of the constants and choice of parameters.} For every $\bar{r}>0$, we have $\beta_{\bar{r}},\beta_{\infty}>0$ with
\begin{equation*}
    \beta_{\bar{r}}\eqdef\inf\,\{|\nabla g(\theta)|\,\rvert\,\theta\in\RR^{d_\theta},~|\theta|\leq \bar{r},~g(\theta)=-\eta\} ~~\mathrm{and}~~ \beta_{\infty}\eqdef\inf\,\{|\nabla_{\mathbb{S}} g_{\infty}(\varphi)|\,\rvert\,\varphi\in\mathbb{S}^{d_\theta-1},~g_{\infty}(\varphi)=-\eta\},
\end{equation*}
where $\nabla_{\mathbb{S}}g_{\infty}$ denotes the element of $\{\varphi\}^\perp$ representing the differential of $g_{\infty}$ at $\varphi\in\mathbb{S}^{d_\theta-1}$. Since $-\eta$ is a regular value of $g_{\infty}$ and $\mathbb{S}^{d_\theta-1}$ is compact, we also have the existence of $\gamma'_{\infty}>0$ such that
\begin{equation}
    \inf\,\{|\nabla_{\SS} g_{\infty}(\varphi)|\,\rvert\,\varphi\in\mathbb{S}^{d_\theta-1},~g_{\infty}(\varphi)\in[-\eta-\gamma_{\infty},-\eta+\gamma_{\infty}]\}\geq \beta_{\infty}/2
    \locallabel{inf_grad_ginf}
\end{equation}
for every $\gamma_{\infty}\leq \gamma'_{\infty}$. We fix $\gamma_{\infty}=\min(\gamma'_{\infty},\eta/4)$. We define $C_w=\|g\|_{\infty}+1$ and 
\begin{equation*}
    C_\theta=\max(\|\nabla g\|_{\infty},\textstyle\sup_{r\geq 0}\|r\nabla g(r\cdot)\|_{\infty}),
\end{equation*} 
this last quantity being finite by \ref{conv_dgmu_bounded_scalar}. We fix $\alpha>0$ small enough to have
\begin{equation}\locallabel{bound_alpha}
    2(4+C_\theta)(\alpha+C_w\alpha^2/2)\leq 1
\end{equation}
and define $C_1=9+C_w(4+C_\theta)\alpha^2$, $C_2=2(4+C_\theta)$ and 
\begin{equation}\locallabel{eq_def_tau}
    \tau=\max\bigg(1,\sqrt{\eta \frac{C_1}{C_2}}\bigg).
\end{equation}
Finally, define
\begin{equation}\locallabel{eq_def_c}
    c=\min\bigg(\gamma_{\infty},\frac{3\beta_{\infty}^2}{32 C_2}\log\bigg(1+\alpha \frac{C_2}{C_1}\bigg)\bigg).
\end{equation}
Using \ref{conv_gmu_bounded_scalar} and \ref{conv_dgmu_bounded_scalar}, we choose $\bar{r}\geq 1$ large enough to have
\begin{align}
    &\sup_{\varphi\in\mathbb{S}^{d_\theta-1}}|g(r\varphi)-g_{\infty}(\varphi)|\leq \min\bigg(\frac{c}{4},\frac{\eta}{4},\gamma_{\infty}\bigg),\locallabel{conv_unif_gr}\\
    &\sup_{\varphi\in\mathbb{S}^{d_\theta-1}}|r\mathrm{proj}_{\{\varphi\}^\perp}(\nabla g(r\varphi))-\nabla_{\mathbb{S}}g_{\infty}(\varphi)|\leq \min\bigg(\frac{\beta_{\infty}}{8},\frac{\eta}{4},1\bigg),\locallabel{conv_unif_grad_gr}
\end{align}
for every $r\geq \bar{r}$. Set
\[
    \beta_{\mathrm{loc}}:=\inf\{\abs{\nabla g(\theta)}\,\rvert\,g(\theta)=-\eta,\ \abs{\theta}\leq2\bar r\}>0,
\]
with the convention $\beta_{\mathrm{loc}}=+\infty$ when this compact part of the level set is empty. Positivity follows from the regular-value assumption. Using \ref{conv_dgmu_dgnu}, we also take $\epsilon$ small enough to have
\begin{equation}\locallabel{choice_epsilon}
    \epsilon\leq\min\bigg(\frac{\eta}{4},\frac{1}{\bar{r}},\frac{\beta_{\mathrm{loc}}}{2}\bigg)
\end{equation}
and
\begin{equation}\locallabel{choice_epsilon_bis}
    \sup\,\{|\theta||\nabla g_{\nu}(\theta)-\nabla g_{\mu}(\theta)|\,\rvert\,\theta\in\RR^{d_\theta},~\nu\in\mathcal{P}_2(\Omega),~W_2(\mu,\nu)\leq C\epsilon\}\leq \frac{\beta_{\infty}}{8}
\end{equation}

In the lemmas below, we repeatedly use four key estimates: \localeqref{inf_grad_ginf} is the regular-level lower bound for the limiting angular function; \localeqref{conv_unif_gr} lets us replace $g(r\varphi)$ by $g_\infty(\varphi)$ outside $B(0,\bar r)$; \localeqref{conv_unif_grad_gr} is the corresponding tangential-gradient approximation; \localeqref{choice_epsilon} controls the scalar perturbation $g_t-g$ and keeps the small parameter below $1/\bar r$; and \localeqref{choice_epsilon_bis} controls the scaled gradient perturbation $|\theta||\nabla g_t-\nabla g|$.

\subsubsection{Preliminary lemmas}
Throughout what follows, given $\theta_t\in\mathbb{R}^{d_\theta}$, we use the notation $\varphi_t=\theta_t/|\theta_t|\in\mathbb{S}^{d_\theta-1}$. 

\begin{lem}[Basic speed bounds]\label{lem:basic_speed}
    It holds
    \begin{align}
        |\dot w_t|&\leq C_w,\locallabel{bound_dw}\\
        |\dot\theta_t|&\leq |w_t|[\epsilon+C_{\theta}\min(1,1/|\theta_t|)]\locallabel{bound_dtheta}.
    \end{align}
\end{lem}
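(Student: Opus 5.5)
The plan is to read both bounds directly off the characteristic system \eqref{characteristic_ode}, specialized to $d_w=1$, which gives $\dot w_t=-g_t(\theta_t)$ and $\dot\theta_t=-w_t\nabla g_t(\theta_t)$, where $g_t=g_{\mu_t}$ and the curve $(\mu_t)$ satisfies $\sup_t W_2(\mu_t,\mu)\leq C\epsilon$. Throughout I will use the consequence of \eqref{bound_diff_g_gt} recorded above, namely $\sup_t\|g_t-g\|_{C^1}\leq\epsilon$ after the rescaling of $\epsilon$, together with the choice \localeqref{choice_epsilon}, which gives in particular $\epsilon\leq\eta/4$ and $\epsilon\leq 1/\bar r\leq 1$.

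\emph{Bound on $\dot w_t$.} I write $|\dot w_t|=|g_t(\theta_t)|\leq |g(\theta_t)|+\|g_t-g\|_\infty\leq\|g\|_\infty+\epsilon$. Since $\epsilon\leq1$, this is at most $\|g\|_\infty+1=C_w$. The quantity $\|g\|_\infty$ is finite because $\phi$ is bounded: indeed $|g(\theta)|\leq\|\phi\|_\infty\|R'(\int\Phi d\mu)\|$.

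\emph{Bound on $\dot\theta_t$.} I start from $|\dot\theta_t|=|w_t||\nabla g_t(\theta_t)|\leq |w_t|\bigl(|\nabla g_t(\theta_t)-\nabla g(\theta_t)|+|\nabla g(\theta_t)|\bigr)$. The first term is at most $\epsilon$ by the $C^1$ control. For the second term I treat two cases.
\begin{itemize}
\item If $|\theta_t|\leq 1$, I use $|\nabla g|\leq\|\nabla g\|_\infty\leq C_\theta$.
\item If $|\theta_t|=r\geq1$, I write $\theta_t=r\varphi_t$ and get $|\nabla g(\theta_t)|=r^{-1}|r\nabla g(r\varphi_t)|\leq C_\theta/r$, by the second term in the definition of $C_\theta$.
\end{itemize}
In both cases $|\nabla g(\theta_t)|\leq C_\theta\min(1,1/|\theta_t|)$, which yields \localeqref{bound_dtheta}.

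\emph{Main subtlety.} The only point needing care is that $C_\theta$ is finite. Finiteness of $\|\nabla g\|_\infty$ follows from boundedness of $d\phi$ under \Cref{ass_glob_conv}. Finiteness of $\sup_{r}\|r\nabla g(r\cdot)\|_\infty$ follows from condition \ref{conv_dgmu_bounded_scalar}: the uniform convergence gives a bound for $r\geq r_0$, and for $r\leq r_0$ one has the bound $r_0\|\nabla g\|_\infty$.
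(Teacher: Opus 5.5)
Your proposal is correct and follows essentially the same argument as the paper. For $\dot w_t$ you use the triangle inequality with the $C^1$ perturbation bound and $\epsilon\leq 1/\bar r\leq 1$. For $\dot\theta_t$ you use the rescaling $\nabla g(\theta_t)=|\theta_t|^{-1}\,|\theta_t|\nabla g(|\theta_t|\varphi_t)$. Your case split at $|\theta_t|=1$ and your explicit check that $C_\theta$ and $\|g\|_\infty$ are finite just spell out what the paper compresses into a single $\min$ and the remark that $C_\theta$ is finite by condition \ref{conv_dgmu_bounded_scalar}.
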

\begin{proof}
    Using \eqref{bound_diff_g_gt} and the boundedness of $g$, \localeqref{choice_epsilon} and $\bar{r}\geq 1$, we obtain:
    \begin{equation*}
            |\dot w_t|=|g_t(\theta_t)|\leq |g(\theta_t)|+|g_t(\theta_t)-g(\theta_t)|\leq \|g\|_{\infty}+\epsilon\leq \|g\|_{\infty}+1.
    \end{equation*}
    In the same way, we obtain
    \begin{equation*}
        |\dot\theta_t|=|w_t||\nabla g_t(\theta_t)|\leq |w_t|[|\nabla g(\theta_t)|+|\nabla g_t(\theta_t)-\nabla g(\theta_t)|]\leq |w_t|[\|\nabla g\|_{\infty}+\epsilon].
    \end{equation*}
    Finally, we use $\nabla g(\theta_t)=(1/|\theta_t|)|\theta_t|\nabla g(|\theta_t|\varphi_t)$ to get the improved bound
    \begin{equation*}
        |\nabla g(\theta_t)|\leq \min\bigg(\|\nabla g\|_{\infty},\frac{\sup_{r\geq 0}\|r\nabla g(r\cdot)\|_{\infty}}{|\theta_t|}\bigg),
    \end{equation*}
    which in turn yields \localeqref{bound_dtheta}.
\end{proof}

The following lemmas show that $g_\infty(\varphi_t)$ is strictly decreasing near the regular value $-\eta$, and hence, decreases by a constant in finite time under appropriate bounds on $\theta_t,w_t$.
\begin{lem}[Angular descent]\label{lem_decr_ginf}
    If $w_t>0$ and $|\theta_t|\geq \bar{r}$ is such that $g_{\infty}(\varphi_t)\in[-\eta-\gamma_{\infty},-\eta+\gamma_{\infty}]$, then
    \begin{equation}\locallabel{eq_decr_ginf}
        \frac{d}{dt}g_{\infty}(\varphi_t)\leq -\frac{\beta_{\infty}^2}{8}\frac{w_t}{|\theta_t|^2}<0.
    \end{equation}
\end{lem}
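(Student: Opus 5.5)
We compute $\frac{d}{dt}g_\infty(\varphi_t)$ by the chain rule and compare the result with $-\beta_\infty^2 w_t/|\theta_t|^2$ using the approximation estimates fixed above. First, $\varphi_t=\theta_t/|\theta_t|$ has derivative $\dot\varphi_t=\frac{1}{|\theta_t|}\mathrm{proj}_{\{\varphi_t\}^\perp}(\dot\theta_t)$. Since $\dot\theta_t=-w_t\nabla g_t(\theta_t)$, and extending $g_\infty$ $0$-homogeneously so that only its spherical gradient matters, we get
\[
    \frac{d}{dt}g_\infty(\varphi_t)=\langle\nabla_{\SS}g_\infty(\varphi_t),\dot\varphi_t\rangle=-\frac{w_t}{|\theta_t|^2}\big\langle\nabla_{\SS}g_\infty(\varphi_t),\,|\theta_t|\,\mathrm{proj}_{\{\varphi_t\}^\perp}\nabla g_t(\theta_t)\big\rangle .
\]
The rest of the argument is to show that the vector $a:=|\theta_t|\,\mathrm{proj}_{\{\varphi_t\}^\perp}\nabla g_t(\theta_t)$ is close to $b:=\nabla_{\SS}g_\infty(\varphi_t)$.

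Write $a-b=(a-a_0)+(a_0-b)$, where $a_0:=|\theta_t|\,\mathrm{proj}_{\{\varphi_t\}^\perp}\nabla g(\theta_t)$.
\begin{itemize}
\item The term $a_0-b$ is bounded by $\beta_\infty/8$ by \localeqref{conv_unif_grad_gr}, applied with $r=|\theta_t|\geq\bar r$ and $\varphi=\varphi_t$.
\item The term $a-a_0$ is bounded by $|\theta_t||\nabla g_t(\theta_t)-\nabla g(\theta_t)|$, because projection is a contraction. This is at most $\beta_\infty/8$ by \localeqref{choice_epsilon_bis}, which applies because $W_2(\mu_t,\mu)\leq C\epsilon$ in the perturbation setting fixed at the start of the appendix.
\end{itemize}
Hence $|a-b|\leq\beta_\infty/4$.

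Next we use the hypothesis $g_\infty(\varphi_t)\in[-\eta-\gamma_\infty,-\eta+\gamma_\infty]$. By \localeqref{inf_grad_ginf}, this gives $|b|\geq\beta_\infty/2$. Then
\[
    \langle b,a\rangle\geq |b|^2-|b||a-b|\geq |b|\,(|b|-\beta_\infty/4)\geq \frac{\beta_\infty}{2}\cdot\frac{\beta_\infty}{4}=\frac{\beta_\infty^2}{8},
\]
where the middle inequality uses $|b|\geq\beta_\infty/2$ twice. Since $w_t>0$, plugging this into the chain-rule formula yields \localeqref{eq_decr_ginf}.

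The main point requiring care is the chain-rule step. We need $g_\infty$ to be differentiable on the sphere, with $\nabla_{\SS}g_\infty$ the limit of the rescaled tangential gradients. This follows from condition \ref{conv_dgmu_bounded_scalar} together with \ref{conv_gmu_bounded_scalar}: the maps $\varphi\mapsto g(r\varphi)$ converge uniformly, and their spherical gradients $r\,\mathrm{proj}_{\{\varphi\}^\perp}\nabla g(r\varphi)$ also converge uniformly. Uniform $C^1$ convergence then makes $g_\infty$ a $C^1$ function on $\SS^{d_\theta-1}$ whose gradient is that limit; this is implicitly assumed when $\beta_\infty$ is defined. We also need $\theta_t\neq0$ for $\varphi_t$ to be differentiable, which holds since $|\theta_t|\geq\bar r\geq1$.
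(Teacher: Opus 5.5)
Your proof is correct and follows the paper's argument essentially step for step. You use the same chain-rule computation for $\varphi_t$, split $|\theta_t|\,\mathrm{proj}_{\{\varphi_t\}^\perp}\nabla g_t(\theta_t)-\nabla_{\mathbb{S}}g_\infty(\varphi_t)$ into a large-radius term and a perturbation term each bounded by $\beta_\infty/8$, and apply the strip bound $|\nabla_{\mathbb{S}}g_\infty(\varphi_t)|\geq\beta_\infty/2$ to reach $\beta_\infty^2/8$; your remark that $g_\infty$ is $C^1$ on the sphere (from the uniform convergence in conditions (i)--(ii)) is a point the paper leaves implicit.
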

\begin{proof}
We use three estimates: the strip estimate \localeqref{inf_grad_ginf} gives a lower bound on $|\nabla_{\SS}g_\infty|$, the large-radius approximation \localeqref{conv_unif_grad_gr} compares $g$ with $g_\infty$, and the perturbation estimate \localeqref{choice_epsilon_bis} compares $g_t$ with $g$. We obtain
    \begin{equation*}
    \begin{aligned}
        \frac{d}{dt}g_{\infty}(\varphi_t)&=\langle \nabla_{\mathbb{S}}g_{\infty}(\varphi_t),\dot\varphi_t\rangle\\
        &= -\frac{w_t}{|\theta_t|}\langle \nabla_{\mathbb{S}}g_{\infty}(\varphi_t),\mathrm{proj}_{\{\varphi_t\}^\perp}(\nabla g_t(\theta_t))\rangle\\
        &= -\frac{w_t}{|\theta_t|^2}\big[|\nabla_{\mathbb{S}}g_{\infty}(\varphi_t)|^2 +\langle \nabla_{\mathbb{S}}g_{\infty}(\varphi_t),|\theta_t|\mathrm{proj}_{\{\varphi_t\}^\perp}(\nabla g(\theta_t))-\nabla_{\mathbb{S}}g_{\infty}(\varphi_t)\rangle\\
        &\qquad\qquad\qquad\qquad\qquad+\langle \nabla_{\mathbb{S}}g_{\infty}(\varphi_t),|\theta_t|\mathrm{proj}_{\{\varphi_t\}^\perp}(\nabla g_t(\theta_t))-|\theta_t|\mathrm{proj}_{\{\varphi_t\}^\perp}(\nabla g(\theta_t))\rangle\big]\\
        &\leq -\frac{w_t}{|\theta_t|^2}\frac{\beta_{\infty}}{2}\bigg[\frac{\beta_{\infty}}{2}-\frac{\beta_{\infty}}{8}-\frac{\beta_{\infty}}{8}\bigg]\\
        &\leq -\frac{w_t}{|\theta_t|^2}\frac{\beta_{\infty}^2}{8}.
    \end{aligned}
    \end{equation*}
\end{proof}

The following lemma converts the differential inequality above into a finite-time statement in the transition annulus when $\abs{\theta_t}\sim \bar r$. Starting with $|\theta_{t_0}|$ between $2\bar r$ and $3\bar r$, the trajectory cannot leave the larger annulus too quickly. During that time, either $g_\infty$ decreases by the fixed amount $c$, or it has already crossed below the good level $-\eta-\gamma_\infty$.
\begin{lem}[Finite excursion]\label{lem_decr_ginf_bis}
    For every $t_0\geq 0$ with $|\theta_{t_0}|\in [2\bar{r},3\bar{r}]$, $g_{\infty}(\varphi_{t_0})\in[-\eta-\gamma_{\infty},-\eta+\gamma_{\infty}]$ and $w_{t_0}\geq \tau\bar{r}$ (with $\tau$ defined in \localeqref{eq_def_tau}), there exists $t_1>t_0$ such that $t\mapsto g_{\infty}(\varphi_t)$ is decreasing and $|\theta_t|\in(\bar{r},4\bar{r})$ on $[t_0,t_1]$. Moreover, we have $g_{\infty}(\varphi_{t_1})\leq g_{\infty}(\varphi_{t_0})-c$ with $c$ defined in \localeqref{eq_def_c}, or $g_\infty(\varphi_{t_1})<-\eta-\gamma_\infty$.
\end{lem}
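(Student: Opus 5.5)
The plan is a continuity (bootstrap) argument on a short time window after $t_0$. On this window, \Cref{lem:basic_speed} keeps $w_t$ large and positive and keeps $|\theta_t|$ inside the annulus. Meanwhile, \Cref{lem_decr_ginf} makes $g_\infty(\varphi_t)$ decrease at a rate of order $w_t/|\theta_t|^2$. Integrating this rate over the window should give a decrease of at least $c$, unless $g_\infty(\varphi_t)$ has already left the strip $[-\eta-\gamma_\infty,-\eta+\gamma_\infty]$ from below. It cannot leave from above, because $g_\infty(\varphi_t)$ is decreasing as long as it is in the strip. Throughout, we assume $\sup_t W_2(\mu_t,\mu)\le C\epsilon$, so that $\|g_t-g\|_{C^1}\le \epsilon$ and \localeqref{choice_epsilon_bis} are available.

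\emph{Step 1 (a priori bounds).} Set $w_0:=w_{t_0}\ge \tau\bar r$ and $s:=t-t_0$. By \localeqref{bound_dw}, $|w_t-w_0|\le C_w s$. Let $t^*$ be the first time after $t_0$ at which $|\theta_t|\notin(\bar r,4\bar r)$. On $[t_0,t^*)$ we have $|\theta_t|\ge\bar r\ge1$ and $\epsilon\le 1/\bar r$, so \localeqref{bound_dtheta} gives
\[
|\dot\theta_t|\le \frac{(1+C_\theta)\,|w_t|}{\bar r}\le \frac{(1+C_\theta)(w_0+C_w s)}{\bar r}.
\]
Integrating, $|\theta_t-\theta_{t_0}|\le (1+C_\theta)(w_0 s+C_w s^2/2)/\bar r$.

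I will choose a horizon of the form $s_{\max}=\alpha\bar r^2/w_0$, possibly adjusted by a constant. With this choice, $w_0s_{\max}=\alpha\bar r^2$ and $C_w s_{\max}^2/2\le C_w\alpha^2\bar r^2/(2\tau^2)\le C_w\alpha^2\bar r^2/2$, using $w_0\ge\tau\bar r$ and $\tau\ge1$. Then \localeqref{bound_alpha} bounds the displacement by $\bar r/2$. Hence $|\theta_t|\in[3\bar r/2,7\bar r/2]\subset(\bar r,4\bar r)$, so $t^*>t_0+s_{\max}$.

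On the same window, $C_w s\le C_w\alpha\bar r/\tau$. Using \localeqref{eq_def_tau}, this keeps $w_t\ge w_0/2>0$; if not, I would shrink the window by a constant factor, which is what $C_1$ and $C_2$ seem designed to absorb.

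\emph{Step 2 (descent).} Let $t_1'$ be the first time in $[t_0,t_0+s_{\max}]$ at which $g_\infty(\varphi_t)$ exits the strip. Before $t_1'$, \Cref{lem_decr_ginf} applies, since $w_t>0$ and $|\theta_t|\ge\bar r$. So $g_\infty(\varphi_t)$ is strictly decreasing, and therefore it can only exit through the lower end $-\eta-\gamma_\infty$. If such an exit happens, I take $t_1$ slightly after it, which gives the second alternative.

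Otherwise, I integrate the descent rate over the whole window. Using $|\theta_t|\le 4\bar r$, this gives
\[
g_\infty(\varphi_{t_0})-g_\infty(\varphi_{t_1})\ \ge\ \frac{\beta_\infty^2}{8}\int_{t_0}^{t_1}\frac{w_t}{|\theta_t|^2}\,dt .
\]
To lower-bound the integral I would not use crude constants. Instead I would integrate the comparison ODE for $|\theta_t|$, namely $\frac{d}{dt}|\theta_t|^2\lesssim C_2 w_t$, against $w_t/|\theta_t|^2$. This produces $\int w_t/|\theta_t|^2\,dt\ge C_2^{-1}\log\bigl(1+\alpha C_2/C_1\bigr)$, where $C_1$ bounds $|\theta_{t_0}|^2/\bar r^2$ plus the quadratic remainder. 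This is exactly the shape of \localeqref{eq_def_c}, and it yields a decrease of at least $c$.

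\emph{Main obstacle.} The hard part is matching the precise constants, that is, getting the logarithmic lower bound on $\int w_t|\theta_t|^{-2}\,dt$ rather than a weaker linear one. This requires controlling the growth of $|\theta_t|^2$ by $\int w$ (via $\frac{d}{dt}|\theta_t|^2=2\langle\theta_t,\dot\theta_t\rangle\le 2(C_\theta+\epsilon|\theta_t|)w_t$, which uses the scaled-gradient bound) and then using a change of variables $u=\int_{t_0}^t w$. I would first attempt this substitution, bounding $|\theta_t|^2\le \bar r^2(C_1+C_2u/\bar r^2)$. Then $\int w/|\theta|^2=\int du/(\bar r^2C_1+C_2u)$, and evaluating this at $u=\alpha\bar r^2$ gives the logarithm. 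Choosing $t_1$ so that $u(t_1)=\alpha\bar r^2$ also fixes the horizon used in Step 1 consistently.
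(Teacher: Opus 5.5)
Your architecture matches the paper's: a bootstrap in the annulus over a horizon of order $\alpha\bar r^2/w_{t_0}$, the angular descent lemma, and a logarithmic lower bound on $\int w_t|\theta_t|^{-2}\,dt$. The control of $w_t$ in Step 1, however, does not hold as claimed.

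\textbf{The gap.} You need $w_t>0$ on the whole window for two reasons: to invoke the angular descent lemma, and to make $u=\int_{t_0}^t w_s\,ds$ reach $\alpha\bar r^2$. You derive this from $|\dot w_t|\le C_w$ alone. That requires $C_w\alpha\bar r^2/w_{t_0}\le w_{t_0}/2$, i.e.\ $\tau^2\ge 2C_w\alpha$, and this does not follow from the definitions of $\alpha$ and $\tau$. The constraint on $\alpha$ only gives $8\alpha\le 1$ and $(4+C_\theta)C_w\alpha^2\le 1$, while $\tau=1$ as soon as $\eta C_1\le C_2$.
\begin{itemize}
\item \emph{Counterexample.} Take $C_\theta$ close to $0$, $C_w=100$, $\alpha=0.04$ and $\eta\le 0.8$. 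All constraints hold with $\tau=1$, yet $2C_w\alpha=8$.
\item \emph{Consequence.} The crude bound only guarantees that $u$ reaches $w_{t_0}^2/(2C_w)$ before $w$ might vanish, which can be far below $\alpha\bar r^2$.
\item \emph{Your fallback fails.} Shrinking the window gives a decrease of only $\frac{\beta_\infty^2}{8C_2}\log\bigl(1+\frac{C_2u(t_1)}{C_1\bar r^2}\bigr)$ with $u(t_1)\ll\alpha\bar r^2$. This can be smaller than $c$, so the lemma with $c$ as defined is not proved.
\end{itemize}

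\textbf{The missing ingredient.} The paper uses the sign of $\dot w_t$ in this regime. Suppose $|\theta_t|\ge\bar r$ and $g_\infty(\varphi_t)\le-\eta+\gamma_\infty$. Combine three bounds: $\sup_{r\ge\bar r}\|g(r\cdot)-g_\infty\|_\infty\le\eta/4$, $\epsilon\le\eta/4$, and $\gamma_\infty\le\eta/4$. They give $\dot w_t=-g_t(\theta_t)\ge\eta-\gamma_\infty-\eta/2\ge\eta/4$.

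Now $g_\infty(\varphi_{t_0})\le-\eta+\gamma_\infty$, and $g_\infty(\varphi_t)$ decreases while it stays in the strip and $w_t>0$. These two facts close a joint continuity argument. It yields $w_t\ge w_{t_0}+\eta(t-t_0)/4$ up to, and slightly beyond, any exit through $-\eta-\gamma_\infty$. In particular $u$ reaches $\alpha\bar r^2$ by time $t_0+\alpha\bar r^2/w_{t_0}$, which is inside your horizon.

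\textbf{With this repair your argument is correct}, and your substitution $u=\int w$ is a cleaner route than the paper's explicit primitive. On the annulus, $\bigl|\frac{d}{dt}|\theta_t|^2\bigr|\le C_2w_t$. This gives $|\theta_t|^2\le 9\bar r^2+C_2u\le C_1\bar r^2+C_2u$ with no quadratic remainder. You then get a decrease of at least $\frac{\beta_\infty^2}{8C_2}\log(1+\alpha C_2/C_1)\ge c$.

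The paper instead combines a linear lower bound on $w_t$ with a quadratic-in-$t$ upper bound on $|\theta_t|^2$. It then uses the $\sqrt{\eta C_1/C_2}$ branch of $\tau$ to keep the factor $1-\frac{C_1\eta\bar r^2}{4C_2w_{t_0}^2}$ above $3/4$. In your version, you can also phrase the annulus control through $\bigl|\,|\theta_t|^2-|\theta_{t_0}|^2\bigr|\le C_2u\le\bar r^2$. Then the hypothesis $w_{t_0}\ge\tau\bar r$ is only used through $w_{t_0}>0$.
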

\begin{proof}
    Without loss of generality, we can assume $t_0=0$. 

    We first show that $\theta_t$ initialized in the transition radius $[2\bar r, 4 \bar r]$ can drift at most $\Oo(\bar r)$ in $\Oo(\bar r^2/w_{t_0})$ time. Define the stopping time at which the trajectory leaves the enlarged radius,
    \[
        \delta=\inf\{t\geq 0\,\rvert\,\abs{\theta_{t}}=\bar{r}~\mathrm{or}~\abs{\theta_{t}}=4\bar{r}\}.
    \]
    We have $\delta>0$. For $t\in[0,\delta]$, by \localeqref{choice_epsilon}, we have $\epsilon\leq 1/\bar{r}\leq 4/|\theta_t|$, so \Cref{lem:basic_speed} yields
    \begin{equation*}
            |\dot\theta_t|\leq w_t(\epsilon + C_{\theta}/|\theta_t|)\leq (4+C_\theta)\frac{w_t}{|\theta_t|}\leq (4+C_\theta)\frac{w_0+C_w t}{|\theta_t|}.
    \end{equation*}
    Thus, we obtain
    \begin{equation*}
        \Big|\frac{d}{dt}|\theta_t|^2\Big|\leq 2(4+C_\theta)(w_0+C_w t),
    \end{equation*}
    which in turn yields $||\theta_t|^2-|\theta_{t_0}|^2|\leq 2(4+C_\theta)(w_0 t+(C_w/2)t^2)$. Provided $t\leq t'_1$ with $t'_1=\alpha\bar{r}^2/w_0$ and using the choice of $\alpha$ in \localeqref{bound_alpha}, we obtain $|\theta_t|\in (\bar{r},4\bar{r})$. Thus, $\delta\geq t'_1$ and the previous bound holds for every $t\in[0,t'_1]$.

    Since we have established that $\abs{\theta_t}\sim \bar r$ in time $t_1'\sim \bar r^2/w_{t_0}$, this precisely matches the velocity estimate in \Cref{lem_decr_ginf}, leading to an order-one decrease of $g_\infty(\varphi_t)$ in this time, as we now show.
    By \Cref{lem_decr_ginf}, we have that $t\mapsto g_{\infty}(\varphi_t)$ is decreasing on $[0,t'_1]$, or until the first time $t''_1$ for which $g_{\infty}(\varphi_{t''_1})<-\eta-\gamma_{\infty}$. If $t''_1\geq t'_1$ then using \localeqref{conv_unif_gr}, \Cref{lem_decr_ginf} and $\gamma_{\infty}\leq \eta/4$ we obtain
    \begin{equation*}
        \dot w_t=-g_t(\theta_t)\geq -g(\theta_t)-\epsilon\geq -g_{\infty}(\varphi_t)-\epsilon -\sup_{r\geq \bar{r}}\|g(r\cdot)-g_{\infty}\|_{\infty}\geq \eta-\gamma_{\infty}-\epsilon-\sup_{r\geq \bar{r}}\|g(r\cdot)-g_{\infty}\|_{\infty}\geq \eta/4,
    \end{equation*}
    which in turn yields
    \begin{equation*}
        \begin{aligned}
            g_{\infty}(\varphi_{t_1})&\leq g_{\infty}(\varphi_0)-\frac{\beta_{\infty}^2}{8}\int_{0}^{t'_1}\frac{w_t}{|\theta_t|^2}dt\\
            &\leq g_{\infty}(\varphi_{0})-\frac{\beta_{\infty}^2}{8}\int_{0}^{t'_1}\frac{w_0+(\eta/4) t}{|\theta_0|^2+2(4+C_\theta)(w_0 t+(C_w/2) t^2)}dt\\
            &\leq g_{\infty}(\varphi_{0})-\frac{\beta_{\infty}^2}{8}\int_{0}^{t'_1}\frac{w_0+(\eta/4) t}{C_1 \bar{r}^2+C_2w_0 t}dt.
        \end{aligned}
    \end{equation*}
    Using that $(b/d)x+[(ad-bc)/d^2]\log(|c+dx|)$ is a primitive of $x\mapsto (a+bx)/(c+dx)$, we also obtain
    \begin{equation*}
        \begin{aligned}
            \int_{0}^{t'_1}\frac{w_0+(\eta/4) t}{C_1 \bar{r}^2+C_2w_0 t}dt&=\bigg[\frac{\eta}{4C_2 w_0}t+\frac{C_2w_0^2-(\eta/4) C_1 \bar{r}^2}{C_2^2w_0^2}\log(C_1\bar{r}^2+C_2 w_0 t)\bigg]_{t=0}^{t'_1}\\
            &=\frac{\eta}{4C_2 w_0}t'_1+\frac{C_2w_0^2-(\eta/4) C_1 \bar{r}^2}{C_2^2w_0^2}\log\bigg(1+\frac{C_2 w_0 t'_1}{C_1\bar{r}^2}\bigg)\\
            &\geq\frac{1}{C_2}\bigg(1-\frac{C_1}{4C_2}\frac{\eta\bar{r}^2}{w_0^2}\bigg)\log\bigg(1+\alpha\frac{C_2}{C_1}\bigg).
        \end{aligned}
    \end{equation*}
    Thus, using $w_0\geq \tau\bar{r}$ and the choice of parameters in \localeqref{eq_def_tau}, we obtain the result with $t_1=\min(t'_1,t''_1)$.
\end{proof}

\subsubsection{Proof of Proposition \ref{prop_esc_reg_bounded_scalar}}
Assume that $w_0\geq \tau\bar{r}$ and $\theta_0\in K$. The goal is not to prove that $\theta_t$ always stays in $K$; this can fail in the transition annulus. Instead we prove the weaker invariant
\[
    g(\theta_t)\leq-\eta/2\qquad\text{for all }t\geq0.
\]
This is sufficient, since it implies $\dot w_t=-g_t(\theta_t)\geq\eta/4$ for our choice of $\epsilon$. We check what can happen when a trajectory touches the boundary level $\{g=-\eta\}$ at time $t_0$. If it cannot cross there, it remains in $K$; if it can cross, the estimates below keep it below the weaker level $-\eta/2$ until it returns to $K$. In any case, for all $t\geq t_0$, we have $\dot w_t\geq \eta/4$. The following three regimes cover all possible boundary contacts.

\phantomsection\label{audit:bounded-scalar-small-regime}
\paragraph{Small regime.} If $|\theta_{t_0}|\leq 2\bar{r}$, by compactness of $\partial K\cap B(0,2\bar{r})$ we can conclude as in the case where $\partial K$ is bounded that we cannot exit $K$. Indeed, at such a boundary contact,
\[
    \frac{d}{dt}g(\theta_t)
    =-w_t\langle\nabla g(\theta_t),\nabla g_t(\theta_t)\rangle
    \leq-w_t\beta_{\mathrm{loc}}(\beta_{\mathrm{loc}}-\epsilon)<0.
\]

\paragraph{Medium regime.} 
Assume that $|\theta_{t_0}|\in [2\bar{r},3\bar{r}]$. Since $g(\theta_{t_0})=-\eta$, it follows from \localeqref{conv_unif_gr} that
    \[
        g_{\infty}(\varphi_{t_0})\in [-\eta-\gamma_{\infty},-\eta+\min(\gamma_{\infty},c/2)].
    \] 
The aim is to prove a finite-excursion statement: there is a time $t_1>t_0$ at which the trajectory has returned to $K$, and throughout $[t_0,t_1]$ it remains in the weaker sublevel set $\{g\leq-\eta/2\}$.
Since $w_{t_0}\geq \tau\bar{r}$, we can apply \Cref{lem_decr_ginf_bis} to obtain the existence of $t_1> t_0$ such that $t\mapsto g_{\infty}(\varphi_t)$ is decreasing on $[t_0,t_1]$ and either $g_{\infty}(\varphi_{t_1})\leq -\eta-\gamma_\infty$, so $\theta_{t_1}$ is back in $K$, or $g_{\infty}(\varphi_{t_1})\leq g_{\infty}(\varphi_{t_0})-c$. In the latter case, using \localeqref{conv_unif_gr}, we obtain
\begin{equation*}
    \begin{aligned}
    g(\theta_{t_1})&\leq g_{\infty}(\varphi_{t_1})+\sup_{r\geq \bar{r}}\|g(r\cdot)-g_{\infty}\|_{\infty}\\
    &\leq g_{\infty}(\varphi_{t_0})-c+\sup_{r\geq \bar{r}}\|g(r\cdot)-g_{\infty}\|_{\infty}\\
    &\leq -\eta +\frac{c}{2}-c+\frac{c}{4}<-\eta.
    \end{aligned}
\end{equation*}
Moreover, for every $t\in[t_0,t_1]$, using \localeqref{conv_unif_gr} again, we have
\begin{equation*}
    g(\theta_t)\leq g_{\infty}(\varphi_t)+\sup_{r\geq \bar{r}}\|g(r\cdot)-g_{\infty}\|_{\infty}\leq g_{\infty}(\varphi_{t_0})+\sup_{r\geq \bar{r}}\|g(r\cdot)-g_{\infty}\|_{\infty}\leq -\eta + \gamma_{\infty}+\frac{\eta}{4}\leq -\frac{\eta}{2}.
\end{equation*}
Consequently, in the medium regime, $\theta_t$ can exit $K$ but re-enters $K$ in finite time with $|\theta_t|\geq \bar{r}$ and, in the meantime, we still have ${g(\theta_t)\leq -\eta/2}$.

\paragraph{Large regime.} Assume now that $|\theta_{t_0}|\geq 3\bar{r}$ and $g(\theta_{t_0})=-\eta$. 
Then reasoning as above, we obtain $g_{\infty}(\varphi_{t_0})\leq -\eta+\gamma_{\infty}$. By \Cref{lem_decr_ginf}, the quantity $g_{\infty}(\varphi_t)$ decreases until one of the following happens:
\begin{itemize}
    \item $g_{\infty}(\varphi_t)<-\eta-\gamma_{\infty}$, which yields $g(\theta_t)< -\eta-\gamma_{\infty}+\sup_{r\geq\bar{r}} \|g(r\cdot)-g_{\infty}\|_{\infty}\leq-\eta$ by \localeqref{conv_unif_gr}. Before this time, the same reasoning gives $g(\theta_t)\leq -\eta/2$.
    \item $|\theta_t|=\bar{r}$. For this to happen we would have to enter the medium regime before. We would enter it with $g_{\infty}(\varphi_t)\leq g_{\infty}(\varphi_{t_0})\leq -\eta + \sup_{r\geq \bar{r}}\|g(r\cdot)-g_{\infty}\|_{\infty}\leq -\eta + \min(\gamma_{\infty},c/2)$, so that the condition to ensure we re-enter $K$ is satisfied. 
    \item Neither event happens. Then, the trajectory stays in the large regime with $g_\infty(\varphi_t) \in [-\eta-\gamma_\infty, -\eta + \gamma_\infty]$ for all later times, which leads to $g(\theta_t)\leq -\eta/2$.
\end{itemize}

\paragraph{Conclusion.} From the reasoning above, we have that, taking 
\begin{equation*}
    A=\Big\{(w,\theta)\in \RR\times\RR^{d_\theta}\,\rvert\, w\geq \tau\bar{r},~g(\theta)\leq -\eta\Big\},
\end{equation*}
the set $A$ contains a nonempty open set and, provided $(w_0,\theta_0)\in A$, we have $w_t\geq \tau \bar{r}$ and $g(\theta_t)\leq -\eta/2$ for every $t\geq 0$. This yields $F'(\mu_t)(w_t, \theta_t)\leq -\tau\bar{r}\eta/4$ and shows that $\mu$ admits an escape region.

\subsection{Nonlinearities with at most linear growth under a non-degeneracy condition}\label{appendix_nondegenerate}
Let us now prove \Cref{prop_esc_reg_nondegenerate} and \Cref{cor_esc_reg_nondegenerate}.
\subsubsection{Proof of Proposition \ref{prop_esc_reg_nondegenerate}}
We fix an absolutely continuous curve $(\mu_t)_{t\geq 0}$ in $\mathcal{P}_2(\Omega)$ and write $g:=g_{\mu}$ and $g_t:=g_{\mu_t}$. Define $K:=\{v\in V\,\rvert\, h_{\Theta}(v)\leq-\eta\}$. Since $K\subset\subset V$ and $-\eta$ is a regular value of $h_{\Theta}$, we have $\beta:=\inf_{v\in\partial K}|\nabla_{\SS} h_{\Theta}(v)|>0$, where $\nabla_{\SS}$ denotes the spherical gradient.

    \paragraph{Geometry of the active set.}
    The escape region is a tube around the stable critical branch $\theta=\Theta(v)$, where $v=w/|w|$. The escaping coordinate is $|w|$. There are three things to check: $|w|$ grows inside the tube, the angular boundary $h_{\Theta}(v)=-\eta$ points back into $K$, and the tube boundary $|\theta-\Theta(v)|=M$ points back toward the branch.
    
    \paragraph{Choice of parameters.} Since $\phi$ is locally $C^{2,1}$, the same holds for $g$. As $\{\theta\in\RR^{d_\theta}\,\rvert\,\mathrm{dist}(\theta,\Theta(K))\leq 1\}$ is compact (by compactness of $K$ and continuity of $\Theta$), there exists $C>0$ such that for all $v\in\SS^{d_w-1}$ and all $\theta,\theta'\in \{\theta\in\RR^{d_\theta}\,\rvert\,\mathrm{dist}(\theta,\Theta(K))\leq 1\}$,
        \[
        |\nabla_{\theta}h(v,\theta')-\nabla_{\theta}h(v,\theta)-\nabla^2_{\theta}h(v,\theta)(\theta'-\theta)|
        \leq C \abs{\theta-\theta'}^2.
        \]
    For every compact set $W\subset \RR^{d_\theta}$, we also have $\delta_W(\epsilon)\to 0$ as $\epsilon\to 0$ where
    \begin{equation}\locallabel{eq_bound_diff_g_gt}
    \delta_W(\epsilon):=\sup\{\|g_{\nu}-g_{\mu}\|_{C^1(W)}\,\rvert\,\nu\in\mathcal{P}_2(\Omega),~W_2(\mu,\nu)\leq\epsilon\}.    
    \end{equation}
	We set $W_1:=\{\theta\in\RR^{d_\theta}\,\rvert\,\mathrm{dist}(\theta,\Theta(K))\leq 1\}$ and
    \begin{equation*}
        G_1:=1+\|g\|_{C^0(W_1)}.
    \end{equation*}
    
    We now choose the parameters as follows. First choose $M\in(0,1]$ small enough that
    \begin{equation}\locallabel{eq_nondeg_choice_M}
        \|J_g\|_{\infty}M\leq \frac{\eta}{4},\qquad
        G_1\|J_g\|_{\infty}M\leq \frac{\beta^2}{8},\qquad
        CM\leq\frac{\lambda}{6}.
    \end{equation}
    Then define $W=\{\theta\in\RR^{d_\theta}\,\rvert\,\mathrm{dist}(\theta,\Theta(K))\leq M\}$ and choose $\epsilon>0$ small enough that $\delta(\epsilon):=\delta_W(\epsilon)$ satisfies
    \begin{equation}\locallabel{eq_nondeg_choice_eps}
        \delta(\epsilon)\leq \min\left(1,\frac{\eta}{4},\frac{\beta^2}{8G_1},\frac{M\lambda}{6}\right).
    \end{equation}
    Finally, choose $\bar r\geq1$ large enough that
    \begin{equation}\locallabel{eq_nondeg_choice_r}
        \frac{1}{M\bar{r}^2}\frac{\|g\|_{C^1(W)}}{\lambda}\big(1+\|g\|_{C^0(W)}\big)\leq \frac{\lambda}{6}.
    \end{equation}

    \paragraph{The escape region.} With these parameters fixed, set
    \begin{equation*}
        A:=\{(w,\theta)\in \RR^{d_w}\times\RR^{d_\theta}\,\rvert\,|w|\geq \bar{r},~w/|w|\in K,~|\theta-\Theta(w/|w|)|\leq M\}.
    \end{equation*}

    \paragraph{Outer weight growth.} 
    If $(w_t,\theta_t)\in A$, since our choice of $M$ ensures $|g(\theta)-g(\Theta(w/|w|))|\leq \eta/4$ for every $(w,\theta)\in A$, we have
    \begin{equation*}
        \begin{aligned}
            \frac{d}{dt}|w_t|&=-\langle g_t(\theta_t),v_t\rangle\\
            &=-\langle g(\Theta(v_t))+g(\theta_t)-g(\Theta(v_t))+ g_t(\theta_t)-g(\theta_t),v_t\rangle\\
            &\geq - h_{\Theta}(v_t)-\|J_g\|_{\infty}M-\delta(\epsilon)\\
            &\geq \eta/2.
        \end{aligned}
    \end{equation*}
    Thus, the constraint $\abs{w_t}\geq \bar r$ cannot be the first to fail.

    \paragraph{Angular boundary.} We next check the boundary $h_{\Theta}(v_t)=-\eta$. Since $\nabla_{\theta}h(v,\Theta(v))=0$ for every $v\in V$, we have $\nabla_{\SS} h_{\Theta}(v)=\nabla_{\SS,v} h(v,\Theta(v))$. Writing $P_v:=\Id-vv^{\top}$, we also have $\dot v_t=-(1/|w_t|)P_{v_t}g_t(\theta_t)$ and $\nabla_{\SS} h_{\Theta}(v_t)=P_{v_t}g(\Theta(v_t))$. Therefore, when $(w_t,\theta_t)\in A$ and $h_{\Theta}(v_t)=-\eta$,
    \begin{equation*}
        \begin{aligned}
            \frac{d}{dt}h_{\Theta}(v_t)&=\langle \dot v_t,\nabla_{\SS} h_{\Theta}(v_t)\rangle\\
            &=-\frac{1}{|w_t|}\langle P_{v_t}g_t(\theta_t),P_{v_t}g(\Theta(v_t))\rangle\\
            &\leq -\frac{1}{|w_t|}\left(|\nabla_{\SS}h_{\Theta}(v_t)|^2-G_1(\delta(\epsilon)+\|J_g\|_{\infty}M)\right)\\
            &\leq -\frac{1}{|w_t|}\left(\beta^2-G_1(\delta(\epsilon)+\|J_g\|_{\infty}M)\right)\\
            &\leq -\frac{1}{|w_t|}\beta^2/2<0.
        \end{aligned}
    \end{equation*}
    Here the last line uses \localeqref{eq_nondeg_choice_M} and \localeqref{eq_nondeg_choice_eps}. Hence the angular component cannot exit $K$ through $\partial K$.
    
    \paragraph{Tube boundary.} 
    We now prove that $\omega_t:=\theta_t-\Theta(v_t)$ satisfies $\frac{d}{dt} |\omega_t|<0$ when $(w_t,\theta_t)\in A$ and $|\omega_t|=M$.

    The tube error dynamics are
    \[
        \dot \omega_t = -\abs{w_t} \nabla_\theta h_t(v_t,\theta_t) + \frac{1}{\abs{w_t}} J_\Theta(v_t)\nabla_{\SS,v} h_t(v_t,\theta_t).
    \]
    The Taylor estimate $C$ and the perturbation bound give
    \begin{equation*}
        \left|\nabla_{\theta}h_t(v_t,\theta_t)-\nabla_\theta^2 h(v_t,\Theta(v_t))\omega_t\right|
        \leq CM^2+\delta(\epsilon).
    \end{equation*} 
    Thus, when $(w_t,\theta_t)\in A$ and $|\omega_t|=M$,
    \begin{equation*}
    \begin{aligned}
        \frac{d}{dt}\bigg(\frac{1}{2}|\omega_t|^2\bigg) &= \dotp{\dot \omega_t}{\omega_t}\\
        &\leq -\abs{w_t} \dotp{\nabla_\theta^2 h(v_t,\Theta(v_t))\omega_t}{\omega_t} + \abs{w_t} CM^3  + \abs{w_t} M \delta(\epsilon) + \frac{M}{\abs{w_t}} \norm{J_\Theta}_{C^0(K)} (\norm{g}_{C^0(W)}+\delta(\epsilon))\\
        &\leq -\abs{w_t}\pa{ \lambda M^2 - CM^3 - M\delta(\epsilon) - \frac{M}{\lambda \bar r^2} \norm{g}_{C^1(W)}(\norm{g}_{C^0(W)} + \delta(\epsilon)) }\\
        &\leq -\abs{w_t} M^2 \frac{\lambda}{2}.
    \end{aligned}
    \end{equation*}
    Here we used the bound $\|J_{\Theta}\|_{C^0(K)}\leq \|g\|_{C^1(W)}/\lambda$, obtained by differentiating $\nabla_{\theta} h(v,\Theta(v))=0$. The last step follows by our choice of parameters in \localeqref{eq_nondeg_choice_M}, \localeqref{eq_nondeg_choice_eps} and \localeqref{eq_nondeg_choice_r}.
   
    This proves that $A$ is invariant, that is, $(w_t,\theta_t)\in A$ for every $t\geq 0$ provided $(w_0,\theta_0)\in A$.

\paragraph{Conclusion.} We have shown that every solution $(w_t,\theta_t)_{t\geq 0}$ of \eqref{characteristic_ode} initialized in $A$ remains in $A$ for every $t\geq 0$. Reasoning as in the growth estimate, we obtain $F'(\mu_t)(w_t,\theta_t)=\langle g_t(\theta_t),w_t\rangle\leq -|w_t|\eta/2\leq -\bar{r}\eta/2$ for every $t\geq 0$, which shows that $A$ is an escape region.

\subsubsection{Proof of Corollary \ref{cor_esc_reg_nondegenerate}}
    We use the notation $g:=g_{\mu}$. Nondegeneracy forces $g(\theta_*)\neq0$: otherwise the nonnegative function $|g|^2/2$ would vanish identically in a neighborhood of a local maximizer, contradicting its nondegenerate Hessian. We set $v_*=-g(\theta_*)/|g(\theta_*)|$. Since $\theta_*$ is a non-degenerate local maximizer of $\theta\mapsto (1/2)|g(\theta)|^2$, we have $J_g(\theta_*)^T g(\theta_*)=0$ and $H_g(\theta_*)[g(\theta_*)]+J_g(\theta_*)^T J_g(\theta_*)\prec 0$, where
    \[
        H_g(\theta)[w]:=\Bigg[\sum_{k=1}^{ d_w}\partial_{ij}g_k(\theta)w_k\Bigg]_{1\leq i,j\leq d_\theta}
    \]
    for every $(w,\theta)\in\RR^{d_w}\times\RR^{d_\theta}$. Defining $h:(v,\theta)\in\SS^{d_w-1}\times\RR^{d_\theta}\mapsto \langle g(\theta),v\rangle$, we hence obtain
    \begin{equation*}\left\{
        \begin{aligned}
            \nabla_{\theta}h(v_*,\theta_*)&=J_g(\theta_*)^Tv_*=0,\\
            \nabla^2_{\theta}h(v_*,\theta_*)&=H_g(\theta_*)[v_*]=-(1/|g(\theta_*)|)H_g(\theta_*)[g(\theta_*)]\succ (1/|g(\theta_*)|)J_g(\theta_*)^T J_g(\theta_*)\succeq 0.
        \end{aligned}\right.
    \end{equation*}

    We first treat the case $d_w=1$, which cannot be obtained from a regular level on the zero-dimensional sphere $\SS^0$. Set $v_*=-\sign(g(\theta_*))\in\{-1,1\}$ and define $h_*(\theta):=v_*g(\theta)$. The preceding identities give
    \[
        h_*(\theta_*)=-|g(\theta_*)|<0,\qquad
        \nabla h_*(\theta_*)=0,
        \qquad
        \nabla^2h_*(\theta_*)
        =-\frac{1}{|g(\theta_*)|}\nabla^2\!\left(\frac12|g|^2\right)(\theta_*)\succ0.
    \]
    Hence there exist $M,\lambda,\eta_0>0$ such that, on $W:=\overline B(\theta_*,M)$,
    \[
        \nabla^2h_*(\theta)\succeq\lambda I_{d_\theta}
        \qquad\text{and}\qquad
        h_*(\theta)\leq-2\eta_0.
    \]
    As in the proof of the proposition, $W_2(\nu,\mu)\to0$ implies $\|g_\nu-g\|_{C^1(W)}\to0$. Choose $\epsilon>0$ so that this norm is at most $\min(\eta_0,\lambda M/2)$ whenever $W_2(\nu,\mu)\leq\epsilon$, and fix any $\bar r>0$. Let $(\mu_t)_{t\geq0}$ be any admissible curve with $\sup_tW_2(\mu_t,\mu)\leq\epsilon$, and write $g_t=g_{\mu_t}$. Consider
    \[
        A:=\{(w,\theta)\in\RR\times\RR^{d_\theta}\,\rvert\,
        v_*w\geq\bar r,\ |\theta-\theta_*|\leq M\}.
    \]
    Along a characteristic initialized in $A$, write $r_t=v_*w_t=|w_t|$. As long as it lies in $A$,
    \[
        \dot r_t=-v_*g_t(\theta_t)\geq\eta_0.
    \]
    On the tube boundary, with $\omega_t=\theta_t-\theta_*$, strong convexity and the $C^1$ perturbation bound give
    \[
        \frac{d}{dt}\frac{|\omega_t|^2}{2}
        =-r_t\langle\omega_t,\nabla(v_*g_t)(\theta_t)\rangle
        \leq-r_t\bigl(\lambda M^2-M\|g_t-g\|_{C^1(W)}\bigr)
        \leq-\frac{\lambda}{2}r_tM^2<0.
    \]
    Thus $A$ is invariant and
    \[
        F'(\mu_t)(w_t,\theta_t)=w_tg_t(\theta_t)
        =r_tv_*g_t(\theta_t)\leq-\bar r\eta_0.
    \]
    Since $A$ contains a nonempty open set, it is an escape region. This proves the corollary for $d_w=1$.

    We assume $d_w\geq2$ for the remainder of the proof.
    
    Let us now define $\psi:(v,\theta)\in\SS^{d_w-1}\times\RR^{d_\theta}\mapsto \nabla_\theta h(v,\theta)=J_g(\theta)^Tv$. We have $\psi(v_*,\theta_*)=0$ and $D_{\theta}\psi(v_*,\theta_*)=H_g(\theta_*)[v_*]\succ 0$. By the implicit function theorem, we obtain the existence of an open set $V\subset\SS^{d_w-1}$ containing $v_*$ and a $C^1$ map $\Theta:V\to\RR^{d_\theta}$ such that $\Theta(v_*)=\theta_*$ and $\nabla_{\theta}h(v,\Theta(v))=0$ for every $v\in V$. By continuity, shrinking $V$ if necessary, we can ensure that $\nabla^2_{\theta}h(v,\Theta(v))\succeq \lambda I_{d_\theta}$ for some $\lambda>0$ and every $v\in V$. 

    We now claim that $v_*$ is a nondegenerate local minimizer of $h_{\Theta}:v\in\SS^{d_w-1}\mapsto h(v,\Theta(v))$. We have
    \begin{equation*}
        \nabla h_{\Theta}(v_*)=\nabla_{\SS,v} h(v_*,\Theta(v_*))+J_{\Theta}(v_*)^T\nabla_{\theta}h(v_*,\Theta(v_*))=\nabla_{\SS,v} h(v_*,\Theta(v_*))=\mathrm{proj}_{\{v_*\}^\perp}(g(\theta_*))=0.
    \end{equation*}
    Moreover, for every $v\in\{v_*\}^\perp$,
    \begin{equation*}
        d^2h_{\Theta}(v_*)[v,v]=\langle H_g(\theta_*)[v_*]J_{\Theta}(v_*)v,J_{\Theta}(v_*)v\rangle+2\langle J_{g}(\theta_*)J_{\Theta}(v_*)v,v\rangle.
    \end{equation*}
    Differentiating $\nabla_{\theta} h(v,\Theta(v))=0$ at $v_*$ we also obtain
    \begin{equation*}
        J_g(\theta_*)^Tv+H_g(\theta_*)[v_*]J_{\Theta}(v_*)v=0
    \end{equation*}
    for every $v\in\{v_*\}^\perp$, which yields
    \begin{equation*}
        d^2h_{\Theta}(v_*)[v,v]=-\langle (H_g(\theta_*)[v_*])^{-1}J_g(\theta_*)^Tv,J_g(\theta_*)^Tv\rangle.
    \end{equation*}
    The spherical Hessian of $h_{\Theta}$ at $v_*$ (the bilinear form that associates to $(u,u)\in (\{v_*\}^\perp)^2$ the quantity $\frac{d^2}{dt^2}h_{\Theta}(\gamma(t))$, where $\gamma$ is any curve on the sphere satisfying $\gamma(0)=v_*$ and $\gamma'(0)=u$) is therefore
    \begin{equation*}
        \begin{aligned}
            \nabla^2_{\SS} h_{\Theta}(v_*)&=-J_g(\theta_*)(H_g(\theta_*)[v_*])^{-1}J_g(\theta_*)^T -\langle g(\theta_*),v_*\rangle I_{\SS^{d_w-1}}\\
            &=|g(\theta_*)| \big[J_g(\theta_*)(H_g(\theta_*)[g(\theta_*)])^{-1}J_g(\theta_*)^T+I_{\SS^{d_w-1}}\big].
        \end{aligned}
    \end{equation*}
    It remains to show that $JH^{-1}J^T+I_{d_w}\succ 0$, where $J=J_g(\theta_*)$ and $H=H_g(\theta_*)[g(\theta_*)]$. Since $H+J^TJ\prec 0$, defining $A=-H$ gives $J^TJ\prec A$ and $A^{-1/2}J^TJA^{-1/2}\prec I_{d_\theta}$. Defining $B=A^{-1/2}J^T$ and $C=JA^{-1/2}$, we have that $BC=A^{-1/2}J^TJA^{-1/2}$ and $CB=JA^{-1}J^T$ have the same nonzero eigenvalues. Thus, $JA^{-1}J^T\prec I_{d_w}$ and hence $JH^{-1}J^T+I_{d_w}\succ 0$. This shows that $v_*$ is a nondegenerate local minimizer of $h_{\Theta}$. 
    
    Finally, since $v_*$ is a nondegenerate local minimizer of $h_{\Theta}$, every $\eta\in (0,-h_{\Theta}(v_*))$ sufficiently close to $-h_{\Theta}(v_*)$ satisfies $\{h_{\Theta}\leq-\eta\}\subset\subset V$ and $-\eta$ is a regular value of $h_{\Theta}$. The result then follows from the previous proposition.

\subsection{Asymptotically positively one-homogeneous nonlinearities}\label{appendix_asymp_homogeneous}
In this section, we provide a proof of \Cref{prop_esc_reg_asymp_homogeneous}. We fix an absolutely continuous curve $(\mu_t)_{t\geq 0}$ in $\mathcal{P}_2(\Omega)$ such that $W_2(\mu_t,\mu)\leq\epsilon$ for every $t\geq 0$ and write $g:=g_{\mu}$ and $g_t:=g_{\mu_t}$. We also define $f=R'(\int_{\Omega}\Phi d\mu)$ and $f_t=R'(\int_{\Omega}\Phi d\mu_t)$.

We first prove that $g_{\infty}:\varphi\in\mathbb{S}^{d_\theta-1}\mapsto\phi_{\infty}(\varphi)^*f\in \RR^{d_w}$ is not identically zero. Since $\mu$ does not minimize $F$, the map $F'(\mu):(w,\theta)\mapsto \langle \phi(\theta)^*f,w\rangle$ is not identically zero and, in particular, $f\neq 0$. If $g_{\infty}\equiv 0$, then $\phi_{\infty}(\varphi)^*f=0$ for every $\varphi\in\mathbb{S}^{d_\theta-1}$. For every $m\in\NN$ and $(w_i,\theta_i)\in\Omega^m$, we would obtain
\begin{equation*}
    0 =\sum\limits_{\substack{i=1\\\theta_i\neq0}}^m |\theta_i|\langle \phi_{\infty}(\theta_i/|\theta_i|)^*f,w_i\rangle=\sum\limits_{i=1}^m \langle f,\phi_{\infty}(\theta_i)w_i\rangle_{\mathcal{F}},
\end{equation*}
which contradicts the density assumption and $f\neq 0$. Consequently, defining $h_{\infty}:(v,\varphi)\in \mathbb{S}^{d_w-1}\times\mathbb{S}^{d_\theta-1}\mapsto \langle g_{\infty}(\varphi),v\rangle$, we obtain that $h_{\infty}$ is not identically zero.

From now on, we use our assumption and fix $\eta>0$ such that $-\eta$ is a regular value of $h_{\infty}$ (the case of a positive regular value can be treated in a similar way). We define $K=\{h_{\infty}\leq-\eta\}$ and hence have $\beta:=\inf_{(v,\varphi)\in\partial K}|\nabla_{\mathbb{S}}h_{\infty}(v,\varphi)|>0$.

\paragraph{Choice of parameters.} Using \ref{conv_phi_asymp_hom}-\ref{conv_dphi_asym_hom} and the uniform boundedness principle, we obtain 
\begin{equation*}
 \sup_{|\theta|\geq 1}\|(1/|\theta|)\phi(\theta)\|_{\mathcal{L}(\RR^{d_w};\mathcal{F})}<+\infty~~\mathrm{and}~~\sup_{|\theta|\geq 1}\|d\phi(\theta)\|_{\mathcal{L}(\RR^{d_w}\times\RR^{d_\theta};\mathcal{F})}<+\infty.
\end{equation*}
For every $\nu_i\in\mathcal{P}_2(\Omega)$ ($i=1,2$), we have
\begin{equation*}\left\{
    \begin{aligned}
        &\frac{1}{|\theta|}|g_{\nu_1}(\theta)-g_{\nu_2}(\theta)|\leq \textstyle\|(1/|\theta|)\phi(\theta)\|_{\mathcal{L}(\RR^{d_w};\mathcal{F})}\|R'(\int_{\Omega}\Phi d\nu_1)-R'(\int_{\Omega}\Phi d\nu_2)\|_{\mathcal{F}},\\
        &|J_{g_{\nu_1}}(\theta)-J_{g_{\nu_2}}(\theta)|\leq \textstyle\|d\phi(\theta)\|_{\mathcal{L}(\RR^{d_w}\times\RR^{d_\theta};\mathcal{F})}\|R'(\int_{\Omega} \Phi d\nu_1)-R'(\int_{\Omega}\Phi d\nu_2)\|_{\mathcal{F}}.
    \end{aligned}\right.
\end{equation*}
Using that $R'$ is Lipschitz on bounded sets together with \Cref{lemma_bound_diff_Phi_mu}, we get that $\delta_1(\epsilon)\to 0$ as $\epsilon\to 0$ where
\begin{equation}\locallabel{eq_bound_diff_g_gt}
    \delta_1(\epsilon):=\sup\bigg\{\frac{1}{|\theta|}|g_{\nu}(\theta)-g_{\mu}(\theta)| + |J_{g_{\nu}}(\theta)-J_{g_{\mu}}(\theta)|\,\bigg\rvert\,(\theta,\nu)\in\RR^{d_\theta}\times\mathcal{P}_2(\Omega),~|\theta|\geq 1,~W_2(\mu,\nu)\leq\epsilon\bigg\}.    
\end{equation}
From \ref{conv_phi_asymp_hom}-\ref{conv_dphi_asym_hom}, as $g(\theta)=\phi(\theta)^* f$ and $g_{\infty}(\varphi)=\phi_{\infty}(\varphi)^*f$, one can show that $\delta_2(\bar{r})\to 0$ as $\bar{r}\to+\infty$ where
\begin{equation}\locallabel{conv_unif_gr}
    \delta_2(\bar{r}):=\sup_{r\geq\bar{r}}\sup_{\varphi\in\SS^{d_\theta-1}}|(1/r)g(r\varphi)-g_{\infty}(\varphi)|+|J_g(r\varphi)-(g_{\infty}(\varphi)\varphi^T+J^{\SS}_{g_{\infty}}(\varphi))|,
\end{equation}
with $J^{\SS}_{g_{\infty}}(\varphi):=(\nabla_{\SS}(g_{\infty})_1(\varphi)\cdots \nabla_{\SS}(g_{\infty})_{d_w}(\varphi))^{T}$ the spherical Jacobian of $g_{\infty}$. We fix $M>1$, as well as $\epsilon$ small enough and $\bar{r}\geq 1$ large enough to have \begin{equation*}
\delta_1(\epsilon)+\delta_2(\bar{r})\leq \min\bigg(\frac{\eta}{2}\frac{M^2-1}{M^2+1},\frac{\beta}{2\sqrt{2}M^2}\bigg).
\end{equation*}

\paragraph{Growth inside the escape region.} From now on, given $(w_t,\theta_t)\in \RR^{d_w}\times\RR^{d_\theta}$, write $v_t:=w_t/|w_t|$ and $\varphi_t:=\theta_t/|\theta_t|$. We define
\[
    A=\{(w,\theta)\in\RR^{d_w} \times \RR^{d_\theta}\,\rvert\, \min(|w|,|\theta|)\geq \bar{r},~1/M\leq |w|/|\theta|\leq M,~(w/|w|,\theta/|\theta|)\in K\}.
\]
If $(w_t,\theta_t)\in A$, then by \localeqref{eq_bound_diff_g_gt} and \localeqref{conv_unif_gr},
\begin{equation*}
    \begin{aligned}
        \frac{d}{dt}|w_t|&=-\langle g_t(\theta_t),v_t\rangle\\
        &=-|\theta_t|\langle g_{\infty}(\varphi_t)+(1/|\theta_t|)(g_t(\theta_t)-g(\theta_t))+(1/|\theta_t|)g(\theta_t)-g_{\infty}(\varphi_t),v_t\rangle\\
        &\geq |\theta_t|[\eta-\delta_1(\epsilon)-\delta_2(\bar{r})]\\
        &\geq |\theta_t|\eta/2.
    \end{aligned}
\end{equation*}
Using again \localeqref{eq_bound_diff_g_gt} and \localeqref{conv_unif_gr}, we obtain
\begin{equation*}
    \begin{aligned}
        \frac{d}{dt}|\theta_t|&=-|w_t|\langle J_{g_t}(\theta_t)\varphi_t,v_t\rangle\\
        &=-|w_t|\langle g_{\infty}(\varphi_t)+ J_{g}(\theta_t)\varphi_t-g_{\infty}(\varphi_t)+J_{g_t}(\theta_t)\varphi_t-J_g(\theta_t)\varphi_t,v_t\rangle\\
        &\geq |w_t|[\eta-\delta_2(\bar{r})-\delta_1(\epsilon)]\\
        &\geq |w_t|\eta/2.
    \end{aligned}
\end{equation*}
Thus, the norm of the parameters grows as long as they stay in $A$; hence $\min(|w_t|,|\theta_t|)\geq\bar{r}$ holds as long as $(v_t,\varphi_t)\in K$ and $1/M\leq |w_t|/|\theta_t|\leq M$, provided $(w_0,\theta_0)\in A$.

\paragraph{Invariance of the escape region.} We now prove that $\frac{d}{dt}h_{\infty}(v_t,\varphi_t)<0$ provided $h_{\infty}(v_t,\varphi_t)=-\eta$. First,
\begin{equation*}
    \dot v_t =-\frac{1}{|w_t|}\mathrm{proj}_{\{v_t\}^\perp}(g_t(\theta_t))=-\frac{|\theta_t|}{|w_t|}\mathrm{proj}_{\{v_t\}^\perp}\bigg(\frac{1}{|\theta_t|}g_t(\theta_t)\bigg)~~\mathrm{and}~~\mathrm{proj}_{\{v_t\}^\perp}(g_{\infty}(\varphi_t))=\nabla_{\SS,v}h_{\infty}(v_t,\varphi_t),
\end{equation*}
so that we obtain
\begin{equation*}
    \bigg|\frac{|w_t|}{|\theta_t|}\dot v_t -(-\nabla_{\SS,v}h_{\infty}(v_t,\varphi_t))\bigg|\leq \delta_1(\epsilon)+\delta_2(\bar{r}).
\end{equation*}
Moreover, $J^{\SS}_{g_{\infty}}(\varphi)^Tv=\nabla_{\SS,\varphi}h_{\infty}(v,\varphi)$ for every $(v,\varphi)\in\SS^{d_w-1}\times\SS^{d_\theta-1}$. Thus,
\begin{equation*}
    \begin{aligned}
    \dot\varphi_t&=-\frac{|w_t|}{|\theta_t|}\mathrm{proj}_{\{\varphi_t\}^\perp}(J_{g_t}(\theta_t)^Tv_t)\\
    &=-\frac{|w_t|}{|\theta_t|}\mathrm{proj}_{\{\varphi_t\}^\perp}([g_{\infty}(\varphi_t)\varphi_t^T+J_{g_{\infty}}^{\SS}(\varphi_t)+J_g(\theta_t)-(g_{\infty}(\varphi_t)\varphi_t^T+J_{g_{\infty}}^{\SS}(\varphi_t))+J_{g_t}(\theta_t)-J_g(\theta_t)]^T v_t),\\
    \end{aligned}
\end{equation*}
which yields
\begin{equation*}
    \bigg|\frac{|\theta_t|}{|w_t|}\dot\varphi_t -(-\nabla_{\SS,\varphi}h_{\infty}(v_t,\varphi_t))\bigg|\leq \delta_1(\epsilon)+\delta_2(\bar{r}).
\end{equation*}
Using the above, if $(v_t,\varphi_t)\in\partial K$, we obtain
\begin{equation*}
    \begin{aligned}
        \frac{d}{dt}h_{\infty}(v_t,\varphi_t)&=\langle \dot v_t,\nabla_{\SS,v}h_{\infty}(v_t,\varphi_t)\rangle+\langle \dot \varphi_t,\nabla_{\SS,\varphi}h_{\infty}(v_t,\varphi_t)\rangle\\
        &=\frac{|\theta_t|}{|w_t|}\bigg\langle \frac{|w_t|}{|\theta_t|}\dot v_t,\nabla_{\SS,v}h_{\infty}(v_t,\varphi_t)\bigg\rangle+\frac{|w_t|}{|\theta_t|}\bigg\langle \frac{|\theta_t|}{|w_t|}\dot \varphi_t,\nabla_{\SS,\varphi}h_{\infty}(v_t,\varphi_t)\bigg\rangle\\
        &\leq -\frac{1}{M}|\nabla_{\SS}h_{\infty}(v_t,\varphi_t)|^2+M(\delta_1(\epsilon)+\delta_2(\bar{r}))(|\nabla_{\SS,\varphi}h_{\infty}(v_t,\varphi_t)|+|\nabla_{\SS,v}h_{\infty}(v_t,\varphi_t)|)\\
        &\leq -|\nabla_{\SS}h_{\infty}(v_t,\varphi_t)|\big[(1/M)|\nabla_{\SS}h_{\infty}(v_t,\varphi_t)|-\sqrt{2}M(\delta_1(\epsilon)+\delta_2(\bar{r}))\big]\\
        &\leq -\beta\big[(1/M)\beta-\sqrt{2}M(\delta_1(\epsilon)+\delta_2(\bar{r}))\big]\\
        &\leq -\frac{\beta^2}{2M}<0.
    \end{aligned}
\end{equation*}
This shows that, provided $(w_0,\theta_0)\in A$, then $(v_t,\varphi_t)\in K$ as long as $1/M\leq |w_t|/|\theta_t|\leq M$. To prove that this last condition always holds, we proceed as in the growth estimates to obtain
\begin{equation*}
    \begin{aligned}
         \bigg|\frac{d}{dt}\frac{|w_t|}{|\theta_t|}-h_{\infty}(v_t,\varphi_t)\bigg[\frac{|w_t|^2}{|\theta_t|^2}-1\bigg]\bigg|\leq (\delta_1(\epsilon)+\delta_2(\bar{r}))\bigg[\frac{|w_t|^2}{|\theta_t|^2}+1\bigg].
    \end{aligned}
\end{equation*}
If $|w_t|/|\theta_t|=M$, since $M>1$ this yields
\begin{equation*}
    \frac{d}{dt}\frac{|w_t|}{|\theta_t|}\leq -\eta(M^2-1)+(\delta_1(\epsilon)+\delta_2(\bar{r}))(M^2+1)\leq -\frac{\eta}{2}(M^2-1)<0.
\end{equation*}
If $|w_t|/|\theta_t|=1/M$, we obtain
\begin{equation*}
    \frac{d}{dt}\frac{|w_t|}{|\theta_t|}\geq \eta\bigg(1-\frac{1}{M^2}\bigg)-(\delta_1(\epsilon)+\delta_2(\bar{r}))\bigg(1+\frac{1}{M^2}\bigg)\geq \frac{\eta}{2}\bigg(1-\frac{1}{M^2}\bigg)>0.
\end{equation*}
This shows that $A$ is invariant, that is, $(w_t,\theta_t)\in A$ for every $t\geq 0$ provided $(w_0,\theta_0)\in A$.

\paragraph{Conclusion.} We have shown that every solution $(w_t,\theta_t)_{t\geq 0}$ of \eqref{characteristic_ode} initialized in $A$ remains in $A$ for every $t\geq 0$. Reasoning as in the growth estimates, we obtain $F'(\mu_t)(w_t,\theta_t)=\langle g_t(\theta_t),w_t\rangle\leq -|w_t||\theta_t|\eta/2\leq -\bar{r}^2\eta/2$ for every $t\geq 0$, which shows that $A$ is an escape region. In fact, we can obtain a stronger result, since
\begin{equation*}\left\{
    \begin{aligned}
        \frac{d}{dt}\bigg(\frac{1}{2}|w_t|^2\bigg)&\geq \frac{\eta}{2}|w_t||\theta_t|\geq \frac{\eta}{M}\frac{|w_t|^2}{2},\\
        \frac{d}{dt}\bigg(\frac{1}{2}|\theta_t|^2\bigg)&\geq \frac{\eta}{2}|w_t||\theta_t|\geq \frac{\eta}{M}\frac{|\theta_t|^2}{2}.
    \end{aligned}\right.
\end{equation*}
\phantomsection\label{audit:asymptotic-characteristic-growth}
These estimates show exponential growth of both parameter norms along every characteristic initialized in $A$. If $(\mu_t)$ is the Wasserstein gradient flow itself, its Lagrangian representation shows that any positive initial mass in $A$ contributes an exponentially growing amount to the second moment.
\end{document}